\newcommand{\IC}{\mathbb{C}}
\newcommand{\IR}{\mathbb{R}}
\newcommand{\question}[1]{\leavevmode{\marginpar{\tiny%
			$\hbox to 0mm{\hspace*{-0.5mm}$\leftarrow$\hss}%
			\vcenter{\vrule depth 0.1mm height 0.1mm width \the\marginparwidth}%
			\hbox to 0mm{\hss$\rightarrow$\hspace*{-0.5mm}}$\\\relax\raggedright #1}}}
\newcommand{\rg}{\mathrm{rg}}
\renewcommand{\tr}{\mathrm{tr}}
\newcommand{\dom}{\mathrm{Dom}}
\newcommand{\IN}{\mathbb{N}}
\newcommand{\Ii}{\mathbbm{i}}
\newcommand{\IF}{\mathcal{F}}
\newcommand{\Id}{{\rm d}}
\newcommand{\p}{\partial}
\newcommand{\di}{\slashed{\p}}
\newcommand{\vol}{\mathrm{vol}}
\newcommand{\one}{\mathbbm{1}}
\DeclareMathOperator*{\supp}{supp}
\DeclareMathOperator{\coker}{coker}
\DeclareMathOperator{\ind}{ind}
\theoremstyle{plain}            
\newtheorem{theorem}{theorem}[section]
\newtheorem{Lemma}[theorem]{Lemma}
\newtheorem{Corollary}[theorem]{Corollary}
\newtheorem{Theorem}[theorem]{Theorem}
\newtheorem{Proposition}[theorem]{Proposition}
\newtheorem{Propandef}[theorem]{Proposition and Definition}
\theoremstyle{definition}
\newtheorem{Definition}[theorem]{Definition}
\newtheorem{Hypothesis}[theorem]{Hypothesis}
\newtheorem{Remark}[theorem]{Remark}
\newtheorem{Example}[theorem]{Example}
\title{Trace and Index of Dirac-Schr\"odinger Operators on Open Space with Operator Potentials}
\author{Oliver F\"urst\footnote{\textsc{O. F\"urst, Mathematisches Institut, Rheinische Friedrich-Wilhelms-Universit\"at Bonn, Endenicher Allee 60, 53115 Bonn}, \url{ofuerst@math.uni-bonn.de}}}
\date{13th December 2024}
\begin{document}
	\maketitle
	
	\begin{abstract}
		We develop a principal trace and generalized index formula for a Dirac-Schr\"odinger operator $D$ on open space of odd dimension $d\geq 3$ with a potential given by a family of self-adjoint unbounded operators acting on a infinite dimensional Hilbert space $H$. The presented results generalize formulas surrounding the Callias index theorem to the case of unbounded operator potentials, for which the operator $D$ is not necessarily Fredholm. This is the principal novelty of this paper. As application, we include examples where the trace formula is used to calculate the Witten index of non-Fredholm massless $(d+1)$-Dirac-Schr\"odinger operators acting in $L^2\left(\IR^{d+1},H\right)$.
	\end{abstract}
	
	\section{Introduction}
	The central goal of this paper is to calculate the trace (called in the following the "principal trace") $\tr_{L^2\left(\IR^{d},H\right)}\tr_{\IC^{r}}\left(e^{-tD^{\ast}D}-e^{-tDD^{\ast}}\right)$, $t>0$, of the Dirac-Schr\"odinger operator $D=\Ii\di+A$, and the associated index formula, under suitable conditions on the operator-valued potential $A$.
	
	Let us start with the setup, explaining the notation above. We fix $\Ii$ as the imaginary unit in $\IC$. Let $H$ be a separable complex Hilbert space, and let $A_{0}$ be a self-adjoint operator in $H$ with domain $\dom\ A_{0}$, which plays the role of a "model operator". We equip domains of (Borel) functions $f$ of $A_{0}$ with the corresponding graph norm, which turns $\dom\ f\left(A_{0}\right)$ into a separable Hilbert space, embedding continuously into $H$ with norm $1$. Fix $d\in\IN$, $d\geq 3$ to be the odd dimension of the euclidean space $\IR^{d}$, and let $A$ be a function on $\IR^{d}$ with values in the self-adjoint operators in $H$ with constant domain $\dom\ A\left(x\right)=\dom\ A_{0}$, $x\in\IR^{d}$. Denote by $M_{0}$ the constant operator function $\IR^{d}\ni x\mapsto A_{0}$. Fix $r\in\IN$ as the rank (the minimal choice is $r=2^{\frac{d-1}{2}}$) of Clifford matrices $\left(c^{i}\right)_{i=1}^{d}\subset\IC^{r\times r}$, i.e. for $i,j\in\left\{1,\ldots,d\right\}$,
	\begin{align}\label{clifdefeq1}
		c^{i}c^{j}+c^{j}c^{i}=-2\delta_{ij}\one_{\IC^{r}}.
	\end{align}
	For $p\in\left[1,\infty\right]$ denote by $L^p\left(\IR^{d},\IC^{r}\otimes H\right)$ the $\IC^{r}\otimes H$-valued $L^{p}$ elements, and with slight abuse of notation, denote by $A$ and $M_{0}$ the multiplication operators in the Hilbert space $L^2\left(\IR^{d},\IC^{r}\otimes H\right)$, given by
	\begin{align}
    \left(Af\right)\left(x\right):=&A\left(x\right)f\left(x\right),\ x\in\IR^{d},\ f\in\dom\ A,\nonumber\\
		\left(M_{0}f\right)\left(x\right):=&A_{0}f\left(x\right),\ x\in\IR^{d},\ f\in\dom\ M_{0},\nonumber\\
		\dom\ A:=&\dom\ M_{0}:=L^2\left(\IR^{d},\IC^{r}\otimes\dom\ A_{0}\right).
	\end{align}
	Denote by $H^k\left(\IR^{d},\IC^{r}\otimes H\right)$ the $L^2$-Sobolev space of order $k\in\IN_{0}$ with values in $\IC^{r}\otimes H$. Let $\di$ be the Dirac operator in $\IC^{r}\otimes H$ given by
	\begin{align}
		\left(\di f\right)\left(x\right):=&\sum_{i=1}^{d}c^{i}\left(\p_{i}f\right)\left(x\right),\ x\in\IR^{d},\ f\in\dom\ \di,\nonumber\\
		\dom\ \di:=& H^1\left(\IR^{d},\IC^{r}\otimes H\right).
	\end{align}
	It is well-known that $\di$ is self-adjoint, with $\Delta=\di^{2}=\Delta_{\IR^{d}}\otimes\one_{\IC^{r}\otimes H}$, where $\Delta_{\IR^{d}}$ is the self-adjoint euclidean Laplace operator $\Delta_{\IR^{d}}:=-\sum_{i=1}^{d}\p_{i}^{2}$, $\dom \Delta_{\IR^{d}}:= H^2\left(\IR^{d}\right)$. In $L^2\left(\IR^{d},\IC^{r}\otimes H\right)$ introduce the operators $D,D_{0}$ given by
	\begin{align}
		D:=&\Ii\di+A,\ D_{0}:=\Ii\di+M_{0},\\
		\dom\ D=&\dom\ D_{0}=\dom\ \di\cap\dom\ M_{0}.
	\end{align}
	$D_{0}$ is normal with $D_{0}^{\ast}=-\Ii\di+M_{0}$, and $\dom\ D_{0}^{\ast}=\dom\ D_{0}$ (cf. \cite{GLMST}[Lemma 4.2]). We introduce the non-negative operator $H_{0}$ given by
	\begin{align}
		H_{0}:=&D_{0}D_{0}^{\ast}=D_{0}^{\ast}D_{0}=\Delta+M_{0}^{2},\nonumber\\
		\dom\ H_{0} =&H^{2}\left(\IR^{d},\IC^{r}\otimes H\right)\cap\dom\ M_{0}^{2}.
	\end{align}
	In the following preparatory section we will give conditions on $A$, such that the domain conditions $\dom\ DD^{\ast}=\dom\ D^{\ast}D=\dom\ H_{0}$, $\dom\ D=\dom\ D^{\ast}=\dom\ D_{0}$ are satisfied. We will also show that under further assumptions on the function $A$, akin to the asymptotic homogeneity condition imposed classically in Callias original paper (\cite{Cal}[Theorem 2]), the partial trace $\tr_{\IC^{r}}\left(e^{-tD^{\ast}D}-e^{-tDD^{\ast}}\right)$ is trace-class in $L^2\left(\IR^{d},H\right)$, for $t>0$.
    After introducing a suitable approximation of the operator function $A$, the bulk of this paper then deals with the calculation of the principal trace.

    Let us now present the conditions we impose on the operator function $A=B+M_{0}$, and discuss some of the details. We use the following notation: $\langle x\rangle_{z}:=\left(\left|x\right|^{2}+z\right)^{\frac{1}{2}}$, and $\langle x\rangle:=\langle x\rangle_{1}$.
	
	\begin{Hypothesis}[Hypothesis \ref{hyp1}]
    Let $\alpha\geq 1$, and $N\geq\lfloor\frac{\alpha-1}{2}\left(d+1\right)\rfloor+1$. Let $B$ be a family of symmetric operators in $H$, with domains $\dom\ A_{0}$. Assume that $B\langle A_{0}\rangle^{-1}$ is continuously differentiable in the strong operator topology, and that for all $\beta\in\left[-2N-1,2N+1\right]$ the operators $\langle A_{0}\rangle^{\beta}B\left(x\right)\langle A_{0}\rangle^{-1-\beta}$ are closable for all $x\in\IR^{d}$ with bounded closures and
    \begin{align}\label{introdiffeq}
        \left\|\overline{\langle M_{0}\rangle^{\beta}B\langle M_{0}\rangle^{-1-\beta}}\right\|_{L^{\infty}\left(\IR^{d},B\left(H\right)\right)}<\infty.
    \end{align}
    Assume that for $\beta=\pm2N$,
    \begin{align}\label{introdomeq}
        &\left\|\langle M_{0}\rangle^{\beta}B\langle M_{0}\rangle^{-\beta}\langle M_{0}\rangle^{-1}_{z}\right\|_{L^{\infty}\left(\IR^{d},B\left(H\right)\right)}+\left\|\langle M_{0}\rangle^{1+\beta}B\langle M_{0}\rangle^{-1-\beta}\langle M_{0}\rangle^{-1}_{z}\right\|_{L^{\infty}\left(\IR^{d},B\left(H\right)\right)}\nonumber\\
		&+\left\|\langle M_{0}\rangle^{\beta}\nabla B\langle M_{0}\rangle^{-1-\beta}\langle M_{0}\rangle^{-1}_{z}\right\|_{L^{\infty}\left(\IR^{d},B\left(H\right)^{d}\right)}\xrightarrow{w\to\infty}0,\ w:=\operatorname{dist}\left(z,\left(-\infty,0\right]\right).
    \end{align}
		Further assume that for all $\beta\in\left[-2N+\alpha,2N+1\right]$, there exists $\epsilon>0$, such that
		\begin{align}\label{introtraceeq}
            &\left\|x\mapsto\langle x\rangle\langle A_{0}\rangle^{-\alpha+\beta}\left(\nabla B\right)\left(x\right)\langle A_{0}\rangle^{-\beta}\right\|_{L^{\infty}\left(\IR^{d},S^{d}\left(H\right)^{d}\right)}\nonumber\\
            &+\left\|x\mapsto\langle x\rangle^{1+\epsilon}\langle A_{0}\rangle^{-\alpha+\beta}\left(\p_{R}B\right)\left(x\right)\langle A_{0}\rangle^{-\beta}\right\|_{L^{\infty}\left(\IR^{d},S^{d}\left(H\right)\right)}<\infty,
		\end{align}
		where $\p_{R}:=\sum_{i=1}^{d}\frac{x^{i}}{\left|x\right|}\p_{i}$ denotes the radial vector field, and $S^{d}\left(H\right)$ the $d$th Schatten-von Neumann class of $H$.
        
        Finally, let $S_{1}\left(0\right)$ denote the $\left(d-1\right)$-dimensional unit sphere, and assume that for $\phi\in\dom\ A_{0}$, a.e. $y\in S_{1}\left(0\right)$, a.e. $x\in\IR^{d}$,
		\begin{align}\label{introradlimeq}
			\lim_{R\to\infty}R\left\|\left(\nabla B\left(Ry+x\right)-\nabla B\left(Ry\right)\right)\phi\right\|_{H^{d}}=\lim_{R\to\infty}\left\|\left(B\left(Ry+x\right)-B\left(Ry\right)\right)\phi\right\|_{H}=0.
		\end{align}
	\end{Hypothesis}

	Let us give an overview over the above conditions. We first note that the parameter $\alpha\geq 1$ is central. It allows flexibility in the most important condition (\ref{introtraceeq}) where a larger $\alpha$ means a weaker requirement on $B$. It should be noted however that this comes at a price, namely that $N$ has to be increased. The parameter $N$ determines how regular $B$ needs to be on the Hilbert scale generated by $A_{0}$ (cf. conditions (\ref{introdiffeq}) and (\ref{introdomeq}). It should however be noted that the regularity of $B$ on $\IR^{d}$ needs to be in any case just once continuously differentiable, independent of the choice of $\alpha$, and thus $N$.
    The condition (\ref{introdomeq}) encodes Kato-Rellich bounds on the family $B$, which in particular imply that all $A\left(x\right)=B\left(x\right)+A_{0}$ have constant domain $\dom\ A_{0}$. In rough terms condition (\ref{introdomeq}) ensures that the domain properties of the involved operators are well-behaved.
    The condition (\ref{introtraceeq}) stipulates the (relative) Schatten-von Neumann membership of the derivatives of the operator function $B$ and their decay for $\left|x\right|\to\infty$. The radial derivative is singled out, since we assume that it decays faster. These decay conditions correspond to the assumption of asymptotic homogeneity of order $0$ in Callias' classical setup \cite{Cal}[Theorem 2]. The final condition (\ref{introradlimeq}) also refers to the behaviour of $B$ at infinity. Roughly speaking, it implies that it should not matter on which ray one approaches a fixed direction at infinity of the functions $B$ and $\nabla B$.

    It should be remarked that while the above conditions are quite general, one should also emphasize that the possibility of arbitrarily choosing $\alpha\geq 1$ is not only interesting for generality but also because of the requirements from applications. If one wants to incorporate the choice of a proper differential operator of order $1$ on $\IR^{n}$ for $A_{0}$ itself ($H$ is then $L^2\left(\IR^{n}\right)$), one runs into problems with condition (\ref{introtraceeq}) if one is just allowed to take $\alpha=1$. Due to well-known Schatten-von Neumann estimates (cf. Chapter 4 in \cite{Simon}) this issue boils basically down to the fact that $x^{-n}$ is not integrable on $\IR^{n}$ at infinity. We note that at the end of this paper we include as an example results dealing with exactly this sketched setup, which stem from \cite{F2}, which gives the detailed proofs in this particular setup, which we omit in this paper.

    After explaining the conditions imposed on the operator potential $A$, let us present the main result of this paper, which is the desired principal trace formula in terms of $A$ and $\Id A$ in a neighbourhood of infinity.
	\begin{Theorem}[Theorem \ref{newmainthm}]\label{introthm}
		Assume Hypothesis \ref{hyp1}. For $\phi\in C^{\infty}_{c}\left(\IR^{d}\right)$ denote $A_{\phi}:=A_{0}+\left(1-\phi\right)\left(A-A_{0}\right)$. Then for $t>0$, and any $\phi\in C^{\infty}_{c}\left(\IR^{d}\right)$,
		\begin{align}\label{introeq1}
			&\tr_{L^{2}\left(\IR^{d},H\right)}\tr_{\IC^{r}}\left(e^{-tD^{\ast}D}-e^{-tDD^{\ast}}\right)\nonumber\\
			=&\frac{2}{d}\left(4\pi\right)^{-\frac{d}{2}}\Ii^{d}\kappa_{c}t^{\frac{d}{2}}\int_{s\in\Delta_{d-1}}\int_{\IR^{d}}\tr_{H}\bigwedge_{j=0}^{d-1}\left(\Id A_{\phi}e^{-ts_{j}A_{\phi}^2}\right)\Id s,
		\end{align}
		where (\ref{introeq1}) is independent of the choice of $\phi$, and
		\begin{align}
			\kappa_{c}:=\tr_{\IC^{r}}\left(c^{1}\cdot\ldots\cdot c^{d}\right),
		\end{align}
		$\Delta_{d-1}=\left\{u\in\left[0,1\right]^{d},\sum_{j=0}^{d-1}u_{j}=1\right\}$ denotes the $\left(d-1\right)$-simplex, and $\bigwedge$ the exterior product.
	\end{Theorem}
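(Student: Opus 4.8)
The plan is to reduce the trace formula to a statement about heat semigroups generated by operator families via a Duhamel-type expansion, and then to extract the leading term by a scaling/localization argument that sees only the behaviour of $A$ near infinity. First I would write $D^{\ast}D = H_{0} + V_{-}$ and $DD^{\ast} = H_{0} + V_{+}$, where the potentials $V_{\pm}$ differ by the anticommutator and commutator terms coming from $\{c^{i}\}$; schematically $V_{\pm} = B^{2} + M_{0}B + BM_{0} \pm \sum_{i} c^{i}(\p_{i}B)$ (plus lower order pieces), so that the $\IC^{r}$-dependence of the difference $e^{-tD^{\ast}D}-e^{-tDD^{\ast}}$ is concentrated entirely in the Clifford-odd part $\sum_{i}c^{i}(\p_{i}B)$. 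Taking $\tr_{\IC^{r}}$ then forces an odd number — in fact exactly $d$ — of Clifford matrices to survive, which is where $\kappa_{c}=\tr_{\IC^{r}}(c^{1}\cdots c^{d})$ enters; this also explains the factor $\Ii^{d}$ and the antisymmetrization producing $\bigwedge_{j=0}^{d-1}$.

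Next I would iterate the Duhamel formula $e^{-tD^{\ast}D}-e^{-tDD^{\ast}} = -\int_{0}^{t} e^{-sD^{\ast}D}(V_{-}-V_{+})e^{-(t-s)DD^{\ast}}\,ds$ and expand both semigroups further against $e^{-\cdot H_{0}}$, keeping track of which terms carry enough Clifford matrices to survive the partial trace and enough Schatten-decay (controlled by the $\tau$-seminorms in Hypothesis \ref{hyp1}) to be trace-class in $L^{2}(\IR^{d},H)$. The Clifford-odd insertion $\sum_{i}c^{i}(\p_{i}B)$ must appear exactly $d$ times; each of the remaining insertions must be the Clifford-even combination, and after taking $\tr_{\IC^{r}}$ the $d$ radial/coordinate derivatives get fully antisymmetrized into the wedge product $\bigwedge_{j=0}^{d-1}\Id A$. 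The free heat kernel $(4\pi t)^{-d/2}e^{-|x-y|^2/4t}$ supplies the combinatorial constant: integrating the Gaussian against the $d$-fold convolution over the simplex $\Delta_{d-1}$ yields precisely $\frac{2}{d}(4\pi)^{-d/2}t^{d/2}\int_{\Delta_{d-1}}(\cdots)\,ds$, with the $M_{0}^{2}=A_{0}^{2}$ part of $H_{0}$ surviving as the $e^{-ts_{j}A_{\phi}^{2}}$ factors between consecutive $\Id A_{\phi}$'s.

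The replacement of $A$ by $A_{\phi}$ — i.e. the claim that only a neighbourhood of infinity contributes — is handled by a cutoff/commutator argument: since $A-A_{\phi} = \phi(A-A_{0})$ is compactly supported, the difference of the corresponding traces is expressible through trace-class remainders that vanish, using that $e^{-tD^{\ast}D}-e^{-tD_\phi^{\ast}D_\phi}$ etc. lie in the appropriate Schatten ideal by the $\rho_{z}$ and $\tau$ bounds; the radial continuity at infinity (condition (2) of the Hypothesis) is what lets one pass from the genuine $A$ to its homogenized version in the remaining integral. The main obstacle, I expect, is the careful bookkeeping in the Duhamel expansion: showing that the infinitely many higher-order terms are not merely trace-class but actually \emph{sum} to zero after taking $\tr_{\IC^{r}}$ (because they carry the wrong number of Clifford matrices, or because an antisymmetrization of a symmetric object collapses), while the single surviving term reorganizes exactly into \eqref{introeq1}. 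Controlling the trace norms of these terms uniformly — so that interchange of $\tr$, the $ds$-integral over $\Delta_{d-1}$, and the series is legitimate — is where the full strength of the Schatten-decay seminorms $\tau^{\alpha,\beta,\nabla,d,1}$ and $\tau^{\alpha,\beta,\p_{R},d,1+\epsilon}$, together with the choice $N\geq\lfloor\tfrac{\alpha-1}{2}(d+1)\rfloor+1$, will be needed.
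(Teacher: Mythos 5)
Your broad strategy (Volterra/Duhamel expansion, Clifford counting via $\tr_{\IC^{r}}$, localization at infinity) points in the right general direction, but there is a genuine gap at the heart of the argument: you expect the infinitely many higher-order Duhamel terms to \emph{sum to zero} after applying $\tr_{\IC^{r}}$, either by Clifford counting or by ``antisymmetrization collapsing.'' This is false. Clifford counting (Lemma \ref{cliflem}) only kills terms with fewer than $d$ or an odd-but-wrong number of $c^{i}$'s; terms with $d, d+2, d+4, \ldots$ insertions survive $\tr_{\IC^{r}}$ and are in general nonzero. What the paper actually does is reformulate the trace as a \emph{boundary} integral: via the commutator identities in Lemma \ref{auxfunccalclem} and Lemma \ref{reduxlem}, the integrand splits into a piece $\left[\Ii\di,\,\cdot\,\right]$ which is a total derivative (so, after applying the divergence theorem, only a sphere $S_{R}(0)$ survives, Proposition \ref{tracecalcprop}) and a piece $\left[A,\,\cdot\,\right]$ whose $H$-trace vanishes by cyclicity (Lemma \ref{diffkernellem}). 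Only \emph{then}, in the limit $R\to\infty$ of the spherical integral, do the decay conditions of Hypothesis \ref{hyp1} (especially the $\tau^{\alpha,\beta,\p_{R},d,1+\epsilon}$ radial seminorm and the radial continuity) kill the higher-order terms $r_{t}^{j}$, $j\geq d$ (Lemmas \ref{decaycutofflem}, \ref{cutofflem}). Without identifying this boundary-term structure, you have no mechanism for discarding the surviving higher-Clifford-order contributions, and you also cannot explain why only the values of $A$ and $\Id A$ near infinity matter (your ``compactly supported perturbations give trace-class remainders that vanish'' is not how the cutoff $\phi$ actually drops out; it drops out because a compactly supported change does not affect a sphere at infinity).

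Two further, smaller mismatches. First, you expand around $H_{0}=\Delta+M_{0}^{2}$; the paper instead Duhamel-expands around $H_{B}=\Delta+A^{2}$ (the Clifford-even part of $D^{*}D$ and $DD^{*}$, Lemma \ref{haserieslem}), which is essential because only the Clifford-odd insertion $\Ii\di^{E}B$ then remains, and because the heat kernel of $H_{B}$ (not of $H_{0}$) can be compared, term by term and in the $R\to\infty$ limit, with the frozen-coefficient kernel $q_{t}(x-y)\otimes e^{-tA(x_{0})^{2}}$ that produces the $e^{-ts_{j}A_{\phi}^{2}}$ factors (Lemma \ref{qforthetalem}). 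Second, the entire kernel analysis requires the potential to be approximated by the finite-rank-valued family $B_{n,m}$ (Proposition and Definition \ref{approxpropandef}, Corollary \ref{expapproxcor}); without this the intermediate integral kernels are not trace-class valued, and the Br\"uning--Seeley Mercer-type theorem (Theorem \ref{bruseethm}) cannot be invoked to equate the operator trace with the integral of the diagonal kernel trace. Finally, the constant $\tfrac{2}{d}t^{d/2}$ does not come from ``integrating the Gaussian against the $d$-fold convolution over the simplex''; it comes from Stokes' theorem on $\IR^{d}$ combined with a delicate integration by parts in $s$ relating $\omega_{s}^{1}$ and $\omega_{s}^{2}$ in the proof of Theorem \ref{newmainthm} (using $\omega_{s}^{2}=\tfrac{2}{d}s\,\p_{s}\omega_{s}^{1}$).
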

	
	We note that formula (\ref{introeq1}) only depends on the values of $A$ and $\Id A$ in a neighbourhood of infinity due to the cut-off $\phi$. If $A$ and $\Id A$ additionally assume radial limits in a particular sense, we show that only these limits appear in the trace formula.
	
	\begin{Corollary}[Corollary \ref{newcor}]
		Assume Hypothesis \ref{hyp1}, and denote $A^{\circ}\left(y\right)\psi:=\lim_{R\to\infty}A\left(Ry\right)\psi$, for $\psi\in\dom\ A_{0}$, and $y\in S_{1}\left(0\right)$. Assume that $A^{\circ}\psi$ is differentiable on $S_{1}\left(0\right)$ for $\psi\in\dom\ A_{0}$, such that for a.e. $y\in S_{1}\left(0\right)$,
		\begin{align}
			\lim_{R\to\infty}R\left\|\p_{i}A^{\circ}\left(y\right)\psi-\p_{i}A\left(Ry\right)\psi\right\|_{H}=0.
		\end{align}
		and that for $\beta\in\left[-2N+\alpha,2N+1\right]$, and all unit vector fields $\nu$ in $TS_{1}\left(0\right)$,
		\begin{align}
			\langle A_{0}\rangle^{-\alpha+\beta}\left(\nu A^{\circ}\right)\langle A_{0}\rangle^{-\beta}\in L^{\infty}\left(S_{1}\left(0\right),S^{d}\left(H\right)\right).
		\end{align}
		Denote for $\rho\in C^{\infty}_{c}\left(\IR\right)$,
		\begin{align}
			\widetilde{A^{\circ}}_{\rho}\left(x\right)&:=A_{0}+\left(1-\rho\left(\left|x\right|\right)\right)\left(A^{\circ}\left(\frac{x}{\left|x\right|}\right)-A_{0}\right).
		\end{align}
		Then for $t>0$, and any $\rho\in C^{\infty}_{c}\left(\IR\right)$, with $\rho\equiv 1$ near $0$,
		\begin{align}
			&\tr_{L^{2}\left(\IR^{d},H\right)}\tr_{\IC^{r}}\left(e^{-tD^{\ast}D}-e^{-tDD^{\ast}}\right)\nonumber\\
			=&\frac{2}{d}\left(4\pi\right)^{-\frac{d}{2}}\Ii^{d}\kappa_{c}t^{\frac{d}{2}}\int_{s\in\Delta_{d-1}}\int_{\IR^{d}}\tr_{H}\bigwedge_{j=0}^{d-1}\left(\left(\Id\widetilde{A^{\circ}}_{\rho}\right)e^{-ts_{j}\widetilde{A^{\circ}}_{\rho}^2}\right)\Id u,
		\end{align}
	independent of the choice of $\rho$.
	\end{Corollary}
	
	As a direct application of the principal trace formula of Theorem \ref{introthm} we may calculate the Fredholm index of $D$, whenever $D$ is Fredholm.
	
	\begin{Theorem}[Theorem \ref{fredindexthm}]\label{introfredthm}
		Assume Hypothesis \ref{hyp1}. Assume that there exists $R_{0}\geq 0$, such that for $\left|x\right|\geq R_{0}$, we have $0\in\rho\left(A\left(x\right)\right)$. Then $D$ is Fredholm, and the Fredholm index $\ind D$ is given by
		\begin{align}\label{introfredthmeq1}
			\ind D=\frac{2}{d}\left(4\pi\right)^{-\frac{d}{2}}\Ii^{d}\kappa_{c}\lim_{t\to\infty}t^{\frac{d}{2}}\int_{s\in\Delta_{d-1}}\int_{\IR^{d}}\tr_{H}\bigwedge_{j=0}^{d-1}\left(\left(\Id A_{\phi}\right)e^{-ts_{j}A_{\phi}^2}\right)\Id s,
		\end{align}
		where $A_{\phi}:=A_{0}+\left(1-\phi\right)\left(A-A_{0}\right)$ for any $\phi\in C^{\infty}_{c}\left(\IR^{d}\right)$. Formula (\ref{introfredthmeq1}) is independent of the choice of $\phi$.
	\end{Theorem}
	
	The principal trace formula of Theorem \ref{introthm}, can also be applied to determine some cases a generalized version of index, the so called (partial) Witten index, of the operator $D$. The notion was introduced in \cite{GesSim}. We give here the definition of the semi-group regularized partial Witten index.
	
	\begin{Definition}
		Let $T$ be a linear, densely defined operator $\IC^{r}\otimes X$ for some separable complex Hilbert space $X$. We say the partial (semi-group regularized) Witten index of $T$ exists, if for some $t_{0}>0$, we have
		\begin{align}
			\tr_{\IC^{r}}\left(e^{-t T^{\ast}T}-e^{-tTT^{\ast}}\right)\in S^1\left(X\right),\ t\geq t_{0},
		\end{align}
		and if the limit
		\begin{align}
			\ind_{W}T:=\lim_{t\to\infty}\tr_{X}\left(\tr_{\IC^{r}}\left(e^{-t T^{\ast}T}-e^{-tTT^{\ast}}\right)\right),
		\end{align}
		exists, then $\ind_{W}T$ is called the partial Witten index of $T$.
	\end{Definition}
	
	The (partial) Witten index may assume any real number (cf. \cite{BolGesGroSchSim}, \cite{CGGLPSZ},\cite{CGLPSZ} in dimension $d=1$, and \cite{F2} for $d\geq 3$), however if $T$ is additionally Fredholm, it coincides with the usual Fredholm index (see Remark \ref{wittenrem}).
    
	We conclude this paper, as we mentioned before, with examples of partial Witten index formulas taken from \cite{F2} without their proofs, dealing with massless $\left(d+1\right)$-Dirac-Schr\"odinger operators. These operators are non-Fredholm, but possess partial Witten index formulas given in Example \ref{theexample}, and for a special choice of potential in Example \ref{exampleminor}. The point of these examples is that they extend the known Witten index formula, given directly in terms of the underlying potential, in the case $d=1$ (cf. \cite{BolGesGroSchSim},\cite{CGGLPSZ},\cite{CGLPSZ}), to $d\geq 3$ odd. In this presented setup the model operator $A_{0}=\Ii\p$ is just the derivative in one direction, and $B$ is given by a (matrix) potential over $\IR^{d+1}$.
	
	\begin{Definition}
		For a potential $V:\IR^{d}\times\IR\rightarrow B_{sa}\left(H\right)$, define the associated \textit{massless $\left(d+1\right)$-Dirac-Schr\"odinger operator} $D_{V}$,
		\begin{align}
			\left(D_{V}f\right)\left(x,y\right):=&i\sum_{j=1}^{d}c^{j}\p_{x^{j}}f\left(x,y\right)+i\p_{y}f\left(x,y\right)+V\left(x,y\right)f\left(x,y\right),\ x\in\IR^{d},\ y\in\IR,\nonumber\\
			f\in& W^{1,2}\left(\IR^{d+1},\IC^{r}\otimes H\right),
		\end{align}
	\end{Definition}
	
	Without going into the details of the assumptions on the potential $V$, we then have the index formula:
	
	\begin{Example}[Example \ref{theexample}]
		Assume that $V$ satisfies the requirements given in Example \ref{theexample}. Then
		\begin{align}
			\ind_{W}D_{V}=\frac{1}{2\pi}\left(4\pi\right)^{-\frac{d-1}{2}}\frac{\left(\frac{d-1}{2}\right)!}{d!}\kappa_{c}\int_{\IR^{d}}\tr_{H}\left(\left(U^{V}\right)^{-1}\Id U^{V}\right)^{\wedge d},
		\end{align}
		where $\wedge d$ is the $d$-fold exterior power,
		\begin{align}
			\kappa_{c}:=\tr_{\IC^{r}}\left(c^{1}\cdot\ldots\cdot c^{d}\right),
		\end{align}
		and the unitary $U^{V}$ is given by $U^{V}\left(x\right):=\lim_{n\to\infty}U^{V\left(x,\cdot\right)}\left(n,-n\right)$, $x\in\IR^{d}$, where $U^{A}\left(y_{1},y_{2}\right)$, $y_{1},y_{2}\in\IR$, for a given $A:\IR\rightarrow B_{sa}\left(H\right)$ is the (unique) evolutions system of the equation
		\begin{align}
			u'\left(y\right)=\Ii A\left(y\right) u\left(y\right),\ y\in\IR.
		\end{align}
	\end{Example}
	
	Before we proceed, let us outline the historical context of the problem. A differential operator of the type $D=\Ii\di+A$ was discussed prominently by Atiyah, Patodi and Singer in their seminal series of papers \cite{APS1}\cite{APS2}\cite{APS3}. Therein $A$ is a family over $\IR$ of first order, elliptic, differential operators on a compact, odd dimensional manifold with invertible asymptotic endpoints. They showed that $D$ is Fredholm with Fredholm index $\ind D=\mathrm{sflow}\left(A,0\right)$, where the right hand side is the spectral flow of $A$ through $0$. The one-dimensional case, in which $A$ is parametrized over $\IR$, and $\Ii\di$ is just the derivative along $\IR$, was further discussed by Robbin and Salamon \cite{RobSal} for more general types of unbounded operator families $A$, however still with $A\left(x\right)$, $x\in\IR$, having discrete spectra. One should also note \cite{BolGesGroSchSim}, where some of the notions about generalized index, and spectral flow, appeared. In 2008, Pushnitski showed in \cite{Push} that discreteness of the spectrum of $A$ can, in essence, be replaced by the following assumption: Assume that $A=A_{-}+B$, where $A_{-}$ is a generic self-adjoint operator in a separable Hilbert space, and the derivative $B'$ of the family $B$ of bounded, symmetric operators is an integrable family of trace-class operators in $H$, i.e.
	\begin{align}\label{introeq4}
		\int_{\IR}\left\|B'\left(x\right)\right\|_{S^1\left(H\right)}<\infty.
	\end{align}
	Under these conditions $D$ is Fredholm with
	\begin{align}
		\ind D=\xi\left(0,A_{+},A_{-}\right),
	\end{align}
	where $A_{\pm}=\lim_{x\to\pm\infty}A\left(x\right)$ are the (self-adjoint) limit endpoints in a certain sense, and $\xi\left(\cdot,A_{+},A_{-}\right)$ is the associated spectral shift function, a concept from scattering theory (cf. \cite{Yaf}) which we will not delve into here, however it should be noted, that if the spectral shift function of the endpoints and the spectral flow of a (regular enough) family exist, then they coincide (cf. also \cite{BolGesGroSchSim}). However Pushnitski's results go beyond a generalization of the index problem. In fact, a principal trace formula (which in general carries more information on $D$ than the Fredholm index), already present in Callias paper \cite{Cal}, was shown in this more general setup:
	\begin{align}\label{introeq5}
		\tr_{L^2\left(\IR,H\right)}\left(\left(DD^{\ast}-z\right)^{-1}-\left(D^{\ast}D-z\right)^{-1}\right)=\frac{1}{2z}\tr_{H}\left(A_{+}\left(A_{+}^2-z\right)^{-\frac{1}{2}}-A_{-}\left(A_{-}^2-z\right)^{-\frac{1}{2}}\right),
	\end{align}
	for $z\in\IC\backslash\left[0,\infty\right)$. In the following years, and also quite recently, more results emerged similar to Pushnitski's trace based approach to the index problem, for example condition (\ref{introeq4}) was replaced with
	\begin{align}\label{introeq6}
		\int_{\IR}\left\|B'\left(x\right)\left(A_{-}^{2}+1\right)^{-\frac{1}{2}}\right\|_{S^1\left(H\right)}<\infty,
	\end{align}
	by Gesztesy et al. in \cite{GLMST}, while (\ref{introeq5}) still holds. Further generalizations along these lines have been discussed in \cite{Car}, and \cite{CarGesLevSuk}. We remark that the principal trace formula in these publications (\ref{introeq5}) corresponds to the case $d=1$ in the principal trace formula (\ref{introeq1}) presented in this paper, if one rewrites the right hand side of (\ref{introeq5}) with an appropriate integral (the endpoints $A_{\pm}$ can be seen as the endpoints of the function $A(t)=A_{-}+t(A_{+}-A_{-})$, an application of the fundamental theorem of calculus then yields the integrated form), and turned via the inverse Laplace transform into an exponentiated version instead of the resolvent version (\ref{introeq5}):
    \begin{align}
        \tr_{L^2\left(\IR,H\right)}\left(e^{-tD^{\ast}D}-e^{-tDD^{\ast}}\right)=\sqrt{\frac{t}{\pi}}\int_{0}^{1}\tr_{H}\left(e^{-t A(s)^{2}}\left(A_{+}-A_{-}\right)\right)\Id s.
    \end{align}
	
	Another important result, extending the classical "Index = Spectral Flow" Theorem, was provided by Callias in the paper \cite{Cal}. While the operator family $A$ acts there in a finite dimensional space, the index formula therein holds for $\di$ being the classical Dirac operator in $\IR^{d}$ for $d$ odd, which is also our setup here. The index theorem (\cite{Cal}[Theorem 2]) then reads as follows: Let $k=\frac{d-1}{2}$. With the minimal rank choice $r=2^{k}$ for the Clifford matrices, let $A$ be a smooth family of $m\times m$-matrices with $\left|A\left(x\right)\right|\geq C>0$ for all $\left|x\right|\geq c$ for some constants $c$, $C$. Assume that $A$ is asymptotically homogeneous of order $0$ as $x\to\infty$ (i.e. $A\left(x\right)$ is a $0$-order classic symbol in $x$). Then $D$ is Fredholm, and, with $U\left(x\right):=A\left(x\right)\left|A\left(x\right)\right|^{-1}$,
	\begin{align}\label{introeq7}
		\ind D=\frac{1}{2 k!}\left(\frac{\Ii}{8\pi}\right)^{k}\lim_{R\to\infty}\int_{S_{R}}\tr_{\IC^{m}}U\left(\Id U\right)^{\wedge\left(d-1\right)},
	\end{align}
	here, $S_{R}$ denotes the $d-1$-sphere of radius $R$ around $0$, and $\left(\Id U\right)^{\wedge\left(d-1\right)}$ the $d-1$-fold exterior product of (matrix-)one-forms $\Id U$. One should note that there are some problems with the trace-class membership of certain operators in \cite{Cal} in the original proof\footnote{This may be due to the fact that shortly after Callias published his results a different proof was presented by Bott and Seeley in \cite{BotSee}, which Callias already mentions in the introduction of his paper.}, however the ideas have been made rigorous (and generalized) by Gesztesy and Waurick in \cite{GesWau}.	
	
	Although beyond the scope of this paper, one should also mention that there exist a multitude of generalizations of Callias' index theorem to non-compact Riemannian manifolds (cf.\cite{Ang1},\cite{BruMos},\cite{Ang2},\cite{Rad}, \cite{Bun},\cite{Kuc},\cite{CarNis},\cite{Kot},\cite{BraShi},\cite{Shi},\cite{Dun1},\cite{Cec},\cite{SchSto},\cite{Dun2}), however still in the context of Fredholmness and not providing a general principal trace formula. The usual assumption therein is that the operator valued potential $A$ is invertible in some sense outside a compact region of the manifold, to ensure Fredholmness of $D$. We emphasize that we do not assume such a condition in this paper.
	
	With this perspective, the principal trace formula (\ref{introeq1}) stands in analogy to (\ref{introeq7}), similar in spirit to the  generalizations developed by Pushniski in the one dimensional case for the "Index=Spectral Flow"-Theorem, especially if one no longer requires $D$ to be Fredholm.
	
	\section{Trace-class properties of the resolvents of $D^{\ast}D$ and $DD^{\ast}$}
	
	In this section, we present our conditions on the family $A$ and show domain, and trace-class properties. We start however by introducing further notation.
	
	Denote by $S^{p}\left(H\right)$ the $p$-th Schatten-von Neumann class of $H$, and with $B\left(H\right)$ the linear bounded operators of $H$. Denote $\langle T\rangle_{z}:=\left(T^{\ast}T+z\right)^{\frac{1}{2}}$, $z\in\IC\backslash\left(-\infty,0\right]$, and $\langle T\rangle:=\langle T\rangle_{1}$, for an operator $T$ in a Hilbert space. Denote for a measurable function $g:\IR\to\IR$ by $g\left(X\right)$ the multiplication operator in $L^2\left(\IR^{d},\IC^{r}\otimes H\right)$, given by $\left(g\left(X\right)f\right)\left(x\right):=g\left(x\right)f\left(x\right)$, $x\in\IR^{d}$. For $z\in\rho\left(T\right)$, denote $R_{z}\left(T\right):=\left(T+z\right)^{-1}$.
	
	The following simple relation between trace and Clifford matrices is essential.
	 
	\begin{Lemma}\label{cliflem}
		\begin{enumerate}
			\item For $n\in\IN$ and $\alpha\in\left\{1,\ldots,d\right\}^{n}$,
			\begin{align}
				\tr_{\IC^{r}}\left(\prod_{j=1}^{n}c^{\alpha_{j}}\right)=0,
			\end{align}
			if ($n$, $d$ are odd, and $n<d$) or ($n$ is odd, and $d$ is even).
			\item For $d$ odd, and $\alpha\in\left\{1,\ldots,d\right\}^{d}$,
			\begin{align}\label{cliflemeq2}
				\tr_{\IC^{r}}\left(\prod_{j=1}^{d}c^{\alpha_{j}}\right)=\kappa_{c}\epsilon_{\alpha},
			\end{align}
			where $\epsilon_{\alpha}=\epsilon_{\alpha_{1}\ldots\alpha_{d}}$ is the Levi-Civita symbol, and
			\begin{align}
				\kappa_{c}:=\tr_{\IC^{r}}\left(\prod_{j=1}^{d}c^{j}\right).
			\end{align}
			If $r=2^{\frac{d-1}{2}}$, i.e. the minimal choice of $r$ to satisfy the anti commutator relation (\ref{clifdefeq1}), then
			\begin{align}\label{cliflemeq3}
				\kappa_{c}=\left(2\Ii\right)^{\frac{d-1}{2}}\left(-\Ii\right)^{d}.
			\end{align}
		\end{enumerate}
	\end{Lemma}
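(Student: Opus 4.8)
The plan is to reduce everything to the well-known structure of the complex Clifford algebra on $\IR^d$ and two elementary trace identities. First I would record the cyclicity of $\tr_{\IC^r}$ and the relation $c^i c^j = -c^j c^i$ for $i\neq j$, together with $(c^i)^2 = -\one_{\IC^r}$, which follow immediately from \eqref{clifdefeq1}. For part (1), given $\alpha\in\{1,\dots,d\}^n$ with $n$ odd, I distinguish two cases. If some index value is repeated in $\alpha$, I move the two equal factors next to each other using anticommutation (picking up a sign), replace their product by $-\one_{\IC^r}$, and induct on $n$; since $n-2$ is still odd, the inductive hypothesis applies provided the hypotheses on $n,d$ are inherited, which they are in both stated cases. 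If all the indices in $\alpha$ are distinct, then $n\leq d$; in the case ($n,d$ odd, $n<d$) there exists an index $k\in\{1,\dots,d\}$ not appearing in $\alpha$, and I insert $\one = -(c^k)^2$, then use cyclicity to move one $c^k$ around the product: commuting it past the $n$ distinct factors $c^{\alpha_1},\dots,c^{\alpha_n}$ (none equal to $c^k$) produces a sign $(-1)^n = -1$ since $n$ is odd, giving $\tr_{\IC^r}\!\big(\prod_j c^{\alpha_j}\big) = -\tr_{\IC^r}\!\big(\prod_j c^{\alpha_j}\big)$, hence $0$. In the case ($n$ odd, $d$ even) with all indices distinct, one has $n\le d$; if $n<d$ the previous argument works verbatim, and if $n=d$ then $d$ is even while $n$ is odd, a contradiction, so this subcase is vacuous. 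This settles part (1).

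For part (2), let $d$ be odd and $\alpha\in\{1,\dots,d\}^d$. If two indices in $\alpha$ coincide, the same "pull equal factors together'' reduction expresses the trace as $\pm\tr_{\IC^r}$ of a product of $d-2$ Clifford factors with $d-2$ odd and $d-2<d$; by part (1) this vanishes, matching $\epsilon_\alpha = 0$. If all indices are distinct, $\alpha$ is a permutation of $(1,\dots,d)$, and sorting it into increasing order via transpositions of neighbouring factors produces exactly the sign $\epsilon_\alpha = \mathrm{sgn}(\alpha)$, so $\tr_{\IC^r}\!\big(\prod_{j=1}^d c^{\alpha_j}\big) = \epsilon_\alpha\,\tr_{\IC^r}(c^1\cdots c^d) = \kappa_c\,\epsilon_\alpha$, which is \eqref{cliflemeq2}.

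It remains to compute $\kappa_c$ in the minimal-rank case $r = 2^{(d-1)/2}$, i.e. \eqref{cliflemeq3}. Here I would invoke the standard explicit realization of the Clifford relations on $\IC^{2^{(d-1)/2}}$ built from Pauli matrices. Writing $\sigma_1,\sigma_2,\sigma_3$ for the Pauli matrices (which satisfy $\sigma_a\sigma_b = \delta_{ab}\one + \Ii\,\epsilon_{abc}\sigma_c$ and have trace $0$), with $d = 2k+1$, $k = (d-1)/2$, one takes the $k$-fold tensor products realizing $c^1,\dots,c^d$ as $\pm\Ii$ times products of Pauli matrices and identities, arranged so that the full product $c^1\cdots c^d$ equals $\lambda\,\one_{\IC^r}$ for a scalar $\lambda$ (it must be a multiple of the identity because it commutes with every $c^i$, by oddness of $d$, hence with the irreducible algebra they generate). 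Tracking the accumulated factors of $\Ii$: each $c^i$ contributes a factor $-\Ii$ (that is $d$ factors, giving $(-\Ii)^d$), and pairing up the tensor slots to collapse the Pauli products to the identity contributes a factor $2\Ii$ per slot, i.e. $(2\Ii)^k = (2\Ii)^{(d-1)/2}$, whence $\kappa_c = \tr_{\IC^r}(\lambda\one) = \lambda\cdot 2^{(d-1)/2} = (2\Ii)^{(d-1)/2}(-\Ii)^d$ after absorbing the dimension factor into the $(2\Ii)^k$ bookkeeping.

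The main obstacle is the last computation of the constant $\kappa_c$: parts (1) and (2) up to the value of $\kappa_c$ are purely formal manipulations with signs and cyclicity, but pinning down \eqref{cliflemeq3} requires committing to a concrete Pauli-matrix model and carefully tracking the powers of $\Ii$ and the normalization, where an off-by-one in $k$ or a dropped sign is easy to make; I would double-check it against the low-dimensional case $d=3$ (where $r=2$, the $c^i$ may be taken as $\Ii\sigma_i$, and $c^1c^2c^3 = \Ii^3\sigma_1\sigma_2\sigma_3 = \Ii^3\cdot\Ii\one = \Ii^4\one = \one$, giving $\kappa_c = 2 = (2\Ii)^1(-\Ii)^3 = 2\Ii\cdot\Ii = 2\Ii^2 = -2$ — so in fact one must be careful about the sign convention and possibly take $c^i = -\Ii\sigma_i$ or adjust, which is exactly the kind of bookkeeping I would nail down explicitly).
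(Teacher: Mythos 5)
Your arguments for part (1) and for \eqref{cliflemeq2} are correct and match the paper's approach: reduce to pairwise-distinct indices by anticommutation, observe that distinctness plus the stated parity hypotheses force $n<d$, insert $-(c^m)^2$ for an unused index $m$, and use cyclicity together with $(-1)^n=-1$ to get $\tr=-\tr$; then for \eqref{cliflemeq2}, coinciding indices reduce (via part (1) at $n=d-2$) to $0=\kappa_c\epsilon_\alpha$, and the distinct case is a permutation sign.

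For the constant \eqref{cliflemeq3} you take a different route from the paper (an explicit tensor-product Pauli model, rather than the paper's induction in $d$ via the realization of dimension-$d$ Clifford matrices by dimension-$(d-2)$ ones), which is a perfectly sound alternative, and your scalar-multiple-of-identity observation for $c^1\cdots c^d$ is correct. However, you do not actually close the argument: you stop at the observation that the sign convention matters and that ``possibly'' one should take $c^i=-\Ii\sigma^i$, leaving the claimed value of $\kappa_c$ unverified. Your own $d=3$ check already resolves this: with $c^i=\Ii\sigma^i$ you get $\kappa_c=2$, which disagrees with the formula's $-2$; with $c^i=-\Ii\sigma^i$ you get $c^1c^2c^3=(-\Ii)^3\sigma_1\sigma_2\sigma_3=\Ii\cdot\Ii\,\one=-\one$ and $\kappa_c=-2$, which matches. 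The paper explicitly fixes this convention ($c^j=-\Ii\sigma^j$), so the formula \eqref{cliflemeq3} is a statement \emph{relative to that normalization}, and should not be presented as an open-ended bookkeeping exercise; to complete the proof along your lines you would need to write down the standard $(d-2)\to d$ tensoring step (adjoining one extra $\IC^2$ factor and two new Pauli slots) and check that each such step multiplies the previous $\kappa_c$ by $2\Ii\cdot(-\Ii)^2=-2\Ii$, which is exactly the ratio $(2\Ii)^{(d-1)/2}(-\Ii)^d\big/(2\Ii)^{(d-3)/2}(-\Ii)^{d-2}$.
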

	
	\begin{proof}
		\begin{enumerate}
			\item Due to the anti commutator relation (\ref{clifdefeq1}), it is sufficient to consider only those $\alpha$ for which all $\alpha_{j}$, $j\in\left\{1,\ldots,n\right\}$, are pairwise different. Since $n$ is odd, we have $n<d$ in any case. Thus there exists $m\in\left\{1,\ldots,d\right\}$ such that $m\neq\alpha_{j}$, $j\in\left\{1,\ldots,n\right\}$. Then
			\begin{align}
				\tr_{\IC^{r}}\left(\prod_{j=1}^{n}c^{\alpha_{j}}\right)&=-\tr_{\IC^{r}}\left(\left(c^{m}\right)^{2}\prod_{j=1}^{n}c^{\alpha_{j}}\right)\nonumber\\
				&=-\tr_{\IC^{r}}\left(c^{m}\prod_{j=1}^{n}c^{\alpha_{j}}c^{m}\right)=\left(-1\right)^{n+1}\tr_{\IC^{r}}\left(\left(c^{m}\right)^{2}\prod_{j=1}^{n}c^{\alpha_{j}}\right)\nonumber\\
				&=-\tr_{\IC^{r}}\left(\prod_{j=1}^{n}c^{\alpha_{j}}\right).
			\end{align}
			\item Statement (\ref{cliflemeq2}) follows immediately from the anti commutator relation (\ref{clifdefeq1}), the definition of $\epsilon_{\alpha}$, and the first part of this proof. The case of $r=2^{\frac{d-1}{2}}$ follows by induction in $d$ and the representation of Clifford matrices of odd dimension $d$ by those of dimension $d-2$. The sign convention corresponds to the choice $c^{j}=-\Ii\sigma^{j}$, where $\sigma^{j}$ are the classical Pauli matrices.
		\end{enumerate}
	\end{proof}

\begin{Definition}
	For $B$ a function on $\IR^{d}$ with values in symmetric operators in $H$ with common domain $\dom\ A_{0}$, let $D=D_{B}:=\Ii\di+B+M_{0}$, $\dom\ D_{B}:=\dom\ D_{0}$, and $H_{B}:=\Delta+\left(B+M_{0}\right)^{2}$, $\dom\ H_{B}:=\dom\ H_{0}$.
\end{Definition}

\begin{Definition}
	For $B$ a operator function on $\IR^{d}$ with domains $\dom\ B\left(x\right)=\dom\ A_{0}$, we write $B\in C^{k}_{b}\left(\IR^{d},A_{0},m\right)$, for $k\in\IN_{0}$, $m\geq 0$, if $\langle A_{0}\rangle^{\beta}B\left(x\right)\langle A_{0}\rangle^{-1-\beta}$, $x\in\IR^{d}$, is a family of closable operators with closures in $B\left(H\right)$ for any $\beta\in\left[-m,m\right]$, and $x\mapsto\overline{\langle A_{0}\rangle^{\beta}B\left(x\right)\langle A_{0}\rangle^{-1-\beta}}\in C^{k}_{b}\left(\IR^{d},B\left(H\right)\right)$, where $C^{k}_{b}$ is the the space of $k$-times continuously differentiable bounded functions in the strong operator topology with bounded derivatives, where boundedness is with respect to the $B\left(H\right)$ operator norm. As usual, $C^{k}_{b}\left(\IR^{d}\right)$ just denotes the scalar/matrix valued-$C^{k}_{b}$-functions. All involved spaces come equipped with the obvious norm given by summing the appropriate $L^{\infty}$-norms of the derivatives up to order $k$.
\end{Definition}

\begin{Definition}
	We introduce the following semi-norms for operators $B\in C^{1}_{b}\left(\IR^{d},A_{0},1\right)$, with $z\in\IC\backslash\left(-\infty,0\right]$,
	\begin{align}
		\rho_{z}\left(B\right):=&\left\|B\langle M_{0}\rangle^{-1}_{z}\right\|_{L^{\infty}\left(\IR^{d},B\left(H\right)\right)}+\left\|\langle M_{0}\rangle B\langle M_{0}\rangle^{-1}\langle M_{0}\rangle^{-1}_{z}\right\|_{L^{\infty}\left(\IR^{d},B\left(H\right)\right)}\nonumber\\
		&+\left\|\nabla B\langle M_{0}\rangle^{-1}\langle M_{0}\rangle^{-1}_{z}\right\|_{L^{\infty}\left(\IR^{d},B\left(H\right)^{d}\right)}.
	\end{align}
	For $\beta\in\IR$ denote for $B\in C^{1}_{b}\left(\IR^{d},A_{0},\left|\beta\right|+1\right)$,
	\begin{align}
		\rho_{z}^{\beta}\left(B\right):=\rho_{z}\left(\langle M_{0}\rangle^{\beta}B\langle M_{0}\rangle^{-\beta}\right).
	\end{align}
\end{Definition}

\begin{Remark}\label{norminterpolrem}
	Complex interpolation implies certain operator inequalities (cf \cite{GLST}). In particular, by combining \cite{GLST}[Theorem 2.8] with Jensen's inequality, we have for $\beta\in\left[-2N,2N\right]$,
	\begin{align}
		\rho^{\beta}_{z}\left(B\right)\leq\rho^{-2N}_{z}\left(B\right)+\rho^{2N}_{z}\left(B\right).
	\end{align}
\end{Remark}

\begin{Proposition}\label{hdomainprop}
	Let a family $B\in C^{1}_{b}\left(\IR^{d},A_{0},1\right)$ of symmetric operators satisfy
	\begin{align}\label{hdomainpropeq1}
		\lim_{\mathrm{dist}\left(z,\left(-\infty,0\right]\right)\to\infty}\rho_{z}\left(B\right)=0.
	\end{align}
	Then $D_{B}$, is closed in $L^2\left(\IR^{d},\IC^{r}\otimes H\right)$ with $D_{B}^{\ast}=-\Ii\di+B+M_{0}$, $\dom\ D_{B}^{\ast}=\dom\ D_{0}$. Moreover the operators $D_{B}D_{B}^{\ast}$, $D_{B}^{\ast}D_{B}$, and $H_{B}$ are non-negative, and self-adjoint with domain $\dom\ H_{0}$.
\end{Proposition}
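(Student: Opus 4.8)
The plan is to reduce the whole statement to a handful of relative‑boundedness estimates and then feed them into the Kato--Rellich theorem and von Neumann's theorem, taking for granted the normality of $D_0$ and the descriptions $H_0=D_0^\ast D_0=D_0D_0^\ast=\Delta+M_0^2$, $\dom D_0^\ast=\dom\di\cap\dom M_0$. The estimates all come from one mechanism: fibrewise $B(x)\langle A_0\rangle_z^{-1}$ has norm $\leq\rho_z(B)$, which tends to $0$ by \eqref{hdomainpropeq1}, so on $\dom M_0$ one has $\|Bu\|\leq\rho_z(B)\,(\|M_0u\|+\sqrt z\,\|u\|)$; since $M_0^2\leq H_0$ as forms ($H_0=\Delta+M_0^2$, $\Delta\geq0$) and $\|M_0v\|\leq\|D_0v\|=\|D_0^\ast v\|$ by normality, $B$ is bounded relative to $H_0^{1/2}$, to $D_0$ and to $D_0^\ast$ with relative bound $0$. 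Pushing one factor $\langle M_0\rangle$ through $B$ (controlled by the second term of $\rho_z$), moving out a derivative (third term of $\rho_z$), and using $\|\langle M_0\rangle_z\langle M_0\rangle(H_0+w)^{-1}\|\leq1$ for $w\geq z\geq1$, one obtains in the same way that $M_0B$, $BM_0$, $B^2$ and $\sum_j c^j\p_j^E B$ are all $H_0$‑bounded with relative bound $0$. Carrying these estimates out carefully, with the three terms of $\rho_z$ slotted into the right places, is the bulk of the work.

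Given this, $D_B=D_0+B$ is closed on $\dom D_0$ because $B$ is $D_0$‑bounded with relative bound $<1$. For the adjoint, the inclusion $-\Ii\di+B+M_0\subseteq D_B^\ast$ on $\dom D_0$ is immediate from symmetry of $B$ and $\dom D_0\subseteq\dom M_0=\dom B$. The reverse inclusion $\dom D_B^\ast\subseteq\dom D_0$ I would deduce from the elliptic a priori estimate for $D_0^\ast$ (part of \cite{GLMST}[Lemma 4.2]: $\dom D_0^\ast=\dom\di\cap\dom M_0$ with graph norm equivalent to $\|\cdot\|+\|D_0^\ast\cdot\|$) together with $B$ being $D_0^\ast$‑bounded of bound $<1$, which keeps the maximal domain of $D_0^\ast+B$ equal to that of $D_0^\ast$; this is the standard statement that $(D_0+B)^\ast=D_0^\ast+B$ for a relatively bounded perturbation of bound $<1$. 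Hence $D_B^\ast=-\Ii\di+B+M_0$ on $\dom D_0$.

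Next, for $H_B$: write $H_B=H_0+(M_0B+BM_0+B^2)$ on $\dom H_0$ (the products are legitimate there since $B\in W^{1,\infty}(\IR^d,A_0,1)$ forces $B\langle M_0\rangle^{-2}$ into $\dom M_0$). The perturbation is symmetric on $\dom H_0$ and, by the first paragraph, $H_0$‑bounded with relative bound $0$, so Kato--Rellich makes $H_B$ self‑adjoint on $\dom H_0$, and $\langle H_Bf,f\rangle=\|\di f\|^2+\|(B+M_0)f\|^2\geq0$ gives non‑negativity. The operators $D_B^\ast D_B$ and $D_BD_B^\ast$ are automatically non‑negative and self‑adjoint (von Neumann), so only their domains need identification. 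Using $\dom D_0^\ast\subseteq\dom D_B^\ast$ from the previous step, one checks — by the same computation that $(B+M_0)f\in\dom\di\cap\dom M_0$ for $f\in\dom H_0$ — that $\dom H_0$ lies in both domains and that there $D_B^\ast D_B=H_B-\Ii\sum_j c^j\p_j^E B$ and $D_BD_B^\ast=H_B+\Ii\sum_j c^j\p_j^E B$, the cross term being $[\di,A]=\sum_j c^j\p_j^E B$ (since $A(x)$ commutes with the $c^j$ and $M_0$ is $x$‑independent). As $(c^j)^\ast=-c^j$, the operator $\Ii\sum_j c^j\p_j^E B$ is symmetric, and by the first paragraph it is $H_0$‑bounded with relative bound $0$, so both right‑hand sides are self‑adjoint on $\dom H_0$ (Kato--Rellich again, together with the self‑adjointness of $H_B$); a self‑adjoint operator has no proper self‑adjoint extension, so $D_B^\ast D_B$ and $D_BD_B^\ast$ must both have domain $\dom H_0$.

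The main obstacle is twofold: making the adjoint identity in the second step fully rigorous — i.e. that replacing $D_0^\ast$ by $D_0^\ast+B$ does not enlarge the domain of the adjoint, which is where the relative bound $<1$ and the elliptic estimate for $D_0^\ast$ are genuinely needed — and, on the technical side, executing the estimates of the first paragraph; once these are in hand the remaining assertions are routine applications of Kato--Rellich and von Neumann's theorem.
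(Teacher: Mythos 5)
Your proof is correct and follows essentially the same route as the paper's: relative boundedness with vanishing bound via the $\rho_z$ seminorms, then Kato--Rellich to establish closedness of $D_B$, the adjoint identity, and self-adjointness of $H_B$ and $D_BD_B^\ast\mp\Ii\di^E B$ on $\dom H_0$. The only cosmetic difference is in identifying the domains of $D_B^\ast D_B$ and $D_BD_B^\ast$: you invoke the fact that a self-adjoint operator admits no proper self-adjoint extension, whereas the paper phrases the same point via a common-core argument — these are equivalent and both valid.
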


\begin{proof}
	The proof of the statement about $D_{B}$ and $D^{\ast}_{B}$ is similar to the proof of (\cite{GLMST}[Lemma 4.4]), although it is assumed that $d=1$, the argument works with the necessary changes for all dimensions $d\in\IN$. Let us provide at least an outline. The authors show that $B$ is relatively bounded by $D_{0}$, and $B^{\ast}$ by $D_{0}^{\ast}$, with bound $0$, provided that
	\begin{align}
		\left\|B\langle M_{0}\rangle_{z}^{-1}\right\|_{L^{\infty}\left(\IR^{d}, B\left(H\right)\right)}\xrightarrow{\IR\ni z\to\infty}0.
	\end{align}
	This condition is however satisfied by (\ref{hdomainpropeq1}). The statement then follows from the Kato-Rellich Theorems (\cite{Kato}[Theorem 4.1.1, Theorem 5.4.3]). The same idea works for $D^{\ast}_{B}D_{B}$ and $D_{B}D^{\ast}_{B}$, which are self-adjoint by definition. Let us consider $D_{B}D^{\ast}_{B}$, the proof for $D^{\ast}_{B}D_{B}$ is analogous. As differential operators we have $D_{B}D^{\ast}_{B}=\Delta+\left(B+M_{0}\right)^{2}+\Ii\left(\di B\right)$, which implies
	\begin{align}
		D_{B}D^{\ast}_{B}-H_{0}=B^{2}+M_{0}B+BM_{0}+\Ii\left(\di B\right),
	\end{align}
	on $\dom\ H_{0}$. We estimate for $z\geq 1$,
	\begin{align}
		&\left\|\left(B^{2}+M_{0}B+BM_{0}+\Ii\di^{E}B\right)R_{z^{2}}\left(H_{0}\right)\right\|_{B\left(L^2\left(\IR^{d},\IC^{r}\otimes H\right)\right)}\nonumber\\
		\leq&\left(\left\|B\langle M_{0}\rangle^{-1}\right\|_{L^{\infty}\left(\IR^{d}, B\left(H\right)\right)}+1\right)\left\|\langle M_{0}\rangle B\langle M_{0}\rangle^{-1}\langle M_{0}\rangle_{z}^{-1}\right\|_{L^{\infty}\left(\IR^{d},B\left(H\right)\right)}\nonumber\\
		&+\left\|B\langle M_{0}\rangle_{z}^{-1}\right\|_{L^{\infty}\left(\IR^{d}, B\left(H\right)\right)}+\left\|\nabla B\langle M_{0}\rangle^{-1}\langle M_{0}\rangle_{z}^{-1}\right\|_{L^{\infty}\left(\IR^{d}, B\left(H\right)^{d}\right)}\xrightarrow[(\ref{hdomainpropeq1})]{z\to\infty}0.
	\end{align}
	Hence $\left(B+M_{0}\right)^{2}+\Ii\left(\di B\right)$ is relatively bounded by $H_{0}$ with bound $0$. Thus $D_{B}D^{\ast}_{B}$ is self-adjoint on $\dom\ H_{0}$, and essentially self-adjoint on any core of $H_{0}$. Since $C_{c}^{\infty}\left(\IR^{d}\right)\otimes\IC^{r}\otimes\mathcal{D}$ is a common core of both $D_{B}D^{\ast}_{B}$, and $H_{0}$, we have $\dom\left(D_{B}D^{\ast}_{B}\right)=\dom\ H_{0}$.
	
	For $H_{B}$ we have a similar result, but we have to add that $H_{B}\geq 0$ on $\dom\ H_{0}$ follows by a direct calculation.
\end{proof}

\begin{Remark}\label{domarem}
	Under the prerequisites of Proposition \ref{hdomainprop}, it also follows that $A\left(x\right)=A_{0}+B\left(x\right)$ is self-adjoint on $\dom\ A_{0}$ for a.e. $x\in\IR^{d}$. Indeed, since $\langle M_{0}\rangle_{z}\langle D_{0}\rangle_{z}^{-1}$ is bounded in norm by $1$, uniformly for $z>0$, it follows from $\left\|T\right\|_{B\left(L^2\left(\IR^{d},H\right)\right)}=\left\|T\right\|_{L^{\infty}\left(\IR^{d},B\left(H\right)\right)}$ for measurable multiplication operators $T$, that $B\left(x\right)$ is relatively bounded by $A_{0}$ with bound $0$ for a.e. $x\in\IR^{d}$. Since $B\left(x\right)$ is symmetric, the statement follows from the Kato-Rellich Theorem.
\end{Remark}

Next, we recall some well-known facts about Schatten-von Neumann operators.

\begin{Lemma}\label{simoninterpollem}
	Let $2\leq p<\infty$, $x\mapsto T\left(x\right)\in L^{p}\left(\IR^{d},S^{p}\left(\IC^{r}\otimes H\right)\right)$, and $g\in L^{p}\left(\IR^{d}\right)$, then $Tg\left(\left|\di\right|\right)\in S^p\left(L^2\left(\IR^{d},\IC^{r}\otimes H\right)\right)$ with
	\begin{align}\label{simoninterpollemeq1}
		\left\|Tg\left(\left|\di\right|\right)\right\|_{S^p\left(L^2\left(\IR^{d},\IC^{r}\otimes H\right)\right)}\leq\left(2\pi\right)^{-\frac{d}{p}}\left\|T\right\|_{L^{p}\left(\IR^{d},S^{p}\left(\IC^{r}\otimes H\right)\right)}\left\|g\right\|_{L^{p}\left(\IR^{d}\right)}.
	\end{align}
\end{Lemma}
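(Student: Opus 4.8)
The plan is to reduce the claim to the classical estimate of Birman–Solomyak / Simon for operators of the form $g(X)h(-i\nabla)$, adapted to the Hilbert-space–valued setting. First I would observe that $g(|\di|)$ is, via the (fibrewise) Fourier transform $\mathcal F$ on $L^2(\IR^d,\IC^r\otimes H)$, unitarily equivalent to the multiplication operator $g(|\xi|)\one_{\IC^r\otimes H}$ on the Fourier side, since $\di^2=\Delta\otimes\one$ and $|\di|=\sqrt\Delta\otimes\one$ acts as $|\xi|$. Thus $Tg(|\di|)$ is, after conjugating one factor by $\mathcal F$, an integral operator whose (operator-valued) kernel is $K(x,\xi)=T(x)\,(2\pi)^{-d/2}\,\widehat{(\cdot)}$-type expression; more precisely it is the composition $M_T\circ \mathcal F^{-1}M_{g(|\cdot|)}$, so its Schatten norm is controlled by the mixed $L^p$–$S^p$ norm of $T$ times the $L^p$ norm of $g$.

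The key step is the operator-valued version of the inequality $\|f(X)g(-i\nabla)\|_{S^p}\le (2\pi)^{-d/p}\|f\|_{L^p}\|g\|_{L^p}$ for $2\le p<\infty$. For $p=2$ this is an exact Hilbert–Schmidt computation: writing $Tg(|\di|)$ as an integral operator and using Plancherel fibrewise, one gets
\begin{align}
\|Tg(|\di|)\|_{S^2}^2=(2\pi)^{-d}\int_{\IR^d}\int_{\IR^d}\|T(x)\|_{S^2(\IC^r\otimes H)}^2\,|g(\xi)|^2\,\Id x\,\Id\xi=(2\pi)^{-d}\|T\|_{L^2(S^2)}^2\|g\|_{L^2}^2,
\end{align}
which is \eqref{simoninterpollemeq1} with equality when $p=2$. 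The general case $2<p<\infty$ follows by complex interpolation in the spirit of \cite{GLST}: one interpolates between the $p=2$ estimate just obtained and the trivial $p=\infty$ bound $\|Tg(|\di|)\|_{B}\le\|T\|_{L^\infty(B)}\|g\|_{L^\infty}$ (here $S^\infty=B$), using that the map $(T,g)\mapsto Tg(|\di|)$ is bilinear and that the relevant families $\{S^p\}$, $\{L^p\}$ form complex interpolation scales (Calderón's theorem, together with the vector-valued version for $L^p(\IR^d,S^p)$). The constant $(2\pi)^{-d/p}$ is exactly what the interpolation of $(2\pi)^{-d/2}$ and $1$ produces.

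The main obstacle is making the interpolation argument rigorous in the operator-valued, bilinear form — one must check that the analytic families of operators arising from $T$ and $g$ (after the standard substitution $|T(x)|\to|T(x)|^{p/q(z)}$ and similarly for $g$, with a polar-decomposition argument to handle the partial isometries) are of admissible growth so that Stein's interpolation theorem applies, and that $S^p(\IC^r\otimes H)$ behaves well under this even though $H$ is infinite-dimensional. This is standard but slightly technical; I would cite \cite{GLST} and Simon's trace-ideals book for the scalar prototype and indicate that the vector-valued modifications are routine, the point being that all norms in sight are computed fibrewise in $x$ and in $\xi$ and the bilinear structure decouples them. An alternative, avoiding interpolation, would be to prove it directly for even integers $p=2n$ by expanding $\|Tg(|\di|)\|_{S^{2n}}^{2n}=\tr\big((Tg(|\di|))^*\cdots\big)$ and estimating the resulting multiple integral, then interpolating only between consecutive even integers; but the Stein-interpolation route is cleaner and is presumably what the paper intends.
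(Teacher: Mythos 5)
Your proposal is correct and follows essentially the same route as the paper: exact Hilbert--Schmidt computation at $p=2$ via the explicit operator-valued kernel $k(x,y)=(2\pi)^{-d/2}T(x)\check g(x-y)$, the trivial operator-norm bound at $p=\infty$, and complex interpolation for $2<p<\infty$. The one place where the paper proceeds a bit differently is exactly the technical obstacle you flag at the end: rather than running Stein's interpolation theorem directly on the full (possibly unbounded-support) data, the paper first proves the interpolated bound for \emph{compactly supported} $T$ and $g$, and then passes to general $T\in L^p(\IR^d,S^p)$, $g\in L^p(\IR^d)$ by a truncation argument. Concretely, one picks continuous compactly supported $\phi_n\to 1$ pointwise, sets $T_n=\phi_n T$, $g_n=\phi_n g$, observes $T_ng_n(|\di|)=(\phi_n\otimes\one)\,Tg(|\di|)\,\phi_n(|\di|)$, and then invokes Lemma~\ref{traceconvlem} together with $T_n\to T$ in $L^p(S^p)$ and $g_n\to g$ in $L^p$ to pass the bound to the limit. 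This sidesteps the need to verify admissible-growth conditions on an analytic family built from the polar decomposition of an $S^p$-valued $T$, which is the part you correctly identify as delicate; if you wish to keep the direct Stein route, you should at least indicate why the polar-decomposition substitution produces an admissible analytic family in the operator-valued setting.
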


\begin{proof}
	The proof of this statement is analogous to the proof of Theorem 4.1 in \cite{Simon}. There, $H=\IC$, which one has to change here. We give the essential outline. For $p=2$ the statement follows from the fact that there is a $L^2\left(\IR^{d}\times\IR^{d},S^2\left(\IC^{r}\otimes H\right)\right)$-valued integral kernel $k$ of $Tg\left(\left|\di\right|\right)$, given by
	\begin{align}
		k\left(x,y\right)=\left(2\pi\right)^{-\frac{d}{2}}T\left(x\right)\check{g}\left(x-y\right),\ x,y\in\IR^{d},
	\end{align}
	where $\check{g}\left(u\right):=\left(2\pi\right)^{-\frac{d}{2}}\int_{\IR^{d}}e^{-\Ii u\cdot\xi}g\left(\xi\right)\otimes\one_{\IC^{r}}\Id\xi$. The operator $Tg\left(\left|\di\right|\right)$ is thus Hilbert Schmidt with equality in (\ref{simoninterpollemeq1}). On the other hand, $x\mapsto T\left(x\right)\in L^{\infty}\left(\IR^{d},B\left(\IC^{r}\otimes H\right)\right)$ implies that $T\in B\left(L^2\left(\IR^{d},\IC^{r}\otimes H\right)\right)$, and Borel functional calculus implies $g\left(\left|\di\right|\right)\in B\left(L^2\left(\IR^{d},\IC^{r}\otimes H\right)\right)$. We have
	\begin{align}
		\left\|Tg\left(\left|\di\right|\right)\right\|_{B\left(L^2\left(\IR^{d},\IC^{r}\otimes H\right)\right)}\leq\left\|T\right\|_{L^{\infty}\left(\IR^{d},B\left(\IC^{r}\otimes H\right)\right)}\left\|g\right\|_{L^{\infty}\left(\IR^{d}\right)}.
	\end{align}
	If $T$, and $g$ are compactly supported the case $p\geq 2$ with (\ref{simoninterpollemeq1}) follows from complex interpolation. For general $T$, and $g$ let $\left(\phi_{n}\right)_{n\in\IN}$ be a sequence of compactly supported continuous functions, which converge pointwise to $1$ on $\IR$. Then the self-adjoint operators $\phi_{n}\otimes\one_{\IC^{r}\otimes H}$, and $\phi_{n}\left(\left|\di\right|\right)$ converge strongly to $\one_{L^2\left(E\otimes H\right)}$. With $T_{n}\left(x\right):=\phi_{n}\left(x\right)T\left(x\right)$ and $g_{n}:=\phi_{n}g$, we obtain $T_{n}g_{n}\left(\left|\di\right|\right)=\phi_{n}\otimes\one_{\IC^{r}\otimes H}Tg\left(\left|\di\right|\right)\phi_{n}\left(\left|\di\right|\right)$. By Lemma \ref{traceconvlem} and since $\lim_{n\to\infty}T_{n}=T$ in $L^{p}\left(\IR^{d},S^{p}\left(\IC^{r}\otimes H\right)\right)$, and $\lim_{n\to\infty}g_{n}=g$ in $L^{p}\left(\IR^{d}\right)$, the general case follows, by taking the limits past the norms in (\ref{simoninterpollemeq1}).
\end{proof}

\begin{Corollary}\label{simoninterpolcor}
	Let $x\mapsto T\left(x\right)\in L^{p}\left(\IR^{d},S^{d}\left(\IC^{r}\otimes H\right)\right)$, such that $T\left(x\right)$ is self-adjoint for a.e. $x\in\IR^{d}$. Then for $t>\frac{d}{p}$ we have a constant $C_{d,p}<\infty$, such that,
	\begin{align}
		\langle\di\rangle^{s-t}T\langle\di\rangle^{-s}\in& S^{p}\left(L^2\left(\IR^{d},\IC^{r}\otimes H\right)\right),\nonumber\\
		\left\|\langle\di\rangle^{s-t}T\langle\di\rangle^{-s}\right\|_{S^{p}\left(L^2\left(\IR^{d},\IC^{r}\otimes H\right)\right)}\leq& C_{d,p}\left\|T\right\|_{L^{p}\left(\IR^{d},S^{p}\left(\IC^{r}\otimes H\right)\right)},\ s\in\left[0,t\right].
	\end{align}
\end{Corollary}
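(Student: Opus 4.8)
The plan is to reduce the two-sided weighted estimate to a one-sided one covered by Lemma \ref{simoninterpollem}, and then obtain the intermediate values of $s$ by complex interpolation. First I would observe that the case $s = 0$ is exactly Lemma \ref{simoninterpollem} applied with $p$ there equal to $p$ here, provided we can choose $g(\xi) = \langle\xi\rangle^{-t}$: indeed, $\langle\di\rangle^{-t} = g(|\di|)$ with $g(\lambda) = (1+\lambda^2)^{-t/2}$, and $g \in L^p(\IR^d)$ precisely when $tp > d$, i.e. $t > d/p$, which is our hypothesis. So $\langle\di\rangle^{-t}T \in S^p$ with the desired bound (note $T\langle\di\rangle^{-t}$ and $\langle\di\rangle^{-t}T$ have the same Schatten norm up to taking adjoints, using self-adjointness of $T(x)$ and of $\langle\di\rangle$). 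Likewise the case $s = t$ follows by taking adjoints from the $s=0$ case, since $(\langle\di\rangle^{-t}T)^* = T\langle\di\rangle^{-t}$ and $T$ is self-adjoint (a.e.\ fibrewise, hence as a multiplication operator).

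Next I would handle general $s \in [0,t]$ by interpolation. Consider the analytic family of operators $F(\zeta) := \langle\di\rangle^{\zeta t - t} T \langle\di\rangle^{-\zeta t}$ for $\zeta$ in the strip $0 \leq \mathrm{Re}\,\zeta \leq 1$. On the line $\mathrm{Re}\,\zeta = 0$ we have $F(\Ii y) = \langle\di\rangle^{-t}\bigl(\langle\di\rangle^{\Ii y t}T\langle\di\rangle^{-\Ii y t}\bigr)$; since $\langle\di\rangle^{\Ii y t}$ is unitary and commutes with nothing in particular, I would instead write $F(\Ii y) = \langle\di\rangle^{\Ii y t - t}\,T\,\langle\di\rangle^{-\Ii y t}$ and bound its $S^p$-norm by $\|\langle\di\rangle^{\Ii yt - t} T\|_{S^p}\cdot\|\langle\di\rangle^{-\Ii y t}\|_{B} = \|\langle\di\rangle^{-t}T\|_{S^p}$, which is finite by the $s=0$ case (the unitary $\langle\di\rangle^{\Ii y t}$ does not change Schatten norms). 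On the line $\mathrm{Re}\,\zeta = 1$ one gets analogously the $S^p$-norm bounded by $\|T\langle\di\rangle^{-t}\|_{S^p}$, finite by the $s=t$ case. Stein's interpolation theorem for analytic families of operators (applied with the $S^p$ scale) then yields, for $\zeta = s/t \in [0,1]$, that $F(s/t) = \langle\di\rangle^{s-t}T\langle\di\rangle^{-s} \in S^p$ with norm bounded by a constant times $\|T\|_{L^p(\IR^d, S^p)}$, the constant depending only on $d$ and $p$.

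The main obstacle, and the step requiring the most care, is the analyticity and boundedness-on-the-strip hypotheses needed to invoke Stein interpolation: one must check that $\zeta \mapsto F(\zeta)$ is, say, weakly analytic in the interior of the strip and continuous and uniformly bounded (in operator norm, which suffices) up to the boundary, for $T$ in a suitable dense class — e.g.\ $T$ with compact support taking values in finite-rank operators — and then pass to the limit using density of such $T$ in $L^p(\IR^d, S^p(\IC^r\otimes H))$ together with the already-established a priori bound. The analyticity is routine from the functional calculus $\langle\di\rangle^{\zeta t} = \exp(\zeta t \log\langle\di\rangle)$ and the spectral theorem, and the uniform bound on vertical strips follows because the "imaginary part" of the exponent only contributes unitaries; the only genuine subtlety is ensuring that the family is genuinely bounded (not merely finite pointwise) on the closed strip, which is why restricting first to the nice dense class of $T$'s is convenient. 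Once that is in place, everything else is bookkeeping with Schatten-norm submultiplicativity $\|XYZ\|_{S^p} \leq \|X\|_B\|Y\|_{S^p}\|Z\|_B$ and the fact that unitary conjugation preserves $\|\cdot\|_{S^p}$.
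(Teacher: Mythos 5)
Your proposal takes essentially the same route as the paper: establish the endpoint $T\langle\di\rangle^{-t}$ from Lemma \ref{simoninterpollem} with $g(\xi)=\langle\xi\rangle^{-t}\in L^p(\IR^d)$ (which is where $t>d/p$ is used), get $\langle\di\rangle^{-t}T$ by taking adjoints, and obtain intermediate $s$ by complex/Stein interpolation (the paper cites \cite{GLST} for this step). One small slip: Lemma \ref{simoninterpollem} directly gives the $s=t$ endpoint $T\langle\di\rangle^{-t}=Tg(|\di|)$, not the $s=0$ endpoint $\langle\di\rangle^{-t}T$ as you initially state, so you have the labels of the ``direct'' and ``adjoint'' endpoints reversed --- but since you yourself note the two are adjoints of each other, the substance is unaffected.
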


\begin{proof}
	First consider $s=t$. We note that $\xi\mapsto \langle\xi\rangle^{-t}\in L^{p}\left(\IR^{d}\right)$, so Lemma \ref{simoninterpollem} applies. The case $s=0$ follows by taking adjoints. The general case follows by complex interpolation (cf. \cite{GLST}).
\end{proof}

\begin{Definition}
	We introduce a second set of semi-norms for $B\in C^{1}_{b}\left(\IR^{d},A_{0},m\right)$, $m\geq 0$, and the following data: $\alpha\geq 0$, $\beta\in\IR$, with $\left|\beta-1\right|\leq m$, $\left|\beta-\alpha\right|\leq m$, $p\geq 2$, $s\geq 0$, and a unit vector field $\nu\in\Gamma\left(T\IR^{d}\right)$,
	\begin{align}
		\tau^{\alpha,\beta,\nu,p,s}\left(B\right):=\left\|x\mapsto\langle x\rangle^{s}\langle A_{0}\rangle^{-\alpha+\beta}\left(\nu B\right)\left(x\right)\langle A_{0}\rangle^{-\beta}\right\|_{L^{\infty}\left(\IR^{d},S^{p}\left(H\right)\right)},
	\end{align}
	and denote $\tau^{\alpha,\beta,\nabla,p,s}:=\sum_{i=1}^{d}\tau^{\alpha,\beta,\p_{i},p,s}$.
\end{Definition}

We introduce the set of conditions on the family operator family $A=M_{0}+B$.

\begin{Hypothesis}\label{hyp1}
	Let $\alpha\geq 1$, and $N\geq\lfloor\frac{\alpha-1}{2}\left(d+1\right)\rfloor+1$. Let $B$ be a family of symmetric operators in $H$, with domains $\dom\ A_{0}$. Assume that $B\in C^{1}_{b}\left(\IR^{d},A_{0},2N+1\right)$. Assume that
	\begin{enumerate}
		\item \begin{align}\label{hypdomeq}
			\lim_{\mathrm{dist}\left(z,\left(-\infty,0\right]\right)\to\infty}\rho_{z}^{-2N}\left(B\right)+\rho_{z}^{2N}\left(B\right)=0,
		\end{align}
		and that for all $\beta\in\left[-2N+\alpha,2N+1\right]$,
		\begin{align}\label{hyptraceeq}
			\tau^{\alpha,\beta,\nabla,d,1}\left(B\right)&<\infty,\nonumber\\
			\exists\epsilon>0:\ \tau^{\alpha,\beta,\p_{R},d,1+\epsilon}\left(B\right)&<\infty,
		\end{align}
		where $\p_{R}:=\sum_{i=1}^{d}\frac{x^{i}}{\left|x\right|}\p_{i}$ denotes the radial vector field.
		\item Let $S_{1}\left(0\right)$ denote the $\left(d-1\right)$-dimensional unit sphere. For $\phi\in\dom\ A_{0}$, a.e. $y\in S_{1}\left(0\right)$, a.e. $x\in\IR^{d}$, and $\gamma\in\IN_{0}^{d}$ with $\left|\gamma\right|\leq 1$, assume,
		\begin{align}\label{radlimeq}
			\lim_{R\to\infty}R^{\left|\gamma\right|}\left\|\left(\p^{\gamma}B\left(Ry+x\right)-\p^{\gamma}B\left(Ry\right)\right)\phi\right\|_{H}&=0. 
		\end{align}
	\end{enumerate}
\end{Hypothesis}

\begin{Remark}\label{a0rem}
	By a Kato-Rellich argument Hypothesis \ref{hyp1} implies that for a.e. $x_{0}\in\IR^{d}$, $A\left(x_{0}\right)=A_{0}+B\left(x_{0}\right)$ can take the role of the model operator $A_{0}$.
\end{Remark}

The second set of conditions concerned with the radial convergence at infinity of $A$ and $\Id A$ will also only play a role in determining the trace. Note that the prerequisites of Proposition \ref{hdomainprop} and Remark \ref{domarem} are satisfied if $B$ satisfies Hypothesis \ref{hyp1}.

In the following we will prepare for the proof of Proposition \ref{decomposelem}. In this proof we will factorize the partial trace of difference of the resolvent powers of $DD^{\ast}$ and $D^{\ast}D$ into appropriate bounded and Schatten-von Neumann class operators. These factors will fit the templates of operators we will discuss below.

\begin{Definition}
    Let $B$ satisfy Hypothesis \ref{hyp1}. For $z\in\IC$ let $w=w\left(z\right):=\mathrm{dist}\left(z,\left(-\infty,0\right]\right)$. For $C\geq 0$ let $\mathcal{D}_{C}\left(B\right):=\left\{z\in\IC:w\left(z\right)\geq c\right\}$ where $c\geq 1$ is the minimal choice, such that
	\begin{align}
		Cw^{-\frac{1}{2}}+\left(\rho^{-2N}_{1}\left(B\right)+\rho^{2N}_{1}\left(B\right)+1\right)\left(\rho^{-2N}_{z}\left(B\right)+\rho^{2N}_{z}\left(B\right)\right)\leq\frac{1}{2},\ z\in\mathcal{D}_{C}\left(B\right).
    \end{align}
\end{Definition}

\begin{Definition}
    For $s\in\IR$ denote by $Sym_{s}$ those smooth functions $f:\IR^{d}\rightarrow\IC$, such that for all $\gamma\in\IN_{0}^{d}$,
    \begin{align}
       \langle X\rangle^{\left|\gamma\right|-s}\p^{\gamma}f\in L^{\infty}\left(\IR^{d}\right).
    \end{align}
\end{Definition}

\begin{Remark}\label{symbolrem}
    Straightforward calculation verifies $C^{\infty}_{c}\left(\IR^{d}\right)\subset Sym_{-\infty}:=\bigcap_{j=0}^{\infty}Sym_{-j}$, and $\langle X\rangle^{u}\in Sym_{u}$ for $u\in\IR$. For $f\in Sym_{r}$, $g\in Sym_{s}$ we also have $fg\in Sym_{r+s}$, which follows from the Leibniz rule. Moreover for $f\in Sym_{0}$ the Leibniz rule, taking adjoints and complex interpolation implies that $\langle\di\rangle^{s}f\langle\di\rangle^{-s}$ have bounded extensions in $L^2\left(\IR^{d},\IC^{r}\otimes H\right)$ for all $s\in\IR$.
\end{Remark}

We discuss our first type of bounded operators involved in the factorization of Proposition \ref{decomposelem}.

\begin{Lemma}\label{boundedlem}
	There exists a universal constant $C$ such that if $B$ satisfies Hypothesis \ref{hyp1}, and we let $t\in\left[0,2\right]$, $s\in\left[-1,1\right]$, $k\in\IN$, $k\leq N$, and $r\in\left[-2N,2N\right]$ with $r-2\left(k-1\right)\in\left[-2N,2N\right]$, then the operators
	\begin{align}
		R:=\langle M_{0}\rangle^{r}\langle X\rangle^{-s}R_{z}\left(T\right)^{k}\langle X\rangle^{s}\langle M_{0}\rangle^{2k-r-t}\langle\di\rangle^{t},\ T\in\left\{H_{B},DD^{\ast},D^{\ast}D\right\},
	\end{align}
	possess extensions in $B\left(L^2\left(\IR^{d},\IC^{r}\otimes H\right)\right)$ with operator norm bounded by $2^{k}$, for $z\in\mathcal{D}_{C}\left(B\right)$.
\end{Lemma}

\begin{proof}
	We write $R=\left(\prod_{i=1}^{k-1}R_{i}\right)S$, with
	\begin{align}
		S=\langle M_{0}\rangle^{-2\left(k-1\right)+r}\langle X\rangle^{-s}R_{z}\left(T\right)\langle X\rangle^{s}\langle M_{0}\rangle^{2k-r-t}\langle\di\rangle^{t}\nonumber\\
		R_{i}=\langle M_{0}\rangle^{-2\left(i-1\right)+r}\langle X\rangle^{-s}R_{z}\left(T\right)\langle X\rangle^{s}\langle M_{0}\rangle^{2i-r},\ i\in\left\{1,\ldots,k-1\right\}.
	\end{align}
    We develop $R_{z}\left(T\right)$ into a Neumann series around $H_{0}$,
    \begin{align}\label{boundedlemeq1}
        R_{i}&=\sum_{l=0}^{\infty}\left(-1\right)^{l}\left(R_{z}\left(H_{0}\right)C_{i}\right)^{l}\langle M_{0}\rangle^{2}R_{z}\left(H_{0}\right),\nonumber\\
        S&=\sum_{l=0}^{\infty}\left(-1\right)^{l}\left(R_{z}\left(H_{0}\right)C_{k}\right)^{l}\langle M_{0}\rangle^{2-t}\langle\di\rangle^{t}R_{z}\left(H_{0}\right),
    \end{align}
    where
    \begin{align}
           C_{i}:=\langle M_{0}\rangle^{-2\left(i-1\right)+r}\left(B^{2}+BM_{0}+M_{0}B+\epsilon\Ii\di^{E}B+\langle X\rangle^{-s}\left[\Delta,\langle X\rangle^{s}\right]\right)\langle M_{0}\rangle^{2\left(i-1\right)-r},
    \end{align}
    where $\epsilon=-1$ for $T=D^{\ast}D$, $\epsilon=0$ for $T=H_{B}$, and $\epsilon=1$ for $T=DD^{\ast}$. Since $\langle X\rangle^{s}\in Sym_{s}$ it follows that $\langle X\rangle^{-s}\left[\Delta,\langle X\rangle^{s}\right]$ is a first order differential operator with coefficients in $Sym_{-1}$. Thus there exists a universal constant $C$ such that for $u\in\left[-1,1\right]$, $z\in\IC\backslash\left(-\infty,0\right]$ with $w:=\mathrm{dist}\left(z,\left(-\infty,0\right]\right)\geq 1$,
    \begin{align}
        \left\|\overline{R_{z}\left(H_{0}\right)\langle X\rangle^{-s}\left[\Delta,\langle X\rangle^{s}\right]}\right\|_{B\left(L^2\left(\IR^{d},\IC^{r}\otimes H\right)\right)}\leq Cw^{-\frac{1}{2}}.
    \end{align}
    Furthermore we have by Hypothesis \ref{hyp1} the norm estimate
    \begin{align}
        &\left\|\overline{R_{z}\left(H_{0}\right)C_{i}}\right\|_{B\left(L^2\left(\IR^{d},\IC^{r}\otimes H\right)\right)}\leq Cw^{-\frac{1}{2}}+\left\|\langle M_{0}\rangle \langle M_{0}\rangle_{z}R_{z}\left(H_{0}\right)\right\|_{B\left(L^2\left(\IR^{d},\IC^{r}\otimes H\right)\right)}\nonumber\\
        &\cdot\left\|\overline{\langle M_{0}\rangle_{z}^{-1}\langle M_{0}\rangle^{-2\left(i-1\right)+r-1}\left(B^2+BM_{0}+M_{0}B+\epsilon\Ii\left(\di B\right)\right)\langle M_{0}\rangle^{2\left(i-1\right)-r}}\right\|_{B\left(L^2\left(\IR^{d},\IC^{r}\otimes H\right)\right)}\nonumber\\
        \leq& Cw^{-\frac{1}{2}}+\left(\rho_{1}^{2\left(i-1\right)-r}\left(B\right)+1\right)\rho_{z}^{2\left(i-1\right)-r}\left(B\right)\leq\frac{1}{2},\ z\in\mathcal{D}_{C}\left(B\right),
    \end{align}
    where the last inequality follows by Remark \ref{norminterpolrem} and the definition of the domain $\mathcal{D}_{C}\left(B\right)$. This implies that the Neumann series of $R_{i}$ and $S$ converge in operator norm, each of which can be bounded in operator norm by $2$ for $z\in\mathcal{D}_{C}\left(B\right)$. We conclude that $\left\|\overline{R}\right\|_{B\left(L^2\left(\IR^{d},\IC^{r}\otimes H\right)\right)}\leq 2^{k}$ for $z\in\mathcal{D}_{C}\left(B\right)$.
\end{proof}

A similar result holds under weaker conditions, which we will need later in the proof of the approximation statement.

\begin{Lemma}\label{lightboundedlem}
    Let $B:\IR^{d}\rightarrow B\left(H\right)$ be a point-wise self-adjoint $C^{1}$-function with respect to the strong operator topology such that $\left\|B\right\|_{C^{1}_{b}\left(\IR^{d},B\left(H\right)\right)}=\left\|B\right\|_{L^{\infty}\left(\IR^{d},B\left(H\right)\right)}+\left\|\nabla B\right\|_{L^{\infty}\left(\IR^{d},B\left(H\right)^{d}\right)}<\infty$. Assume also that $B\in C^{1}_{b}\left(\IR^{d},A_{0},1\right)$ with $\lim_{w\to\infty}\rho_{z}\left(B\right)=0$ where $w:=\operatorname{dist}\left(z,\left(-\infty,0\right]\right)$. Let $t\in\left[0,2\right]$, $s\in\left[-1,1\right]$, and $k\in\IN$. Then the operators
    \begin{align}
        R_{k,s,t}:=\langle X\rangle ^{-s}R_{z}\left(H_{B}\right)^{k}\langle X\rangle^{s}\langle\di\rangle^{t}
    \end{align}
    admit extensions in $B\left(L^2\left(\IR^{d},\IC^{r}\otimes H\right)\right)$ with norm bounded by $C^k\left(\left\|B\right\|_{C^{1}_{b}\left(\IR^{d},B\left(H\right)\right)}+1\right)^{2k}$ for $w\geq C\left(\left\|B\right\|_{C^{1}_{b}\left(\IR^{d},B\left(H\right)\right)}+1\right)^{4}$, where $C$ is some universal constant.\\    
    For $T_{B}\in\left\{H_{B},D_{B}^{\ast}D_{B},D_{B}D_{B}^{\ast}\right\}$, and $w\geq C\left(\left\|B\right\|_{C^{1}_{b}\left(\IR^{d},B\left(H\right)\right)}+1\right)^{4}$, the operators
    \begin{align}
        \left(H_{0}+1\right)^{\frac{1}{2}}\langle X\rangle ^{-s}R_{z}\left(T_B\right)\langle X\rangle ^{s}\left(H_{0}+1\right)^{\frac{1}{2}},\ s\in\left[-1,1\right],
    \end{align}
    admit extensions in $B\left(L^2\left(\IR^{d},\IC^{r}\otimes H\right)\right)$ with norm bounded by $C^k\left(\left\|B\right\|_{C^{1}_{b}\left(\IR^{d},B\left(H\right)\right)}+1\right)^{2k}$ for $w\geq C\left(\left\|B\right\|_{C^{1}_{b}\left(\IR^{d},B\left(H\right)\right)}+1\right)^{4}$.
\end{Lemma}

\begin{proof}
    We note that by Proposition \ref{hdomainprop} the operators $T_{B}\geq 0$ are self-adjoint, so that the involved resolvents are well-defined. We show first that
    \begin{align}
        \left(H_{0}+1\right)^{\frac{1}{2}}\langle X\rangle^{-s}R_{z}\left(T_{B}\right)\langle X\rangle^{s}\left(H_{0}+1\right)^{\frac{1}{2}}
    \end{align}
    admits a bounded extension. We develop $\langle X\rangle^{-s}R_{z}\left(T_{B}\right)\langle X\rangle^{s}$ into a Neumann series around $H_{0}$
    \begin{align}\label{lightboundedlemeq1}
        &\overline{\left(H_{0}+1\right)^{\frac{1}{2}}\langle X\rangle^{-s}R_{z}\left(T_{B}\right)\langle X\rangle^{s}\left(H_{0}+1\right)^{\frac{1}{2}}}\nonumber\\
        =&\left(H_{0}+1\right)^{\frac{1}{2}}R_{z}\left(H_{0}\right)^{\frac{1}{2}}\sum_{l=0}^{\infty}\left(-1\right)^{l}\left(R_{z}\left(H_{0}\right)^{\frac{1}{2}}T_{\epsilon}R_{z}\left(H_{0}\right)^{\frac{1}{2}}\right)^{l}\left(H_{0}+1\right)^{\frac{1}{2}}R_{z}\left(H_{0}\right)^{\frac{1}{2}},
    \end{align}
    where
    \begin{align}
        T_{\epsilon}:=B^{2}+BM_{0}+M_{0}B+f_{s}\di+g_{s}+\Ii\epsilon\left(\di B\right),\ f_{s}:=2\langle X\rangle^{-s}\di\langle X\rangle^{s},\ g_{s}:=\langle X\rangle^{-s}\Delta\langle X\rangle^{s},
    \end{align}
        where $\epsilon=-1$ for $T_{B}=D_{B}^{\ast}D_{B}$, $\epsilon=0$ for $T_{B}=H_{B}$, and $\epsilon=1$ for $T_{B}=D_{B}D_{B}^{\ast}$. Since $\left(H_{0}+1\right)^{\frac{1}{2}}R_{z}\left(H_{0}\right)^{\frac{1}{2}}$ is bounded in operator norm by $1$ for $w\geq 1$, it remains to investigate the inner factor of the left-hand side of (\ref{lightboundedlemeq1}), and thus we consider
    \begin{align}
        &R_{z}\left(H_{0}\right)^{\frac{1}{2}}T_{\epsilon}R_{z}\left(H_{0}\right)^{\frac{1}{2}}=R_{z}\left(H_{0}\right)^{\frac{1}{2}}B^{2}R_{z}\left(H_{0}\right)^{\frac{1}{2}}+R_{z}\left(H_{0}\right)^{\frac{1}{2}}BM_{0}R_{z}\left(H_{0}\right)^{\frac{1}{2}}\nonumber\\
        &+M_{0}R_{z}\left(H_{0}\right)^{\frac{1}{2}}BR_{z}\left(H_{0}\right)^{\frac{1}{2}}+R_{z}\left(H_{0}\right)^{\frac{1}{2}}f_{s}\di R_{z}\left(H_{0}\right)^{\frac{1}{2}}+R_{z}\left(H_{0}\right)^{\frac{1}{2}}g_{s} R_{z}\left(H_{0}\right)^{\frac{1}{2}}\nonumber\\
        &+\Ii\epsilon R_{z}\left(H_{0}\right)^{\frac{1}{2}}\left(\di B\right)R_{z}\left(H_{0}\right)^{\frac{1}{2}}.
    \end{align}
    Since $M_{0}R_{z}\left(H_{0}\right)^{\frac{1}{2}}$ and $\di R_{z}\left(H_{0}\right)^{\frac{1}{2}}$ are uniformly bounded in operator norm by $1$ for $w\geq 1$, $B$, $B^2$, $\left(\di B\right)$ are bounded in norm operator norm by $\left(\left\|B\right\|_{C^{1}_{b}\left(\IR^{d},B\left(H\right)\right)}+1\right)^{2}$, and the functions $f_{s}\in Sym_{-1}$, $g_{s}\in Sym_{-2}$ give rise to uniformly bounded operators in operator norm for $s\in\left[-1,1\right]$, the operator $R_{z}\left(H_{0}\right)^{\frac{1}{2}}TR_{z}\left(H_{0}\right)^{\frac{1}{2}}$ is uniformly bounded in operator norm by $c\left(\left\|B\right\|_{C^{1}_{b}\left(\IR^{d},B\left(H\right)\right)}+1\right)^{2}w^{-\frac{1}{2}}$ for some universal constant $c$ and $w\geq 1$.
    We assume for the rest of the proof that $w\geq c^{2}\left(\left\|B\right\|_{C^{1}_{b}\left(\IR^{d},B\left(H\right)\right)}+1\right)^{4}$. Then the Neumann series (\ref{lightboundedlemeq1}) converges in operator norm to a bounded operator with norm bound $2$. Since $\left(\Ii+\di\right)\left(H_{0}+1\right)^{-\frac{1}{2}}$ is bounded in operator norm by $1$, we also have that
    \begin{align}\label{lightboundedlemeq2}
        \overline{\left(\Ii+\di\right)\langle X\rangle^{-s}R_{z}\left(H_{B}\right)\langle X\rangle^{s}\left(\Ii+\di\right)}
    \end{align}
    is a bounded operator with norm less than $2$. In (\ref{lightboundedlemeq2}) we commute $\Ii+\di$ and obtain
    \begin{align}\label{lightboundedlemeq4}
        &\overline{\left(\Ii+\di\right)\langle X\rangle^{-s}R_{z}\left(H_{B}\right)\langle X\rangle^{s}\left(\Ii+\di\right)}\nonumber\\
        =&\overline{\langle X\rangle^{-s}R_{z}\left(H_{B}\right)\langle X\rangle^{s}\left(\Ii+\di\right)^{2}}+\overline{\left[\di,\langle X\rangle^{-s}R_{z}\left(H_{B}\right)\langle X\rangle^{s}\right]\left(\Ii+\di\right)}
    \end{align}
    By the resolvent identity we rewrite the commutator and obtain
    \begin{align}\label{lightboundedlemeq3}
        &\overline{\left[\di,\langle X\rangle^{-s}R_{z}\left(H_{B}\right)\langle X\rangle^{s}\right]\left(\Ii+\di\right)}=-\overline{\langle X\rangle^{-s}R_{z}\left(H_{B}\right)\langle X\rangle^{s}\left[\di,T\right]\langle X\rangle^{-s}R_{z}\left(H_{B}\right)\langle X\rangle^{s}\left(\Ii+\di\right)}\nonumber\\
        =&-\overline{\langle X\rangle^{-s}R_{z}\left(H_{B}\right)\langle X\rangle^{s}\left(H_{0}+1\right)^{\frac{1}{2}}}\nonumber\\ &\overline{\left(H_{0}+1\right)^{-\frac{1}{2}}\left(\left(\di B^{2}\right)+M_{0}\left(\di B\right)+\left(\di B\right)M_{0}+\left(\di f_{s}\right)\di+\left(\di g_{s}\right)\right)\left(H_{0}+1\right)^{-\frac{1}{2}}}\nonumber\\
        &\overline{\left(H_{0}+1\right)^{\frac{1}{2}}\langle X\rangle^{-s}R_{z}\left(H_{B}\right)\langle X\rangle^{s}\left(\Ii+\di\right)}.
    \end{align}
    The first and third factor of (\ref{lightboundedlemeq3}) are both bounded in operator norm by $2$ according to the previous considerations, the second factor is bounded since $M_{0}R_{z}\left(H_{0}\right)^{\frac{1}{2}}$ and $\di R_{z}\left(H_{0}\right)^{\frac{1}{2}}$ are uniformly bounded in operator norm by $1$ for $w\geq 1$, $\left(\di B\right)$, and $\left(\di B^{2}\right)$ are bounded by $\left(\left\|B\right\|_{C^{1}_{b}\left(\IR^{d},B\left(H\right)\right)}+1\right)^{2}$, and the functions $\left(\di f_{s}\right)\in Sym_{-2}$, $\left(\di g_{s}\right)\in Sym_{-3}$ give rise to uniformly bounded operators in operator norm for $s\in\left[-1,1\right]$. In total (\ref{lightboundedlemeq3}) implies that $\overline{\left[\di,\langle X\rangle^{-s}R_{z}\left(H_{B}\right)\langle X\rangle^{s}\right]\left(\Ii+\di\right)}$ is uniformly bounded in operator norm by $C'\left(\left\|B\right\|_{C^{1}_{b}\left(\IR^{d},B\left(H\right)\right)}+1\right)^{2}$ for some universal constant $C'$. By (\ref{lightboundedlemeq4}) it follows that
    \begin{align}
        \overline{\langle X\rangle^{-s}R_{z}\left(H_{B}\right)\langle X\rangle^{s}\left(\Ii+\di\right)^{2}},
    \end{align}
    and thus $\overline{R_{1,s,t}}$ is uniformly bounded by $C''\left(\left\|B\right\|_{C^{1}_{b}\left(\IR^{d},B\left(H\right)\right)}+1\right)^{2}$ for some universal constant $C''$, and all $t\in\left[0,2\right]$. Finally, we use that $R_{k,s,t}=R_{1,s,0}^{k-1}R_{1,s,t}$ to conclude the statement.
\end{proof}

We will also need the following convergence statement, which is a fundamental property of Schatten-von Neumann operators.

\begin{Lemma}\label{traceconvlem}\cite{GLMST}[Lemma 3.4]
	Let $R_{n},S_{n},T_{n}$, $n\in\IN$ be bounded operators in a separable Hilbert space $X$. For $1\leq p<\infty$, let $S_{n}\in S^{p}\left(X\right)$, such that $\lim_{n\to\infty}S_{n}=S$ in $S^{p}\left(X\right)$-norm. Furthermore assume that $\lim_{n\to\infty}R_{n}=R$, and $\lim_{n\to\infty}T_{n}^{\ast}=T^{\ast}$ strongly. Then
	\begin{align}
		\lim_{n\to\infty}R_{n}S_{n}T_{n}=RST,
	\end{align}
	in $S^{p}\left(X\right)$-norm.
\end{Lemma}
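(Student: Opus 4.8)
The plan is to reduce the statement to the elementary fact that if $S\in S^{p}\left(X\right)$ and $\left(R_{n}\right)_{n\in\IN}$ is a uniformly bounded sequence in $B\left(X\right)$ with $R_{n}\to R$ strongly, then $\left\|R_{n}S-RS\right\|_{S^{p}\left(X\right)}\to0$ (and, by taking adjoints, $\left\|SR_{n}-SR\right\|_{S^{p}\left(X\right)}\to0$ whenever instead $R_{n}^{\ast}\to R^{\ast}$ strongly). First I would observe that strong convergence of $R_{n}$ and of $T_{n}^{\ast}$, via the uniform boundedness principle, yields $C:=\sup_{n}\left\|R_{n}\right\|+\sup_{n}\left\|T_{n}\right\|<\infty$, where $\left\|T_{n}\right\|=\left\|T_{n}^{\ast}\right\|$. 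Then I would split
\begin{align}
R_{n}S_{n}T_{n}-RST=R_{n}\left(S_{n}-S\right)T_{n}+R_{n}ST_{n}-RST,\nonumber
\end{align}
and estimate the first summand in $S^{p}$-norm by $C^{2}\left\|S_{n}-S\right\|_{S^{p}\left(X\right)}$, which tends to $0$ by hypothesis, using the ideal inequality $\left\|ABC\right\|_{S^{p}}\leq\left\|A\right\|\,\left\|B\right\|_{S^{p}}\,\left\|C\right\|$.

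To prove the auxiliary fact, I would first take $S$ of finite rank, $S=\sum_{j=1}^{m}\langle\cdot,e_{j}\rangle f_{j}$, so that $R_{n}S-RS=\sum_{j=1}^{m}\langle\cdot,e_{j}\rangle\left(R_{n}f_{j}-Rf_{j}\right)$; since $\left\|\langle\cdot,e\rangle g\right\|_{S^{p}\left(X\right)}=\left\|e\right\|\,\left\|g\right\|$ and $R_{n}f_{j}\to Rf_{j}$ in $X$, the claim follows at once in this case. For general $S\in S^{p}\left(X\right)$ I would approximate by a finite-rank $F$ with $\left\|S-F\right\|_{S^{p}\left(X\right)}<\varepsilon$ and write
\begin{align}
\left\|R_{n}S-RS\right\|_{S^{p}\left(X\right)}\leq\left(\sup_{n}\left\|R_{n}\right\|+\left\|R\right\|\right)\varepsilon+\left\|R_{n}F-RF\right\|_{S^{p}\left(X\right)},\nonumber
\end{align}
then let $n\to\infty$ (the last term vanishes by the finite-rank case) and finally $\varepsilon\to0$.

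With the auxiliary fact in hand, the term $R_{n}ST_{n}-RST$ is handled by applying it on both sides. On the right, $ST_{n}=\left(T_{n}^{\ast}S^{\ast}\right)^{\ast}$ with $S^{\ast}\in S^{p}\left(X\right)$ and $T_{n}^{\ast}\to T^{\ast}$ strongly, so $T_{n}^{\ast}S^{\ast}\to T^{\ast}S^{\ast}$ in $S^{p}$-norm, whence $ST_{n}\to ST$ in $S^{p}$-norm by adjoint-invariance of $\left\|\cdot\right\|_{S^{p}}$; on the left the auxiliary fact gives $R_{n}\left(ST\right)\to R\left(ST\right)$ in $S^{p}$-norm. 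Combining,
\begin{align}
\left\|R_{n}ST_{n}-RST\right\|_{S^{p}\left(X\right)}\leq\left\|R_{n}\right\|\left\|ST_{n}-ST\right\|_{S^{p}\left(X\right)}+\left\|\left(R_{n}-R\right)ST\right\|_{S^{p}\left(X\right)}\longrightarrow0.\nonumber
\end{align}
I expect the only genuine obstacle to be the auxiliary fact: strong convergence of $R_{n}$ does not pass through the Schatten norm directly, so one must detour through finite-rank operators and lean on the ideal property, using the approximability of $S$ by finite-rank operators in $S^{p}$-norm; everything else is routine bookkeeping with the triangle and ideal inequalities.
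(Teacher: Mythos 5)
Your proposal is correct. Note that the paper does not prove this lemma itself but cites it as \cite{GLMST}[Lemma 3.4]; your argument is the standard one: uniform boundedness gives $C=\sup_{n}\|R_{n}\|+\sup_{n}\|T_{n}\|<\infty$, the ideal inequality handles $R_{n}(S_{n}-S)T_{n}$, and the remaining term $R_{n}ST_{n}-RST$ is reduced via the auxiliary claim (strong convergence of a uniformly bounded sequence times a fixed $S^{p}$ operator converges in $S^{p}$-norm), which you establish by approximating $S$ in $S^{p}$ by finite-rank operators and using $\|\langle\cdot,e\rangle g\|_{S^{p}}=\|e\|\,\|g\|$. The adjoint trick $ST_{n}=(T_{n}^{\ast}S^{\ast})^{\ast}$ together with the $*$-invariance of $\|\cdot\|_{S^{p}}$ correctly transfers the right-hand factor to the left-hand situation; the whole thing is sound.
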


Inspired by this Lemma and for our convenience, we introduce the following notation.

\begin{Definition}
	If $T$ is a bounded operator, and $\left(T_{n}\right)_{n\in\IN}$ is a sequence of bounded operators in a Hilbert space $X$, we say $T_{n}$ converges in SOTA of $X$ to $T$, if $T_{n}$, and $T_{n}^{\ast}$ converge in the strong operator topology to $T$, respectively to $T^{\ast}$ as $n\to\infty$.
\end{Definition}

\begin{Lemma}\label{sotlem}
    There exists a universal constant $C$ such that the following statement holds true. Let $B$ satisfy Hypothesis \ref{hyp1} and let $\left(B_{n}\right)_{n\in\IN}$ be a sequence of operators, each of which satisfy Hypothesis \ref{hyp1}, such that $\left(B_{n}\right)_{n\in\IN}$ converges in the following sense: For $\beta\in\left[-2N+1,2N-1\right]$, and a.e. $x\in\IR^{d}$, we have strong operator convergence in $H$ of
	\begin{align}\label{sotlemeq1}
		\langle A_{0}\rangle^{\beta}\left(B_{n}\left(x\right)+A_{0}\right)^{2}\langle A_{0}\rangle^{-2-\beta}\xrightarrow{n\to\infty}\langle A_{0}\rangle^{\beta}\left(B\left(x\right)+A_{0}\right)^{2}\langle A_{0}\rangle^{-2-\beta},
	\end{align}
	and with $w:=\mathrm{dist}\left(z,\left(-\infty,0\right]\right)$, assume
	\begin{align}\label{sotlemeq2}
		\sup_{n\in\IN}\left(\rho_{z}^{-2N}\left(B_{n}\right)+\rho_{z}^{2N}\left(B_{n}\right)\right)\xrightarrow{w\to\infty}0,
	\end{align}
	then we have SOTA convergence in $L^2\left(\IR^{d},\IC^{r}\otimes H\right)$ of 
	\begin{align}
		\overline{\langle M_{0}\rangle^{r}\langle X\rangle^{-s}R_{z}\left(H_{B_{n}}\right)^{k}\langle X\rangle^{s}\langle M_{0}\rangle^{2k-r-t}\langle\di\rangle^{t}}\xrightarrow{n\to\infty}\overline{\langle M_{0}\rangle^{r}\langle X\rangle^{-s}R_{z}\left(H_{B}\right)^{k}\langle X\rangle^{s}\langle M_{0}\rangle^{2k-r-t}\langle\di\rangle^{t}},
	\end{align}
for $t\in\left[0,2\right]$, $u\in\left[-1,1\right]$, $k\in\IN$, $k\leq N$, and $r\in\left[-2N,2N\right]$ with $r-2\left(k-1\right)\in\left[-2N,2N\right]$.\\
If additionally for a.e. $x\in\IR^{d}$ we have strong operator convergence in $H$ of
	\begin{align}
		\langle A_{0}\rangle^{\beta}\p_{i}B_{n}\left(x\right)\langle A_{0}\rangle^{-2-\beta}\xrightarrow{n\to\infty}\langle A_{0}\rangle^{\beta}\p_{i}B\left(x\right)\langle A_{0}\rangle^{-2-\beta},\ i\in\left\{1,\ldots,d\right\},
	\end{align}
	then we also have SOTA convergence in $L^2\left(\IR^{d},\IC^{r}\otimes H\right)$ of 
	\begin{align}
		&\overline{\langle M_{0}\rangle^{r}\langle X\rangle^{-s}R_{z}\left(D_{B_{n}}D_{B_{n}}^{\ast}\right)^{k}\langle X\rangle^{s}\langle M_{0}\rangle^{2k-r-t}\langle\di\rangle^{t}}\nonumber\\
        \xrightarrow{n\to\infty}&\overline{\langle M_{0}\rangle^{r}\langle X\rangle^{-s}R_{z}\left(D_{B}D_{B}^{\ast}\right)^{k}\langle X\rangle^{s}\langle M_{0}\rangle^{2k-r-t}\langle\di\rangle^{t}},
	\end{align}
	and
	\begin{align}
		&\overline{\langle M_{0}\rangle^{r}\langle X\rangle^{-s}R_{z}\left(D_{B_{n}}^{\ast}D_{B_{n}}\right)^{k}\langle X\rangle^{s}\langle M_{0}\rangle^{2k-r-t}\langle\di\rangle^{t}}\nonumber\\
        \xrightarrow{n\to\infty}&\overline{\langle M_{0}\rangle^{r}\langle X\rangle^{-s}R_{z}\left(D_{B}^{\ast}D_{B}\right)^{k}\langle X\rangle^{s}\langle M_{0}\rangle^{2k-r-t}\langle\di\rangle^{t}},
	\end{align}
for $t\in\left[0,2\right]$, $u\in\left[-1,1\right]$, $k\in\IN$, $k\leq N$, and $r\in\left[-2N,2N\right]$ with $r-2\left(k-1\right)\in\left[-2N,2N\right]$.
\end{Lemma}

\begin{proof}
	We only show the statement involving $H_{B}$. The proof for $D_{B}D_{B}^{\ast}$ and $D_{B}^{\ast}D_{B}$ is analogous. Consider the factorization presented in the proof of Lemma \ref{boundedlem}, now with $B_{n}$ replacing $B$ everywhere. We note that both Neumann series in (\ref{boundedlemeq1}), converge for $z\in\mathcal{D}_{C}\left(B_{n}\right)$. Due to condition (\ref{sotlemeq2}) there exists $w_{0}\geq 1$, such that $\left\{z\in\IC:w\left(z\right)\geq w_{0}\right\}\subseteq\mathcal{D}_{C}\left(B\right)\cap\bigcap_{n\in\IN}\mathcal{D}_{C}\left(B_{n}\right)$. Thus, analogous to the proof of Lemma \ref{boundedlem} the operators
	\begin{align}
		S_{n}\left(t\right):=&\langle M_{0}\rangle^{-2\left(k-1\right)+r}\langle X\rangle^{-s}R_{z}\left(H_{B_{n}}\right)\langle X\rangle^{s}\langle M_{0}\rangle^{2k-r-t}\langle\di\rangle^{t},\nonumber\\
        S\left(t\right):=&\langle M_{0}\rangle^{-2\left(k-1\right)+r}\langle X\rangle^{-s}R_{z}\left(H_{B}\right)\langle X\rangle^{s}\langle M_{0}\rangle^{2k-r-t}\langle\di\rangle^{t},
	\end{align}
	have bounded extensions for $w\left(z\right)\geq w_{0}$, where $w_{0}$ is independent of $n$, and their operator norms are uniformly bounded by $2$. By the resolvent identity we find
	\begin{align}\label{sotlemeq3}
		\overline{S_{n}\left(t\right)}-\overline{S\left(t\right)}=\overline{S_{n}\left(0\right)}\overline{\left(\langle M_{0}\rangle^{-2k+r}\left(\left(B+M_{0}\right)^{2}-\left(B_{n}+M_{0}\right)^{2}\right)\langle M_{0}\rangle^{2\left(k-1\right)-r}\right)}\overline{S\left(t\right)}.
	\end{align}
	Since $\overline{S_{n}\left(0\right)}$ is uniformly bounded by $2$, it suffices to verify the strong operator convergence in $L^2\left(\IR^{d},\IC^{r}\otimes H\right)$ of the inner factor of (\ref{sotlemeq3}), to conclude strong operator convergence of $\overline{S_{n}\left(t\right)}$ to $\overline{S\left(t\right)}$. Indeed, condition (\ref{sotlemeq1}), the uniform bound given by condition (\ref{sotlemeq2}), and the dominated convergence theorem ensure the strong convergence of the inner factor. For the remaining factors $R_{i,n}:=\langle M_{0}\rangle^{-2\left(i-1\right)+r}\langle X\rangle^{-s}R_{z}\left(H_{B_{n}}\right)\langle X\rangle^{s}\langle M_{0}\rangle^{2i-r}$, we obtain analogous results. For the strong convergence of the adjoints the claimed statement follows similarly
    if one replaces (\ref{sotlemeq3}) with
    \begin{align}\label{sotlemeq4}
		S_{n}\left(t\right)^{\ast}-S\left(t\right)^{\ast}=S_{n}\left(t\right)^{\ast}\overline{\left(\langle M_{0}\rangle^{2\left(k-1\right)-r}\left(\left(B+M_{0}\right)^{2}-\left(B_{n}+M_{0}\right)^{2}\right)\langle M_{0}\rangle^{-2k+r}\right)}S\left(0\right)^{\ast}.
	\end{align}
    using that the operator functions $B_{n}$ and $B$ are point-wise symmetric.
\end{proof}

We also provide here a variant under weaker conditions, which we will later also need.

\begin{Lemma}\label{lightsotlem}
    Let $B:\IR^{d}\rightarrow B\left(H\right)$, $B_{n}:\IR^{d}\rightarrow B\left(H\right)$ be point-wise self-adjoint $C^{1}$-functions with respect to the strong operator topology for $n\in\IN$, and assume that $\left\|B\right\|_{C^{1}_{b}\left(\IR^{d},B\left(H\right)\right)}$, and $\left\|B_{n}\right\|_{C^{1}_{b}\left(\IR^{d},B\left(H\right)\right)}$ are uniformly bounded. Assume also that $B, B_{n}\in C^{1}_{b}\left(\IR^{d},A_{0},1\right)$, and that for all $n\in\IN$,
    \begin{align}
        \lim_{w\to\infty}\rho_{z}\left(B\right)=\lim_{w\to\infty}\rho_{z}\left(B_{n}\right)=0,
    \end{align}
    where $w:=\operatorname{dist}\left(z,\left(-\infty,0\right]\right)$. Finally assume that $B_{n}\left(x\right)\xrightarrow{n\to\infty}B\left(x\right)$, and $\nabla B_{n}\left(x\right)\xrightarrow{n\to\infty}\nabla B\left(x\right)$ for a.e. $x\in\IR^{d}$ in the strong operator topology in $H$. Then there exist constants $c$ and $C$ such that for $w:=\operatorname{dist}\left(z,\left(-\infty,0\right]\right)\geq c$ we have convergence in strong operator topology of $L^2\left(\IR^{d},\IC^{r}\otimes H\right)$ of
    \begin{align}\label{lightsotlemeq0}
        \overline{\langle X\rangle^{-s}R_{z}\left(H_{B_{n}}\right)^{k}\langle X\rangle^{s}\langle\di\rangle^{t}}\xrightarrow{n\to\infty}\overline{\langle X\rangle^{-s}R_{z}\left(H_{B}\right)^{k}\langle X\rangle^{s}\langle\di\rangle^{t}}
    \end{align}
    for $t\in\left[0,2\right]$, $s\in\left[-1,1\right]$, and $k\in\IN$, and the operators are uniformly bounded in operator norm by $C$. In case $t=0$, (\ref{lightsotlemeq0}) converges in SOTA of $L^2\left(\IR^{d},\IC^{r}\otimes H\right)$. Moreover for $w\geq 1$ we have SOTA convergence in $L^2\left(\IR^{d},\IC^{r}\otimes H\right)$ of
    \begin{align}\label{lightsotlemeq2}
        R_{z}\left(D_{B_{n}}^{\ast}D_{B_{n}}\right)^{k}\xrightarrow{n\to\infty}R_{z}\left(D_{B}^{\ast}D_{B}\right)^{k},\ R_{z}\left(D_{B_{n}}D_{B_{n}}^{\ast}\right)^{k}\xrightarrow{n\to\infty}R_{z}\left(D_{B}D_{B}^{\ast}\right)^{k}.
    \end{align}
\end{Lemma}

\begin{proof}
    By the uniform boundedness principle and Lemma \ref{lightboundedlem} there exist constants $c,C$ such that for $w\geq c$ we have the uniform bound
    \begin{align}
    \left\|\overline{\langle X\rangle^{-s}R_{z}\left(H_{B_{n}}\right)^{k}\langle X\rangle^{s}\langle\di\rangle^{t}}\right\|_{B\left(L^2\left(\IR^{d},\IC^{r}\otimes H\right)\right)},\ \left\|\overline{\langle X\rangle^{-s}R_{z}\left(H_{B}\right)^{k}\langle X\rangle^{s}\langle\di\rangle^{t}}\right\|_{B\left(L^2\left(\IR^{d},\IC^{r}\otimes H\right)\right)}\leq C.
    \end{align}
    It suffices to restrict to the case $k=1$ by factorizing the operators. Due to the uniform norm bounds and the density of $C_{c}^{\infty}\left(\IR^{d}\right)$ in $L^2\left(\IR^{d}\right)$ we may also assume $t=0$. Then with
    \begin{align}
        S_{n}&:=\overline{\langle X\rangle^{-s}R_{z}\left(H_{B_{n}}\right)\langle X\rangle^{s}},\ S:=\overline{\langle X\rangle^{-s}R_{z}\left(H_{B}\right)\langle X\rangle^{s}},
    \end{align}
    the resolvent identity implies
    \begin{align}
        S_{n}-S=S_{n}\left(\left(B+M_{0}\right)^{2}-\left(B_{n}+M_{0}\right)^{2}\right)S
    \end{align}
    Since the operators $\overline{S_{n}M_{0}}$, $M_{0}S$ are uniformly bounded by Lemma \ref{lightboundedlem}, it follows that $S_{n}$ converges in the strong operator topology to $S$.
    
    For the convergence of the adjoints of (\ref{lightsotlemeq0}) in case $t=0$, we similarly may restrict to the case $k=1$, and by the resolvent identity we have
    \begin{align}\label{lightsotlemeq1}
        S_{n}^{\ast}-S^{\ast}=S_{n}^{\ast}\left(\left(B+M_{0}\right)^{2}-\left(B_{n}+M_{0}\right)^{2}\right)S^{\ast}
    \end{align}
    Since $\overline{S_{n}^{\ast}M_{0}}$, $M_{0}S^{\ast}$ are uniformly bounded by Lemma \ref{lightboundedlem}, (\ref{lightsotlemeq1}) also converges in strong operator topology.

    It remains to investigate the SOTA convergence of (\ref{lightsotlemeq2}). It suffices to show convergence in strong operator convergence for $k=1$ since we may factorize and taking adjoints only changes $z$ to $\overline{z}$. The statement then follows as before by the resolvent identity and the uniform norm bounds provided by Lemma \ref{lightboundedlem}.
\end{proof}

We now present results on the type of operators which we will encounter as Schatten- von Neumann class factors in the proof of Proposition \ref{decomposelem}. 

\begin{Lemma}\label{schattlem}
	Let $B$ satisfy Hypothesis \ref{hyp1}, let $\phi\in C^{\infty}_{c}\left(\IR^{d}\right)$, $0\leq\phi\leq 1$, $\supp\phi\subseteq \overline{B_{1}\left(0\right)}$ and $\phi\equiv 1$ on $\overline{B_{\frac{1}{2}}\left(0\right)}$, and let $\beta\in\IR$ satisfy $\left|\beta-\alpha\right|\leq 2N$, and $\left|\beta-1\right|\leq 2N$. Then there exist $\epsilon>0$ such that for all $\delta>0$,
	\begin{align}\label{schattlemeq1}
		\langle\di\rangle^{-1-\delta}\langle M_{0}\rangle^{\beta-\alpha}\langle X\rangle^{\epsilon}\left(\phi\Ii\left(\d B\right)+\left(1-\phi\right)\Ii c_{R}\left(\p_{R}B\right)\right)\langle M_{0}\rangle^{-\beta}
	\end{align}
	possesses a $S^{d}\left(L^2\left(\IR^{d},\IC^{r}\otimes H\right)\right)$-extension. On the other hand for all $\epsilon,\delta>0$,
	\begin{align}\label{schattlemeq2}
		\langle\di\rangle^{-1-\delta}\langle M_{0}\rangle^{\beta-\alpha}\langle X\rangle^{-\epsilon}\left(\phi\Ii\left(\di B\right)+\left(1-\phi\right)\Ii c_{R}\left(\p_{R} B\right)\right)\langle M_{0}\rangle^{-\beta},
	\end{align}
	and
	\begin{align}\label{schattlemeq3}
		\langle\di\rangle^{-1-\delta}\langle M_{0}\rangle^{\beta-\alpha}\langle X\rangle^{-\epsilon}\left(1-\phi\right)\Ii \left(\left(\di -c_{R}\p_{R} \right)B\right)\langle M_{0}\rangle^{-\beta}
	\end{align}
	possess $S^{d}\left(L^2\left(\IR^{d},\IC^{r}\otimes H\right)\right)$-extensions. Finally for all $\delta>0$,
	\begin{align}\label{schattlemeq4}
		\langle\di\rangle^{-1}\langle M_{0}\rangle^{\beta-\alpha}\Ii\left(\di B\right)\langle M_{0}\rangle^{-\beta},
	\end{align}
	possesses a $S^{d+\delta}\left(L^2\left(\IR^{d},\IC^{r}\otimes H\right)\right)$-extension.
\end{Lemma}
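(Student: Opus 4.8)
The strategy is the same for all four claims: bring each operator into the form $\langle\di\rangle^{-\mu}T$ with $\mu\in\{1,1+\delta\}$ and $T$ a multiplication operator in $x\in\IR^d$ with values in operators on $\IC^r\otimes H$, and then invoke Lemma \ref{simoninterpollem}. The reduction is legitimate because $\langle M_0\rangle^{\beta-\alpha}$ and $\langle M_0\rangle^{-\beta}$ act on the $H$-fibre and hence commute with $\langle\di\rangle^{-\mu}$, while $\langle X\rangle^{\pm\epsilon}$ is scalar; all factors other than $\langle\di\rangle^{-\mu}$ therefore collapse into the single multiplication operator
\[
T(x)=\langle A_0\rangle^{\beta-\alpha}\,\langle x\rangle^{\pm\epsilon}\,m(x)\,\langle A_0\rangle^{-\beta},
\]
where $m(x)$ is the relevant zeroth-order-in-$x$ Clifford combination of the $(\p_i^E B)(x)$ --- e.g.\ $m(x)=\phi(x)\,\Ii\,(\di^E B)(x)+(1-\phi(x))\,\Ii\,c_R(x)\otimes(\p_R^E B)(x)$ for (\ref{schattlemeq1}), and $m(x)=(1-\phi(x))\,\Ii\sum_i\!\bigl(c^i-\tfrac{x^i}{|x|}c_R(x)\bigr)\otimes(\p_i^E B)(x)$ for (\ref{schattlemeq3}). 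Since the Clifford coefficients are bounded in $B(\IC^r)$ uniformly in $x$, each $\|m(x)\|_{S^d(\IC^r\otimes H)}$ is dominated by a constant times $\sum_i\|\langle A_0\rangle^{\beta-\alpha}(\p_i^E B)(x)\langle A_0\rangle^{-\beta}\|_{S^d(H)}$ (and, for the radial term, by $\|\langle A_0\rangle^{\beta-\alpha}(\p_R^E B)(x)\langle A_0\rangle^{-\beta}\|_{S^d(H)}$), which is finite for a.e.\ $x$ and essentially bounded by the semi-norms of Hypothesis \ref{hyp1}; note that the admissible range $\beta\in[-2N+\alpha,2N+1]$ there coincides, since $\alpha\ge1$, with the set cut out by the constraints $|\beta-\alpha|\le2N$ and $|\beta-1|\le2N$ of the lemma.

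The crux is the exact spatial decay needed for $T\in L^p(\IR^d,S^p(\IC^r\otimes H))$. From $\tau^{\alpha,\beta,\nabla,d,1}(B)<\infty$ one only gets $\|\langle A_0\rangle^{\beta-\alpha}(\p_i^E B)(x)\langle A_0\rangle^{-\beta}\|_{S^d(H)}\le C\langle x\rangle^{-1}$, and $\langle\,\cdot\,\rangle^{-1}\notin L^d(\IR^d)$ by a logarithmic divergence; this is exactly why each statement carries a weight $\langle X\rangle^{\pm\epsilon}$ or a strictly positive $\delta$. For (\ref{schattlemeq2}) and (\ref{schattlemeq3}) the weight $\langle X\rangle^{-\epsilon}$ turns $\langle x\rangle^{-1}$ into $\langle x\rangle^{-1-\epsilon}\in L^d(\IR^d)$ for every $\epsilon>0$, while the $\phi$-supported piece is bounded with compact support and hence in $L^d$; so $T\in L^d(\IR^d,S^d)$. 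For (\ref{schattlemeq1}) the weight $\langle X\rangle^{+\epsilon}$ grows, so one must use the sharper radial decay built into Hypothesis \ref{hyp1}: there is $\epsilon'>0$ with $\tau^{\alpha,\beta,\p_R,d,1+\epsilon'}(B)<\infty$, so the radial piece decays like $\langle x\rangle^{-1-\epsilon'}$; taking $\epsilon:=\epsilon'/2$ makes $\langle x\rangle^{\epsilon}\langle x\rangle^{-1-\epsilon'}=\langle x\rangle^{-1-(\epsilon'-\epsilon)}\in L^d(\IR^d)$, while the $\phi$-part ($\supp\phi$ compact) is again bounded with compact support, so $T\in L^d(\IR^d,S^d)$. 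For (\ref{schattlemeq4}) there is no weight, but the target class is relaxed to $S^{d+\delta}$: via $S^d(\IC^r\otimes H)\hookrightarrow S^{d+\delta}(\IC^r\otimes H)$ one has $\|T(x)\|_{S^{d+\delta}}\le C\langle x\rangle^{-1}$ and $\langle\,\cdot\,\rangle^{-1}\in L^{d+\delta}(\IR^d)$, so $T\in L^{d+\delta}(\IR^d,S^{d+\delta})$.

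It remains to apply Lemma \ref{simoninterpollem} with $g(\xi):=\langle\xi\rangle^{-\mu}$: since $\di^2=\Delta$ is multiplication by $|\xi|^2$ on the Fourier side, $g(|\di|)=\langle\di\rangle^{-\mu}$, and $g\in L^p(\IR^d)$ precisely when $\mu p>d$ --- which holds with $(\mu,p)=(1+\delta,d)$ for (\ref{schattlemeq1})--(\ref{schattlemeq3}) and $(\mu,p)=(1,d+\delta)$ for (\ref{schattlemeq4}), in every case because $\delta>0$. As $T^*$ lies in the same $L^p(\IR^d,S^p)$ as $T$, the lemma gives $T^*g(|\di|)\in S^p$, and taking adjoints --- $g(|\di|)$ being self-adjoint --- yields $\langle\di\rangle^{-\mu}T=(T^*g(|\di|))^*\in S^p$, which is the asserted $S^d$- resp.\ $S^{d+\delta}$-extension. (Alternatively, split $T$ into self-adjoint and skew-adjoint parts and quote Corollary \ref{simoninterpolcor} with $s=0$.) The sole ancillary point, strong measurability of $x\mapsto T(x)$ as an $S^p$-valued map, follows --- as in the proof of Lemma \ref{simoninterpollem} --- by approximation with compactly supported strongly continuous families, available from the $C^2$-in-strong-operator-topology and $W^{2,\infty}(\IR^d,A_0,2N+1)$ assumptions on $B$.

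The main obstacle is not depth but bookkeeping: tracking which of the four expressions calls on which $\tau$-seminorm, keeping the weight exponent $\epsilon$ distinct from the Hypothesis-\ref{hyp1} radial exponent $\epsilon'$, and --- the conceptual point --- noticing that $\langle x\rangle^{-1}$ only barely fails to lie in $L^d(\IR^d)$, so that the weights $\langle X\rangle^{\pm\epsilon}$ and the supplementary $\delta$'s are load-bearing and single out precisely the radial-derivative term as the one requiring the sharper estimate.
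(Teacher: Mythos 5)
Your proposal is correct and follows the same route as the paper: collapse everything to the right of $\langle\di\rangle^{-\mu}$ into a single $S^d$-valued (resp.\ $S^{d+\delta}$-valued) multiplication operator $T$, verify $T\in L^d(\IR^d,S^d)$ (resp.\ $L^{d+\delta}$) from the $\tau$-seminorms in Hypothesis~\ref{hyp1} --- using the sharper radial decay $\tau^{\alpha,\beta,\p_R,d,1+\epsilon'}$ to absorb the growing weight $\langle X\rangle^{+\epsilon}$ in (\ref{schattlemeq1}), and the $S^d\hookrightarrow S^{d+\delta}$ relaxation for (\ref{schattlemeq4}) --- and then invoke Lemma~\ref{simoninterpollem}/Corollary~\ref{simoninterpolcor} with $g(\xi)=\langle\xi\rangle^{-\mu}\in L^p$. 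Your adjoint trick $\langle\di\rangle^{-\mu}T=(T^*g(|\di|))^*$ is in fact slightly cleaner than citing Corollary~\ref{simoninterpolcor} directly, since the latter's statement nominally requires $T$ self-adjoint (which is not automatic after conjugation by $\langle A_0\rangle^{\beta-\alpha}$, $\langle A_0\rangle^{-\beta}$), whereas the $s=0$ adjoint argument needs no such hypothesis.
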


\begin{proof}
	We first consider (\ref{schattlemeq1}). We note that due to the compact support of $\phi$, for $\epsilon>0$ there is a constant $C$, such that
	\begin{align}
		&\left\|\langle M_{0}\rangle^{-\alpha+\beta}\langle X\rangle^{\epsilon}\left(\phi\Ii\left(\di B\right)+\left(1-\phi\right)\Ii c_{R}\left(\p_{R}B\right)\right)\langle M_{0}\rangle^{-\beta}\right\|_{L^{d}\left(S^{d}\left(H\right)\right)}\nonumber\\
		\leq&C\left(\tau^{\alpha,\beta,\nabla,d,1}\left(B\right)+\tau^{\alpha,\beta,\p_{R},d,1+2\epsilon}\left(B\right)\right),
	\end{align}
	which is finite for $\epsilon>0$ small enough, due to Hypothesis \ref{hyp1}. For (\ref{schattlemeq2}) and (\ref{schattlemeq3}) we similarly have for all $\epsilon>0$ a constant $C$, such that
	\begin{align}
		&\left\|\langle M_{0}\rangle^{\beta-\alpha}\langle X\rangle^{-\epsilon}\left(\phi\Ii\left(\di B\right)+\left(1-\phi\right)\Ii c_{R}\left(\p_{R} B\right)\right)\langle M_{0}\rangle^{-\beta}\right\|_{L^{d}\left(S^{d}\left(H\right)\right)}\nonumber\\
		&+\left\|\langle M_{0}\rangle^{\beta-\alpha}\langle X\rangle^{-\epsilon}\left(1-\phi\right)\Ii \left(\left(\di -c_{R}\p_{R} \right)B\right)\langle M_{0}\rangle^{-\beta}\right\|_{L^{d}\left(S^{d}\left(H\right)\right)}\leq C\tau^{\alpha,\beta,\nabla,d,1}\left(B\right),
	\end{align}
	which is also finite by Hypothesis \ref{hyp1}. For (\ref{schattlemeq4}) we have for all $\delta>0$, a constant $C$, such that
	\begin{align}
		\left\|\langle M_{0}\rangle^{\beta-\alpha}\Ii\left(\di B\right)\langle M_{0}\rangle^{-\beta}\right\|_{L^{d+\delta}\left(S^{d+\delta}\left(H\right)\right)}\leq C\tau^{\alpha,\beta,\nabla,d,1}\left(B\right),
	\end{align}
	which is finite by Hypothesis \ref{hyp1}. The claims now follow from Corollary \ref{simoninterpolcor}, and $\langle X\rangle^{-1-\delta}\in L^{d}\left(\IR^{d}\right)$ respectively $\langle X\rangle^{-1}\in L^{d+\delta}\left(\IR^{d}\right)$, for all $\delta>0$.
\end{proof}

In a similar manner one shows the weaker result under weaker conditions on the operator family $B$ below.

\begin{Lemma}\label{lightschattlem}
    Let $B:\IR^{d}\rightarrow B\left(H\right)$ be a point-wise self-adjoint $C^{1}$-function with respect to the strong operator topology, and assume that there exists $\eta>0$, such that
    \begin{align}
        \tau^{0,0,\nabla,d,1}\left(B\right)+\tau^{0,0,\p_{R},d,1+\eta}\left(B\right)<\infty.
    \end{align}
    Let $\phi\in C^{\infty}_{c}\left(\IR^{d}\right)$, $0\leq\phi\leq 1$, $\supp\phi\subseteq \overline{B_{1}\left(0\right)}$, and $\phi\equiv 1$ on $\overline{B_{\frac{1}{2}}\left(0\right)}$. Then there exist $\epsilon>0$ such that for all $\delta>0$,
	\begin{align}\label{lightschattlemeq1}
		\langle\di\rangle^{-1-\delta}\langle X\rangle^{\epsilon}\left(\phi\Ii\left(\di B\right)+\left(1-\phi\right)\Ii c_{R}\left(\p_{R} B\right)\right)
	\end{align}
	possesses a $S^{d}\left(L^2\left(\IR^{d},\IC^{r}\otimes H\right)\right)$-extension. On the other hand for all $\epsilon,\delta>0$,
	\begin{align}\label{lightschattlemeq2}
		\langle\di\rangle^{-1-\delta}\langle X\rangle^{-\epsilon}\left(\phi\Ii\left(\di B\right)+\left(1-\phi\right)\Ii c_{R}\left(\p_{R} B\right)\right),
	\end{align}
	and
	\begin{align}\label{lightschattlemeq3}
		\langle\di\rangle^{-1-\delta}\langle X\rangle^{-\epsilon}\left(1-\phi\right)\Ii\left(\left(\di -c_{R}\p_{R}\right)B\right)
	\end{align}
	possess $S^{d}\left(L^2\left(\IR^{d},\IC^{r}\otimes H\right)\right)$-extensions. Finally for all $\delta>0$,
	\begin{align}\label{lightschattlemeq4}
		\langle\di\rangle^{-1}\Ii\left(\di B\right),
	\end{align}
	possesses a $S^{d+\delta}\left(L^2\left(\IR^{d},\IC^{r}\otimes H\right)\right)$-extension.
\end{Lemma}

In the next Lemma we prepare the upcoming factorizations of Proposition \ref{decomposelem}. We will utilize the following notation.

\begin{Definition}
    Let $l\in\IN$. For $Y$ a Banach space, define the shuffle product $\shuffle$ between $l+1$-tuples of operators $T=\left(T_{0},\ldots,T_{l}\right)$ in $Y$, and $l$-tuples of operators $S=\left(S_{1},\ldots,S_{l}\right)$ in $Y$ by
	\begin{align}
		T\shuffle S:=T_{0}S_{1}T_{1}\ldots S_{l}T_{l}.
	\end{align}
    For $T$ a operator in $Y$, $S=\left(S_{1},\ldots,S_{l}\right)$ a $l$-tuple of operators in $Y$, and $k\in\left\{1,\ldots,l+1\right\}$, denote
    \begin{align}
        \iota_{T}^{k}S:=\left(S_{1},\ldots,S_{k-1},T,S_{k},\ldots,S_{l}\right),\ k\leq l,\nonumber\\
        \iota_{T}^{l+1}S:=\left(S_{1},\ldots,S_{l},T\right).
    \end{align}
    Denote for a sequence of operators $\left(S_{k}\right)_{k\in\IN_{0}}$, and $\eta\in\IN_{0}^{l}$ by $S^{\eta}$ the $l$-tuple of operators
    \begin{align}
        S^{\eta}:=\left(S_{\eta_{1}},\ldots,S_{\eta_{1}}\right).
    \end{align}
    Finally denote $\one:=\left(1,\ldots,1\right)\in\IN^{l}$.
\end{Definition}

\begin{Lemma}\label{neulem1}
	Assume Hypothesis \ref{hyp1}. Let $z\in\IC\backslash\left(-\infty,0\right]$. Then,
	\begin{align}
		&\tr_{\IC^{r}}\left(R_{z}\left(D_{B}^{\ast}D_{B}\right)^{N}-R_{z}\left(D_{B}D_{B}^{\ast}\right)^{N}\right)=\tr_{\IC^{r}}\sum_{\stackrel{\gamma\in\IN^{d+1}}{\left|\gamma\right|=N+d}}\binom{N-1}{\gamma-\one}^{-1}R_{z}\left(H_{B}\right)^{\gamma}\shuffle\left(\Ii\di B\right)^{\otimes d}\nonumber\\
		+&\tr_{\IC^{r}}\sum_{\stackrel{\gamma\in\IN^{d+3}}{\left|\gamma\right|=N+d+2}}\binom{N-1}{\gamma-\one}^{-1}\left(
\left(R_{z}\left(H_{B}\right)^{\otimes\left(d+2\right)},R_{z}\left(D_{B}^{\ast}D_{B}\right)\right)^{\gamma}\shuffle\left(\Ii\di B\right)^{\otimes\left(d+2\right)}\right.\nonumber\\
&\left.+\left(R_{z}\left(H_{B}\right)^{\otimes\left(d+2\right)},R_{z}\left(D_{B}D_{B}^{\ast}\right)\right)^{\gamma}\shuffle\left(\Ii\di B\right)^{\otimes\left(d+2\right)}\right).
	\end{align}
\end{Lemma}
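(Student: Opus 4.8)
### Proof proposal for Lemma \ref{neulem1}

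The plan is to start from the algebraic identity $a^{N}-b^{N}=\sum_{j=0}^{N-1}a^{j}(a-b)b^{N-1-j}$ applied to $a=R_{z}(D_{B}^{\ast}D_{B})$ and $b=R_{z}(D_{B}D_{B}^{\ast})$, and then exploit the fact that, as differential operators on $\dom\ H_{0}$,
\begin{align*}
D_{B}^{\ast}D_{B}-H_{B}=-\Ii\di^{E}B,\qquad D_{B}D_{B}^{\ast}-H_{B}=+\Ii\di^{E}B,
\end{align*}
so that, via the resolvent identity, $R_{z}(D_{B}^{\ast}D_{B})=R_{z}(H_{B})+R_{z}(H_{B})\,\Ii\di^{E}B\,R_{z}(D_{B}^{\ast}D_{B})$ and similarly (with the opposite sign) for $R_{z}(D_{B}D_{B}^{\ast})$. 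Substituting these expansions into $a-b=R_{z}(D_{B}^{\ast}D_{B})-R_{z}(D_{B}D_{B}^{\ast})$ and iterating, one produces a telescoping series whose even-order terms in $\Ii\di^{E}B$ cancel in the difference while the odd-order ones survive. Since one is free to iterate as many times as needed, I would iterate until one factor $R_{z}(H_{B})$ has been "spent" for every copy of $\Ii\di^{E}B$ inserted, stopping once $d$ copies of $\Ii\di^{E}B$ have been produced with leftover resolvent powers; the remainder term then still carries $R_{z}(D_{B}^{\ast}D_{B})^{k}$ or $R_{z}(D_{B}D_{B}^{\ast})^{k}$ factors, which is exactly the structure of the second sum.

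Concretely, the first step is to rewrite $a^{N}-b^{N}$ and push the $N$ copies of resolvents of $H_{B}$ out of $a^{j}$ and $b^{N-1-j}$ using the Neumann/resolvent expansion, being careful that at this stage everything is a legitimate identity of \emph{bounded} operators (the resolvents are bounded, and $\di^{E}B$ composed with a resolvent of $H_{B}$ is bounded by Proposition \ref{hdomainprop} and Lemma \ref{boundedlem}), so there is no convergence subtlety — the expansions are finite if we stop after finitely many insertions and keep an exact remainder. The combinatorial bookkeeping — which $\gamma\in\IN^{d+1}$, $|\gamma|=N+d$, appears for the fully-expanded part and with what multiplicity, and which $(\eta,k)$ with $|\eta|+k=N-1$ for the remainder — is pure counting: a copy of $R_{z}(H_{B})^{N}$ in position $j$ of the telescope, after inserting $d$ factors $\Ii\di^{E}B$ interleaved with resolvent powers, gives $R_{z}(H_{B})^{\gamma}\shuffle(\Ii\di^{E}B)^{\otimes d}$ with $|\gamma|$ summing the original $N$ resolvents plus the extra ones generated by the Neumann step; the multinomial/binomial coefficients $\binom{N-1}{\gamma-\one}^{-1}$ and $\binom{N-1}{\eta,k}^{-1}$ arise because the same operator word is reached by several distinct choices of $j$ and of where the insertions land, i.e. from composing the ``stars-and-bars'' count of $\sum_{j}a^{j}(\cdots)b^{N-1-j}$ with the count of ordered compositions. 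I would verify the coefficients by checking small cases ($d=1$, small $N$) and then give the general identity.

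The key point making the even-order cancellation work is the sign: in $a=R_{z}(H_{B})+R_{z}(H_{B})(\Ii\di^{E}B)a$ versus $b=R_{z}(H_{B})-R_{z}(H_{B})(\Ii\di^{E}B)b$, a word with an \emph{even} number of $\Ii\di^{E}B$'s contributes with the same sign and coefficient to $a^{N}$ and to $b^{N}$ (because each of $a^{j}$ and $b^{N-1-j}$ contributes the same combinatorics and the signs multiply to $+1$), hence drops out of $a^{N}-b^{N}$; a word with an \emph{odd} number survives with a factor $2$ when it sits entirely inside the expanded part, but the leading odd contribution with exactly $d$ insertions does \emph{not} get a factor $2$ because it is, at that truncation level, already fully inside a single $a^{j}$-or-$b^{N-1-j}$ block — I should double-check whether the stated first sum is the $d$-insertion term with coefficient as written or whether a factor $2$ is hidden in $\binom{N-1}{\gamma-\one}^{-1}$; this is the one place I would be most careful. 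I expect the \textbf{main obstacle} to be precisely pinning down these combinatorial coefficients and the even/odd cancellation pattern simultaneously — the operator-theoretic content is routine given Proposition \ref{hdomainprop} and Lemma \ref{boundedlem}, but getting the multinomials exactly right (and the truncation level $N+d$ versus $N-1$ consistent) requires a careful induction, which I would organize as: (i) prove the one-step resolvent expansion, (ii) prove by induction on the number of insertions a closed form for the partially-expanded telescope, (iii) specialize to $d$ insertions and collect the cancellation, reading off the two sums.
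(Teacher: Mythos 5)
Your route — factor $a^{N}-b^{N}=\sum_{j}a^{j}(a-b)b^{N-1-j}$ and then Neumann-expand each factor — is genuinely different from the paper's, and, as you yourself suspect, the combinatorics becomes the sticking point. The paper avoids that entirely: it proves the identity first for $N=1$, where no combinatorics occurs, by writing the full Neumann series of $R_{z}(D_{B}^{\ast}D_{B})$ and $R_{z}(D_{B}D_{B}^{\ast})$ around $R_{z}(H_{B})$ (valid for $\mathrm{dist}(z,(-\infty,0])$ large), observing that the difference keeps only odd powers of $\Ii\di^{E}B$ and the sum only even powers, and then \emph{applying the Clifford trace} $\tr_{\IC^{r}}$ together with Lemma~\ref{cliflem}. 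Since each factor $\Ii\di^{E}B$ carries exactly one Clifford matrix $c^{i}$, Lemma~\ref{cliflem} annihilates every term with fewer than $d$ such factors; this is what collapses the infinite odd series to the single leading term with $d$ insertions, giving
$\tr_{\IC^{r}}\bigl(R_{z}(D_{B}^{\ast}D_{B})-R_{z}(D_{B}D_{B}^{\ast})\bigr)=\tr_{\IC^{r}}\bigl(R_{z}(H_{B})\Ii\di^{E}B\bigr)^{d}\bigl(R_{z}(D_{B}^{\ast}D_{B})+R_{z}(D_{B}D_{B}^{\ast})\bigr)$
for $|z|$ large, after which both sides are extended to all of $\IC\setminus[0,\infty)$ by holomorphy. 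The $N$-th power version is then obtained simply by differentiating this identity $N-1$ times in $z$: the multinomial coefficients in the statement come mechanically out of the Leibniz rule together with $\p_{z}^{m}R_{z}=(-1)^{m}m!\,R_{z}^{m+1}$. This is precisely the step that removes your ``main obstacle''.

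Two concrete gaps in your plan. First, and more seriously, you never invoke the partial Clifford trace mechanism. Your even/odd sign argument is correct and mirrors the paper's, but by itself it still leaves infinitely many \emph{odd}-order words with $1,3,\ldots,d-2$ insertions of $\Ii\di^{E}B$ surviving in $a^{N}-b^{N}$. You simply declare that you ``stop once $d$ copies have been produced'' without giving a reason why the lower odd orders disappear; without Lemma~\ref{cliflem} (which uses that $d$ is odd and that $\tr_{\IC^{r}}$ of a product of fewer than $d$ distinct Clifford matrices vanishes), the identity is false as stated. Second, the Neumann expansions converge only for $z$ sufficiently far from $(-\infty,0]$, so any identity obtained this way must be continued analytically to all $z\in\IC\setminus(-\infty,0]$; the paper records this explicitly, and you should too. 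Your proposal to verify the coefficients by induction and small cases would ultimately succeed, but you would, in effect, be rederiving the Leibniz rule; proving $N=1$ and differentiating is shorter, makes the coefficients transparent, and avoids the delicate bookkeeping of where insertions land inside $a^{j}(a-b)b^{N-1-j}$.
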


\begin{proof}
	By Hypothesis \ref{hyp1}, and Proposition \ref{hdomainprop}, we have for $w=\mathrm{dist}\left(z,\left(-\infty,0\right]\right)$ large enough,
	\begin{align}
		\left\|\Ii\left(\di B\right)R_{z}\left(H_{B}\right)\right\|_{B\left(L^2\left(\IR^{d},\IC^{r}\otimes H\right)\right)}\leq w^{-\frac{1}{2}}\rho^{1}_{z}\left(B\right)<1.
	\end{align}
	We may develop the Neumann series of $R_{z}\left(D_{B}D_{B}^{\ast}\right)$ and $R_{z}\left(D^{\ast}_{B}D_{B}\right)$, and obtain
	\begin{align}
		&R_{z}\left(D^{\ast}_{B}D_{B}\right)-R_{z}\left(D_{B}D_{B}^{\ast}\right)=2R_{z}\left(H_{B}\right)\sum_{k=0}^{\infty}\left(\Ii\left(\di B\right)R_{z}\left(H_{B}\right)\right)^{2k+1},\nonumber\\
		&R_{z}\left(D_{B}^{\ast}D_{B}\right)+R_{z}\left(D_{B}D_{B}^{\ast}\right)=2R_{z}\left(H_{B}\right)\sum_{k=0}^{\infty}\left(\Ii\left(\di B\right)R_{z}\left(H_{B}\right)\right)^{2k}.
	\end{align}
	So by Lemma \ref{cliflem} we have for $w$ large enough,
	\begin{align}\label{neulem1eq1}
		\tr_{\IC^{r}}\left(R_{z}\left(D^{\ast}_{B}D_{B}\right)-R_{z}\left(D_{B}D_{B}^{\ast}\right)\right)&=2\tr_{\IC^{r}}R_{z}\left(H_{B}\right)\left(\Ii\left(\di B\right)R_{z}\left(H_{B}\right)\right)^{d}\sum_{k=0}^{\infty}\left(\Ii\left(\di B\right)R_{z}\left(H_{B}\right)\right)^{2k}\nonumber\\
		&=\tr_{\IC^{r}}\left(R_{z}\left(H_{B}\right)\Ii\left(\di B\right)\right)^{d}\left(R_{z}\left(D^{\ast}_{B}D_{B}\right)+R_{z}\left(D_{B}D_{B}^{\ast}\right)\right).
	\end{align}
	both sides are holomorphic on $\IC\backslash\left[0,\infty\right)$, which implies that the above identity extends to this domain. Differentiating (\ref{neulem1eq1}) $\left(N-1\right)$-times in $z$, yields the claim.
\end{proof}

We now arrive at the principal decomposition result. However we will first introduce further abbreviations for the involved operators.

\begin{Definition}\label{factordef}
    Let $k,l,m\in\IN_{0}$, $\beta\in\IR^{l+1}$, $\gamma\in\IN^{l+1}$, $\epsilon\in\IR^{l}$, $\zeta\in\left\{0,1,2\right\}^{l}$, $\alpha\geq 0$, and $\delta,\xi>0$. For a operator family $B$ satisfying Hypothesis \ref{hyp1} define the operators
    \begin{align}
        T\left(B\right)^{0}:=&\phi\Ii\left(\di B\right)+\left(1-\phi\right)\Ii c_{R}\left(\p_{R}B\right),\\
        T\left(B\right)^{1}:=&\left(1-\phi\right)\Ii\left(\left(\di-c_{R}\p_{R}\right)B\right),\\
        T\left(B\right)^{2}:=&\Ii\left(\di B\right)=T\left(B\right)^{0}+T\left(B\right)^{1},\\
        L:=&\left[\Delta,\left(1-\psi\right)c_{R}\right].
    \end{align}
    Now define the operators for $m\in\left\{1,\ldots,\gamma_{k}\right\}$,
    \begin{align}
        u_{z}^{a,b,c,d,k}\left(B\right):=&\langle M_{0}\rangle^{a}\langle X\rangle^{b}R_{z}\left(H_{B}\right)^{k}\langle X\rangle^{-b}\langle M_{0}\rangle^{c}\langle\di\rangle^{d},\ a,b,c,d\in\IR,\ k\in\IN,\nonumber\\
        \left(U_{z}\left(B\right)^{\alpha,\beta,\gamma,\delta,\epsilon}\right)_{i}:=&u_{z}^{\beta_{i},\epsilon_{i},2\gamma_{i}-1-\delta-\beta_{i},1+\delta,\gamma_{i}}\left(B\right),\ 0\leq i\leq l-1,\nonumber\\
        \left(U_{z}\left(B\right)^{\alpha,\beta,\gamma,\delta,\epsilon}\right)_{l}:=&u_{z}^{\beta_{l},0,0,0,\gamma_{l}}\left(B\right),\nonumber\\
        \left(V\left(B\right)^{\alpha,\beta,\delta,\epsilon,\zeta}\right)_{i}:=&\langle\di\rangle^{-1-\delta}\langle M_{0}\rangle^{\beta_{i}-\alpha}\langle X\rangle^{\epsilon_{i}}T\left(B\right)^{\zeta_{i}}\langle M_{0}\rangle^{-\beta_{i}},\ i\in\left\{1,\ldots,l\right\},\nonumber\\
        \left(W_{z}\left(B\right)^{\alpha,\beta,\gamma,\delta,\xi,k,m}\right)_{i}:=&u_{z}^{\beta_{i},-i\xi,2\gamma_{i}-1-\delta-\beta_{i},1+\delta,\gamma_{i}}\left(B\right),\ 0\leq i<k\leq d,\nonumber\\
        \left(W_{z}\left(B\right)^{\alpha,\beta,\gamma,\delta,\xi,k,m}\right)_{i}:=&u_{z}^{\beta_{i},-\left(d-i\right)\xi,2\gamma_{i}-1-\delta-\beta_{i},1+\delta,\gamma_{i}}\left(B\right),\ k<i<d,\nonumber\\
        \left(W_{z}\left(B\right)^{\alpha,\beta,\gamma,\delta,\xi,k,m}\right)_{d}:=&u_{z}^{\beta_{d},0,0,0,\gamma_{d}}\left(B\right),\ k<d,\nonumber\\
        \left(W_{z}\left(B\right)^{\alpha,\beta,\gamma,\delta,\xi,k,m}\right)_{k}:=&u_{z}^{\beta_{k},k\xi,2m-1-\beta_{k},1,m}\left(B\right)\left(\langle\di\rangle^{-1}\langle X\rangle^{k\xi}L\langle X\rangle^{\left(d-k\right)\xi}\right)\nonumber\\
        &u_{z}^{\beta_{k}+1-2m,-\left(d-k\right)\xi,2\gamma_{k}-1-\beta_{k}-\delta,1+\delta,\gamma_{k}-m+1}\left(B\right),\ 0\leq k\leq d-1,\nonumber\\
        \left(W_{z}\left(B\right)^{\alpha,\gamma,\delta,\xi,d,m}\right)_{d}:=&u_{z}^{\beta_{d},d\xi,2m-1-\beta_{d},1,m}\left(B\right)\left(\langle\di\rangle^{-1}\langle X\rangle^{d\xi}L\right)u_{z}^{\beta_{d}+1-2m,0,0,0,\gamma_{d}-m+1}\left(B\right).
    \end{align}
\end{Definition}

The next Proposition is the central result of this chapter. It is similar to the central result obtained in \cite{F1}, where the statement below was shown without the presence of the partial trace $\tr_{\IC^{r}}$, but with much stronger assumptions on the family $A$, which are not sensible for the calculation of the principal trace. In fact, if we would take the intersection of Hypothesis \ref{hyp1} and the assumptions made in \cite{F1}, we would always end up in the trivial case
\begin{align}
	\tr_{L^{2}\left(\IR^{d},H\right)}\tr_{\IC^{r}}\left(e^{-tD^{\ast}D}-e^{-tDD^{\ast}}\right)=0.
\end{align}

\begin{Proposition}\label{decomposelem}
    There exists a universal constant $C$ such that the following holds true. Assume Hypothesis \ref{hyp1}. Let $\psi,\phi\in C^{\infty}_{c}\left(\IR^{d}\right)$, with $0\leq\psi,\phi\leq 1$, $\supp\psi\subseteq\overline{B_{\frac{1}{2}}\left(0\right)}$, and $\psi\equiv 1$ on $\overline{B_{\frac{1}{4}}\left(0\right)}$, and $\supp\phi\subseteq\overline{B_{1}\left(0\right)}$, and $\phi\equiv 1$ on $\overline{B_{\frac{1}{2}}\left(0\right)}$.
    Then there exist, independent of the choices of $B$, $\psi$, $\phi$, and $z\in\mathcal{D}_{C}\left(B\right)$, multi-indices $\beta\in\left[\alpha-2N,\alpha+2N\right]\cap\left[1-2N,-1+2N\right]$, $\epsilon^{j},\eta^{j}\in\left[-1,1\right]^{d}$ for $j\in\left\{1,\ldots,5\right\}$, constants $\delta,\xi\in\left[0,1\right]$, and $C_{\gamma}^{j},C_{\gamma,k}<\infty$ for $\gamma\in\IN^{d+1}$, $k\in\left\{0,\ldots,d\right\}$, $j\in\left\{1,2,3\right\}$, such that
    \begin{align}\label{decomposelemmaineq}
        &\tr_{\IC^{r}}\left(R_{z}\left(D_{B}^{\ast}D_{B}\right)^{N}-R_{z}\left(D_{B}D_{B}^{\ast}\right)^{N}\right)\nonumber\\
        =&\tr_{\IC^{r}}\left[\sum_{\stackrel{\gamma\in\IN^{d+1},}{\left|\gamma\right|=N+d}}C_{\gamma}^{1}\sum_{\stackrel{\zeta\in\left\{0,1\right\}^{d},}{\left|\zeta\right|\leq d-1}}U_{z}^{\alpha,\beta,\gamma,\delta,\epsilon^{1}}\left(B\right)\shuffle V^{\alpha,\beta,\delta,\eta^{1},\zeta}\left(B\right)\right]\nonumber\\
        &+\tr_{\IC^{r}}\left[\sum_{\stackrel{\gamma\in\IN^{d+1},}{\left|\gamma\right|=N+d}}C_{\gamma}^{2}\left(\left(1-\phi\right)U_{z}\left(B\right)^{\alpha,\beta,\gamma,\delta,\epsilon^{2}}\shuffle V\left(B\right)^{\alpha,\beta,\delta,\eta^{2},\one}\phi\right.\right.\nonumber\\
        &\left.\left.+\phi U_{z}\left(B\right)^{\alpha,\beta,\gamma,\delta,\epsilon^{3}}\shuffle V\left(B\right)^{\alpha,\beta,\delta,\eta^{3},\one}\right)\right]\nonumber\\
        &+\tr_{\IC^{r}}\left[c_{R}\left(1-\phi\right)\sum_{\stackrel{\gamma\in\IN^{d+1},}{\left|\gamma\right|=N+d}}\sum_{k=0}^{d}C_{\gamma,k}\sum_{m=1}^{\gamma_{k}}W_{z}\left(B\right)^{\alpha,\beta,\gamma,\delta,\xi,k,m}\shuffle V\left(B\right)^{\alpha,\beta,\delta,\eta^{4},\one}\right]\nonumber\\
        &+\tr_{\IC^{r}}\left[\sum_{\stackrel{\gamma=\left(\widehat{\gamma},\gamma_{d+2}\right)\in\IN^{d+2}\times\IN}{\left|\gamma\right|=N+d+2}}C_{\gamma}^{3}\left(U_{z}^{\alpha,\beta,\widehat{\gamma},0,0}\left(B\right)\shuffle V^{\alpha,\beta,0,0,2\cdot\one}\right)\right.\nonumber\\
        &\left.\cdot\Ii\left(\di B\right)\left(R_{z}\left(D_{B}^{\ast}D_{B}\right)^{\gamma_{d+2}}+R_{z}\left(D_{B}D_{B}^{\ast}\right)^{\gamma_{d+2}}\right)\right],\ z\in\mathcal{D}_{C}\left(B\right),
    \end{align}
    where each summand in $\left[\cdot\right]$-brackets is a trace-class operator in $L^2\left(\IR^{d},\IC^{r}\otimes H\right)$ with trace norm bound given by a $z$-independent polynomial in
    \begin{align}\label{decomposelemeq0}
        \tau^{\alpha,\beta,\nabla,d,1}\left(B\right)+\tau^{\alpha,\beta,\p_{R},d,1+\chi}\left(B\right),
    \end{align}
    for $\beta$ running over $z$-independent finite subsets of $\left[\alpha-2N,\alpha+2N\right]\cap\left[1-2N,-1+2N\right]$, and some $z$-independent $\chi>0$ small enough, such that (\ref{decomposelemeq0}) is finite.
\end{Proposition}

\begin{proof}
    We continue with the formula from Lemma \ref{neulem1} and further decompose the involved terms. First, we start with the sum indexed over $\gamma\in\IN^{d+1}$, $\left|\gamma\right|=N+d$, whose summands, up to factors only dependent on $\gamma$ and $N$, are
    \begin{align}\label{decomposelemeq1}
        R_{z}\left(H_{B}\right)^{\gamma}\shuffle\left(\Ii\di B\right)^{\otimes d}.
    \end{align}
    If we expand $\Ii\left(\di B\right)=T\left(B\right)^{2}=T\left(B\right)^{0}+T\left(B\right)^{1}$ in (\ref{decomposelemeq1}), we obtain $2^{d}-1$ summands, where at least one $T\left(B\right)^{0}$ appears, and one summand which only involves $T\left(B\right)^{1}$. The summands which do involve $T\left(B\right)^{0}$ can be decomposed as
    \begin{align}
        U_{z}^{\alpha,\beta,\gamma,\delta,\epsilon^{1}}\left(B\right)\shuffle V^{\alpha,\beta,\delta,\epsilon^{1},\zeta}\left(B\right),
    \end{align}
        where $\zeta\in\left\{0,1\right\}^{d}$, $\left|\zeta\right|\leq d-1$, and $\beta$, $\delta$, $\epsilon^{1}$, and $\eta^{1}$ are chosen as follows. Let $j\in\left\{0,\ldots,d\right\}$ such that $\zeta_{j}=0$. For $\delta>0$ set $\beta_{0}:=0$, $\beta_{i+1}:=\beta_{i}+\alpha+\delta+1-2\gamma_{i}$. For $\xi>0$ set $\epsilon_{i}^{1}:=i\xi$ for $0\leq i\leq j-1$, and $\epsilon_{i}^{1}:=-\left(d-i\right)\xi$ for $j\leq i\leq d-1$. Set $\eta_{i}^{1}:=-\xi$ for $i\neq j$, and $\eta_{j}^{1}:=d\xi$. Chose $\xi>0$ small enough such that $\overline{\left(V^{\alpha,\beta,\delta,\eta^{1},\zeta}\left(B\right)\right)_{i}}\in S^{d}\left(L^2\left(\IR^{d},\IC^{r}\otimes H\right)\right)$ for all $i$, which exists according to Lemma \ref{schattlem}. Lemma \ref{boundedlem} on the other hand ensures that $\overline{\left(U_{z}^{\alpha,\beta,\gamma,\delta,\epsilon^{1}}\right)_{i}}\in B\left(L^2\left(\IR^{d},\IC^{r}\otimes H\right)\right)$, for $0\leq i\leq d-1$, with operator norms bounded by $4^{\gamma_{i}}$ for $w$ satisfying (\ref{decomposelemeq0}). Since $\left(\alpha+1\right)d<2\left(N+d\right)$ by Hypothesis \ref{hyp1}, we may choose $\delta>0$ small enough such that $\beta_{d}<2\gamma_{d}$, which then also implies that $\overline{\left(U_{z}^{\alpha,\beta,\gamma,\delta,\epsilon^{1}}\right)_{d}}\in B\left(L^2\left(\IR^{d},\IC^{r}\otimes H\right)\right)$ by Lemma \ref{boundedlem}.

        Next, we decompose
        \begin{align}T:=R_{z}\left(H_{B}\right)^{\gamma}\shuffle\left(T\left(B\right)^{1}\right)^{\otimes d},
        \end{align}
        via $T=\left(1-\phi\right)T\phi+\phi T+\left(1-\phi\right)T\left(1-\phi\right)$. Those summands which do involve a factor $\phi$ can be written as
        \begin{align}
            \left(1-\phi\right)\left[U_{z}\left(B\right)^{\alpha,\beta,\gamma,\delta,\epsilon^{2}}\shuffle V\left(B\right)^{\alpha,\beta,\delta,\eta^{2},\one}\right]\phi,
        \end{align}
        and
        \begin{align}
            \phi\left[U_{z}\left(B\right)^{\alpha,\beta,\gamma,\delta,\epsilon^{3}}\shuffle V\left(B\right)^{\alpha,\beta,\delta,\eta^{3},\one}\right],
        \end{align}
        where we set $\epsilon_{i}^{2}:=-i\xi$, and $\epsilon_{d}^{2}:=d\xi$, $\epsilon_{i}^{3}:=-\left(d-i\right)\xi$, and $\epsilon_{0}^{3}:=d\xi$, and $\eta_{i}^{2}:=\eta_{i}^{3}:=-\xi$. If we choose $\delta,\xi>0$ sufficiently small, the same reason as above imply that we have $\overline{\left(U_{z}^{\alpha,\beta,\gamma,\delta,\epsilon^{2}}\right)_{i}}\in B\left(L^2\left(\IR^{d},\IC^{r}\otimes H\right)\right)$, for $0\leq i\leq d-1$, $\overline{\left(U_{z}^{\alpha,\beta,\gamma,\delta,\epsilon^{2}}\right)_{d}\phi}\in B\left(L^2\left(\IR^{d},\IC^{r}\otimes H\right)\right)$, $\overline{\phi\left(U_{z}^{\alpha,\beta,\gamma,\delta,\epsilon^{3}}\right)_{0}}\in B\left(L^2\left(\IR^{d},\IC^{r}\otimes H\right)\right)$, and $\overline{\left(U_{z}^{\alpha,\beta,\gamma,\delta,\epsilon^{3}}\right)_{i}}\in B\left(L^2\left(\IR^{d},\IC^{r}\otimes H\right)\right)$ for $1\leq i\leq d$, where the operator norm is bounded by $4^{\gamma_{i}}$. We also have $\overline{\left(V^{\alpha,\beta,\delta,\eta^{2},\one}\right)_{i}}$, $\overline{\left(V^{\alpha,\beta,\delta,\eta^{3},\one}\right)_{i}}\in S^{d}\left(L^2\left(\IR^{d},\IC^{r}\otimes H\right)\right)$.

        For the remaining summand $\left(1-\phi\right)T\left(1-\phi\right)$ requires more work. First note that
	\begin{align}
		\left[H_{B},\left(1-\psi\right)c_{R}\right]=\left[\Delta,\left(1-\psi\right)c_{R}\right]=L,
	\end{align}
	and thus
	\begin{align}
		\left[\left(1-\psi\right)c_{R},R_{z}\left(H_{B}\right)^{n}\right]=\sum_{m=1}^{n}R_{z}\left(H_{B}\right)^{m}LR_{z}\left(H_{B}\right)^{n-m+1}.
	\end{align}
	On the other hand $\left(1-\psi\right)c_{R}$ anti-commutes with $T\left(B\right)^{1}$, since $c_{R}$ anti-commutes with $\di-c_{R}\p_{R}$, because $\p_{R}$ is a component in a orthonormal system given by coordinates.
	Consequently, since there is a odd number of factors $T\left(B\right)^{1}$, we obtain, with $\left\{\cdot,\cdot\right\}$ denoting the anti-commutator,
	\begin{align}
		\left\{\left(1-\psi\right)c_{R},T\right\}=\sum_{k=0}^{d}\left(-1\right)^{k}\sum_{m=1}^{\gamma_{k}}R_{z}\left(H_{B}\right)^{\left(\gamma_{0},\ldots,\gamma_{k-1},m,\gamma_{k}-m+1,\gamma_{k+1},\ldots,\gamma_{d}\right)}\shuffle\left(\iota_{L}^{k+1}\left(T\left(B\right)^{1}\right)^{\otimes d}\right).
	\end{align}
	Applying $\tr_{\IC^{r}}$, and bringing $c_{R}$ around, we thus obtain
	\begin{align}
		&\tr_{\IC^{r}}\left(1-\phi\right)R_{z}\left(H_{B}\right)^{\gamma}\shuffle\left(T\left(B\right)^{1}\right)^{\otimes d}\left(1-\phi\right)\nonumber\\
  =&-\frac{1-\phi}{2}\tr_{\IC^{r}}c_{R}\sum_{k=0}^{d}\left(-1\right)^{k}\sum_{m=1}^{\gamma_{k}}R_{z}\left(H_{B}\right)^{\left(\gamma_{0},\ldots,\gamma_{k-1},m,\gamma_{k}-m+1,\gamma_{k+1},\ldots,\gamma_{d}\right)}\shuffle\left(\iota_{L}^{k+1}\left(T\left(B\right)^{1}\right)^{\otimes d}\right)\nonumber\\
  =&\tr_{\IC^{r}}c_{R}\left(1-\phi\right)\sum_{k=0}^{d}c_{\gamma,k}\sum_{m=1}^{\gamma_{k}}W_{z}\left(B\right)^{\alpha,\beta,\gamma,\delta,\xi,k,m}\shuffle V\left(B\right)^{\alpha,\beta,\delta,\eta_{4},\one},
	\end{align}
    where we set $\eta^{4}_{i}:=-\xi$. As before $\overline{\left(V^{\alpha,\beta,\delta,\eta^{4},\one}\right)_{i}}\in S^{d}\left(L^2\left(\IR^{d},\IC^{r}\otimes H\right)\right)$, and $\overline{\left(W_{z}^{\alpha,\beta,\gamma,\delta,\xi,k,m}\right)_{i}}\in B\left(L^2\left(\IR^{d},\IC^{r}\otimes H\right)\right)$ for $i\neq k$, with operator norm bounded by $4^{\gamma_{i}}$. For $i=k$ the operator $\left(W_{z}^{\bullet}\left(B\right)\right)_{k}$ contains two factors of the type $u_{z}^{\bullet}\left(B\right)$, which both have bounded extensions with operator norm bounded by $4^{m}$, and $4^{\gamma_{k}-m}$ respectively, by Lemma \ref{boundedlem}. The remaining factor of $\left(W_{z}^{\bullet}\left(B\right)\right)_{k}$ is
    \begin{align}\label{decomposelemeq2}
        \langle\di\rangle^{-1}\langle X\rangle^{k\xi}\left[\Delta,\left(1-\psi\right)c_{R}\right]\langle X\rangle^{\left(d-k\right)\xi}.
    \end{align}
    Since $\left(1-\psi\right)c_{R}\in Sym_{0}$, it follows that $\langle X\rangle^{k\xi}\left[\Delta,\left(1-\psi\right)c_{R}\right]\langle X\rangle^{\left(d-k\right)\xi}$ is a first order operator with coefficients in $Sym_{-1+d\xi}\subseteq Sym_{0}$ for $\xi>0$ small enough. In particular (\ref{decomposelemeq2}) has a bounded operator extension with operator norm $C$. Therefore $\overline{\left(W_{z}^{\alpha,\beta,\gamma,\delta,\xi,k,m}\right)_{k}}\in B\left(L^2\left(\IR^{d},\IC^{r}\otimes H\right)\right)$ with operator norm bounded by $4^{\gamma_{k}}C$.

    Finally, in the formula of Lemma \ref{neulem1} we consider the remaining sum indexed by $\gamma\in\IN^{d+3}$, $\left|\gamma\right|=N+d+2$. The summands are up to a constant of the form
    \begin{align}
    \left(R_{z}\left(H_{B}\right)^{\otimes\left(d+2\right)},R_{z}\left(T\right)\right)^{\gamma}\shuffle\left(\Ii\di B\right)^{\otimes\left(d+2\right)},\ T\in\left\{DD^{\ast},D^{\ast}D\right\},
    \end{align}
    which we factorize into
    \begin{align}
        \left(U_{z}^{\alpha,\beta,\widehat{\gamma},0,0}\left(B\right)\shuffle V^{\alpha,\beta,0,0,2\cdot\one}\right)\Ii\left(\di B\right)\langle M_{0}\rangle^{-1}\langle M_{0}\rangle R_{z}\left(T\right)^{\gamma_{d+2}},
    \end{align}
    where $\widehat{\gamma}:=\left(\gamma_{0},\ldots,\gamma_{d+1}\right)$, $\beta_{0}^{2}:=0$, and $\beta_{i+1}^{2}:=\beta_{i}^{2}+\alpha+1-\gamma_{i}$. As noted before we have
    $\overline{\left(U_{z}^{\alpha,\beta,\widehat{\gamma},0,0}\left(B\right)\right)_{i}}\in B\left(L^2\left(\IR^{d},\IC^{r}\otimes H\right)\right)$ with operator norm bounded by $4^{\gamma_{i}}$ by Lemma \ref{boundedlem}. Also $\Ii\di^{E}B\langle M_{0}\rangle^{-1}\in B\left(L^2\left(\IR^{d},\IC^{r}\otimes H\right)\right)$ by Hypothesis \ref{hyp1}, and the fact that for measurable $S$, $\left\|S\right\|_{B\left(L^2\left(\IR^{d},\IC^{r}\otimes H\right)\right)}=\left\|S\right\|_{L^{\infty}\left(\IR^{d},B\left(\IC^{r}\otimes H\right)\right)}$. Furthermore also $\langle M_{0}\rangle R_{z}\left(T\right)^{\gamma_{d+2}}$ is bounded with operator norm bounded by $4^{\gamma_{d+2}}$ by Lemma \ref{boundedlem}. On the other hand Lemma \ref{schattlem} implies that $\overline{V^{\alpha,\beta,0,0,2\cdot\one}_{i}}\in S^{d+1}\left(L^2\left(\IR^{d},\IC^{r}\otimes H\right)\right)$.

    We note that by the proof of Lemma \ref{schattlem} there exist $z$-independent finite sets
    \begin{align}
        I\subset\left[\alpha-2N,2N+\alpha\right]\cap\left[1-2N,-1+2N\right],
    \end{align}
    and $z$-independent constants $C$, and $\chi>0$ small enough, such that the $S^{d}$- respectively $S^{d+1}$-norms of the various operators $\overline{V^{\bullet}\left(B\right)}$ can be bounded uniformly by
    \begin{align}
        C\sum_{\beta\in I,\chi\in J}\left(\tau^{\alpha,\beta,\nabla,d,1}\left(B\right)+\tau^{\alpha,\beta,\p_{R},d,1+\chi}\left(B\right)\right)<\infty.
    \end{align}
\end{proof}

To close this chapter, we show that the partial trace of the semi-group difference of $DD^{\ast}$ and $D^{\ast}D$ is trace-class. 

\begin{Proposition}\label{exptraceprop}
    Let $B$ satisfy Hypothesis \ref{hyp1}, and let $t>0$. Then $\tr_{\IC^{r}}\left(e^{-tD^{\ast}_{B}D_{B}}-e^{-tD_{B}D^{\ast}_{B}}\right)\in S^1\left(L^2\left(\IR^{d},H\right)\right)$.
\end{Proposition}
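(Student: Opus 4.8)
The plan is to recover $e^{-tD_B^{\ast}D_B}-e^{-tD_BD_B^{\ast}}$ from the $N$-th resolvent power difference of Proposition \ref{tracememprop} by a Cauchy-type contour integral, and then to read off trace-class membership from the uniform trace-norm bound supplied there. By Proposition \ref{hdomainprop} the operators $D_B^{\ast}D_B$ and $D_BD_B^{\ast}$ are non-negative and self-adjoint, so their spectra lie in $[0,\infty)$ and $R_z(T)=(T+z)^{-1}$ is analytic on $\IC\setminus(-\infty,0]$ with $\left\|R_z(T)\right\|_{B\left(L^2\left(\IR^{d},\IC^{r}\otimes H\right)\right)}\le\mathrm{dist}\left(z,(-\infty,0]\right)^{-1}$ for $T\in\left\{D_B^{\ast}D_B,D_BD_B^{\ast}\right\}$. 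Since Hypothesis \ref{hyp1} forces $\rho_z^{-2N}(B)+\rho_z^{2N}(B)\to 0$ as $\mathrm{dist}(z,(-\infty,0])\to\infty$, one may fix $w_0\ge 1$ so large that condition (\ref{tracemempropeq4}) of Proposition \ref{tracememprop} holds for every $z$ with $\mathrm{dist}(z,(-\infty,0])=w_0$, and set $\Gamma:=\left\{z\in\IC:\mathrm{dist}(z,(-\infty,0])=w_0\right\}$ (the two horizontal rays $x\pm\Ii w_0$, $x\le 0$, closed off by the half-circle $\left|z\right|=w_0$, $\Re z\ge0$), oriented counterclockwise around $(-\infty,0]$.

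First I would record, for $t>0$ and $T\in\left\{D_B^{\ast}D_B,D_BD_B^{\ast}\right\}$, the representation
\begin{align*}
	e^{-tT}=\frac{(N-1)!}{2\pi\Ii\,t^{N-1}}\int_{\Gamma}e^{tz}R_z(T)^{N}\,\Id z,
\end{align*}
the integral converging in $B\left(L^2\left(\IR^{d},\IC^{r}\otimes H\right)\right)$ because $\left\|R_z(T)^{N}\right\|\le w_0^{-N}$ on $\Gamma$ while $\left|e^{tz}\right|=e^{t\Re z}$ is integrable along the two rays. This follows from the analytic-semigroup formula $e^{-tT}=\frac{1}{2\pi\Ii}\int_{\Gamma}e^{tz}R_z(T)\,\Id z$ by inserting $R_z(T)^{N}=\frac{(-1)^{N-1}}{(N-1)!}\p_z^{N-1}R_z(T)$ and integrating by parts $(N-1)$ times in $z$: the boundary terms at the two ends of $\Gamma$ vanish since $e^{tz}\to 0$ there while $\left\|\p_z^{j}R_z(T)\right\|=j!\left\|R_z(T)^{j+1}\right\|\le j!\,w_0^{-j-1}$ stays bounded, and $\p_z^{N-1}e^{tz}=t^{N-1}e^{tz}$.

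Subtracting the two representations and applying the bounded partial-trace map $\tr_{\IC^{r}}$, which commutes with the norm-convergent integral, gives
\begin{align*}
	\tr_{\IC^{r}}\left(e^{-tD_B^{\ast}D_B}-e^{-tD_BD_B^{\ast}}\right)=\frac{(N-1)!}{2\pi\Ii\,t^{N-1}}\int_{\Gamma}e^{tz}\,\tr_{\IC^{r}}\left(R_z(D_B^{\ast}D_B)^{N}-R_z(D_BD_B^{\ast})^{N}\right)\Id z.
\end{align*}
By the choice of $w_0$ and Proposition \ref{tracememprop}, for every $z\in\Gamma$ the operator $\tr_{\IC^{r}}\left(R_z(D_B^{\ast}D_B)^{N}-R_z(D_BD_B^{\ast})^{N}\right)$ lies in $S^1\left(L^2\left(\IR^{d},H\right)\right)$ with trace norm $\le C$, a constant independent of $z\in\Gamma$; moreover it depends $S^1$-continuously on $z$ along $\Gamma$, since the only $z$-dependence in the factorisations used in the proof of Proposition \ref{tracememprop} sits in the bounded resolvent factors, which are norm-continuous, so that Lemma \ref{traceconvlem} applies. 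Hence $z\mapsto e^{tz}\tr_{\IC^{r}}\left(R_z(D_B^{\ast}D_B)^{N}-R_z(D_BD_B^{\ast})^{N}\right)$ is a continuous $S^1$-valued map on $\Gamma$ with $S^1$-norm bounded by $C\,e^{t\Re z}$, which is integrable over $\Gamma$; therefore the integral converges as a Bochner integral in $S^1\left(L^2\left(\IR^{d},H\right)\right)$, and $\tr_{\IC^{r}}\left(e^{-tD_B^{\ast}D_B}-e^{-tD_BD_B^{\ast}}\right)\in S^1\left(L^2\left(\IR^{d},H\right)\right)$, as claimed.

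The one point that needs genuine care is the transfer of $S^1$-membership through the contour integral in the last paragraph. It is handled either as above, via $S^1$-continuity of the integrand and Bochner integration, or, if one wishes to avoid the continuity statement, by observing that the Riemann sums of the integral lie in $S^1$ with uniformly bounded trace norm and converge to it in operator norm, whence the limit lies in $S^1$ by lower semicontinuity of $\left\|\cdot\right\|_{S^1}$ under operator-norm convergence. The verification of the Cauchy representation with the unbounded contour $\Gamma$ and the vanishing of the integration-by-parts boundary terms is routine given the elementary resolvent bound $\left\|R_z(T)\right\|\le\mathrm{dist}(z,(-\infty,0])^{-1}$ on $\Gamma$.
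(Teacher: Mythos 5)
Your proof is correct and follows essentially the same strategy as the paper: represent the semigroup difference as a contour integral of the $N$-th resolvent power difference and use the uniform $S^1$-bound of Proposition \ref{tracememprop} on the integrand to conclude trace-class membership of the limit. The paper uses a sector-shaped contour $\Gamma_{\pm}(s)=-C+se^{\pm\Ii\rho}$ instead of your constant-distance one (a cosmetic choice), and your observation that lower semicontinuity of $\left\|\cdot\right\|_{S^1}$ under operator-norm convergence of the Riemann sums lets one sidestep the $S^1$-continuity of the integrand is a useful refinement of a point the paper leaves implicit.
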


\begin{proof}
	Let $C>0$ be large enough, and let $\rho\in\left(0,\frac{\pi}{2}\right)$ be small enough. Let $\Gamma:=\Gamma_{-}\circ\Gamma_{+}^{-1}$, where $\Gamma_{\pm}\left(s\right):=-C\pm e^{\Ii\rho}s$, $s\in\left[0,\infty\right)$, and $\circ$, ${}^{-1}$ denote the composition of paths, respectively the reversal of a path. Note that $\Gamma$ encloses the non-negative real axis once. Then for $\lambda\geq 0$, we have by Cauchy's integral formula
	\begin{align}
		e^{-t\lambda}=\p_{\lambda}^{N-1}\left(\left(-t\right)^{1-N}e^{-t\lambda}\right)=\frac{\left(N-1\right)!\left(-t\right)^{1-N}}{2\pi\Ii}\int_{\Gamma}\frac{e^{-tz}}{\left(z-\lambda\right)^{N}}\Id z.
	\end{align}
	Via holomorphic functional calculus we thus have
	\begin{align}
		e^{-tD^{\ast}_{B}D_{B}}-e^{-tD_{B}D_{B}^{\ast}}=\frac{\left(N-1\right)!\left(-t\right)^{1-N}}{2\pi\Ii}\int_{\Gamma}e^{-tz}\left(R_{-z}\left(D_{B}D^{\ast}_{B}\right)^{N}-R_{-z}\left(D^{\ast}_{B}D_{B}\right)^{N}\right)\Id z,
	\end{align}
	where the integral exists as a Bochner integral in $B\left(L^2\left(\IC^{r}\otimes H\right)\right)$ in the norm topology. Therefore we may interchange $\tr_{\IC^{r}}$ and integration to obtain
	\begin{align}\label{exptracepropeq1}
		&\tr_{\IC^{r}}\left(e^{-tD^{\ast}_{B}D_{B}}-e^{-tD_{B}D^{\ast}_{B}}\right)\nonumber\\
		=&\frac{\left(N-1\right)!\left(-t\right)^{1-N}}{2\pi\Ii}\int_{\Gamma}e^{-tz}\tr_{\IC^{r}}\left(R_{-z}\left(D_{B}D^{\ast}_{B}\right)^{N}-R_{-z}\left(D^{\ast}_{B}D_{B}\right)^{N}\right)\Id z.
	\end{align}
	We claim that the right hand side of (\ref{exptracepropeq1}) converges even as a Bochner integral in $S^1\left(L^2\left(\IR^{d},H\right)\right)$. Indeed, by Proposition \ref{decomposelem}, we have some constant $C'$, such that we may estimate the integrand by
	\begin{align}\label{exptracepropeq2}
		\left\|e^{-tz}\tr_{\IC^{r}}\left(R_{-z}\left(DD^{\ast}\right)^{N}-R_{-z}\left(D^{\ast}D\right)^{N}\right)\right\|_{S^1\left(L^2\left(\IR^{d},H\right)\right)}\leq C'e^{-t\left(-C+\cos\left(\rho\right)\left|s\right|\right)},\  z=\Gamma\left(s\right),
	\end{align}
	which is integrable over $s\in\IR$.
\end{proof}

\section{Approximation of the operator function $B$}

The goal of this chapter is to construct a approximating sequence $B_{\bullet}$ of the operator function $B$, which enables us to calculate the principal trace in the following chapters. We remark that a similar but simpler approximation argument has been used as a standard technique for example in \cite{Push} and \cite{GLMST} with the same goal. In the multidimensional setup in this paper we unfortunately need a more elaborate approximation procedure caused by the requirement that $\nabla B_{\bullet}$ should be sufficiently smooth and trace-class valued, a fact which comes for free in the one dimensional case.
Before we proceed with an approximation statement, we present the following convolution result which allows us smoothing of the operator family without destroying its decay properties.

\begin{Definition}\label{mollifierdef}
    Let $O(d)$ be the orthogonal group in dimension $d$ with (bi-invariant) Haar measure $m$, and let $\left(g_{n}\right)_{n\in\IN}\subset C^{\infty}\left(O(d)\right)$ be a delta sequence with supports decreasing to $\left\{\one\right\}$ of $O(d)$ with respect to the convolution given by the Haar measure\footnote{Since $O(d)$ is a smooth Lie group the usual construction of approximate units in $L^1(G)$ of locally compact groups (cf. \cite{Dixmier}[Chapter 13.2]) can be refined to be smooth.}. For $\psi^{d}\in C_{c}^{\infty}\left(\IR^{d}\right)$ non-negative with $\int_{\IR}\psi\left(x\right)\Id x=1$ let $\psi_{n}^{d}\left(x\right):=n^{d}\psi^{d}\left(nx\right)$ be the associated delta sequence of $\IR^{d}$. Let $\phi\in C^{\infty}_{c}\left(\IR^{d}\right)$ with $\phi\equiv 1$ near $0$. For $\epsilon>0$ let
    \begin{align}
        \IF_{\epsilon}&:=\left\{f\in C^{1}\left(\IR^{d},\IC^{k\times k}\right)\left|\left\|f\right\|_{\IF_{\epsilon}}<\infty\right.\right\},\nonumber\\
        \left\|f\right\|_{\IF_{\epsilon}}&:=\left\|f\right\|_{L^{\infty}}+\left\|\langle X\rangle\nabla f\right\|_{L^{\infty}}+\left\|\langle X\rangle^{1+\epsilon}\p_{R}f\right\|_{L^{\infty}}.
    \end{align}
    For $f\in\IF_{\epsilon}$ define
    \begin{align}\label{radialconvolutionlemeq0}
\delta_{n}\left(f\right)\left(x\right):=&\phi\left(x\right)\int_{\IR^{d}}\psi_{n}^{d}\left(x-y\right)f\left(y\right)\Id y\nonumber\\
        &+\left(1-\phi\left(x\right)\right)\int_{0}^{\infty}\int_{O(d)}\psi^{1}_{n}\left(\log r\right)g_{n}\left(Y\right)f\left(r^{-1}Y^{\ast}x\right)\Id m\left(Y\right)\frac{\Id r}{r}.
        \end{align}
\end{Definition}

\begin{Lemma}\label{radialconvolutionlem}
        For $\epsilon>0$ the operator family $\delta_{n}$ is uniformly bounded in $\IF_{\epsilon}$, and $\delta_{n}\left(f\right)$ respectively $\nabla \delta_{n}\left(f\right)$ converge point-wise to $f$ respectively $\nabla f$. For $f\in\mathcal{F}_{\epsilon}$ the function $\delta_{n}\left(f\right)$ is smooth with bounded derivatives for all $n\in\IN$.
\end{Lemma}

\begin{proof}
    Let $f_{n}:=\delta_{n}\left(f\right)$. For $x_{0}\in\IR^{d}\backslash\left\{0\right\}$ extend $e_{1}:=\frac{x_{0}}{\left|x_{0}\right|}$ to a orthonormal basis $(e_{i})_{i=1}^{d}$ of $\IR^{d}$. For a neighbourhood $U$ of $x_{0}$ small enough, the vectors $\left(\frac{x}{\left|x\right|},e_{2},\ldots,e_{d}\right)$, $x\in U$, are still a basis of $\IR^{d}$. Denote by $O_{U}\left(x\right):=\operatorname{GS}\left(\frac{x}{\left|x\right|},e_{2},\ldots,e_{d}\right)\in O\left(d\right)$ the Gram-Schmidt procedure applied to this basis. The map $O_{U}$ is smooth on $U$ and thus for $x\in U$, using the symmetry of convolution,
    \begin{align}\label{radialconvolutionlemeq1}
        &\int_{0}^{\infty}\int_{O_{d}}\psi^{1}_{n}\left(\log r\right)g_{n}\left(Y\right)f\left(r^{-1} Y^{\ast}x\right)\Id m\left(Y\right)\frac{\Id r}{r}\nonumber\\
        =&\int_{0}^{\infty}\int_{O(d)}\psi^{1}_{n}\left(\log\frac{\left|x\right|}{r}\right)g_{n}\left(O_{U}\left(x\right)Y^{\ast}\right)f\left(rY_{1}\right)\Id m\left(Y\right)\frac{\Id r}{r},
    \end{align}
    where $Y_{1}$ denotes the first column of $Y$. Thus (\ref{radialconvolutionlemeq1}) is smooth in $x$ on $U$. Since $1-\phi$ cuts away $0$, we conclude that $f_{n}$ is smooth, because all other components of (\ref{radialconvolutionlemeq0}) are clearly smooth. Since $\psi_{n}^{d}$ and $\phi$ are compactly supported, $1-\phi$ is supported away from $0$ and $\psi_{n}^{1}$ is again compactly supported, one easily concludes by standard estimates on the integrals, that all derivatives of $f_{n}$ are bounded. It is also apparent that $f_{n}$, and $\nabla f_{n}$ converge point-wise to $f$ respectively $\nabla f$, since for each point, due to the compact support of the Dirac sequences, the integrals in (\ref{radialconvolutionlemeq0}) can be changed to take place over compact sets, on which $f$ and $\nabla f$ are uniformly continuous. It remains to check that there is a constant $C$ such that $\left\|f_{n}\right\|_{\IF_{\epsilon}}\leq C$. Clearly $\left|f_{n}\left(x\right)\right|\leq\left\|f\right\|_{L^{\infty}\left(\IR^{d}\right)}$ for all $x\in\IR^{d}$. Since $\phi$ is compactly supported it is also clear that there is a constant $C$ only dependent on $\phi$ and $s\geq 0$ such that
    \begin{align}
        \left\|\langle X\rangle^{s}\nabla\left(\phi\cdot\left(\psi^{d}_{n}\ast_{\IR^{d}}f\right)\right)\right\|_{L^{\infty}\left(\IR^{d}\right)}\leq C\left(\left\|f\right\|_{L^{\infty}\left(\IR^{d}\right)}+\left\|\nabla f\right\|_{L^{\infty}\left(\IR^{d}\right)}\right).
    \end{align}
    For the second summand of (\ref{radialconvolutionlemeq0}) note that if a derivative is applied to $1-\phi$ we obtain a compactly supported function, hence a similar arbitrary decay result as for the first summand shown above. Differentiation of $f\left(r^{-1}Y^{\ast}x\right)$ yields
    \begin{align}
        \p_{x^{i}}f\left(r^{-1}Y^{\ast}x\right)=r^{-1}\langle\left(\nabla f\right)\left(Y^{\ast}\frac{x}{r}\right),Y_{i}\rangle,\ \p_{R}f\left(r^{-1}Y^{\ast}x\right)=r^{-1}\left(\p_{R}f\right)\left(r^{-1}Y^{\ast}x\right),
    \end{align}
    which implies for $f\in\IF_{\epsilon}$, and $x\in\supp\left(1-\phi\right)$,
    \begin{align}
        \left|\langle x\rangle\p_{x^{i}}\int_{0}^{\infty}\int_{O(d)}\psi_{n}^{1}\left(\log r\right)g_{n}\left(Y\right)f\left(r^{-1}Y^{\ast}x\right)\Id m\left(Y\right)\frac{\Id r}{r}\right|\leq C\left\|f\right\|_{\IF_{\epsilon}},
    \end{align}
    where $C$ only depends on $\phi$. Similarly there are constants $C', C''$ independent of $f$ and $n$, such that for $f\in\IF_{\epsilon}$, and $x\in\supp\left(1-\phi\right)$,
    \begin{align}
        &\left|\langle x\rangle^{1+\epsilon}\p_{R}\int_{0}^{\infty}\int_{O(d)}\psi_{n}^{1}\left(\log r\right)g_{n}\left(Y\right)f\left(r^{-1}Y^{\ast}x\right)\Id m\left(Y\right)\frac{\Id r}{r}\right|\leq C'\left\|f\right\|_{\IF_{\epsilon}}\int_{0}^{\infty}\psi_{n}^{1}\left(\log r\right)r^{\epsilon}\frac{\Id r}{r}\nonumber\\
        =&C'\left\|f\right\|_{\IF_{\epsilon}}\int_{\IR}\psi^{1}\left(s\right)e^{n^{-1}\epsilon s}\Id s\leq C''\left\|f\right\|_{\IF_{\epsilon}}.
    \end{align}
    We conclude that $\left\|f_{n}\right\|_{\IF_{\epsilon}}$ is uniformly bounded.
\end{proof}

We now arrive at the central result of this chapter. The operator family $B$ is approximated by a sequence of finite rank valued operator families with improved regularity. In spirit, this statement is close to the approximation argument used for example in \cite{Push} and \cite{GLMST} in the one-dimensional case.

\begin{Propandef}\label{approxpropandef}
	Denote $P_{n}:=\one_{\left[-n,n\right]}\left(A_{0}\right)$, $n\in\IN$. Let $\left(\phi_{n}\right)_{n\in\IN}\subset\mathcal{D}$ be an orthonormal basis of $H$, and let $K_{n}$ be the finite rank projection onto $\mathrm{span}\left\{\left(\phi_{j}\right)_{j=1}^{n}\right\}$ in $H$ for $n\in\IN$. Identify the range of $K_{n}$ with $\IC^{n}$ and let $\delta_{l}$ be the mollification operator defined in Definition \ref{mollifierdef} with a fixed choice of cut-off functions and delta sequences. 
    
    Define for $B$ satisfying Hypothesis \ref{hyp1},
	\begin{align}
		B_{n}\left(x\right):=P_{n}B\left(x\right)P_{n},\ B_{n,m}\left(x\right):=K_{m}B_{n}\left(x\right)K_{m},\ B_{n,m,l}:=\delta_{l}\left(B_{n,m}\right),\ n,m,l\in\IN.
	\end{align}
	Then there are constants $C$, $C_{n}$, and $C_{n,m}$, where $C$ is universal, $C_{n}$ is independent of $m$ and $C_{n,m}$ is independent of $l$, such that
	\begin{align}\label{approxpropandefeq2}
        \lim_{n\to\infty}\tr_{\IC^{r}}\left(R_{z}\left(D^{\ast}_{B_{n}}D_{B_{n}}\right)^{N}-R_{z}\left(D_{B_{n}}D_{B_{n}}^{\ast}\right)^{N}\right)=\tr_{\IC^{r}}\left(R_{z}\left(D^{\ast}_{B}D_{B}\right)^{N}-R_{z}\left(D_{B}D_{B}^{\ast}\right)^{N}\right),
    \end{align}
       for $z\in\mathcal{D}_{C}\left(B\right)$,
       \begin{align}
        &\lim_{m\to\infty}\tr_{\IC^{r}}\left(R_{z}\left(D^{\ast}_{B_{n,m}}D_{B_{n,m}}\right)^{N}-R_{z}\left(D_{B_{n,m}}D_{B_{n,m}}^{\ast}\right)^{N}\right)\nonumber\\
        &=\tr_{\IC^{r}}\left(R_{z}\left(D^{\ast}_{B_{n}}D_{B_{n}}\right)^{N}-R_{z}\left(D_{B_{n}}D_{B_{n}}^{\ast}\right)^{N}\right),
        \end{align}
        for $\operatorname{dist}\left(z,\left(-\infty,0\right]\right)\geq C_{n}$, and
        \begin{align}
        &\lim_{l\to\infty}\tr_{\IC^{r}}\left(R_{z}\left(D^{\ast}_{B_{n,m,l}}D_{B_{n,m,l}}\right)^{N}-R_{z}\left(D_{B_{n,m,l}}D_{B_{n,m,l}}^{\ast}\right)^{N}\right)\nonumber\\
        &=\tr_{\IC^{r}}\left(R_{z}\left(D^{\ast}_{B_{n,m}}D_{B_{n,m}}\right)^{N}-R_{z}\left(D_{B_{n,m}}D_{B_{n,m}}^{\ast}\right)^{N}\right),
        \end{align}
        for $\operatorname{dist}\left(z,\left(-\infty,0\right]\right)\geq C_{n,m}$, are limits in $S^1\left(L^2\left(\IR^{d},\IC^{r}\otimes H\right)\right)$.
\end{Propandef}

\begin{proof}
	We start with
	\begin{align}\label{approxpropandefeq3}
		\lim_{n\to\infty}\tr_{\IC^{r}}\left(R_{z}\left(D^{\ast}_{B_{n}}D_{B_{n}}\right)^{N}-R_{z}\left(D_{B_{n}}D_{B_{n}}^{\ast}\right)^{N}\right)=\tr_{\IC^{r}}\left(R_{z}\left(D^{\ast}_{B}D_{B}\right)^{N}-R_{z}\left(D_{B}D_{B}^{\ast}\right)^{N}\right).
	\end{align}
	We expand both sides of (\ref{approxpropandefeq3}), according to the decomposition described in Proposition \ref{decomposelem}, into a finite sum of trace-class operators. Each summand consists (up to a constant scalar factor and possibly a constant bounded operator $\overline{\langle\di\rangle^{-1}\langle X\rangle^{k\xi}L\langle X\rangle^{(d-k)\xi}}$) of a product of operators with factors $u_{z}^{\bullet}\left(B_{n}\right)$, $V^{\bullet}\left(B_{n}\right)$, $\Ii\left(\di B_{n}\right)\langle M_{0}\rangle^{-1}$, and $\langle M_{0}\rangle \left(R_{z}\left(D_{B_{n}}^{\ast}D_{B_{n}}\right)^{k}+R_{z}\left(D_{B_{n}}D_{B_{n}}^{\ast}\right)^{k}\right)$ with upper indices independent of $n$ and $B$ (suppressed here by $\bullet$), respectively the same factors with $B_{n}$ replaced by $B$. We first note that, since $P_{n}$ commutes with $A_{0}$ and have norm $1$,
	\begin{align}
		\sup_{n\in\IN}\left(\rho_{z}^{-2N}\left(B_{n}\right)+\rho^{2N}_{z}\left(B_{n}\right)\right)\leq\rho_{z}^{-2N}\left(B\right)+\rho^{2N}_{z}\left(B\right)\xrightarrow{\mathrm{dist}\left(z,\left(-\infty,0\right]\right)\to\infty}0,
	\end{align}
	and because $P_{n}$ converges strongly to $\one_{H}$ for $n\to\infty$, we also have for any $\beta\in\left[-2N,2N\right]$, and a.e. $x\in\IR^{d}$, strong operator convergences in $H$ of
	\begin{align}
		\langle A_{0}\rangle^{\beta}\left(B_{n}\left(x\right)+A_{0}\right)^{2}\langle A_{0}\rangle^{-2-\beta}\xrightarrow{n\to\infty}\langle A_{0}\rangle^{\beta}\left(B\left(x\right)+A_{0}\right)^{2}\langle A_{0}\rangle^{-2-\beta},
	\end{align}
	and of
	\begin{align}
		\langle A_{0}\rangle^{\beta}\p_{i} B_{n}\left(x\right)\langle A_{0}\rangle^{-2-\beta}\xrightarrow{n\to\infty}\langle A_{0}\rangle^{\beta}\p_{i} B\left(x\right)\langle A_{0}\rangle^{-2-\beta},\ i\in\left\{1,\ldots,d\right\}.
	\end{align}
	The prerequisites of Lemma \ref{sotlem} are satisfied, such that we have SOTA convergences of of all factors $\overline{u_{z}^{\bullet}\left(B_{n}\right)}$ to $\overline{u_{z}^{\bullet}\left(B\right)}$, and of 
    \begin{align}
        \langle M_{0}\rangle \left(R_{z}\left(D_{B_{n}}^{\ast}D_{B_{n}}\right)^{k}+R_{z}\left(D_{B_{n}}D_{B_{n}}^{\ast}\right)^{k}\right)\xrightarrow{n\to\infty}\langle M_{0}\rangle \left(R_{z}\left(D_{B}^{\ast}D_{B}\right)^{k}+R_{z}\left(D_{B}D_{B}^{\ast}\right)^{k}\right).
    \end{align}
    Clearly the same argument also implies SOTA convergence of $\Ii\left(\di B_{n}\right)\langle M_{0}\rangle^{-1}$ to $\Ii\left(\di B\right)\langle M_{0}\rangle^{-1}$. It remains to discuss the $S^{d}\left(L^2\left(\IR^{d},\IC^{r}\otimes H\right)\right)$- respectively $S^{d+1}\left(L^2\left(\IR^{d},\IC^{r}\otimes H\right)\right)$-valued operators $\overline{V^{\bullet}\left(B_{n}\right)}$. Since $P_{n}$ commutes with $A_{0}$ and is constant on $\IR^{d}$ we have $V^{\bullet}\left(B_{n}\right)=\widetilde{P}_{n}V^{\bullet}\left(B\right)\widetilde{P}_{n}$ with $\widetilde{P}_{n}:=\one_{L^2\left(\IR^{d},\IC^{r}\right)}\otimes P_{n}$ which converges in SOTA to $\one_{L^2\left(\IR^{d},\IC^{r}\otimes H\right)}$. Lemma \ref{traceconvlem} thus implies that $\overline{V^{\bullet}\left(B_{n}\right)}$ converges in $S^{d}\left(L^2\left(\IR^{d},\IC^{r}\otimes H\right)\right)$ respectively $S^{d+1}\left(L^2\left(\IR^{d},\IC^{r}\otimes H\right)\right)$ to $\overline{V^{\bullet}\left(B\right)}$. We conclude that (\ref{approxpropandefeq3}) converges in trace-class by Lemma \ref{traceconvlem} for $z\in\mathcal{D}_{C}\left(B\right)$ for some universal constant $C$.
	
	For the second step we fix $n\in\IN$, and consider the statement
	\begin{align}\label{approxpropandefeq5}
		&\lim_{m\to\infty}\tr_{\IC^{r}}\left(R_{z}\left(D^{\ast}_{B_{n,m}}D_{B_{n,m}}\right)^{N}-R_{z}\left(D_{B_{n,m}}D_{B_{n,m}}^{\ast}\right)^{N}\right)\nonumber\\
		=&\tr_{\IC^{r}}\left(R_{z}\left(D^{\ast}_{B_{n}}D_{B_{n}}\right)^{N}-R_{z}\left(D_{B_{n}}D_{B_{n}}^{\ast}\right)^{N}\right).
	\end{align}
	Note that we may not proceed analogously to the first step, since $K_{m}$, in contrast to $P_{n}$, does not necessarily commute with $A_{0}$. However, since $\langle M_{0}\rangle^{s}P_{n}$ is a bounded operator for all $s\in\IR$, $n\in\IN$, we note for $B_{n}$ the stronger condition below holds.
	\begin{align}\label{approxpropandefeq6}
		\tau^{0,0,\nabla,d,1}\left(B_{n}\right)&<\infty,\nonumber\\
		\exists\epsilon>0:\ \tau^{0,0,\p_{R},d,1+\epsilon}\left(B_{n}\right)&<\infty.
	\end{align}
	This allows us to simplify the decomposition in Proposition \ref{decomposelem} to set $\alpha=0$ and $\beta=0$ in all appearing indices. In particular no factor of the decomposition contains a function of $M_{0}$. We suppress the new choice of indices by denoting them with $\widetilde{\bullet}$. The same decomposition can be deployed for $B_{n,m}$ instead of $B_{n}$. Moreover there exists $\eta>0$ uniformly in $m$, and a $m$ independent constant $C$ such that
    \begin{align}
        \tau^{0,0,\nabla,d,1}\left(B_{n,m}\right)+\tau^{0,0,\p_{R},d,1+\eta}\left(B_{n,m}\right)\leq C<\infty.
    \end{align}
    Since $K_{m}$ is constant on $\IR^{d}$, we have $V^{\widetilde{\bullet}}\left(B_{n,m}\right)=\widetilde{K_{m}}V^{\widetilde{\bullet}}\left(B_{n}\right)\widetilde{K_{m}}$, where the self-adjoint $\widetilde{K_{m}}:=\one_{L^2\left(\IR^{d},\IC^{r}\right)}\otimes K_{m}$ converge in strong operator topology to $\one_{L^2\left(\IR^{d},\IC^{r}\otimes H\right)}$. Thus Lemma \ref{traceconvlem} and Lemma \ref{lightschattlem} imply that $V^{\widetilde{\bullet}}\left(B_{n,m}\right)$ converges in $d$th respectively $(d+1)$th Schatten-von Neumann norm to $V^{\widetilde{\bullet}}\left(B_{n}\right)$ as $m\to\infty$.
    Since $\left\|B_{n,m}\right\|_{C^{1}_{b}\left(\IR^{d},B\left(H\right)\right)}$, $\left\|B_{n}\right\|_{C^{1}_{b}\left(\IR^{d},B\left(H\right)\right)}$ are uniformly bounded in $m$, Lemma \ref{lightsotlem} implies that all factors $\overline{u_{z}^{\widetilde{\bullet}}\left(B_{n,m}\right)}$ converge in strong operator topology to $\overline{u_{z}^{\widetilde{\bullet}}\left(B_{n}\right)}$, and that the last factor of type $\overline{u_{z}^{\widetilde{\bullet}}\left(B_{n,m}\right)}$ in each product of the decomposition converges even in SOTA (because they do not involve $\di$). Similarly also $R_{z}\left(D_{B_{n,m}}^{\ast}D_{B_{n,m}}\right)^{k}+R_{z}\left(D_{B_{n,m}}D_{B_{n,m}}^{\ast}\right)^{k}$ converges in SOTA to $R_{z}\left(D_{B_{n}}^{\ast}D_{B_{n}}\right)^{k}+R_{z}\left(D_{B_{n}}D_{B_{n}}^{\ast}\right)^{k}$. The domain of convergence for the parameter $z$ is uniformly given by $w=\operatorname{dist}\left(z,\left(-\infty,0\right]\right)\geq C$, for some constant $C$ independent of $m$, according to Lemma \ref{lightboundedlem}, since $\left\|B_{n,m}\right\|_{C^{1}_{b}\left(\IR^{d},B\left(H\right)\right)}$ is uniformly bounded in $m$. Clearly also $\Ii\left(\di B_{n,m}\right)$ converges in SOTA to $\Ii\di^{E}B_{n}$. We conclude that (\ref{approxpropandefeq5}) converges in trace class for $w\geq C$ for some $m$ independent constant $C$ by Lemma \ref{traceconvlem}.

    For the third step fix both $n,m\in\IN$, and consider the statement
	\begin{align}\label{approxpropandefeq7}
		&\lim_{l\to\infty}\tr_{\IC^{r}}\left(R_{z}\left(D^{\ast}_{B_{n,m,l}}D_{B_{n,m,l}}\right)^{N}-R_{z}\left(D_{B_{n,m,l}}D_{B_{n,m,l}}^{\ast}\right)^{N}\right)\nonumber\\
		=&\tr_{\IC^{r}}\left(R_{z}\left(D^{\ast}_{B_{n,m}}D_{B_{n,m}}\right)^{N}-R_{z}\left(D_{B_{n,m}}D_{B_{n,m}}^{\ast}\right)^{N}\right).
	\end{align}
    First note that $B_{n,m}\left(x\right)=K_{m}B_{n,m}\left(x\right)K_{m}$, and thus also $B_{n,m,l}\left(x\right)=K_{m}B_{n,m,l}\left(x\right)K_{m}$, and $\nabla B_{n,m,l}\left(x\right)=K_{m}\nabla B_{n,m,l}\left(x\right)K_{m}$. In particular, if we identify $\rg\left(K_{m}\right)\simeq\IC^{m}$, there is some $l$ independent $\eta>0$ such that
    \begin{align}
        \tau^{0,0,\nabla,d,1}\left(B_{n,m,l}\right)+\tau^{0,0,\p_{R},d,1+\eta}\left(B_{n,m,l}\right)\leq\left\|B_{n,m,l}\right\|_{\mathcal{F}_{\epsilon}}\leq C<\infty.
    \end{align}
    Additionally $B_{n,m,l}$, and $\nabla B_{n,m,l}$ converge point-wise to $B_{n,m}$, and $\nabla B_{n,m}$ respectively by Lemma \ref{radialconvolutionlem}. We use the same decomposition with $\alpha=0$ and $\beta=0$ from the second step, then Lemma \ref{lightsotlem} implies that all factors $\overline{u_{z}^{\widetilde{\bullet}}\left(B_{n,m,l}\right)}$ converge in strong operator topology to $\overline{u_{z}^{\widetilde{\bullet}}\left(B_{n,m}\right)}$, and that the last factor of type $\overline{u_{z}^{\widetilde{\bullet}}\left(B_{n,m,l}\right)}$ in each product of the decomposition converges even in SOTA. Similarly also $R_{z}\left(D_{B_{n,m,l}}^{\ast}D_{B_{n,m,l}}\right)^{k}+R_{z}\left(D_{B_{n,m,l}}D_{B_{n,m,l}}^{\ast}\right)^{k}$ converges in SOTA to $R_{z}\left(D_{B_{n,m}}^{\ast}D_{B_{n,m}}\right)^{k}+R_{z}\left(D_{B_{n,m}}D_{B_{n,m}}^{\ast}\right)^{k}$. Clearly also $\Ii\left(\di B_{n,m,l}\right)$ converges in SOTA to $\Ii\left(\di B_{n,m}\right)$. The domain of convergence for the parameter $z$ is uniformly given by $\operatorname{dist}\left(z,\left(-\infty,0\right]\right)\geq C$, for some constant $C$ independent of $l$, according to Lemma \ref{lightboundedlem}, since $\left\|B_{n,m,l}\right\|_{C^{1}_{b}\left(\IR^{d},B\left(H\right)\right)}\leq\left\|B_{n,m,l}\right\|_{\mathcal{F}_{\epsilon}}$ is uniformly bounded in $l$ by Lemma \ref{radialconvolutionlem}.
    Because $K_{m}$ is finite rank, and $\nabla B_{n,m,l}$, respectively $\p_{R} B_{n,m,l}$ can be bounded in $S^{d}$-norm by multiples of $\langle X\rangle^{-1}$ respectively of $\langle X\rangle^{-1-\epsilon}$ uniformly in $l$, the dominated convergence theorem and Corollary \ref{simoninterpolcor} imply that the factors $V^{\widetilde{\bullet}}\left(B_{n,m,l}\right)$ converge in $S^{d}\left(L^2\left(\IR^{d},\IC^{r}\otimes H\right)\right)$ respectively $S^{d+1}\left(L^2\left(\IR^{d},\IC^{r}\otimes H\right)\right)$. Together we conclude that (\ref{approxpropandefeq7}) converges in trace class for $w\geq C$ for some $l$ independent constant $C$ by Lemma \ref{traceconvlem}.
\end{proof}

A simple consequence of holomorphic functional calculus and the possibility of choosing uniform integration paths, allows us to conclude the approximation of the principal trace in its exponentiated form.

\begin{Corollary}\label{expapproxcor}
	Let $B$ satisfy Hypothesis \ref{hyp1}, and let $t>0$. Then the iterated limits
	\begin{align}
		\lim_{n\to\infty}\lim_{m\to\infty}\lim_{l\to\infty}\tr_{\IC^{r}}\left(e^{-tD_{B_{n,m,l}}^{\ast}D_{B_{n,m,l}}}-e^{-tD_{B_{n,m,l}}D^{\ast}_{B_{n,m,l}}}\right)=\tr_{\IC^{r}}\left(e^{-tD_{B}^{\ast}D_{B}}-e^{-tD_{B}D^{\ast}_{B}}\right),
	\end{align}
	are limits in $S^1\left(L^2\left(\IR^{d},H\right)\right)$.
\end{Corollary}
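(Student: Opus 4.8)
The plan is to connect the resolvent-power approximation (\ref{approxpropandefeq2}) to the semigroup through the holomorphic functional calculus representation already used in the proof of Proposition \ref{exptraceprop}. Each $B_{n,m}$ satisfies Hypothesis \ref{hyp1} (as used around (\ref{approxpropandefeq2})), so Proposition \ref{hdomainprop} applies and $D_{B_{n,m}}D_{B_{n,m}}^{\ast}$, $D_{B_{n,m}}^{\ast}D_{B_{n,m}}$ are non-negative self-adjoint; writing $e^{-t\lambda}$ as a Cauchy integral of $\left(z-\lambda\right)^{-N}$ over the wedge $\Gamma=\Gamma_{-}\circ\Gamma_{+}^{-1}$, $\Gamma_{\pm}\left(s\right)=-C\pm e^{\Ii\rho}s$, and using that $\tr_{\IC^{r}}$ commutes with Bochner integrals in $B\left(L^{2}\left(\IR^{d},\IC^{r}\otimes H\right)\right)$, one obtains — on a common contour $\Gamma$ to be fixed below, its fitness for all the $B_{n,m}$ being guaranteed by the uniform estimates further down — for every $B'\in\left\{B\right\}\cup\left\{B_{n,m}:n,m\in\IN\right\}$,
\begin{align*}
	\tr_{\IC^{r}}\left(e^{-tD_{B'}^{\ast}D_{B'}}-e^{-tD_{B'}D_{B'}^{\ast}}\right)=\frac{\left(N-1\right)!\left(-t\right)^{1-N}}{2\pi\Ii}\int_{\Gamma}e^{-tz}\tr_{\IC^{r}}\left(R_{-z}\left(D_{B'}D_{B'}^{\ast}\right)^{N}-R_{-z}\left(D_{B'}^{\ast}D_{B'}\right)^{N}\right)\Id z .
\end{align*}

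I would fix $\rho\in\left(0,\frac{\pi}{2}\right)$ and $C$ large enough that along $\Gamma$ one has $w:=\mathrm{dist}\left(-z,\left(-\infty,0\right]\right)\geq w_{0}$, with $w_{0}$ big enough that, first, the convergence statement (\ref{approxpropandefeq2}) is valid at every $z\in\Gamma$ (this constrains $w_{0}$ only through the $\rho$-seminorms of $B$, via (\ref{tracemempropeq4})), and, second, the block estimates of the next paragraph hold; note $\left|z\right|\leq C_{\rho}w$ on $\Gamma$ and $\left|e^{-tz}\right|=e^{-t\,\mathrm{Re}\,z}$ is integrable along $\Gamma$ against a bounded factor. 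The corollary then follows by dominated convergence under the integral sign, provided (a) for each $z\in\Gamma$ the integrand converges in $S^{1}\left(L^{2}\left(\IR^{d},H\right)\right)$ under the iterated limit $\lim_{n}\lim_{m}$, and (b) its $S^{1}$-norm is dominated, uniformly in $z\in\Gamma$ and in $n$, and for each fixed $n$ uniformly in $m$, by a fixed constant. Assertion (a) is precisely (\ref{approxpropandefeq2}), whose proof produces the inner limit $m\to\infty$ at fixed $n$ and then the outer limit $n\to\infty$, applicable because $w\geq w_{0}$ on $\Gamma$. Assertion (b) is the crux and, exactly as in the proof of (\ref{approxpropandefeq2}), needs two different arguments because $K_{m}$, unlike $P_{n}$, does not commute with $A_{0}$.

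For the outer limit I would run the factorization (\ref{tracemempropeq3}) of the proof of Proposition \ref{tracememprop} with $B_{n}$ in place of $B$: since $P_{n}$ commutes with $A_{0}$ one has $\rho_{z}^{\pm 2N}\left(B_{n}\right)\leq\rho_{z}^{\pm 2N}\left(B\right)$ and the Schatten blocks equal $\widetilde{P}_{n}V_{i}^{B}\widetilde{P}_{n}$ with $\left\|V_{i}^{B_{n}}\right\|_{S^{d}}\leq\left\|V_{i}^{B}\right\|_{S^{d}}$, so Lemmas \ref{boundedlem} and \ref{schattlem} bound every block uniformly in $n$, and multiplying them gives the $n$-uniform bound (in effect (\ref{exptracepropeq2}) with a constant independent of $n$). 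For the inner limit at fixed $n$ I would instead use the simplified factorization (\ref{approxpropandefeq7}), available because $\langle M_{0}\rangle^{s}P_{n}$ is bounded so that $B_{n}$ satisfies the stronger bounds (\ref{approxpropandefeq6}) with $\alpha=0$: there the Schatten blocks are $\widetilde{K_{m}}\widetilde{V}_{i}^{B_{n}}\widetilde{K_{m}}$, of $S^{d}$-norm at most $\left\|\widetilde{V}_{i}^{B_{n}}\right\|_{S^{d}}$, finite and independent of $m$ since the $\widetilde{K_{m}}$ are contractions, whereas the bounded blocks $\langle X\rangle^{-j\epsilon}R_{-z}\left(H_{B_{n,m}}\right)^{\gamma_{i}}\langle X\rangle^{j\epsilon}\langle\di\rangle^{1+\delta}$ are estimated without Lemma \ref{boundedlem}: conjugation by $\langle X\rangle^{\pm\epsilon}$ merely adds to $\Delta$ a first-order operator with bounded coefficients (computed in the proof of Lemma \ref{boundedlem}; $\left(B_{n,m}+M_{0}\right)^{2}$ is a multiplication operator and commutes with $\langle X\rangle^{\pm\epsilon}$), and the form inequality $H_{B_{n,m}}\geq\Delta$ yields $\left\|R_{-z}\left(H_{B_{n,m}}\right)\right\|\leq w^{-1}$, $\left\|R_{-z}\left(H_{B_{n,m}}\right)\Delta^{1/2}\right\|\lesssim w^{-1/2}$ and $\left\|R_{-z}\left(H_{B_{n,m}}\right)\langle\di\rangle^{1+\delta}\right\|\lesssim w^{\left(\delta-1\right)/2}$, uniformly in $n$ and $m$; hence each such block is bounded by a constant depending only on $d$ and $\delta$ once $w\geq w_{0}$. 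Multiplying block bounds supplies the $m$-uniform dominating function.

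I expect the main obstacle to be exactly this last uniform-in-$m$ estimate. One cannot invoke Proposition \ref{tracememprop} directly for $B_{n,m}$, because the $w$-threshold (\ref{tracemempropeq4}) for the family $B_{n,m}$ degenerates as $m\to\infty$ (the seminorms $\rho_{z}^{\pm 2N}\left(B_{n,m}\right)$ acquire $\langle A_{0}\rangle^{\pm 2N}$-conjugates of $K_{m}$), so $\Gamma$ could not be kept fixed that way; the resolution is that in the $\alpha=0$ factorization the bounded blocks depend on $B_{n,m}$ only through the form inequality $H_{B_{n,m}}\geq\Delta$ and are therefore controlled by a threshold depending on $d$ alone. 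Everything else — the outer limit, the commutation of $\tr_{\IC^{r}}$ with the integral, and the pointwise $S^{1}$-convergence — is either routine or already contained in (\ref{approxpropandefeq2}) and Proposition \ref{exptraceprop}.
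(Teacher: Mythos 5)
Your proof follows the same route as the paper: the Cauchy integral representation from Proposition \ref{exptraceprop}, pointwise-in-$z$ trace-class convergence from Proposition and Definition \ref{approxpropandef}, and dominated convergence along a fixed wedge $\Gamma$. What you add is a careful justification of the $m$-uniform integrable dominant, which the paper's one-paragraph proof condenses into the remark that the factorization argument is ``analogous to the first step'' with all $\beta$ set to $0$. Your diagnosis there is right: invoking Lemma \ref{boundedlem} for $B_{n,m}$ would require a $w$-threshold governed by $\rho_{z}^{\pm 2N}\left(B_{n,m}\right)$, and since $\langle A_{0}\rangle^{\beta}K_{m}$ has $m$-dependent norm (the $\phi_{j}\in\mathcal{D}$ need not be $A_{0}$-eigenvectors), these seminorms do not decay uniformly in $m$, so $\Gamma$ could not be fixed that way. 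Your replacement --- bound the blocks $\widetilde{U}_{i}^{B_{n,m}}$ directly through the Neumann series for the $\langle X\rangle^{\pm\epsilon}$ conjugation centred at $R_{-z}\left(H_{B_{n,m}}\right)$ itself, and use the form inequality $H_{B_{n,m}}\geq\Delta$ (valid since $\left(B_{n,m}+M_{0}\right)^{2}\geq 0$ pointwise) to obtain $m,n$-independent estimates such as $\left\|R_{-z}\left(H_{B_{n,m}}\right)\langle\di\rangle^{1+\delta}\right\|\lesssim w^{\left(\delta-1\right)/2}$ --- gives a $w$-threshold depending only on $d$ and $\delta$, so the contour can be fixed once, and is consistent with how the Schatten blocks are handled (contraction by $\widetilde{K}_{m}$). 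In short: same approach as the paper, with a welcome and genuinely needed clarification of the $m$-uniformity.
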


\begin{proof}
	According to Proposition and Definition \ref{approxpropandef}, the trace norm bound from Proposition \ref{decomposelem} holds uniformly in $n\in\IN$ for $B_{n}$ instead of $B$, on the domain $\operatorname{dist}\left(z,\left(-\infty,0\right]\right)\geq C$ for $C$ independent of $n$. So we may replace $D_{B}$, and $D_{B}^{\ast}$ wit $D_{B_{n}}$ and $D^{\ast}_{B_{n}}$ in equation (\ref{exptracepropeq1}) with the same integrable dominant (\ref{exptracepropeq2}), and the dominated convergence theorem allows us to pass the limit in trace norm past the integral in (\ref{exptracepropeq1}). In a similar fashion we argue for the two iterated limits in $m$ respectively $l$, where we might have to modify the integration curve dependent on $n$ for the limit in $m$ (respectively dependent on $n,m$ for the limit in $l$) in the proof of Proposition \ref{exptraceprop} accordingly. This is possible due to statement in Proposition and Definition \ref{approxpropandef} pertaining to the domains of convergence for the parameter $z$.
\end{proof}

We present some properties of the approximating sequence $B_{n,m,l}$ in the upcoming Remark, which closes this chapter on the approximation scheme.

\begin{Definition}
	For $l,s\in\IN$, $p\in\left[1,\infty\right]$, and a Banach space $X$, denote by $C^{l,p}\left(\IR^{s},X\right)$ the space of $l$-times continuously differentiable functions with derivatives in $L^{p}$ over $\IR^{d}$ with values in $X$. For $l,m,n,s\in\IN_{0}$ with $m\geq n$, denote the Banach space,
	\begin{align}
		C^{l}_{m,n}\left(\IR^{s}\right):=C^{l,\infty}\left(\IR^{s},B\left(\IC^{r}\right)\otimes\langle A_{0}\rangle^{-m}B\left(H\right)\langle A_{0}\rangle^{n}\right),
	\end{align}
	with the norm
	\begin{align}
		\left\|f\right\|_{l,m,n}:=\sum_{\alpha\in\IN^{s}_{0},\left|\alpha\right|\leq l}\left\|\p^{\alpha}f\right\|_{L^{\infty}\left(\IR^{d},B\left(\IC^{r}\right)\otimes\langle A_{0}\rangle^{-m}B\left(H\right)\langle A_{0}\rangle^{n}\right)},\ f\in C^{l}_{m,n}\left(\IR^{s}\right).
	\end{align}
	Let $C^{l,S^1}_{m,n}\left(\IR^{s}\right)$ denote the Banach space $C^{l,p}\left(\IR^{s},B\left(\IC^{r}\right)\otimes\langle A_{0}\rangle^{-m}S^1\left(H\right)\langle A_{0}\rangle^{n}\right)$ with the norm
	\begin{align}
		\left\|f\right\|_{l,m,n}^{S^1}:=\sum_{\alpha\in\IN^{s}_{0},\left|\alpha\right|\leq l}\left\|\p^{\alpha}f\right\|_{L^{\infty}\left(\IR^{d},B\left(\IC^{r}\right)\otimes\langle A_{0}\rangle^{-m}S^{1}\left(H\right)\langle A_{0}\rangle^{n}\right)},\ f\in C^{l,S^{1}}_{m,n}\left(\IR^{s}\right).
	\end{align}
\end{Definition}

\begin{Remark}\label{limitrem}
	Assume that $B$ satisfies Hypothesis \ref{hyp1}.
	\begin{enumerate}
		\item By (\ref{radlimeq}) for $\phi\in\dom\ A_{0}$, a.e. $y\in S_{1}\left(0\right)$, a.e. $x\in\IR^{d}$, and $\gamma\in\IN_{0}^{d}$ with $\left|\gamma\right|\leq 1$, we have
	\begin{align}
		\lim_{R\to\infty}R^{\left|\gamma\right|}\left\|\left(\p^{\gamma}B\left(Ry+x\right)-\p^{\gamma}B\left(Ry\right)\right)\phi\right\|_{H}&=0. 
	\end{align}
	Then for a.e. $y\in S_{1}\left(0\right)$, a.e. $x\in\IR^{d}$, and $\gamma\in\IN_{0}^{d}$ with $\left|\gamma\right|\leq 1$, we have
	\begin{align}
		\lim_{R\to\infty}R^{\left|\gamma\right|}\left\|\p^{\gamma}B_{n,m,l}\left(Ry+x\right)-\p^{\gamma}B_{n,m,l}\left(Ry\right)\right\|_{\langle A_{0}\rangle^{-i}S^{1}\left(H\right)\langle A_{0}\rangle^{j}}&=0.
	\end{align}
	for all $n,m,l\in\IN$, $i,j\in\IN_{0}$.
	\item For all $n,m,l\in\IN$, $i,j\in\IN_{0}$, we have
	\begin{align}
		\left(B_{n,m,l}+M_{0}\right)^{2}-M_{0}^{2}, \ \Ii\left(\di B_{n,m,l}\right)\in C^{\infty,S^1}_{i,j}\left(\IR^{d}\right).
	\end{align}
	\end{enumerate}
The projection $P_{n}$ commutes with $A_{0}$ and $K_{m}$ is finite-rank, which maps into any $\dom\ A_{0}^{i}$, $i\in\IN_{0}$, thus the first statement holds for $B_{n,m}$ instead of $B_{n,m,l}$. But then $B_{n,m,l}=\delta_{l}\left(B_{n,m}\right)$, which together with the dominated convergence theorem (amenable since $\left\|B_{n,m}\right\|_{\mathcal{F}_{\epsilon}}<\infty$ for some $\epsilon>0$) yields the first statement. The second statement follows again since $P_{n}$ commutes with $A_{0}$ and $K_{m}$ is finite-rank, mapping into any $\dom\ A_{0}^{i}$, $i\in\IN_{0}$, and because the range of $\delta_{l}$ is smooth by Lemma \ref{radialconvolutionlem}.
\end{Remark}

\section{Construction of auxiliary integral kernels related to the semigroups of $D^{\ast}D$ and $DD^{\ast}$}

The overarching goal of this and the following chapters is to calculate the principal trace. Since we assume Hypothesis \ref{hyp1} for $B$, we may facilitate this task, by replacing $B$ by $B_{n,m,l}$ (and $D_{B}$, $D^{\ast}_{B}$ by $D_{B_{n,m,l}}$, $D_{B_{n,m,l}}^{\ast}$ accordingly), Corollary \ref{expapproxcor} enables us to then take the iterated limits in the end to obtain the desired result. For convenience of notation, in the bulk of this and the following chapters we drop the indices $n,m,l$, which will be reintroduced at the end.

Let us first introduce some short hand notations.
\begin{Definition}
    Denote by $\Delta_{l}\subset\left[0,1\right]^{l}$ the $l$-simplex
\begin{align}
	\Delta_{l}=\left\{w\in\left[0,1\right]^{l}\left|w_{l}\leq w_{l-1}\leq\ldots\leq w_{1}\right.\right\}.
\end{align}
We will frequently use the following parametrization of $\Delta_{l}$ by $l+1$ coordinates:
\begin{align}
	s_{0}=1-w_{1},\ s_{1}=w_{1}-w_{2},\ldots, s_{l-1}=w_{l-1}-w_{l},\ s_{l}=w_{l},
\end{align}
which satisfies $s_{j}\in\left[0,1\right]$, and $\sum_{j=0}^{l}s_{j}=1$.
Note that $\vol\left(\Delta_{l}\right)=\frac{1}{l!}$. Denote for a Hilbert space $Y$, a self-adjoint operator $T$ in $Y$, bounded below, and $s\in\left(\IR^{\geq 0}\right)^{l}$
\begin{align}
	e^{-sT}:=\left(e^{-s_{1}T},\ldots,e^{-s_{l}T}\right)\in B\left(Y\right)^{l},
\end{align}
and
\begin{align}
	T^{\otimes l}:=\left(T,\ldots, T\right)\in B\left(Y\right)^{l}.
\end{align}

Denote by $L^{0}\left(X,B\right)$ the measurable functions on a measure space $X$ with values in a Banach space $B$. For $M\in L^{0}\left(X,B\right)$ denote
\begin{align}
	M^{\otimes l}:=\left(M,\ldots, M\right)\in\left(L^{0}\left(X,B\right)\right)^{l},
\end{align}
and for $s\in\left(\IR^{\geq 0}\right)^{l}$, $\IR^{\geq{0}}\ni t\mapsto N_{t}\in L^{0}\left(X,B\right)$,
\begin{align}
	N_{s}:=\left(N_{s_{1}},\ldots,N_{s_{l}}\right)\in\left(L^{0}\left(X,B\right)\right)^{l}.
\end{align}
Define the convolution shuffle product $\shuffle_{\ast}$ between $\left(L^{0}\left(\IR^{d}\times\IR^{d},B\left(Y\right)\right)\right)^{l+1}$, and\\
$\left(L^{0}\left(\IR^{d},B\left(Y\right)\right)\right)^{l}$ with values in $L^{0}\left(\IR^{d}\times\IR^{d},B\left(Y\right)\right)$ by
\begin{align}
	&\left(q\shuffle_{\ast}M\right)\left(x,y\right):=\int_{u\in\left(\IR^{d}\right)^{l}}q_{0}\left(x,u_{1}\right)M_{1}\left(u_{1}\right)q_{1}\left(u_{1},u_{2}\right)\ldots M_{l}\left(u_{l}\right)q_{l}\left(u_{l},y\right)\Id u\in B\left(Y\right),\nonumber\\
	&q=\left(q_{0},\ldots,q_{l}\right)\in \left(L^{0}\left(\IR^{d}\times\IR^{d},B\left(Y\right)\right)\right)^{l+1},\ M=\left(M_{1},\ldots,M_{l}\right)\in\left(L^{0}\left(\IR^{d},B\left(Y\right)\right)\right)^{l},
\end{align}
whenever the integral converges absolutely in $B\left(Y\right)$-norm for a.e. $\left(x,y\right)\in\IR^{d}\times\IR^{d}$.
\end{Definition}

	We want to calculate the trace of an operator in $L^2\left(\IR^{d},H\right)$, and thus it is helpful to find a $B\left(H\right)$-valued integral kernel, and integrate its trace in $H$ over the diagonal, which is intuitive, considering the scalar case covered by the well-known theorem by Mercer. A Hilbert space version is provided in the appendix of \cite{BruSee}, albeit only for $\IR^{d}$, $d=1$. However for general $d\in\IN$ the original proof works with the necessary simple changes (which we omit here).
	
	\begin{Theorem}[\cite{BruSee}]\label{bruseethm}
		Let $Y$ be a separable Hilbert space, and $T\in S^1\left(L^2\left(\IR^{d},Y\right)\right)$. Then there exists a unique $S^{1}\left(Y\right)$-valued measurable integral kernel $t$ of $T$, such that
		\begin{align}
			\left(Tf\right)\left(x\right)=\int_{\IR^{d}}t\left(x,y\right)f\left(y\right)\Id y,\ f\in L^2\left(\IR^{d},Y\right),
		\end{align}
		and
		\begin{align}
			h\mapsto\left(x\mapsto t\left(x,x+h\right)\right),
		\end{align}
		is a bounded continuous map of $\IR^{d}$ to $L^1\left(\IR^{d},S^1\left(Y\right)\right)$. Moreover
		\begin{align}
			\int_{\IR^{d}}\left\|t\left(x,x\right)\right\|_{S^1\left(Y\right)}\Id x&\leq\left\|T\right\|_{S^1\left(L^2\left(\IR^{d},Y\right)\right)},\nonumber\\
			\int_{\IR^{d}}\tr_{Y}\left(t\left(x,x\right)\right)\Id x&=\tr_{L^2\left(\IR^{d},Y\right)}\left(T\right).
		\end{align}
	\end{Theorem}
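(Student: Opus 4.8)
The plan is to follow the classical argument for Mercer-type theorems (as in the scalar case) and adapt it to the vector-valued setting, working with the Hilbert-space-valued ``square root'' decomposition of a trace-class operator. First I would reduce to the case where $T$ is non-negative by writing $T$ as a linear combination of (at most four) non-negative trace-class operators via the polar decomposition $T=U|T|$ and spectral considerations on $|T|$; since everything to be proved is linear in $T$ it suffices to handle $T\geq 0$. For such $T$, write $T=S^{\ast}S$ with $S:=T^{1/2}\in S^2(L^2(\IR^d,Y))$. The key point is that a Hilbert--Schmidt operator $S$ on $L^2(\IR^d,Y)$ has a kernel $s\in L^2(\IR^d\times\IR^d,S^2(Y))$ — this is the $Y$-valued analogue of the standard identification $S^2(L^2)\cong L^2(\IR^d\times\IR^d)$, which one proves by expanding in an orthonormal basis of $L^2(\IR^d)\otimes(\text{ONB of }Y)$ — with $\|S\|_{S^2}^2=\int\!\!\int\|s(x,y)\|_{S^2(Y)}^2\,\Id x\,\Id y$.

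Next I would define the candidate kernel of $T$ by the ``composition of kernels'' formula
\begin{align}
	t(x,y):=\int_{\IR^d}s(u,x)^{\ast}s(u,y)\,\Id u,
\end{align}
where the integrand is understood as a product in $S^2(Y)\cdot S^2(Y)\subseteq S^1(Y)$, so that by Cauchy--Schwarz in $u$ (applied to the $S^2(Y)$-norms) the integral converges absolutely in $S^1(Y)$ for a.e.\ pair $(x,y)$, indeed for a.e.\ $x$ for \emph{every} $h$ after the substitution $y=x+h$. One checks directly that $t$ represents $T=S^{\ast}S$: for $f,g\in L^2(\IR^d,Y)$, Fubini gives $\langle g,Tf\rangle=\langle Sg,Sf\rangle=\int\!\!\int\langle (Sg)(u),(Sf)(u)\rangle\,\Id u$, and inserting the kernel representations of $Sf,Sg$ and reorganizing yields $\int\!\!\int\langle g(x),t(x,y)f(y)\rangle\,\Id x\,\Id y$. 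Uniqueness of the kernel is standard: two $S^1(Y)$-valued kernels representing the same operator agree as elements of $L^1_{\mathrm{loc}}$, hence a.e.

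For the continuity and trace statements I would argue as follows. Set $g_h(x):=s(\cdot,x+h)\in L^2(\IR^d,S^2(Y))$; the map $h\mapsto g_h$ (equivalently, translation acting on $s$ in the second variable) is continuous from $\IR^d$ to $L^2(\IR^d\times\IR^d,S^2(Y))$ because continuity of translation holds in any $L^p$ of a Banach-space-valued function, $1\le p<\infty$. Then $x\mapsto t(x,x+h)=\langle g_0(x),g_h(x)\rangle_{S^2(Y)\text{-pairing}}$, more precisely $t(x,x+h)=\int s(u,x)^{\ast}s(u,x+h)\,\Id u$, and by Cauchy--Schwarz
\begin{align}
	\int_{\IR^d}\|t(x,x+h)\|_{S^1(Y)}\,\Id x\leq\|g_0\|_{L^2}\,\|g_h\|_{L^2}=\|S\|_{S^2}^2=\|T\|_{S^1},
\end{align}
giving boundedness; the difference $t(x,x+h)-t(x,x+h')$ is estimated the same way against $\|g_h-g_{h'}\|_{L^2}$, yielding continuity of $h\mapsto(x\mapsto t(x,x+h))$ into $L^1(\IR^d,S^1(Y))$. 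Finally, taking $h=0$ gives the first displayed inequality, and for the trace identity I would compute $\int_{\IR^d}\tr_Y(t(x,x))\,\Id x=\int_{\IR^d}\int_{\IR^d}\tr_Y(s(u,x)^{\ast}s(u,x))\,\Id u\,\Id x=\int\!\!\int\langle s(u,x),s(u,x)\rangle_{S^2(Y)}=\|S\|_{S^2}^2$, and separately $\tr_{L^2(\IR^d,Y)}(T)=\tr(S^{\ast}S)=\|S\|_{S^2}^2$; the passage of $\tr_Y$ inside the $u$-integral is justified since the integrand is non-negative (for $T\ge0$), and Tonelli applies. Reassembling the four non-negative pieces gives the general identity.

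The main obstacle I expect is the careful vector-valued bookkeeping: establishing the identification $S^2(L^2(\IR^d,Y))\cong L^2(\IR^d\times\IR^d,S^2(Y))$ with the correct isometry, and making the ``kernel composition'' $t(x,y)=\int s(u,x)^{\ast}s(u,y)\,\Id u$ rigorous as a Bochner integral in $S^1(Y)$ — including the measurability of $(x,y)\mapsto t(x,y)$ and the a.e.\ absolute convergence — rather than any genuinely new idea; the continuity and trace formulas then follow mechanically from Cauchy--Schwarz and continuity of translation in $L^2$. As the excerpt notes, this is exactly the argument of \cite{BruSee} for $d=1$ with the only changes being notational (replacing $d=1$ by general $d$, which affects nothing in the proof) and the replacement of scalars by the Hilbert space $Y$, so I would present it concisely with emphasis on these two points.
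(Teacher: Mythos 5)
The paper does not actually prove this theorem: it is stated as cited from \cite{BruSee}, with the surrounding prose noting only that the original $d=1$ argument goes through for general $d$ ``with the necessary simple changes.'' Your reconstruction is the right Br\"uning--Seeley-type argument in outline — identify $S^{2}\left(L^{2}\left(\IR^{d},Y\right)\right)$ with $L^{2}\left(\IR^{2d},S^{2}\left(Y\right)\right)$, build the $S^{1}\left(Y\right)$-valued kernel as a fibrewise composition of two Hilbert--Schmidt kernels, and run Cauchy--Schwarz together with continuity of translation in vector-valued $L^{2}$.

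One step in your plan would not deliver what the theorem asserts. You reduce to $T\geq 0$ by a four-piece decomposition and justify this by saying ``everything to be proved is linear in $T$.'' The trace identity is linear, but the inequality $\int\left\|t\left(x,x\right)\right\|_{S^{1}\left(Y\right)}\Id x\leq\left\|T\right\|_{S^{1}}$ is not: for self-adjoint $T$ the split $T=T_{+}-T_{-}$ is harmless since $\left\|T_{+}\right\|_{S^{1}}+\left\|T_{-}\right\|_{S^{1}}=\left\|T\right\|_{S^{1}}$, but separating a non-self-adjoint $T$ into real and imaginary parts already costs a factor of $2$, so your argument yields only $\int\left\|t\left(x,x\right)\right\|_{S^{1}\left(Y\right)}\Id x\leq 2\left\|T\right\|_{S^{1}}$. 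The repair is to drop the positivity reduction entirely and factor $T=\left(U\left|T\right|^{1/2}\right)\left(\left|T\right|^{1/2}\right)$ via the polar decomposition: both factors are Hilbert--Schmidt of $S^{2}$-norm $\leq\left\|T\right\|_{S^{1}}^{1/2}$, and with $a,b\in L^{2}\left(\IR^{2d},S^{2}\left(Y\right)\right)$ their kernels and $t\left(x,y\right):=\int a\left(x,u\right)b\left(u,y\right)\Id u$, exactly your Cauchy--Schwarz computation gives the sharp constant and your translation argument gives the continuity unchanged. The trace identity then follows from $\tr T=\tr\left(AB\right)=\int\int\tr_{Y}\left(a\left(x,u\right)b\left(u,x\right)\right)\Id u\ \Id x$, whose absolute convergence is again Cauchy--Schwarz, so no appeal to Tonelli or positivity is needed. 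The rest of your outline (the $S^{2}$-kernel isometry, measurability, continuity of $h\mapsto t\left(\cdot,\cdot+h\right)$) is correct and does capture the only real content of the ``simple changes'' claimed by the paper.
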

	
	\begin{Remark}\label{bruseerem}
		We note that the variational principle and the separability of $Y$ imply that if we find a jointly continuous $B\left(Y\right)$-valued integral kernel $\tilde{t}$ of $T$, by the above stated uniqueness, we have already $t=\tilde{t}$ as elements of $C_{b}\left(\IR^{d},L^1\left(\IR^{d},S^1\left(Y\right)\right)\right)$. Therefore if we want to calculate the trace of $T$, it suffices to integrate the $Y$-trace of the diagonal of the continuous kernel of $T$, if it exists.
	\end{Remark}
	
	The next goal therefore is the construction of a continuous, $B\left(H\right)$-valued integral kernel of $\tr_{\IC^{r}}\left(e^{-tD^{\ast}D}-e^{-tDD^{\ast}}\right)$. To that end we will use a classical Volterra series argument.
	
	\begin{Lemma}\label{volterralem}
		Let $T,S$ be self-adjoint operators in a Hilbert space $Y$, and assume that $S$ is bounded with norm $c$, and $T\geq 0$. Then for $t>0$,
		\begin{align}\label{eq15}
			e^{-t\left(T+S\right)}=e^{-tT}+\sum_{l=1}^{\infty}\left(-1\right)^{l}\int_{s\in t\Delta_{l}}e^{-sT}\shuffle S^{\otimes l}\Id s,
		\end{align}
		where sum and integral converge absolutely in operator norm.
	\end{Lemma}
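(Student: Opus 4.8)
The plan is to prove the identity by establishing that the right-hand side solves the same integral (Duhamel) equation as the left-hand side, and then invoking uniqueness, while simultaneously verifying the claimed absolute convergence so that all manipulations are justified. First I would record the Duhamel formula: for $t>0$,
\begin{align}\label{volterraplaneq1}
	e^{-t(T+S)}=e^{-tT}-\int_{0}^{t}e^{-(t-\sigma)T}S\,e^{-\sigma(T+S)}\,\Id\sigma,
\end{align}
which holds because $S$ is bounded and $T$ is self-adjoint and nonnegative (both semigroups are strongly continuous contraction/analytic semigroups, $\dom(T+S)=\dom T$, and the integrand is a norm-continuous $B(Y)$-valued function on $[0,t]$; one differentiates $\sigma\mapsto e^{-(t-\sigma)T}e^{-\sigma(T+S)}$ and integrates). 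This is the standard perturbation identity; I would either cite it or give the one-line differentiation argument.

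The main step is then a Picard/Volterra iteration of \eqref{volterraplaneq1}. Substituting the formula for $e^{-\sigma(T+S)}$ into itself repeatedly generates, after $l$ iterations, a term that is an $l$-fold ordered time integral
\begin{align}\label{volterraplaneq2}
	(-1)^{l}\int_{t\ge \sigma_{1}\ge\cdots\ge\sigma_{l}\ge 0}e^{-(t-\sigma_{1})T}S\,e^{-(\sigma_{1}-\sigma_{2})T}S\cdots S\,e^{-\sigma_{l}T}\,\Id\sigma
\end{align}
plus a remainder involving $e^{-\sigma_{l}(T+S)}$. The ordered simplex $\{t\ge\sigma_{1}\ge\cdots\ge\sigma_{l}\ge0\}$ is exactly $t\Delta_{l}$ in the paper's convention, and under the substitution to barycentric coordinates $s_{0}=t-\sigma_{1},\,s_{1}=\sigma_{1}-\sigma_{2},\dots,s_{l}=\sigma_{l}$ (with $\sum s_{j}=t$, i.e.\ $s\in t\Delta_{l}$ after rescaling), the integrand \eqref{volterraplaneq2} becomes precisely $e^{-sT}\shuffle S^{\otimes l}$ with the shuffle product of the preceding Definition. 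So each iterate reproduces the $l$-th summand of \eqref{eq15}.

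For convergence and for controlling the remainder: each factor $e^{-s_jT}$ has norm $\le 1$ since $T\ge0$, and $\|S\|=c$, so the norm of the $l$-th integrand is $\le c^{l}$, and $\vol(t\Delta_{l})=t^{l}/l!$ gives the bound $\|l\text{-th term}\|\le (ct)^{l}/l!$; hence the series converges absolutely in operator norm, uniformly on compact $t$-intervals. The same estimate bounds the remainder after $L$ steps by $\sum_{l>L}(ct)^{l}/l!\to0$ (using $\|e^{-\sigma_{l}(T+S)}\|\le e^{\sigma_l c}$, absorbed into a slightly larger constant, or simply $\le e^{tc}$), so the Volterra iteration converges to $e^{-t(T+S)}$ and equals the claimed series. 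The only mild obstacle is bookkeeping: making the change of variables from the ordered-simplex parametrization to the barycentric $s$-coordinates match the paper's $\shuffle$ and $e^{-sT}$ notation exactly, and confirming that the norm-continuity of all integrands (so the iterated Bochner integrals are legitimate and Fubini applies) follows from strong continuity of $e^{-\sigma T}$ together with boundedness of $S$ — there is no real analytic difficulty here, only care.
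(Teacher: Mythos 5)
Your proposal is correct and follows essentially the same route as the paper: Duhamel's principle, Picard iteration to generate the simplex-integrated series, and the elementary bound $\left\|e^{-s_jT}\right\|\leq 1$ combined with $\vol\left(t\Delta_{l}\right)=t^{l}/l!$ to obtain the $(ct)^{l}/l!$ estimate for absolute convergence and remainder control. The paper's proof is just a terser version of what you wrote; your extra attention to the change of variables and the remainder term is sound bookkeeping, not a different method.
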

	
	\begin{proof}
		Duhamel's principle implies for $t>0$,
		\begin{align}\label{eq14}
			e^{-t\left(T+S\right)}=e^{-tT}-\int_{0}^{t}e^{-\left(t-s\right)T}Se^{-s\left(T+S\right)}\Id s,
		\end{align}
		where the integral converges in operator norm. The iterative implementation of formula (\ref{eq14}) then implies (\ref{eq15}). It remains to show that (\ref{eq15}) converges absolutely in operator norm. We estimate
		\begin{align}
			\int_{s\in t\Delta_{l}}\left\|e^{-sT}\shuffle S^{\otimes l}\right\|_{B\left(Y\right)}\Id s\leq\frac{\left(ct\right)^{l}}{l!},
		\end{align}
		which is absolutely summable over $l\in\IN$.
	\end{proof}
	
	\begin{Definition}
		Let $t>0$, $k\in\IN$, define
		\begin{align}
			R_{t}^{k}:=\begin{cases}
				e^{-tH_{B}},& k=0,\\
				\int_{s\in t\Delta_{k}}e^{-sH_{B}}\shuffle\left(\Ii\di^{E}B\right)^{\otimes k}\Id s, & k\geq 1.
			\end{cases}
		\end{align}
	\end{Definition}
	
	Since $\Ii\di^{E}B$ is a bounded operator, the integrals defining $R_{t}^{k}$ converge absolutely in operator norm. We furthermore note that the norms of $R_{t}^{k}$ are absolutely summable over $k\in\IN$.
	
	\begin{Lemma}\label{haserieslem}
		Let $t>0$, then
		\begin{align}
			e^{-tDD^{\ast}}=&\sum_{l=0}^{\infty}\left(-1\right)^{l}R_{t}^{l},\nonumber\\
			e^{-tD^{\ast}D}=&\sum_{l=0}^{\infty}R_{t}^{l},
		\end{align}
		where the series converges absolutely in operator norm.
	\end{Lemma}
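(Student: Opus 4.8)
The plan is to recognise $e^{-tDD^{\ast}}$ and $e^{-tD^{\ast}D}$ as Volterra expansions of $e^{-tH_B}$ perturbed by $\pm\Ii\di^{E}B$, and then to match the terms with the $R_t^{l}$. First I would record the two operator identities
\begin{align}
	DD^{\ast}=H_{B}+\Ii\di^{E}B,\qquad D^{\ast}D=H_{B}-\Ii\di^{E}B,
\end{align}
as identities of self-adjoint operators on $\dom H_{0}$. On that domain these are exactly the differential-operator computations already carried out in the proof of Proposition \ref{hdomainprop}. In the notation of this chapter, where the indices $n,m$ are suppressed and $B=B_{n,m}$, the family $\Ii\di^{E}B$ lies in $C^{1,S^{1}}_{m,n}\left(\IR^{d}\right)$ by Remark \ref{limitrem}, and in particular $\Ii\di^{E}B\in B\left(L^{2}\left(\IR^{d},\IC^{r}\otimes H\right)\right)$; moreover it is self-adjoint, since each $c^{i}$ is anti-self-adjoint and each $\p_{i}^{E}B\left(x\right)$ is symmetric and commutes with $c^{i}$. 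Hence the right-hand sides are self-adjoint on $\dom H_{0}$ by the Kato--Rellich theorem. Since $DD^{\ast}$ and $D^{\ast}D$ are self-adjoint on $\dom H_{0}$ with the stated differential expressions (Proposition \ref{hdomainprop}), the displayed equalities hold as identities of self-adjoint operators.

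Next I would invoke Lemma \ref{volterralem} with $T:=H_{B}\geq 0$ and the bounded self-adjoint $S:=\Ii\di^{E}B$. This gives, with absolute convergence in operator norm,
\begin{align}
	e^{-tDD^{\ast}}=e^{-t\left(H_{B}+S\right)}=e^{-tH_{B}}+\sum_{l=1}^{\infty}\left(-1\right)^{l}\int_{s\in t\Delta_{l}}e^{-sH_{B}}\shuffle S^{\otimes l}\,\Id s=\sum_{l=0}^{\infty}\left(-1\right)^{l}R_{t}^{l},
\end{align}
which is precisely the first claimed formula. For $e^{-tD^{\ast}D}$ I would apply Lemma \ref{volterralem} instead with $S:=-\Ii\di^{E}B$, obtaining
\begin{align}
	e^{-tD^{\ast}D}=e^{-tH_{B}}+\sum_{l=1}^{\infty}\left(-1\right)^{l}\int_{s\in t\Delta_{l}}e^{-sH_{B}}\shuffle\left(-\Ii\di^{E}B\right)^{\otimes l}\,\Id s .
\end{align}
Then I would observe, directly from the definition of the shuffle product $q\shuffle S=q_{0}S_{1}q_{1}\cdots S_{l}q_{l}$, that negating all $l$ middle factors produces a factor $\left(-1\right)^{l}$, i.e. $e^{-sH_{B}}\shuffle\left(-\Ii\di^{E}B\right)^{\otimes l}=\left(-1\right)^{l}e^{-sH_{B}}\shuffle\left(\Ii\di^{E}B\right)^{\otimes l}$. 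The two signs cancel and the series collapses to $\sum_{l=0}^{\infty}R_{t}^{l}$. Absolute convergence in operator norm is inherited from Lemma \ref{volterralem}, or equivalently from the bound $\left\|R_{t}^{l}\right\|\leq\left(ct\right)^{l}/l!$ with $c=\|\Ii\di^{E}B\|$ noted right after the definition of $R_{t}^{k}$.

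This statement is light and I do not expect a genuine obstacle. The only two points deserving care are the verification that $DD^{\ast}=H_{B}\pm\Ii\di^{E}B$ holds as an identity of self-adjoint operators on $\dom H_{0}$ (handled through Proposition \ref{hdomainprop} together with the boundedness and self-adjointness of $\Ii\di^{E}B$ in the present $B=B_{n,m}$ setting), and the sign bookkeeping in the shuffle product for the $D^{\ast}D$ expansion; everything else is immediate from Lemma \ref{volterralem}.
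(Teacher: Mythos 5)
Your proof is correct and takes essentially the same route as the paper, whose entire argument is the single line "Apply Lemma \ref{volterralem} to $DD^{\ast}=H_{B}+\Ii\di^{E}B$ and $D^{\ast}D=H_{B}-\Ii\di^{E}B$." You have merely spelled out the hypotheses of Lemma \ref{volterralem} (nonnegativity of $H_{B}$, boundedness and self-adjointness of $\Ii\di^{E}B$ in the $B_{n,m}$ setting) and the sign cancellation $e^{-sH_{B}}\shuffle(-\Ii\di^{E}B)^{\otimes l}=(-1)^{l}e^{-sH_{B}}\shuffle(\Ii\di^{E}B)^{\otimes l}$, both of which are implicit in the paper's one-liner.
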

	
	\begin{proof}
		Apply Lemma \ref{volterralem} to $DD^{\ast}=H_{B}+\Ii\left(\di B\right)$, and $D^{\ast}D=H_{B}-\Ii\left(\di B\right)$.
	\end{proof}
	
	\begin{Definition}\label{kdef}
		Let $t>0$, $l\in\IN$. We define
		\begin{align}
			K_{t}^{-,l}:=&\sum_{j=\frac{l-1}{2}}^{\infty}R^{2j+1}_{t},\text{ $l$ odd},\ K_{t}^{+,l}:=\sum_{j=\frac{l}{2}}^{\infty}R^{2j}_{t},\text{ $l$ even},\nonumber\\
			k_{t}^{-,l}:=&\sum_{j=0}^{\frac{l-3}{2}}R^{2j+1}_{t},\text{ $l$ odd},\ k_{t}^{+,l}:=\sum_{j=1}^{\frac{l-2}{2}}R^{2j}_{t},\text{ $l$ even},
		\end{align}
		where the series converge in operator norm.
	\end{Definition}
	
	Lemma \ref{haserieslem} then allows to re-express the semi-group difference in terms of commutators.
	
	\begin{Lemma}\label{auxfunccalclem}
		Let $t>0$, then
		\begin{align}\label{auxfunccalclemeq1}
			&e^{-tD^{\ast}D}-e^{-tDD^{\ast}}=\int_{0}^{t}\frac{1}{2}\left(\overline{\left[D,D^{\ast}e^{-sDD^{\ast}}\right]}-\overline{\left[D^{\ast},De^{-sD^{\ast}D}\right]}\right)\Id s\nonumber\\
			=&\int_{0}^{t}\left(\overline{\left[\Ii\di,\Ii\di\frac{1}{2}\left(e^{-sD^{\ast}D}-e^{-sDD^{\ast}}\right)\right]}-\overline{\left[A,A\frac{1}{2}\left(e^{-sD^{\ast}D}-e^{-sDD^{\ast}}\right)\right]}\right.\nonumber\\
			&\left.+\overline{\left[\Ii\di,A\frac{1}{2}\left(e^{-sD^{\ast}D}+e^{-sDD^{\ast}}\right)\right]}-\overline{\left[A,\Ii\di\frac{1}{2}\left(e^{-sD^{\ast}D}+e^{-sDD^{\ast}}\right)\right]}\right)\Id s\nonumber\\
			=&\int_{0}^{t}\left(\overline{\left[\Ii\di,\Ii\di\left(K_{s}^{-,d-2}+k_{s}^{-,d-2}\right)\right]}-\overline{\left[A,A\left(K_{s}^{-,d}+k_{s}^{-,d}\right)\right]}\right.\nonumber\\
			&\left.+\overline{\left[\Ii\di,A\left(K_{s}^{+,d-1}+k_{s}^{+,d-1}+e^{-sH_{B}}\right)\right]}-\overline{\left[A,\Ii\di\left(K_{s}^{+,d-1}+k_{s}^{+,d-1}+e^{-sH_{B}}\right)\right]}\right)\Id s,
		\end{align}
		where the integrals converge strongly on $\dom\ H_{0}$.
	\end{Lemma}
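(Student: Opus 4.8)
The plan is to establish the three displayed identities in sequence: the first by Duhamel's principle together with the intertwining relations between $D,D^{\ast}$ and their heat semigroups, the second by a purely algebraic rearrangement, and the third by inserting the Volterra expansions of Lemma~\ref{haserieslem}. For the first, I would fix $\psi\in\dom H_{0}=\dom\left(D^{\ast}D\right)=\dom\left(DD^{\ast}\right)$ and differentiate $s\mapsto\left(e^{-sD^{\ast}D}-e^{-sDD^{\ast}}\right)\psi$ to obtain $DD^{\ast}e^{-sDD^{\ast}}\psi-D^{\ast}De^{-sD^{\ast}D}\psi$. Since the semigroups commute with their generators and are contractive, $DD^{\ast}e^{-sDD^{\ast}}\psi=e^{-sDD^{\ast}}DD^{\ast}\psi$ (and likewise for $D^{\ast}D$), so the integrand is bounded in norm by $\left\|DD^{\ast}\psi\right\|+\left\|D^{\ast}D\psi\right\|$ uniformly in $s\in\left[0,t\right]$ and is strongly continuous; the fundamental theorem of calculus then gives $e^{-tD^{\ast}D}-e^{-tDD^{\ast}}=\int_{0}^{t}\left(DD^{\ast}e^{-sDD^{\ast}}-D^{\ast}De^{-sD^{\ast}D}\right)\Id s$ strongly on $\dom H_{0}$. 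The intertwining relations $e^{-sDD^{\ast}}D\subseteq De^{-sD^{\ast}D}$ and $e^{-sD^{\ast}D}D^{\ast}\subseteq D^{\ast}e^{-sDD^{\ast}}$ (valid because the semigroups smooth into $\bigcap_{k}\dom H_{0}^{k}$) give $D^{\ast}e^{-sDD^{\ast}}D=D^{\ast}De^{-sD^{\ast}D}$ and $De^{-sD^{\ast}D}D^{\ast}=DD^{\ast}e^{-sDD^{\ast}}$ on $\dom H_{0}$, whence $\overline{\left[D,D^{\ast}e^{-sDD^{\ast}}\right]}=DD^{\ast}e^{-sDD^{\ast}}-D^{\ast}De^{-sD^{\ast}D}=-\overline{\left[D^{\ast},De^{-sD^{\ast}D}\right]}$; averaging these two expressions puts the integrand into the symmetric form of the first line.

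For the second identity I would substitute $D=\Ii\di+A$, $D^{\ast}=-\Ii\di+A$. Writing $F:=D^{\ast}e^{-sDD^{\ast}}$ and $G:=De^{-sD^{\ast}D}$ one has $\left[D,F\right]-\left[D^{\ast},G\right]=\left[\Ii\di,F+G\right]+\left[A,F-G\right]$; expanding $F=-\Ii\di e^{-sDD^{\ast}}+Ae^{-sDD^{\ast}}$ and $G=\Ii\di e^{-sD^{\ast}D}+Ae^{-sD^{\ast}D}$ (licit since the semigroups map into $\dom D\cap\dom D^{\ast}$) gives $F+G=2\Ii\di E_{-}+2AE_{+}$ and $F-G=-2\Ii\di E_{+}-2AE_{-}$, where $E_{\pm}:=\frac{1}{2}\left(e^{-sD^{\ast}D}\pm e^{-sDD^{\ast}}\right)$. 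Substituting and collecting the four resulting commutators reproduces the second line verbatim; the closures are needed because $E_{\pm}$ only map into $\dom H_{0}$, so a further $\Ii\di$ or $A$ a priori leaves that space, but the smoothing of the semigroups makes every closed operator in sight bounded, and the $s$-integral still converges strongly on $\dom H_{0}$ as in the first step.

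For the third identity I would invoke Lemma~\ref{haserieslem}, by which $e^{-sD^{\ast}D}=\sum_{l\geq0}R_{s}^{l}$ and $e^{-sDD^{\ast}}=\sum_{l\geq0}\left(-1\right)^{l}R_{s}^{l}$ converge absolutely in operator norm, so $E_{-}=\sum_{j\geq0}R_{s}^{2j+1}$ and $E_{+}=e^{-sH_{B}}+\sum_{j\geq1}R_{s}^{2j}$. Since $d$ is odd, $d-2$ and $d$ are odd while $d-1$ is even, and in Definition~\ref{kdef} the two index ranges defining $K_{s}^{\mp,\cdot}$ and $k_{s}^{\mp,\cdot}$ are consecutive and exhaustive; hence $K_{s}^{-,d-2}+k_{s}^{-,d-2}=K_{s}^{-,d}+k_{s}^{-,d}=E_{-}$ and $K_{s}^{+,d-1}+k_{s}^{+,d-1}+e^{-sH_{B}}=E_{+}$, and inserting these into the second line yields the third.

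The main obstacle is not the algebra, which is routine, but the domain and closure bookkeeping: one must verify that each commutator appearing is meaningful on $\dom H_{0}$, that the relevant closures extend to bounded operators, that the intertwining identities are used only on the domains where they hold, and that the $s$-integral converges strongly on $\dom H_{0}$ — all of which rest on the smoothing of $e^{-sD^{\ast}D}$, $e^{-sDD^{\ast}}$ into $\bigcap_{k}\dom H_{0}^{k}$ and on the uniform operator-norm bounds for the $R_{s}^{k}$ recorded after their definition.
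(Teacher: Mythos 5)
Your argument is correct and follows essentially the same route as the paper's: Duhamel together with the intertwining relations $e^{-sDD^{\ast}}D\subseteq De^{-sD^{\ast}D}$ and $e^{-sD^{\ast}D}D^{\ast}\subseteq D^{\ast}e^{-sDD^{\ast}}$ for the first line, the algebraic substitution $D=\Ii\di+A$ and the decomposition $[D,F]-[D^{\ast},G]=[\Ii\di,F+G]+[A,F-G]$ for the second, and Lemma~\ref{haserieslem} together with the exhaustive, consecutive index ranges in Definition~\ref{kdef} for the third. The only small slip is the stated justification for the intertwining relations: they are a general consequence of functional calculus for a closed densely defined operator (via the polar decomposition of $D$), not of the semigroups smoothing into $\bigcap_{k}\dom H_{0}^{k}$, though the smoothing is indeed what makes $D^{\ast}e^{-sDD^{\ast}}$, $De^{-sD^{\ast}D}$ bounded and the commutators closable, as you use.
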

	
	\begin{proof}
		Since $De^{-sD^{\ast}D}\supseteq e^{-sDD^{\ast}}D$, and $D^{\ast}e^{-sDD^{\ast}}\supseteq e^{-sD^{\ast}D}D^{\ast}$, $s>0$, we have
		\begin{align}
			\p_{s}\left(e^{-sD^{\ast}D}-e^{-sDD^{\ast}}\right)=DD^{\ast}e^{-sDD^{\ast}}-D^{\ast}De^{-sD^{\ast}D}\supseteq\frac{1}{2}\left(\left[D,D^{\ast}e^{-sDD^{\ast}}\right]-\left[D^{\ast},De^{-sD^{\ast}D}\right]\right),
		\end{align}
		which implies the first line. The second line follows by expanding $D=\Ii\di+A$, $D^{\ast}=-\Ii\di+A$, and the third line by Lemma \ref{haserieslem}.
	\end{proof}

	We exploit the fact that $H_{B}$ commutes with the Clifford matrices $c^{j}$, and the above expansion reduces if we apply the partial trace $\tr_{\IC^{r}}$.
	
	\begin{Lemma}\label{reduxlem}
		Let $t>0$, then
		\begin{align}\label{reduxlemeq1}
			\tr_{\IC^{r}}\left(e^{-tD^{\ast}D}-e^{-tDD^{\ast}}\right)=&\int_{0}^{t}\left(\tr_{\IC^{r}}\left(\left[\Ii\di,\Ii\di K_{s}^{-,d-2}+AK_{s}^{+,d-1}\right]\right)\right.\nonumber\\
			&\left.-\left[A,\tr_{\IC^{r}}\left(AK_{s}^{-,d}+\Ii\di K_{s}^{+,d-1}\right)\right]\right)\Id s,
		\end{align}
		where the right hand side converges in operator norm of $L^2\left(\IR^{d},H\right)$.
	\end{Lemma}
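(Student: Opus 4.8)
The starting point is the expansion in Lemma \ref{auxfunccalclemeq1}, which writes the semi-group difference (applied on the core $\dom\ H_{0}$) as an integral over $s\in[0,t]$ of four commutator terms:
\begin{align}
\overline{[\Ii\di,\Ii\di(K_{s}^{-,d-2}+k_{s}^{-,d-2})]}&-\overline{[A,A(K_{s}^{-,d}+k_{s}^{-,d})]}\nonumber\\
&+\overline{[\Ii\di,A(K_{s}^{+,d-1}+k_{s}^{+,d-1}+e^{-sH_{B}})]}-\overline{[A,\Ii\di(\cdots)]}.\nonumber
\end{align}
The plan is to apply $\tr_{\IC^{r}}$ term by term and show that (i) every contribution of a lower-order tail $k_{s}^{\pm,\bullet}$ as well as of the pure heat term $e^{-sH_{B}}$ drops out after the partial trace, and (ii) the two $\Ii\di$-commutators can be merged. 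For (i) the key observation is that $H_{B}=\Delta+(B+M_0)^2$ commutes with each Clifford matrix $c^{j}$, so each $R_t^{k}$ carries exactly $k$ factors $\Ii\di^{E}B=\sum_j c^j\,\Ii\p_j^E B$, i.e.\ $k$ Clifford matrices modulo the $c^j$-invariant operator $e^{-sH_B}$; and the extra $\Ii\di$ resp.\ $A$ in front of the bracket contributes one more $c^j$ resp.\ zero. Counting Clifford factors: the $k_{s}^{-,d-2}$ term feeds $[\Ii\di,\Ii\di\,(\cdots)]$ an odd number $\le d-2$ of $c^j$'s times one more from each $\Ii\di$, giving an odd total $<d$, which has vanishing $\tr_{\IC^r}$ by Lemma \ref{cliflem}(1); similarly $k_{s}^{-,d}$ with $[A,A\,(\cdots)]$ gives an odd number $\le d-2$; $k_{s}^{+,d-1}$, $e^{-sH_B}$ paired with $[\Ii\di,A\,(\cdots)]$ and $[A,\Ii\di\,(\cdots)]$ give an odd number $<d$ as well. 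Hence all $k$- and $e^{-sH_B}$-terms vanish under $\tr_{\IC^r}$, leaving only the $K_s$-tails.

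For (ii), note that after the partial trace the two terms $\tr_{\IC^r}[\Ii\di,\Ii\di\,K_s^{-,d-2}]$ and $\tr_{\IC^r}[\Ii\di,A\,K_s^{+,d-1}]$ are both of the form $\tr_{\IC^r}[\Ii\di,\,\cdot\,]$, and using cyclicity of $\tr_{\IC^r}$ together with the fact that the $\IC^r$-trace of a commutator with a fixed matrix structure is again a commutator (or: $\tr_{\IC^r}$ and $[\Ii\di,\cdot]$ pass through each other on the relevant operators, since $\Ii\di$ acts by the fixed matrices $c^j$ tensored with $\p_j$), one rewrites them as $[\Ii\di,\tr_{\IC^r}(\Ii\di\,K_s^{-,d-2}+A\,K_s^{+,d-1})]$, with $[\Ii\di,\cdot]$ now acting in $L^2(\IR^d,H)$ as $\sum_j c^j$ is gone. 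Likewise the two $A$-commutators combine into $-[A,\tr_{\IC^r}(A\,K_s^{-,d}+\Ii\di\,K_s^{+,d-1})]$, using that $A$ acts trivially on $\IC^r$ so $\tr_{\IC^r}$ commutes with $[A,\cdot]$ outright. This yields exactly \eqref{reduxlemeq1}.

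The remaining point is the assertion that the right-hand side of \eqref{reduxlemeq1} converges in operator norm of $L^2(\IR^d,H)$, rather than merely strongly on $\dom\ H_0$. This follows because, after the reductions, the surviving tails $K_s^{-,d-2}$, $K_s^{+,d-1}$, $K_s^{-,d}$ are norm-convergent series of the operators $R_s^k$ (whose norms are absolutely summable over $k$, as remarked after the definition of $R_t^k$ and in Lemma \ref{haserieslem}), and the outer operators $\Ii\di$, $A$ are applied to operators of the form $\int_{t\Delta_k}e^{-sH_B}\shuffle(\cdots)$ which lie in $\dom\,\Ii\di\cap\dom\,A$ with norm controlled uniformly in $s\in[0,t]$, since $H_0$ (hence $H_B$, by Proposition \ref{hdomainprop}) dominates both $\di^2$ and $M_0^2$ and $e^{-sH_B}$ regularizes. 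Then the $s$-integral of a norm-bounded, strongly measurable integrand over the finite interval $[0,t]$ converges as a Bochner integral in $B(L^2(\IR^d,H))$.

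\textbf{Main obstacle.} The bookkeeping in step (i) — tracking exactly how many Clifford matrices sit inside each $R_s^k$, how the commutators $[\Ii\di,\cdot]$ and $[A,\cdot]$ add or fix the count, and verifying in every one of the several cases that the total is odd and $<d$ so that Lemma \ref{cliflem}(1) applies — is the delicate part; one must be careful that $[\Ii\di,\Ii\di\,(\cdot)]$ contributes \emph{two} extra $c^j$'s (one from each $\Ii\di$) while $[A,A\,(\cdot)]$ contributes none, and that the parity shifts in Definition \ref{kdef} (odd vs.\ even $l$, the offsets $\tfrac{l-1}{2}$, $\tfrac{l}{2}$, $\tfrac{l-3}{2}$, $\tfrac{l-2}{2}$) line up with $d$ odd. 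Once the parity count is nailed down, the rest is an application of cyclicity of $\tr_{\IC^r}$ and the norm-summability already established.
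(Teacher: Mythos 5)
Your overall structure mirrors the paper's proof: start from Lemma \ref{auxfunccalclem}, interchange $\tr_{\IC^r}$ with the $s$-integral, delete the $e^{-sH_B}$ and $k_s^{\pm,\cdot}$ contributions by counting Clifford factors (Lemma \ref{cliflem}), and then establish operator-norm convergence via the norm summability of the $R_s^k$. The Clifford-counting bookkeeping in your step (i) is correct and is exactly what the paper does, and your passage of $\tr_{\IC^r}$ through $[A,\cdot]$ (because $A$ is $\one_{\IC^r}\otimes A$, hence commutes with the matrix factor) is also correct and needed.

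However, your step (ii) contains a genuine error regarding the $\Ii\di$-commutator. You claim that $\tr_{\IC^{r}}\left(\left[\Ii\di,\ \Ii\di K_{s}^{-,d-2}+A K_{s}^{+,d-1}\right]\right)$ can be rewritten as $\left[\Ii\di,\ \tr_{\IC^{r}}\left(\Ii\di K_{s}^{-,d-2}+A K_{s}^{+,d-1}\right)\right]$, ``with $\sum_j c^j$ gone''. This is false and not well-formed: $\Ii\di=\sum_j c^j\p_j$ acts nontrivially on the $\IC^r$-factor, so the partial trace does not commute with $[\Ii\di,\cdot]$. Writing $X=\sum_\alpha c^\alpha\otimes X_\alpha$, one gets $\tr_{\IC^{r}}\left(\left[\Ii\di,X\right]\right)=\Ii\sum_{j,\alpha}\tr_{\IC^{r}}\left(c^{j}c^{\alpha}\right)\left[\p_{j},X_{\alpha}\right]$, where the $c^{j}c^{\alpha}$ contraction picks out a $j$-dependent combination of the $X_\alpha$ and cannot be collapsed to a commutator of a single derivative with $\tr_{\IC^r}(X)$. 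This is in fact precisely why the lemma's first term keeps $\tr_{\IC^r}$ \emph{outside} the $\Ii\di$-bracket while the $A$-term has it inside. Fortunately, the step is entirely unnecessary: after your step (i), the surviving $\Ii\di$-commutator terms $\tr_{\IC^r}\left(\left[\Ii\di,\Ii\di K_{s}^{-,d-2}\right]\right)$ and $\tr_{\IC^r}\left(\left[\Ii\di,A K_{s}^{+,d-1}\right]\right)$ already add up, by linearity of the commutator and the trace, to the form stated in (\ref{reduxlemeq1}); no further identity is required. Deleting your false rewriting yields a correct proof.

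On the norm-convergence assertion: your argument is sound but imprecise compared to the paper's. The paper obtains the concrete estimate $\left\|\overline{S T R_{s}^{k}}\right\|,\ \left\|\overline{S R_{s}^{k}T}\right\|\leq C'\,c^{k-1}s^{k-1}/(k-1)!$ for $S,T\in\{\Ii\di,A\}$ and $k\geq d-2$, which sums to $O(s^{j-1})$ near $s=0$ and is integrable on $(0,t)$. Your appeal to ``norm controlled uniformly in $s\in[0,t]$'' is true (since $d\geq 3$ gives $k-1\geq d-3\geq 0$) but it is worth noting that the boundedness as $s\searrow 0$ relies crucially on having at least $d-2\geq 1$ interleaved heat factors in the lowest term of each $K_s$-tail, not merely on $e^{-sH_B}$ ``regularizing''.
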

	
	\begin{proof}
		We remind ourselves that $\tr_{\IC^{r}}=\sum_{i=1}^{r}\langle\cdot e^{i},e^{i}\rangle_{\IC^{r}}$, for any orthonormal basis $\left(e^{i}\right)_{i=1}^{r}$, which in particular implies that $\tr_{\IC^{r}}$ is continuous from $B\left(L^2\left(\IR^{d},\IC^{r}\otimes H\right)\right)$ to $B\left(L^2\left(\IR^{d},H\right)\right)$, both equipped with the weak operator/the strong operator/the norm topology. We consider the last line of equation (\ref{auxfunccalclemeq1}) of Lemma \ref{auxfunccalclem}, and restrict the integrand to $\dom\ H_{0}=\IC^{r}\otimes X$, where $X=\left(H^{2}\left(\IR^{d},H\right)\cap L^2\left(\IR^{d},\dom\ A_{0}^2\right)\right)$ is dense in $L^2\left(\IR^{d},H\right)$. Since the integral converges strongly on $\dom\ H_{0}$ we may interchange $\tr_{\IC^{r}}$ with integration. Also we note that all summands in the last line of (\ref{auxfunccalclemeq1}), which involve $e^{-sH_{B}}$ or the operators $k_{s}^{\pm,j}$ contain at most $d-1$ Clifford matrices $c^{i}$ as factors. We consider Definition \ref{kdef}, and otherwise the factors $e^{-rH_{B}}$, $r>0$, and $A$, which commute with all Clifford matrices: We obtain analogously to Lemma \ref{cliflem}, that these terms vanish, if $\tr_{\IC^{r}}$ is applied. So (\ref{reduxlemeq1}) holds, at least on $X$. We may estimate for $k\geq d-2\geq 1$, and $R,S\in\left\{\Ii\di,A\right\}$, using the fact that $\Ii\left(\di B\right)$ is a $C^{1}_{b}$-family with bounded derivatives of finite rank operators with range in $\dom\ A_{0}$, and the defining integral of $R_{s}^{k}$,
		\begin{align}\label{reduxlemeq2}
			\left\|\overline{STR_{s}^{k}}\right\|_{B\left(L^2\left(\IR^{d},\IC^{r}\otimes H\right)\right)},\ \left\|\overline{SR_{s}^{k}T}\right\|_{B\left(L^2\left(\IR^{d},\IC^{r}\otimes H\right)\right)}\leq C'\frac{c^{k-1}s^{k-1}}{\left(k-1\right)!},
		\end{align}
		where $c:=\left\|\Ii\left(\di B\right)\right\|_{B\left(L^2\left(\IR^{d},\IC^{r}\otimes H\right)\right)}$, and $C'$ is a suitable constant. In particular the norms in (\ref{reduxlemeq2}) are absolutely summable over $k\geq d-2$, so we have
		\begin{align}\label{reduxlemeq3}
			\left\|\overline{STK_{s}^{j}}\right\|_{B\left(L^2\left(\IR^{d},\IC^{r}\otimes H\right)\right)},\ \left\|\overline{SK_{s}^{j}T}\right\|_{B\left(L^2\left(\IR^{d},\IC^{r}\otimes H\right)\right)}\leq C'\sum_{k=j-1}^{\infty}\frac{c^{k}s^{k}}{k!},
		\end{align}
		which is $O\left(s^{j-1}\right)$ for $s$ near $0$, and in particular we see that $\overline{STK_{s}^{j}}$, and $\overline{SK_{s}^{j}T}$ are Bochner integrable in operator norm on $s\in\left(0,t\right)$ for any $t>0$. Since $e^{-tD^{\ast}D}-e^{-tDD^{\ast}}$ is also bounded, and $\tr_{\IC^{r}}$ is a continuous operator, equation (\ref{reduxlemeq1}) holds by density of $X$ in $L^2\left(\IR^{d},H\right)$, and the right hand side of (\ref{reduxlemeq1}) converges in operator norm.
	\end{proof}

    We introduce the following building blocks of the desired integral kernel.
	
	\begin{Definition}\label{kerneldef}
		Let $t>0$. Denote for $j\geq 1$,
		\begin{align}
			q_{t}\left(x\right):=&\left(4\pi t\right)^{-\frac{d}{2}}e^{-\frac{\left|x\right|^{2}}{4t}},\ x\in\IR^{d},\ Q_{t}:=\one_{\IC^{r}}\otimes e^{-tA_{0}^2}\otimes q_{t},\nonumber\\
			\theta_{t}^{j}:=&\int_{s\in t\Delta_{j}}Q_{s}\shuffle_{\ast}\left(A^2-M_{0}^2\right)^{\otimes j}\Id s,\ \theta_{t}:=Q_{t}+\sum_{j=1}^{\infty}\left(-1\right)^{j}\theta_{t}^{j},\nonumber\\
			r_{t}^{j}:=&\int_{s\in t\Delta_{j}}\theta_{s}\shuffle_{\ast}\left(\Ii\di B\right)^{\otimes j}\Id s.
		\end{align}
	\end{Definition}

    \begin{Remark}
        In the following we will use the convention, that if a statement is made for $C^{l',\left(S^1\right)}_{m',n}$ it means that it holds both for $C^{l'}_{m',n}$ respectively $C^{l',S^1}_{m',n}$ consistently in place of all $C^{l',\left(S^1\right)}_{m',n}$ appearing throughout the the statement. If no other domain is specified, it will always be $\IR^{d}$.
    \end{Remark}

    In the following part we will successively construct integral kernels together with estimates, which we build up to arrive at the desired integral kernels needed to calculate the principal trace. The arguments in each step are quite similar, the techniques involved are classical (for example cf. Chapter 2 in \cite{BerGetVer}).
	
	\begin{Lemma}\label{qproplem}
		Let $l,l',m,m',n,n'\in\IN_{0}$ with $m'\geq m\geq n\geq n'$, and $l'\geq l$. For $t>0$ define the linear operators $Q_{t}^{L/R}$, given by
		\begin{align}
			\left(Q_{t}^{L}f\right)\left(x\right):=\int_{\IR^{d}}Q_{t}\left(x,y\right)f\left(y\right)\Id y,\ \left(Q_{t}^{R}f\right)\left(x\right):=\int_{\IR^{d}}f\left(y\right)Q_{t}\left(x,y\right)\Id y,\ f\in C^{l,\left(S^{1}\right)}_{m,n}\left(\IR^{d}\right).
		\end{align}
		Then $Q_{t}^{L}$ is continuous as a map to $C^{l',\left(S^1\right)}_{m',n}$, and $Q_{t}^{R}$ is continuous as a map to $C^{l',\left(S^1\right)}_{m,n'}$, and we have a $t$-independent constants $C$, such that
		\begin{align}\label{qproplemeq1}
			\left\|Q_{t}^{L}\right\|_{B\left(C^{l,\left(S^1\right)}_{m,n},C^{l',\left(S^1\right)}_{m',n}\right)}\leq Ct^{-\frac{l'-l+m'-m}{2}},\ \left\|Q_{t}^{R}\right\|_{B\left(C^{l,\left(S^1\right)}_{m,n'},C^{l',\left(S^1\right)}_{m,n}\right)}\leq Ct^{-\frac{l'-l+n-n'}{2}}.
		\end{align}
	\end{Lemma}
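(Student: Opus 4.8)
The plan is to split $Q_t^{L}$ (and symmetrically $Q_t^{R}$) into two commuting pieces acting on ``independent'' factors. Unwinding the definitions, $\left(Q_t^{L}f\right)(x)=e^{-tA_0^2}\,(q_t\ast f)(x)$ and $\left(Q_t^{R}f\right)(x)=(q_t\ast f)(x)\,e^{-tA_0^2}$, where $\ast$ is convolution in the Euclidean variable with the scalar heat kernel $q_t$ and $e^{-tA_0^2}$ acts pointwise in $x$ on the $H$-factor. Convolution by $q_t$ commutes with pointwise left/right multiplication by $e^{-tA_0^2}$, so the operator norm of $Q_t^{L}$ is bounded by the product of the norm of ``convolve by $q_t$'' (which only raises the differentiability order, from $l$ to $l'$) and that of ``multiply on the left by $e^{-tA_0^2}$'' (which only changes the left $A_0$-weight, from $-m$ to $-m'$); likewise for $Q_t^{R}$, where the multiplication is on the right and changes the weight $n'$ to $n$. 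The asserted estimates are then the products of the two elementary bounds below.

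For the convolution piece: $q_t$ is a Schwartz function, so $q_t\ast f\in C^{\infty}$, and for a multi-index $\alpha$ with $\left|\alpha\right|\le l'$ I would write $\partial^{\alpha}(q_t\ast f)=(\partial^{\beta}q_t)\ast(\partial^{\gamma}f)$ with $\beta+\gamma=\alpha$, $\left|\gamma\right|\le l$, $\left|\beta\right|\le l'-l$. The Banach-space valued Young inequality $\left\|g\ast h\right\|_{C^{0}\left(\IR^{d},X\right)}\le\left\|g\right\|_{L^{1}\left(\IR^{d}\right)}\left\|h\right\|_{C^{0}\left(\IR^{d},X\right)}$, valid for scalar $g$ and $X=B\left(\IC^{r}\right)\otimes B(H)$ or $X=B\left(\IC^{r}\right)\otimes S^{1}(H)$, reduces the bound to $\left\|\partial^{\beta}q_t\right\|_{L^{1}\left(\IR^{d}\right)}$; by the scaling $q_t(x)=t^{-d/2}q_1\!\left(x/\sqrt{t}\right)$ this equals $t^{-\left|\beta\right|/2}\left\|\partial^{\beta}q_1\right\|_{L^{1}\left(\IR^{d}\right)}$. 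Summing over the finitely many $\alpha$ with $\left|\alpha\right|\le l'$ shows that convolution by $q_t$ is bounded from $C^{l,\left(S^{1}\right)}$ to $C^{l',\left(S^{1}\right)}$ with norm $\le C\,t^{-(l'-l)/2}$. (As usual for heat-kernel estimates the relevant regime is $t$ bounded: for $t\ge 1$ one has instead the uniform bound $\left\|\partial^{\beta}q_t\right\|_{L^{1}}\le\left\|\partial^{\beta}q_1\right\|_{L^{1}}$, and $C$ may be taken uniform over any bounded $t$-interval.)

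For the multiplication piece: $\langle A_0\rangle^{m'-m}e^{-tA_0^2}$ is bounded on $H$ with $\left\|\langle A_0\rangle^{m'-m}e^{-tA_0^2}\right\|_{B(H)}=\sup_{s\in\IR}\langle s\rangle^{m'-m}e^{-ts^2}\le C_{m',m}\,t^{-(m'-m)/2}$, by the elementary calculus estimate $\sup_{u\ge0}(1+u)^{b}e^{-tu}\le C_{b}\,t^{-b}$ for $t\le 1$ (with the supremum $\le 1$ for $t\ge 1$). Hence left-multiplying $f(x)$ by $e^{-tA_0^2}$ turns an element of $\langle A_0\rangle^{-m}B(H)\langle A_0\rangle^{n}$ into one of $\langle A_0\rangle^{-m'}B(H)\langle A_0\rangle^{n}$, and preserves $S^{1}(H)$ since multiplication by a bounded operator does. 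For $Q_t^{R}$ the corresponding factor is $\langle A_0\rangle^{n'-n}e^{-tA_0^2}$ acting on the right, and since $n\ge n'$ its norm is $\le 1\le t^{-(n-n')/2}$ for $t\le 1$; this identifies the target space as $C^{l',\left(S^{1}\right)}_{m,n}$. Combining with the convolution bound, and noting that all derivatives of $q_t\ast f$ depend continuously on $x$ (so that the image genuinely lies in $C^{l',\left(S^{1}\right)}_{m',n}$ resp.\ $C^{l',\left(S^{1}\right)}_{m,n'}$), one gets $\left\|Q_t^{L}\right\|\le C\,t^{-(l'-l+m'-m)/2}$ and $\left\|Q_t^{R}\right\|\le C\,t^{-(l'-l+n-n')/2}$ with $C=C_{l,l',m,m',n}$, which is (\ref{qproplemeq1}).

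I do not expect a genuine obstacle: this is a routine scaling argument. The only points deserving attention are the bookkeeping of how a multi-index of order up to $l'$ is split so that at most $l'-l$ derivatives land on $q_t$ and at most $l$ on $f$; the fact that spatial convolution and the $A_0$-functional calculus commute, so their norms simply multiply; and the remark that the displayed power of $t$ is the sharp near-$t=0$ bound (the estimate remaining valid, with a larger constant, on any bounded $t$-interval), which is all that its later uses --- in the Volterra-type expansions $\theta_t^{j}$, $r_t^{j}$ and in the construction of the continuous integral kernel --- require.
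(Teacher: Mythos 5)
Your proof takes essentially the same route as the paper's: split the derivative $\partial^{\alpha}$ between $q_{t}$ and $f$ so that at most $l'-l$ derivatives fall on the heat kernel, bound that piece by $\left\|\partial^{\beta}q_{t}\right\|_{L^{1}}\lesssim t^{-\left|\beta\right|/2}$ via scaling, and bound the weight shift by $\left\|\langle A_{0}\rangle^{m'-m}e^{-tA_{0}^{2}}\right\|_{B(H)}\lesssim t^{-(m'-m)/2}$ from functional calculus, multiplying the two. Your explicit remark that the stated power of $t$ is the sharp small-$t$ bound (and holds with a $t_{0}$-dependent constant on bounded intervals) is actually a small improvement in precision over the paper's statement, which tacitly needs the same caveat.
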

	
	\begin{proof}
		Let $f\in C^{l,\left(S^1\right)}_{m,n}$, $\alpha\in\IN^{d}_{0}$ with $\left|\alpha\right|\leq l'$. For $\left|\alpha\right|\leq l$, let $\beta=0\in\IN_{0}^{d}$, otherwise let $\beta\in\IN_{0}^{d}$, such that $\alpha\geq\beta$, $\left|\beta\right|=l'-l$ and $\left|\alpha-\beta\right|=l$. Then
		\begin{align}
			\langle A_{0}\rangle^{m'}\p^{\alpha}\left(Q_{t}^{L}f\right)\left(x\right)\langle A_{0}\rangle^{-n}&=\int_{\IR^{d}}\left(\p^{\beta}q_{t}\right)\left(x-y\right)\langle A_{0}\rangle^{m'-m}e^{-tA_{0}^{2}}\langle A_{0}\rangle^{m}\left(\p^{\alpha-\beta}f\right)\left(y\right)\langle A_{0}\rangle^{-n}\Id y,\nonumber\\
			\langle A_{0}\rangle^{m}\p^{\alpha}\left(Q_{t}^{R}f\right)\left(x\right)\langle A_{0}\rangle^{-n'}&=\int_{\IR^{d}}\langle A_{0}\rangle^{m}\left(\p^{\alpha-\beta}f\right)\left(y\right)\langle A_{0}\rangle^{-n}\langle A_{0}\rangle^{n-n'}e^{-tA_{0}^{2}}\left(\p^{\beta}q_{t}\right)\left(x-y\right)\Id y.
		\end{align}
		There are constants $C_{\beta}, c_{r}$, such that $\left\|\p^{\beta}q_{t}\right\|_{L^{1}\left(\IR^{d}\right)}\leq C_{\beta}t^{-\frac{\left|\beta\right|}{2}}$, and $\left\|\langle A_{0}\rangle^{r}e^{-tA_{0}^{2}}\right\|_{B\left(H\right)}\leq c_{r}t^{-\frac{r}{2}}$, while
		\begin{align}
			\left\|\langle A_{0}\rangle^{m}\left(\p^{\alpha-\beta}f\right)\langle A_{0}\rangle^{-n}\right\|_{L^{\infty}\left(\IR^{d},B\left(H\right)\right)}&\leq\left\|f\right\|_{l,m,n},\nonumber\\
			\left\|\langle A_{0}\rangle^{m}\left(\p^{\alpha-\beta}f\right)\langle A_{0}\rangle^{-n}\right\|_{L^{\infty}\left(\IR^{d},S^1\left(H\right)\right)}&\leq\left\|f\right\|_{l,m,n}^{S^1},
		\end{align}
		and so the claims follow.
	\end{proof}
	
	\begin{Lemma}\label{thetaproplem}
		Let $n\in\IN_{0}$, then there is a constant $c$, such that for $t>0$, $j\in\IN$,
		\begin{align}\label{thetaproplemeq2}
			\int_{s\in t\Delta_{j}}\left\|\left(Q_{s}\shuffle_{\ast}\left(A^{2}-M_{0}^{2}\right)^{\otimes j}\right)\left(x,y\right)\right\|_{B\left(\IC^{r}\otimes \dom\ A_{0}^{n}\right)}\Id s\leq\frac{\left(ct\right)^{j}}{j!}q_{t}\left(x-y\right),\ x,y\in\IR^{d}.
		\end{align}
		Let $l,l',m,m',n\in\IN_{0}$ with $m'\geq m\geq n\geq n'$, and $l'\geq l$. For $t>0$ and $j\in\IN_{0}$ define the linear operators $\Theta_{t}^{j,L/R}$, given by
		\begin{align}
			\left(\Theta_{t}^{j,L}f\right)\left(x\right):=\int_{\IR^{d}}\theta_{t}^{j}\left(x,y\right)f\left(y\right)\Id y,\  \left(\Theta_{t}^{j,R}f\right)\left(x\right):=\int_{\IR^{d}}f\left(y\right)\theta_{t}^{j}\left(x,y\right)\Id y,\ f\in C^{l,\left(S^1\right)}_{m,n}\left(\IR^{d}\right).
		\end{align}
		Then $\Theta_{t}^{j,L}$ is continuous as a map to $C^{l',\left(S^1\right)}_{m',n}$, and $\Theta_{t}^{j,R}$ is continuous as a map to $C^{l',\left(S^1\right)}_{m,n'}$, and there is a constant $C$, independent of $j$ and $t$, such that
		\begin{align}
			\left\|\Theta_{t}^{j,L}\right\|_{B\left(C^{l,\left(S^1\right)}_{m,n},C^{l',\left(S^1\right)}_{m',n}\right)},\ \left\|\Theta_{t}^{j,R}\right\|_{B\left(C^{l,\left(S^1\right)}_{m,n},C^{l',\left(S^1\right)}_{m,n'}\right)}\leq\frac{C^{j}}{\prod_{i=1}^{j}\max\left(1,i-\frac{l'-l}{2}\right)}t^{j-\frac{l'-l}{2}}.
		\end{align}
	\end{Lemma}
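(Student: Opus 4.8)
The lemma contains two independent assertions -- the pointwise heat-kernel bound (\ref{thetaproplemeq2}) and the operator bounds for $\Theta_{t}^{j,L/R}$ -- and I would prove them in that order. For (\ref{thetaproplemeq2}) the argument is a direct estimate. Write $Q_{s}(a,b)=q_{s}(a-b)\left(\one_{\IC^{r}}\otimes e^{-sA_{0}^{2}}\right)$; then $\one_{\IC^{r}}\otimes e^{-sA_{0}^{2}}$ commutes with $\one_{\IC^{r}}\otimes A_{0}$, so it acts on $\IC^{r}\otimes\dom A_{0}^{n}$ with norm $\leq 1$, and by Remark \ref{limitrem} (taking both weight indices equal to $n$) there is a constant $c$ with $\sup_{u}\|(A^{2}-M_{0}^{2})(u)\|_{B\left(\IC^{r}\otimes\dom A_{0}^{n}\right)}\leq c$. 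Estimating the integrand of $Q_{s}\shuffle_{\ast}(A^{2}-M_{0}^{2})^{\otimes j}$ factor by factor, pulling the scalar Gaussians out, and iterating $\int q_{a}(x-u)q_{b}(u-y)\Id u=q_{a+b}(x-y)$ (using $\sum_{i=0}^{j}s_{i}=t$ on $t\Delta_{j}$) gives $\|(Q_{s}\shuffle_{\ast}(A^{2}-M_{0}^{2})^{\otimes j})(x,y)\|_{B\left(\IC^{r}\otimes\dom A_{0}^{n}\right)}\leq c^{j}q_{t}(x-y)$; integrating over $t\Delta_{j}$, whose volume is $t^{j}/j!$, yields (\ref{thetaproplemeq2}). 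This also shows the integral defining $\theta_{t}^{j}$ converges absolutely.

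For the operator bounds I would treat $\Theta_{t}^{j,L}$ carefully and deduce $\Theta_{t}^{j,R}$ by the mirror argument, with $Q_{s}^{R}$ in place of $Q_{s}^{L}$ and the right index $n\geq n'$ playing the role of the left one. Unfolding the convolution shuffle product and carrying out the $y$- and $u$-integrations from the inside out, one gets $\Theta_{t}^{j,L}=\int_{s\in t\Delta_{j}}Q_{s_{0}}^{L}\circ\mathsf{M}\circ Q_{s_{1}}^{L}\circ\mathsf{M}\circ\cdots\circ\mathsf{M}\circ Q_{s_{j}}^{L}\,\Id s$, where $\mathsf{M}$ is pointwise left-multiplication by $A^{2}-M_{0}^{2}$. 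Two building blocks then feed into the estimate: by Remark \ref{limitrem}, $A^{2}-M_{0}^{2}\in C^{2,S^{1}}_{a',a}(\IR^{d})$ for all $a'\geq a$, whence $\mathsf{M}$ maps $C^{k,(S^{1})}_{a,b}(\IR^{d})$ boundedly into $C^{\min(k,2),S^{1}}_{a',b}(\IR^{d})$ for any $a'\geq a$ -- so each occurrence of $\mathsf{M}$ may raise the left weight index arbitrarily at no cost in $t$ and makes the value trace-class -- while by Lemma \ref{qproplem} each $Q_{s_{i}}^{L}$ maps $C^{k,(S^{1})}_{a,b}$ boundedly into $C^{k',(S^{1})}_{a,b}$ with norm $\leq Cs_{i}^{-(k'-k)/2}$ for $k'\geq k$.

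For $j=0$ the claim is Lemma \ref{qproplem} applied to $\Theta_{t}^{0,L}=Q_{t}^{L}$. For $j\geq 1$ I would let the first $\mathsf{M}$ jump the left weight index $m\to m'$ and distribute the required total gain of $l'-l\in\{0,1,2\}$ derivatives by having $l'-l$ distinct factors among $Q_{s_{0}}^{L},\ldots,Q_{s_{j}}^{L}$ each gain exactly one derivative (possible since $j+1\geq 2\geq l'-l$, and harmless since $l'\leq 2$ keeps every intermediate space valid). Composing the bounds, the integrand is $\leq C^{j}\prod_{i\in I}s_{i}^{-1/2}$ with $|I|=l'-l$, where $C$ absorbs the $j$ norms of $\mathsf{M}$, the $j+1$ norms of the $Q^{L}$-factors, and a bounded combinatorial constant; Dirichlet's integral gives $\int_{s\in t\Delta_{j}}\prod_{i\in I}s_{i}^{-1/2}\Id s=\Gamma(1/2)^{l'-l}\,\Gamma(j+1-(l'-l)/2)^{-1}\,t^{\,j-(l'-l)/2}$, and the elementary identity $\Gamma(j+1-x)=\Gamma(2-x)\prod_{i=1}^{j}\max(1,i-x)$ (valid for $x\leq 1$, $j\geq 1$) rewrites this as the asserted bound, the residual $\Gamma$-quotient being an $O(1)$ constant. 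Continuity of $\Theta_{t}^{j,L}$ into $C^{l',(S^{1})}_{m',n}$ follows because every factor of the composition is continuous between the indicated spaces.

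The main obstacle is not any single estimate but the weight/derivative bookkeeping across the $j$ multiplications and $j+1$ heat factors, and in particular the borderline case $l'-l=2$: a single $Q_{s_{i}}^{L}$ cannot be allowed to gain two derivatives, since $\int_{t\Delta_{j}}s_{i}^{-1}\Id s$ diverges -- this is precisely why the stated bound carries $\max(1,i-(l'-l)/2)$ rather than $i-(l'-l)/2$, and why the two derivatives must be split between two distinct factors (forcing $j\geq 1$, with $j=0$ degenerating to Lemma \ref{qproplem}). A secondary point requiring care is $\Theta_{t}^{j,R}$: there $f$ multiplies the kernel on the opposite side, so the clean composition above is unavailable, and instead I would estimate the weighted kernel $\langle A_{0}\rangle^{n}\theta_{t}^{j}(x,y)\langle A_{0}\rangle^{-n'}$ directly in $S^{1}$, letting one $\mathsf{M}$-factor carry the trace norm while the Gaussians control the $y$-integration.
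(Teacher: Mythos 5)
Your proof of the pointwise estimate~(\ref{thetaproplemeq2}) is essentially the paper's: bound each factor by $c$ in $B(\IC^r\otimes\dom A_0^n)$, use the semigroup property $q_a\ast q_b=q_{a+b}$ of the scalar heat kernel, and integrate over $t\Delta_j$ whose volume is $t^j/j!$. For the operator bounds, your route is genuinely different. The paper argues by induction: it writes $\theta_t^{j+1}(x,y)=\int_0^t\int\theta_s^j(x,z)(A^2(z)-A_0^2)Q_{t-s}(y,z)\Id z\Id s$, treats $\Theta_t^{j+1,L}$ as $\int_0^t\Theta_s^{j,L}\circ L_{\overline{A^2-M_0^2}}\circ Q_{t-s}^L\Id s$, and splits $\int_0^t=\int_0^{t/2}+\int_{t/2}^t$ so that the $s^j$ growth from the inductive hypothesis absorbs the $(t-s)^{-(l'-l)/2}$ singularity on the first half and vice versa on the second; a careful choice $C=2^{2+(l'-l)/2}cC'd^{l'}$ then closes the induction. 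You instead unfold the whole convolution shuffle into the $(j{+}1)$-fold composition $\int_{t\Delta_j}Q_{s_0}^L\circ\mathsf{M}\circ\cdots\circ\mathsf{M}\circ Q_{s_j}^L\Id s$, distribute the $l'-l$ derivative gains among distinct $Q^L$-factors and the weight gain $m\to m'$ to one $\mathsf{M}$, and evaluate the singular time integral in closed form via the Dirichlet formula $\int_{t\Delta_j}\prod_{i\in I}s_i^{-1/2}\Id s=\Gamma(1/2)^{|I|}\Gamma(j+1-|I|/2)^{-1}t^{j-|I|/2}$; the identity $\Gamma(j+1-x)=\Gamma(2-x)\prod_{i=1}^j\max(1,i-x)$ (valid for $0\le x\le1$, $j\ge1$) then recovers the paper's stated form after absorbing the $j$-independent $\Gamma$-factors into $C$. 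This is a correct and arguably cleaner alternative: it replaces a recursive inequality with an exact evaluation and makes the role of the constraint $l'-l\le 2$ transparent (no single $Q^L$ may gain two derivatives, or the $s_i^{-1}$ integral diverges). The one place you hedge---$\Theta_t^{j,R}$ does not come from a literal mirror because the kernel is post-multiplied and $Q_s$ does not commute with $f(y)$---is a real subtlety, but your fallback (estimate the weighted kernel $\langle A_0\rangle^n\theta_t^j(x,y)\langle A_0\rangle^{-n'}$ in $S^1$ directly) is a sound fix; the paper itself simply declares the $R$-case ``analogous''. Finally, note that the stated bound cannot literally hold at $j=0$ when $m'>m$, since $\Theta_t^{0,L}=Q_t^L$ carries an extra $t^{-(m'-m)/2}$ by Lemma~\ref{qproplem}; this imprecision is in the paper's statement (the content is really $j\ge1$, as Corollary~\ref{thetacor} makes clear by handling $j=0$ separately), so you inherit rather than introduce it.
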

	
	\begin{proof}
		We begin with the estimate of (\ref{thetaproplemeq2}). Let $c=\sup_{x\in\IR^{d}}\left\|\overline{A\left(x\right)^{2}-A_{0}^2}\right\|_{B\left(\IC^{r}\otimes\dom\ A_{0}^{n}\right)}$. We have for $x,y\in\IR^{d}$,
		\begin{align}
			&\int_{s\in t\Delta_{j}}\left\|\left(Q_{s}\shuffle_{\ast}\left(A^{2}-M_{0}^{2}\right)^{\otimes j}\right)\left(x,y\right)\right\|_{B\left(\IC^{r}\otimes \dom\ A_{0}^{n}\right)}\Id s\nonumber\\
            \leq&\int_{s\in t\Delta_{j}}\left(q_{s}\shuffle_{\ast}c^{\otimes j}\right)\left(x,y\right)\Id s=\frac{\left(ct\right)^{j}}{j!}q_{t}\left(x-y\right).
		\end{align}
		For the following parts of the proof, we therefore may interchange iterated integration freely. We show only the properties of $\Theta_{t}^{j,L}$, the proof for $\Theta_{t}^{j,R}$ is analogous. Let $L_{f}$ denote the operator given by $\left(L_{f}g\right)\left(x\right)=\left(fg\right)\left(x\right)$, for $x\in\IR^{d}$, $f\in C^{l'}_{m',m}\left(\IR^{d}\right)$ and $g\in C^{l',\left(S^1\right)}_{m,n}\left(\IR^{d}\right)$, we have, by the Leibniz rule and the ideal property of $S^1$, that
		\begin{align}\label{thetaproplemeq1}
			\left\|L_{f}\right\|_{B\left(C^{l',\left(S^1\right)}_{m,n},C^{l',\left(S^1\right)}_{m',n}\right)}\leq d^{l'}\left\|f\right\|_{C^{l'}_{m',m}}.
		\end{align}
		In particular there is a constant $c'$ such that
		\begin{align}
			\left\|L_{\overline{A^2-M_{0}^2}}\right\|_{B\left(C^{l',\left(S^1\right)}_{m,n},C^{l',\left(S^1\right)}_{m',n}\right)}\leq c'd^{l'}.
		\end{align}
		We proceed inductively with convention $\theta_{t}^{0}=Q_{t}$. For $j=0$, the statement is Lemma \ref{qproplem}. Let $C'$ be the constant given by (\ref{qproplemeq1}), which is independent of $j$ and $t$. For $j\geq 0$ we have
		\begin{align}
			\theta_{t}^{j+1}\left(x,y\right)=\int_{0}^{t}\int_{\IR^{d}}\theta_{s}^{j}\left(x,z\right)\left(A^{2}\left(z\right)-A_{0}^{2}\right)Q_{t-s}\left(y,z\right)\Id z\ \Id s,\ x,y\in\IR^{d}.
		\end{align}
		Thus by (\ref{thetaproplemeq1}), we have
		\begin{align}
			&\left\|\Theta_{t}^{j+1,L}\right\|_{B\left(C^{l,\left(S^1\right)}_{m,n},C^{l',\left(S^1\right)}_{m',n}\right)}\leq c'd^{l'}\int_{0}^{\frac{t}{2}}\left\|\Theta_{s}^{j,L}\right\|_{B\left(C^{l',\left(S^1\right)}_{m',n}\right)}\left\|Q_{t-s}^{L}\right\|_{B\left(C^{l,\left(S^1\right)}_{m,n},C^{l',\left(S^1\right)}_{m,n}\right)}\Id s\nonumber\\
			&+c'd^{l}\int_{\frac{t}{2}}^{t}\left\|\Theta_{s}^{j,L}\right\|_{B\left(C^{l,\left(S^1\right)}_{m',n},C^{l',\left(S^1\right)}_{m',n}\right)}\left\|Q_{t-s}^{L}\right\|_{B\left(C^{l,\left(S^1\right)}_{m,n}\right)}\Id s\nonumber\\
			&\leq c'C'd^{l'}\frac{C^{j}}{\prod_{i=1}^{j}\max\left(1,i-\frac{l'-l}{2}\right)}\left(\int_{0}^{\frac{t}{2}}s^{j}\left(t-s\right)^{-\frac{l'-l}{2}}\Id s+\int_{\frac{t}{2}}^{t}s^{j-\frac{l'-l}{2}}\Id s\right)\nonumber\\
			&\leq cC'd^{l'}t^{j+1-\frac{l'-l}{2}}\frac{C^{j}}{\prod_{i=1}^{j}\max\left(1,i-\frac{l'-l}{2}\right)}\frac{2^{2+\frac{l'-l}{2}}}{\max\left(1,j+1-\frac{l'-l}{2}\right)}\leq\frac{C^{j+1}}{\prod_{i=1}^{j+1}\max\left(1,i-\frac{l'-l}{2}\right)}t^{j+1-\frac{l'-l}{2}},
		\end{align}
		which holds if we set $C:=2^{2+\frac{l'-l}{2}}c'C'd^{l'}$, which is independent of $j$ and $t$.
	\end{proof}
	
	\begin{Corollary}\label{thetacor}
		Let $n\in\IN_{0}$, then there is a constant $c$, such that for $t>0$,
		\begin{align}\label{thetacoreq1}
			\sum_{j=0}^{\infty}\left\|\theta_{t}^{j}\left(x,y\right)\right\|_{B\left(\IC^{r}\otimes \dom\ A_{0}^{n}\right)}\leq e^{ct}q_{t}\left(x-y\right),\ x,y\in\IR^{d}.
		\end{align}
		Let $t>0$. Let $l,l',m,m',n,n'\in\IN_{0}$ with $m'\geq m\geq n\geq n'$, and $l'\geq l$. Define the linear operators $\Theta_{t}^{L/R}$, given by
		\begin{align}
			\left(\Theta_{t}^{L}f\right)\left(x\right):=\int_{\IR^{d}}\theta_{t}\left(x,y\right)f\left(y\right)\Id y,\ \left(\Theta_{t}^{R}f\right)\left(x\right):=\int_{\IR^{d}}f\left(y\right)\theta_{t}\left(x,y\right)\Id y,\ f\in C^{l,\left(S^1\right)}_{m,n}\left(\IR^{d}\right).
		\end{align}
		Then $\Theta_{t}^{L}$ is continuous as a map to $C^{l',\left(S^1\right)}_{m',n}$, and $\Theta_{t}^{R}$ is continuous as a map to $C^{l',\left(S^1\right)}_{m,n'}$, and there is a continuous function $h$, such that
		\begin{align}
			&\left\|\Theta_{t}^{L}\right\|_{B\left(C^{l,\left(S^1\right)}_{m,n},C^{l',\left(S^1\right)}_{m',n}\right)}=t^{-\frac{l'-l+m'-m}{2}}h\left(t\right),\ \left\|\Theta_{t}^{R}\right\|_{B\left(C^{l,\left(S^1\right)}_{m,n},C^{l',\left(S^1\right)}_{m,n'}\right)}=t^{-\frac{l'-l+n-n'}{2}}h\left(t\right).
		\end{align}
	\end{Corollary}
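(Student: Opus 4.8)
This is a summation corollary: the quantitative work is already contained in Lemmas \ref{qproplem} and \ref{thetaproplem}, and what remains is to add the pieces up and track the bookkeeping. I would start with the pointwise bound \eqref{thetacoreq1}. Recall from Definition \ref{kerneldef} that $\theta_t^j=\int_{s\in t\Delta_j}Q_s\shuffle_{\ast}(A^2-M_0^2)^{\otimes j}\,\Id s$ for $j\ge 1$, and put $\theta_t^0:=Q_t$ as in the proof of Lemma \ref{thetaproplem}. Pulling the $B(\IC^r\otimes\dom\ A_0^n)$-norm under the integral sign and applying \eqref{thetaproplemeq2} gives $\|\theta_t^j(x,y)\|_{B(\IC^r\otimes\dom\ A_0^n)}\le\frac{(ct)^j}{j!}\,q_t(x-y)$ for all $j\ge 0$ (for $j=0$ the right-hand side is $q_t(x-y)$, consistent with $Q_t=\one_{\IC^r}\otimes e^{-tA_0^2}\otimes q_t$ and $\|e^{-tA_0^2}\|_{B(H)}\le 1$), where $c=c_n$ is the constant of that Lemma. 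Summing over $j\ge 0$ and using $\sum_{j}\tfrac{(ct)^j}{j!}=e^{ct}$ yields \eqref{thetacoreq1}. In particular $\sum_{j}\theta_t^j(x,y)$ converges absolutely in $B(\IC^r\otimes\dom\ A_0^n)$, with the $j$-independent dominant $e^{ct}q_t(x-y)$, which is integrable in $y$ for each fixed $x$.

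Next I would treat the operators $\Theta_t^{L/R}$. Because $\theta_t=\sum_{j\ge 0}(-1)^j\theta_t^j$ with the integrable, $j$-uniform dominant just obtained, dominated convergence permits interchanging the sum with the kernel integration, so for $f\in C^{l,(S^1)}_{m,n}(\IR^d)$ one has $\Theta_t^{L}f=\sum_{j\ge 0}(-1)^j\Theta_t^{j,L}f$ and $\Theta_t^{R}f=\sum_{j\ge 0}(-1)^j\Theta_t^{j,R}f$. I then bound the terms. For $j=0$, $\Theta_t^{0,L/R}=Q_t^{L/R}$, so Lemma \ref{qproplem} gives $\|\Theta_t^{0,L}\|\le Ct^{-(l'-l+m'-m)/2}$ and $\|\Theta_t^{0,R}\|\le Ct^{-(l'-l+n-n')/2}$. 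For $j\ge 1$, Lemma \ref{thetaproplem} gives $\|\Theta_t^{j,L}\|,\|\Theta_t^{j,R}\|\le \frac{C^j}{\prod_{i=1}^{j}\max(1,i-\frac{l'-l}{2})}\,t^{\,j-\frac{l'-l}{2}}$. Since $l'-l\le 2$, all exponents $j-\tfrac{l'-l}{2}$ with $j\ge 1$ are nonnegative, and $\prod_{i=1}^{j}\max(1,i-\tfrac{l'-l}{2})$ grows like $j!$, so $g(t):=\sum_{j\ge 1}\frac{C^j}{\prod_{i=1}^{j}\max(1,i-\frac{l'-l}{2})}\,t^{\,j-\frac{l'-l}{2}}$ converges for every $t\ge 0$, locally uniformly in $t$. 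Hence $\sum_{j}(-1)^j\Theta_t^{j,L/R}$ converges in operator norm, so the limits $\Theta_t^{L/R}$ are bounded into $C^{l',(S^1)}_{m',n}$, resp.\ $C^{l',(S^1)}_{m,n'}$ (i.e.\ continuous as claimed), with $\|\Theta_t^{L}\|\le Ct^{-(l'-l+m'-m)/2}+g(t)$ and $\|\Theta_t^{R}\|\le Ct^{-(l'-l+n-n')/2}+g(t)$.

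It then remains to repackage these as $t^{-(\cdots)/2}h(t)$ with $h$ entire. Set $\sigma:=l'-l+m'-m\ge 0$; then $\|\Theta_t^{L}\|\le t^{-\sigma/2}\bigl(C+t^{\sigma/2}g(t)\bigr)$ and $t^{\sigma/2}g(t)=\sum_{j\ge 1}\frac{C^j}{\prod_{i=1}^{j}\max(1,i-\frac{l'-l}{2})}\,t^{\,j+\frac{m'-m}{2}}$. Using $\tfrac{m'-m}{2}\ge 0$ and the elementary inequality $t^{\,j+\frac{m'-m}{2}}\le t^{\,j}+t^{\,j+\lceil(m'-m)/2\rceil}$ (valid for all $t\ge 0$), this is majorized by $(1+t^{\lceil(m'-m)/2\rceil})\,g_1(t)$, where $g_1(t):=\sum_{j\ge 1}\frac{(Ct)^j}{\prod_{i=1}^{j}\max(1,i-\frac{l'-l}{2})}$ is entire. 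Thus $h_L(t):=C+(1+t^{\lceil(m'-m)/2\rceil})g_1(t)$ is entire and $\|\Theta_t^{L}\|\le t^{-\sigma/2}h_L(t)$; repeating the argument with $n-n'$ in place of $m'-m$ gives an entire $h_R$ with $\|\Theta_t^{R}\|\le t^{-(l'-l+n-n')/2}h_R(t)$. Taking $h:=h_L+h_R$, again entire, serves both estimates, which is the asserted conclusion (reading the displayed "$=$" as the bound "$\le$").

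\emph{Where the care lies.} There is no hard step; the only two points worth a second glance are (i) the interchange of the infinite sum $\sum_j(-1)^j\theta_t^j$ with the kernel integral, which the first step legitimizes through a domination by $e^{ct}q_t(x-y)$ that is uniform in $j$, and (ii) turning the half-integer powers of $t$ produced by Lemma \ref{thetaproplem} into a genuine entire function, which is the elementary majorization in the last step.
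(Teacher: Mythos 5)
Your argument follows the paper's proof essentially line for line: take the $j=0$ contribution from Lemma \ref{qproplem}, the $j\ge 1$ contributions from Lemma \ref{thetaproplem}, justify the term-by-term treatment via the $j$-uniform pointwise dominant $e^{ct}q_t(x-y)$ obtained from \eqref{thetaproplemeq2}, and sum. You are in fact somewhat more careful than the paper at the final repackaging step — the paper simply asserts $Ct^{-\sigma/2}+t^{-(l'-l)/2}S(t)=h(t)t^{-\sigma/2}$ with $h$ entire, which literally would require $t^{(m'-m)/2}S(t)$ to be entire and fails when $m'-m$ is odd; your majorization $t^{j+(m'-m)/2}\le t^{j}+t^{j+\lceil(m'-m)/2\rceil}$ (and the reading of "$=$" as "$\le$", consistent with the paper's usage elsewhere, e.g.\ in Corollary \ref{thetakernelcor}) closes this small gap.
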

	
	\begin{proof}
		Inequality (\ref{thetacoreq1}) follows from inequality (\ref{thetaproplemeq2}), and enables us to exchange integration and summation in the following. We have for $f\in C^{l,\left(S^1\right)}_{m,n}\left(\IR^{d}\right)$, by Lemma \ref{qproplem}, and Lemma \ref{thetaproplem},
		\begin{align}
			&\left\|\Theta_{t}^{L}f\right\|_{C^{l',\left(S^1\right)}_{m',n}}=\left\|\int_{\IR^{d}}\left(\sum_{j=0}^{\infty}\left(-1\right)^{j}\theta_{t}^{j}\left(\cdot,y\right)\right)f\left(y\right)\Id y\right\|_{C^{l',\left(S^1\right)}_{m',n}}=\left\|\sum_{j=0}^{\infty}\left(-1\right)^{j}\Theta_{t}^{j,L}f\right\|_{C^{l',\left(S^1\right)}_{m',n}}\nonumber\\
			&\leq Ct^{-\frac{l'-l+m'-m}{2}}+t^{-\frac{l'-l}{2}}\left(\sum_{j=1}^{\infty}\frac{\left(Ct\right)^{j}}{\prod_{i=1}^{j}\max\left(1,i-\frac{l'-l}{2}\right)}\right)\left\|f\right\|_{C^{l,\left(S^1\right)}_{m,n}}=h\left(t\right)t^{-\frac{l'-l+m'-m}{2}}\left\|f\right\|_{C^{l,\left(S^1\right)}_{m,n}},
		\end{align}
		for some continuous function $h$. $\Theta_{t}^{R}$ can be treated analogously.
	\end{proof}
	
	\begin{Lemma}
		Let $l,m,n\in\IN_{0}$ with $m\geq n$. Then there exists a constant $C$, such that for $t>0$, $j\in\IN_{0}$,
		\begin{align}
			\left\|\theta_{t}^{j}\right\|_{C^{l}_{m,n}\left(\IR^{2d}\right)}\leq\begin{cases}
				Ct^{-\frac{l+m-n+d}{2}},& j=0,\\
				\frac{C^{j}}{\prod_{i=1}^{j}\max\left(1,i-\frac{l+d}{2}\right)}t^{j-\frac{l+d}{2}},& j\geq 1.
			\end{cases}
		\end{align}
	\end{Lemma}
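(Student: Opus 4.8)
The plan is to run an induction on $j$ that mirrors, almost verbatim, the induction proving Lemma \ref{thetaproplem}, but now estimating the kernel $\theta_t^{j}$ and its $x$- and $y$-derivatives pointwise on $\IR^{d}\times\IR^{d}$ rather than estimating the induced operators $\Theta_t^{j,L/R}$. The single structural change that turns the exponent $l'-l$ and the denominator $\prod_i\max(1,i-\tfrac{l'-l}{2})$ of Lemma \ref{thetaproplem} into $l+d$ and $\prod_i\max(1,i-\tfrac{l+d}{2})$ here is that controlling a convolution kernel in $L^{\infty}(\IR^{d}\times\IR^{d})$, as opposed to the operator it defines, costs one extra factor $t^{-d/2}$: throughout we use
\[
\|\p^{\beta}q_t\|_{L^{\infty}(\IR^{d})}\leq C_{\beta}\,t^{-\frac{d+|\beta|}{2}},\qquad \|\langle A_0\rangle^{r}e^{-tA_0^{2}}\|_{B(H)}\leq c_{r}\,t^{-\frac r2}\quad(r\geq 0),
\]
in place of $\|\p^{\beta}q_t\|_{L^{1}(\IR^{d})}\leq C_{\beta}t^{-|\beta|/2}$, cf.\ the proof of Lemma \ref{qproplem}. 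The base case $j=0$ is immediate: $\theta_t^{0}=Q_t$ depends on $x-y$ only, so for $|\alpha|\leq l$ one has $\langle A_0\rangle^{m}\p^{\alpha}Q_t(x-y)\langle A_0\rangle^{-n}=\pm\,\langle A_0\rangle^{m-n}e^{-tA_0^{2}}(\p^{\alpha}q_t)(x-y)$, whose $B(H)$-norm is at most $c_{m-n}C_{\alpha}\,t^{-(m-n)/2}t^{-(d+|\alpha|)/2}$; using $m\geq n$ and summing over $|\alpha|\leq l$ gives $\|\theta_t^{0}\|_{C^{l}_{m,n}(\IR^{2d})}\leq C\,t^{-(l+m-n+d)/2}$.

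For the inductive step I would use the recursion already established inside the proof of Lemma \ref{thetaproplem},
\[
\theta_t^{j+1}(x,y)=\int_{0}^{t}\int_{\IR^{d}}\theta_s^{j}(x,z)\,\bigl(A(z)^{2}-A_0^{2}\bigr)\,Q_{t-s}(z-y)\,\Id z\,\Id s,
\]
and split the $s$-integral at $t/2$. On $\bigl[0,\tfrac t2\bigr]$ the heat factor $Q_{t-s}$ carries time $\geq t/2$, so it can safely absorb all (at most $l$) derivatives: all $y$-derivatives sit on $Q_{t-s}$, and all $x$-derivatives are moved onto $Q_{t-s}$ and onto the smooth potential $V:=A^{2}-M_0^{2}$ by integration by parts in $z$, legitimate because $A^{2}-M_0^{2}\in C^{2}_{m,n}(\IR^{d})$ for all $m\geq n$ — indeed $\langle A_0\rangle^{a}(A(u)^{2}-A_0^{2})\langle A_0\rangle^{b}$ is bounded and $C^{2}$ in $u$ for all $a,b$ because of the finite-rank truncation underlying this chapter (Remark \ref{limitrem}); the factor $\theta_s^{j}$ is then left undifferentiated and estimated in $L^{1}$ in the intermediate variable by Corollary \ref{thetacor} (termwise, $\int_{\IR^{d}}\|\theta_s^{j}(x,z)\|_{B(\IC^{r}\otimes\dom A_0^{k})}\,\Id z\leq (cs)^{j}/j!$ for every $k$), while $\p^{\leq l}\bigl(V(z)Q_{t-s}(z-y)\bigr)$ is estimated in $L^{\infty}$ by the displayed Gaussian bounds, contributing $(t-s)^{-(d+l)/2}\leq(t/2)^{-(d+l)/2}$; the $s$-integral then gives $\int_{0}^{t/2}(cs)^{j}/j!\,\Id s\lesssim t^{j+1}/(j+1)!$. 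On $\bigl[\tfrac t2,t\bigr]$ the roles of $\theta_s^{j}$ and $Q_{t-s}$ are interchanged: now $\theta_s^{j}$ carries time $\geq t/2$ and its full inductive $C^{l}_{m,n}(\IR^{2d})$-bound is used (with $s^{\,j-(l+d)/2}$ for $j\geq 1$, and the base bound for $j=0$), while $Q_{t-s}$ is treated by its harmless $L^{1}$-bound, and the resulting $s$-integral $\int_{t/2}^{t}\prod_{i=1}^{j}\max(1,i-\tfrac{l+d}{2})^{-1}s^{\,j-(l+d)/2}\,\Id s$ produces the asserted bound for $\theta_t^{j+1}$ with denominator $\prod_{i=1}^{j+1}\max(1,i-\tfrac{l+d}{2})$, by the same elementary computation as in Lemma \ref{thetaproplem}. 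The reason the $m-n$ dependence is absent from the exponent in every inductive step (the case left from the base case $j=0$, where no potential is present) is that the weights $\langle A_0\rangle^{m}$, $\langle A_0\rangle^{-n}$ commute with every heat factor $Q_s=e^{-sA_0^{2}}q_s$ and can therefore be absorbed into the potential factors — where $\langle A_0\rangle^{m}V(u)\langle A_0\rangle^{-n}$ is bounded with $C^{2}$ dependence on $u$ — so they never need to be placed on a heat semigroup and cost no power of $t$. Summing the two pieces gives the bound for $\theta_t^{j+1}$ with a constant $C$ of the form (a fixed multiple of) $c\,C'$, $C'$ the constant of Lemma \ref{qproplem}.

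The step I expect to be the genuine obstacle is the small-$j$ regime $j\leq\tfrac{l+d}{2}$, where $s^{\,j-(l+d)/2}$ fails to be integrable at $s=0$; this is exactly what forces the asymmetric treatment above, in which on $\bigl[0,\tfrac t2\bigr]$ one must refuse to differentiate the small-time factor $\theta_s^{j}$ and instead exploit that its $L^{1}$-in-one-variable size carries no negative power of $s$ (Corollary \ref{thetacor}), while all of the $l+d$ orders of growth are loaded onto the factor carrying time $\geq t/2$; it is precisely this borderline integrability that the $\max(1,i-\tfrac{l+d}{2})$ in the denominator records, as in Lemma \ref{thetaproplem}. The remaining verifications are routine: the interchange of the $\IR^{d}$-integration with differentiation, and of integration with the sum defining $\theta_t$, are justified by the Gaussian domination $\sum_{j}\|\theta_s^{j}(x,y)\|\leq e^{cs}q_s(x-y)$ of Corollary \ref{thetacor} together with absolute summability, and the smoothness needed to integrate by parts and apply the Leibniz rule is supplied by $V=A^{2}-M_0^{2}\in C^{2}_{m,n}(\IR^{d})$.
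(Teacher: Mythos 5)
Your plan is structurally the same as the paper's — induction on $j$, split of the Duhamel $s$-integral at $t/2$, and the observation that the $\langle A_0\rangle^{\pm}$ weights can be absorbed into the potential factors so that the $m-n$ exponent appears only at $j=0$. Your base case and your treatment of the $\left[t/2,t\right]$ piece (where $Q_{t-s}$ is translation invariant, so $\p_y$ can be moved onto $\theta_s^j(x,\cdot)V$ by integration by parts, and the inductive bound is then integrable in $s$) are both correct. The gap is on $\left[0,t/2\right]$. You write that the $\p_x$-derivatives are ``moved onto $Q_{t-s}$ and onto $V$ by integration by parts in $z$'', leaving $\theta_s^j$ ``undifferentiated'', and then appeal to the $L^1$-in-$z$ Gaussian bound of Corollary \ref{thetacor}. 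But $\theta_s^j(x,z)$ is not a function of $x-z$ — the potential factors inside it destroy translation invariance — so $\p_x\theta_s^j(x,z)\neq-\p_z\theta_s^j(x,z)$, and a single integration by parts in $z$ does not move $\p_x$ off this factor. If you instead unwind the full chain defining $\theta_s^j$ and integrate by parts through all the intermediate variables, the derivative can indeed be propagated, but what remains is not $\theta_s^j$: you generate terms where one or more of the interior potentials $A^2-M_0^2$ carry derivatives, and those terms are not controlled by Corollary \ref{thetacor}, which bounds only the undifferentiated kernel.

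The device that makes this step close cleanly is the operator bound of Lemma \ref{thetaproplem} with $l'=l$: $\left\|\Theta_s^{j,L}\right\|_{B\left(C^l_{m,n}\left(\IR^d\right)\right)}\leq \frac{C^j}{j!}s^j$. That bound already packages the $C^l$-in-$x$ regularity of $x\mapsto\int\theta_s^j(x,z)g(z)\,\Id z$ in terms of the $C^l$-norm of $g$, with \emph{no negative power of $s$}, and its own inductive proof (via Lemma \ref{qproplem}) carries out precisely the iterated derivative-shifting bookkeeping you are gesturing at. So: replace your invocation of ``Corollary \ref{thetacor} plus integration by parts in $z$'' on $\left[0,t/2\right]$ by an application of $\Theta_s^{j,L}$ to the function $z\mapsto\left(A^2(z)-A_0^2\right)\p_y^\beta Q_{t-s}(z,y)$ and cite Lemma \ref{thetaproplem}; the rest of your argument then goes through exactly as in the paper.
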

	
	\begin{proof}
		We proceed inductively in $j$. For $j=0$, the statement follows from
		\begin{align}
			\left\|Q_{t}\right\|_{C^{l}_{m,n}\left(\IR^{2d}\right)}\leq\left\|\langle A_{0}\rangle^{m-n}e^{-tA_{0}^2}\right\|_{B\left(H\right)}\left\|q_{t}\right\|_{C^{l}_{b}\left(\IR^{d}\right)}\leq C't^{-\frac{l+m-n+d}{2}},
		\end{align}
		where $C'$ is some constant independent of $t$. Recall that there is a constant $c$, such that
		\begin{align}
			\left\|L_{\overline{A^2-M_{0}^2}}\right\|_{B\left(C^{l}_{n,n},C^{l}_{m,n}\right)}\leq cd^{l}.
		\end{align}
		Let $R_f$ denote the operator $\left(R_{f}g\right)\left(x\right):=g\left(x\right)f\left(x\right)$. Then similarly we have
		\begin{align}
			\left\|R_{\overline{A^2-M_{0}^2}}\right\|_{B\left(C^{l}_{n,n},C^{l}_{m,n}\right)}\leq cd^{l}.
		\end{align}
		For $j\geq 0$, we have for $x,y\in\IR^{d}$,
		\begin{align}\label{auxeq1}
			\theta_{t}^{j+1}\left(x,y\right)&=\int_{0}^{t}\int_{\IR^{d}}\theta_{s}^{j}\left(x,z\right)\left(A^{2}\left(z\right)-A_{0}^{2}\right)Q_{t-s}\left(y,z\right)\Id z\ \Id s,\nonumber\\
			&=\int_{0}^{\frac{t}{2}}\Theta_{s}^{j,L}\left(\left(A^{2}-M_{0}^{2}\right)Q_{t-s}\left(\cdot,y\right)\right)\left(x\right)\Id s+\int_{\frac{t}{2}}^{t}Q_{t-s}^{R}\left(\theta_{s}^{j}\left(x,\cdot\right)\left(A^{2}-M_{0}^{2}\right)\right)\left(y\right)\Id s
		\end{align}
		and thus, we have constant $C''$, independent of $j$ and $t$, such that
		\begin{align}\label{auxeq2}
			\left\|\theta^{j+1}_{t}\right\|_{C^{l}_{m,n}\left(\IR^{2d}\right)}\leq& cd^{l}C'C''\frac{C^{j}}{j!}\int_{0}^{\frac{t}{2}}s^{j}\left(t-s\right)^{-\frac{l+d}{2}}\Id s+cd^{l}\frac{C^{j}}{\prod_{i=1}^{j}\max\left(1,i-\frac{l+d}{2}\right)}\int_{\frac{t}{2}}^{t}s^{j-\frac{l+d}{2}}\Id s\nonumber\\
			\leq&\frac{C^{j+1}}{\prod_{i=1}^{j+1}\max\left(1,i-\frac{l+d}{2}\right)}t^{j+1-\frac{l+d}{2}},
		\end{align}
		if we choose $C$, dependent on $c, C', C'',l$, large enough.
	\end{proof}
	
	\begin{Corollary}\label{thetakernelcor}
		Let $l,m,n\in\IN_{0}$ with $m\geq n$. Then $\theta_{t}\in C^{l}_{m,n}\left(\IR^{2d}\right)$ for $t>0$, and there is a continuous function $h$, such that
		\begin{align}
			\left\|\theta_{t}\right\|_{C^{l}_{m,n}\left(\IR^{2d}\right)}=h\left(t\right)t^{-\frac{l+m-n+d}{2}},\ t>0.
		\end{align}
	\end{Corollary}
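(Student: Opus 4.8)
The plan is to read the statement off the preceding Lemma by summing the Volterra series $\theta_{t}=Q_{t}+\sum_{j=1}^{\infty}\left(-1\right)^{j}\theta_{t}^{j}$ of Definition~\ref{kerneldef}. First I would invoke the preceding Lemma (whose hypothesis $2\geq l$ is the one inherited here): after enlarging its constant $C$ if necessary, it yields the $j=0$ bound $\left\|Q_{t}\right\|_{C^{l}_{m,n}\left(\IR^{2d}\right)}\leq Ct^{-\frac{l+m-n+d}{2}}$ together with $\left\|\theta_{t}^{j}\right\|_{C^{l}_{m,n}\left(\IR^{2d}\right)}\leq C^{j}t^{j-(l+d)/2}/\prod_{i=1}^{j}\max\left(1,i-(l+d)/2\right)$ for $j\geq1$.

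Next I would verify that this series converges in the norm of the Banach space $C^{l}_{m,n}\left(\IR^{2d}\right)$, which simultaneously shows that $\theta_{t}$ lies in that space and justifies adding up the termwise bounds. Comparing successive majorants, the quotient of the $\left(j+1\right)$-st by the $j$-th is $Ct/\max\left(1,j+1-(l+d)/2\right)$, which tends to $0$ as $j\to\infty$; hence $\sum_{j\geq1}\left\|\theta_{t}^{j}\right\|_{C^{l}_{m,n}\left(\IR^{2d}\right)}<\infty$ for every $t>0$, the power series $g\left(z\right):=\sum_{j=1}^{\infty}C^{j}z^{j}/\prod_{i=1}^{j}\max\left(1,i-(l+d)/2\right)$ is entire, and completeness of $C^{l}_{m,n}\left(\IR^{2d}\right)$ gives $\theta_{t}\in C^{l}_{m,n}\left(\IR^{2d}\right)$.

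It then remains to assemble the estimate: by the triangle inequality,
\[
	\left\|\theta_{t}\right\|_{C^{l}_{m,n}\left(\IR^{2d}\right)}\leq Ct^{-\frac{l+m-n+d}{2}}+t^{-\frac{l+d}{2}}g\left(t\right)=t^{-\frac{l+m-n+d}{2}}\left(C+t^{\frac{m-n}{2}}g\left(t\right)\right),
\]
so the claim holds with $h\left(t\right):=C+t^{\frac{m-n}{2}}g\left(t\right)$ when $m-n$ is even, and, when $m-n$ is odd, with the entire majorant $h\left(t\right):=C+\left(1+t^{\lceil(m-n)/2\rceil}\right)g\left(t\right)$, since $t^{\frac{m-n}{2}}\leq1+t^{\lceil(m-n)/2\rceil}$ for $t\geq0$; in both cases $h$ is entire and dominates $t^{\frac{l+m-n+d}{2}}\left\|\theta_{t}\right\|_{C^{l}_{m,n}\left(\IR^{2d}\right)}$, as asserted.

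The only genuine point to watch is the bookkeeping of the powers of $t$: one must keep track that the leading term $Q_{t}$ carries the extra factor $t^{-\frac{m-n}{2}}$, coming from $\left\|\langle A_{0}\rangle^{m-n}e^{-tA_{0}^2}\right\|_{B\left(H\right)}\leq c\,t^{-\frac{m-n}{2}}$ in the estimate of $Q_{t}$, whereas the higher Volterra terms $\theta_{t}^{j}$ with $j\geq1$ do not, which is exactly what fixes the global normalisation at $t^{-\frac{l+m-n+d}{2}}$ rather than $t^{-\frac{l+d}{2}}$. This is the same phenomenon already handled for $\Theta_{t}^{L/R}$ in Corollary~\ref{thetacor}, so beyond the preceding Lemma only a routine convergence argument is needed.
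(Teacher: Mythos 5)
Your proof is correct and follows essentially the same route as the paper: sum the termwise bounds from the preceding Lemma and package the result as $h(t)\,t^{-(l+m-n+d)/2}$. You are slightly more careful than the paper on two small points — you explicitly observe that $C^{l}_{m,n}(\IR^{2d})$ is complete so the absolutely convergent series defines an element of that space, and you note that when $m-n$ is odd the factor $t^{(m-n)/2}$ is not entire, replacing it by the dominant $1+t^{\lceil(m-n)/2\rceil}$ (which silently turns the stated equality into the inequality that is actually proved, exactly as the paper's own $\leq\ldots=:h(t)t^{-\ldots}$ already does) — but these are refinements of, not departures from, the paper's argument.
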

	
	\begin{proof}
		We have
		\begin{align}
			\left\|\theta_{t}\right\|_{C^{l}_{m,n}\left(\IR^{2d}\right)}\leq&\sum_{j=0}^{\infty}\left\|\theta_{t}^{j}\right\|_{C^{l}_{m,n}\left(\IR^{2d}\right)}\leq C t^{-\frac{l+m-n+d}{2}}+\left(\sum_{j=1}^{\infty}\frac{C^{j}}{\prod_{i=1}^{j}\max\left(1,i-\frac{l+d}{2}\right)}t^{j}\right)t^{-\frac{l+d}{2}}\nonumber\\
			=:&h\left(t\right)t^{-\frac{l+m-n+d}{2}}.
		\end{align}
	\end{proof}
	
	\begin{Lemma}\label{rproplem}
		Let $n\in\IN_{0}$, then there is a constant $c$, such that for $t>0$, $j\in\IN$, and $x,y\in\IR^{d}$,
		\begin{align}\label{rproplemeq1}
			\int_{s\in t\Delta_{j}}\left\|\left(\theta_{s}\shuffle_{\ast}\left(\Ii\di B\right)^{\otimes j}\right)\left(x,y\right)\right\|_{B\left(\IC^{r}\right)\otimes\langle A_{0}\rangle^{-n}S^{1}\left(H\right)\langle A_{0}\rangle^{n}}\Id s\leq\frac{\left(ct\right)^{j}}{j!}e^{ct}q_{t}\left(x-y\right).
		\end{align}
		Let $l,l',m,m',n\in\IN_{0}$ with $m'\geq m\geq n\geq n'$, and $l'\geq l$. For $t>0$ and $j\in\IN$ define the linear operator $R_{t}^{j,L}$, given by
		\begin{align}
			\left(R_{t}^{j,L}f\right)\left(x\right):=\int_{\IR^{d}}r_{t}^{j}\left(x,y\right)f\left(y\right)\Id y,\ f\in C^{l}_{m,n}\left(\IR^{d}\right).
		\end{align}
		Then $R_{t}^{j,L}$ is continuous as a map to $C^{l',S^1}_{m',n}$, and for $0<t\leq t_{0}$ there exists a constant $C$, independent of $j\in\IN$ and $t>0$, such that
		\begin{align}
			\left\|R_{t}^{j,L}\right\|_{B\left(C^{l}_{m,n},C^{l',S^1}_{m',n}\right)}\leq\frac{C^{j}}{\prod_{i=1}^{j}\max\left(1,i-\frac{l'-l}{2}\right)}t^{j-\frac{l'-l}{2}}.
		\end{align}
	\end{Lemma}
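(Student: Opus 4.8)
I would mirror the proof of Lemma~\ref{thetaproplem}, the new ingredient being that — after dropping the indices $n,m$, so that $B=B_{n,m}$ — the family $\Ii\di^{E}A=\Ii\di^{E}B$ is $C^{1,S^{1}}$ by Remark~\ref{limitrem}; this is exactly what turns the $B(H)$-valued source of $R_{t}^{j,L}$ into the $S^{1}(H)$-valued target and forces $l'\leq 1$. For the integral estimate~(\ref{rproplemeq1}) I would pick $c\geq 1$ so large that simultaneously $\sup_{x}\|\overline{A(x)^{2}-A_{0}^{2}}\|\leq c$ (so that Corollary~\ref{thetacor}, inequality~(\ref{thetacoreq1}), applies and in particular yields the pointwise bound $\|\theta_{s}(x,y)\|_{B\left(\IC^{r}\otimes\dom\ A_{0}^{n}\right)}\leq e^{cs}q_{s}(x-y)$) and $\sup_{x}\|(\Ii\di^{E}A)(x)\|_{B\left(\IC^{r}\right)\otimes\langle A_{0}\rangle^{-n}S^{1}(H)\langle A_{0}\rangle^{n}}\leq c$ (finite since $\Ii\di^{E}B\in C^{1,S^{1}}_{n,n}(\IR^{d})$ by Remark~\ref{limitrem}). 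Expanding the convolution shuffle, dominating each of the $j+1$ kernels $\theta_{s_{i}}$ by $e^{cs_{i}}q_{s_{i}}$, estimating each of the $j$ factors $\Ii\di^{E}A$ in $S^{1}$-norm by $c$, using that $S^{1}$ is an ideal to collect the trace norms, and finally the Gaussian semigroup identity $q_{a}\ast q_{b}=q_{a+b}$ together with $s_{0}+\cdots+s_{j}=t$, one gets $\|(\theta_{s}\shuffle_{\ast}(\Ii\di^{E}A)^{\otimes j})(x,y)\|\leq c^{j}e^{ct}q_{t}(x-y)$ for $s\in t\Delta_{j}$; integration over $t\Delta_{j}$ and $\vol(t\Delta_{j})=t^{j}/j!$ give~(\ref{rproplemeq1}), which in turn legitimises the Fubini interchanges used below.

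\textbf{The recursion.} For the operator bound I would induct on $j$, peeling off the left-most pair $\theta\cdot\Ii\di^{E}A$: unravelling the definition of $r_{t}^{j}$ yields, with the conventions $r_{t}^{0}=\theta_{t}$, $R_{t}^{0,L}=\Theta_{t}^{L}$,
\begin{align*}
	R_{t}^{j,L}=\int_{0}^{t}\Theta_{t-s}^{L}\circ L_{\Ii\di^{E}A}\circ R_{s}^{j-1,L}\,\Id s,\qquad j\geq 1,
\end{align*}
$L_{g}h:=gh$ denoting left multiplication; the base case $j=0$ is Corollary~\ref{thetacor}. In the inductive step I split $\int_{0}^{t}=\int_{0}^{t/2}+\int_{t/2}^{t}$. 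On $[0,t/2]$, where $t-s\geq t/2$, I let $\Theta_{t-s}^{L}$ absorb the $\tfrac{l'-l}{2}$ derivative gain — costing $(t-s)^{-(l'-l)/2}h(t-s)$ by Corollary~\ref{thetacor}, with the intermediate $\langle A_{0}\rangle$-weights matched so that no $(m'-m)/2$ loss occurs — and apply the induction hypothesis to $R_{s}^{j-1,L}$ at level $l'=l$, costing $\tfrac{C^{j-1}}{(j-1)!}s^{j-1}$; on $[t/2,t]$, where $s\geq t/2$, I instead let $R_{s}^{j-1,L}$ absorb the derivative gain (induction hypothesis at level $l'$) while $\Theta_{t-s}^{L}$ is merely bounded by $h(t-s)$. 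The middle factor $L_{\Ii\di^{E}A}$ is controlled, exactly as in~(\ref{thetaproplemeq1}), by $d^{l'}\|\Ii\di^{E}A\|_{C^{l',S^{1}}_{m',m}}$, which raises the left weight from $m$ to $m'$; it is here, and only here, that the $S^{1}$-valuedness of $\Ii\di^{E}A$ and the ideal property of $S^{1}$ convert the $B(H)$-valued output of $R^{j-1,L}$ into the $S^{1}(H)$-valued output demanded of $R^{j,L}$, and the regularity assumption $l'\leq 1$ is precisely what makes $\Ii\di^{E}A\in C^{l',S^{1}}$. Bounding $h$ by $\sup_{[0,t_{0}]}|h|$ — which is where the hypothesis $0<t\leq t_{0}$ enters — and estimating the Beta-type integral $\int_{0}^{t/2}s^{j-1}(t-s)^{-(l'-l)/2}\Id s+\int_{t/2}^{t}s^{j-1-(l'-l)/2}\Id s\leq\mathrm{const}\cdot t^{j-(l'-l)/2}/\max(1,j-\tfrac{l'-l}{2})$ closes the induction with a constant $C$ independent of $j$ and of $t\in(0,t_{0}]$, exactly as in Lemma~\ref{thetaproplem}.

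\textbf{Main obstacle.} The analytic core — the Beta-integral estimate and the Gaussian semigroup property — is routine once Corollary~\ref{thetacor} is in hand, so the real work is the bookkeeping: at each of the three composed maps one must choose the intermediate derivative order and the intermediate $\langle A_{0}\rangle$-weights so that the composite genuinely lands in $C^{l',S^{1}}_{m',n}$ with no power of $t$ worse than $t^{-(l'-l)/2}$, and one must check that the $S^{1}$ decoration is produced exactly once (by $L_{\Ii\di^{E}A}$) and then preserved by the ideal property of $S^{1}$ under the remaining compositions. The chain $m'\geq m\geq n$, $m'\geq n$ is precisely what makes this weight-matching work — raising the left weight to $m'$ at the multiplier step, keeping the right weight fixed at $n$ throughout since $R^{j,L}$ acts only on the left — and I expect this weight-tracking, rather than any genuinely new estimate, to be the only delicate point.
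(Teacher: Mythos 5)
Your proposal is correct and follows essentially the same route as the paper: peel a single Duhamel layer off $r_t^j$, split at $t/2$, absorb the $(l'-l)/2$ derivative gain with whichever factor lives on the large time interval, and close the induction with the Beta-type integral from Lemma~\ref{thetaproplem}. The only differences are cosmetic: you peel the leftmost $\theta_{s_0}$ (writing $R_t^{j,L}=\int_0^t\Theta_{t-s}^L\circ L_{\Ii\di^E A}\circ R_s^{j-1,L}\,\Id s$) whereas the paper peels the rightmost $\theta_{s_j}$ (so $R_s^{j,L}$ is the outer factor), and your phrase that $L_{\Ii\di^E A}$ \emph{converts} the $B(H)$-valued output of $R^{j-1,L}$ into $S^1$ is strictly accurate only at $j=1$ — for $j\geq 2$ the inductive hypothesis already gives $R^{j-1,L}$ an $S^1$-valued target, and $L_{\Ii\di^E A}$ merely preserves it via the ideal property — but this does not affect the correctness of the estimate.
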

	
	\begin{proof}
		By (\ref{thetacoreq1}) we have for $n\in\IN_{0}$ a constant $c'$, such that for $t>0$,
		\begin{align}
			\left\|\theta_{t}\right\|_{B\left(\IC^{r}\otimes \dom\ A_{0}^{n}\right)}\leq e^{c't}q_{t}\left(x-y\right).
		\end{align}
		We recall that $\left(\di B\right)$ is finite rank valued and thus, with \begin{align}
			c:=\max\left(c',\sup_{x\in\IR^{d}}\left\|\Ii\left(\di B\right)\right\|_{B\left(\IC^{r}\right)\otimes\langle A_{0}\rangle^{-n} S^{1}\left(H\right)\langle A_{0}\rangle^{n}}\right),
		\end{align}
		we have for $x,y\in\IR^{d}$,
		\begin{align}
			&\int_{s\in t\Delta_{j}}\left\|\left(\theta_{s}\shuffle_{\ast}\left(\Ii\di B\right)^{\otimes j}\right)\left(x,y\right)\right\|_{B\left(\IC^{r}\right)\otimes\langle A_{0}\rangle^{-n} S^{1}\left(H\right)\langle A_{0}\rangle^{n}}\Id s\nonumber\\
            &\leq\int_{s\in t\Delta_{j}}\left(e^{cs}q_{s}\shuffle_{\ast} c^{\otimes j}\right)\left(x,y\right)\Id s=\frac{\left(ct\right)^{j}}{j!}e^{ct}q_{t}\left(x-y\right),
		\end{align}
		and thus we may exchange integration freely in the following. We note that there is a constant $c$, such that
		\begin{align}
			\left\|L_{\Ii\left(\di B\right)}\right\|_{B\left(C^{l}_{n,n},C^{l,S^1}_{m,n}\right)}+\left\|R_{\Ii\left(\di B\right)}\right\|_{B\left(C^{l}_{m,m},C^{l,S^1}_{m,n}\right)}\leq cd^{l}.
		\end{align}
		With convention $r_{t}^{0}:=\theta_{t}$, $R_{t}^{0,L}:=\Theta_{t}^{L}$, we have by Corollary \ref{thetacor}, for $0< t\leq t_{0}$, a constant $C'$, independent of $t$, such that
		\begin{align}
			\left\|\Theta_{t}^{L}\right\|_{B\left(C^{l,\left(S^1\right)}_{m,n},C^{l',\left(S^1\right)}_{m',n}\right)}\leq C't^{-\frac{l'-l+m'-m}{2}},
		\end{align}
		and thus,
		\begin{align}
			&\left\|R_{t}^{j+1,L}\right\|_{B\left(C^{l}_{m,n},C^{l',S^1}_{m',n}\right)}\leq cd^{l'}\int_{0}^{\frac{t}{2}}\left\|R_{s}^{j,L}\right\|_{B\left(C^{l',S^1}_{m',n}\right)}\left\|\Theta^{L}_{t-s}\right\|_{B\left(C^{l}_{m,n},C^{l'}_{m,n}\right)}\Id s\nonumber\\
			&+cd^{l}\int_{\frac{t}{2}}^{t}\left\|R_{s}^{j,L}\right\|_{B\left(C^{l}_{m',n},C^{l'}_{m',n}\right)}\left\|\Theta^{L}_{t-s}\right\|_{B\left(C^{l,S^1}_{m,n}\right)}\Id s\nonumber\\
			&\leq cC'd^{l'}\frac{C^{j}}{\prod_{i=1}^{j}\max\left(1,i-\frac{l-l'}{2}\right)}\left(\int_{0}^{\frac{t}{2}}s^{j}\left(t-s\right)^{-\frac{l'-l}{2}}\Id s+\int_{\frac{t}{2}}^{t}s^{j-\frac{l'-l}{2}}\Id s\right)\nonumber\\
			&\leq cC'd^{l'}t^{j+1-\frac{l'-l}{2}}\frac{C^{j}}{\prod_{i=1}^{j}\max\left(1,i-\frac{l'-l}{2}\right)}\frac{2^{2+\frac{l'-l}{2}}}{\max\left(1,j+1-\frac{l'-l}{2}\right)}\leq\frac{C^{j+1}}{\prod_{i=1}^{j+1}\max\left(1,i-\frac{l'-l}{2}\right)}t^{j+1-\frac{l'-l}{2}},
		\end{align}
		which holds if we set $C:=2^{2+\frac{l'-l}{2}}cC'd^{l'}$, which is independent of $j$ and $t$.
	\end{proof}
	
	\begin{Lemma}\label{rkernellem}
		Let $n\in\IN_{0}$, then there is a constant $c$, such that for $t>0$,
		\begin{align}
			\sum_{j=1}^{\infty}\left\|r_{t}^{j}\left(x,y\right)\right\|_{B\left(\IC^{r}\right)\otimes\langle A_{0}\rangle^{-n}S^1\left(H\right)\langle A_{0}\rangle^{n}}\leq e^{ct}q_{t}\left(x-y\right),\ x,y\in\IR^{d}.
		\end{align}
		Let $l,m,n\in\IN_{0}$ with $m\geq n$, $1\geq l$. For $0<t\leq t_{0}$ exists a constant $C$, independent of $t$ and $j\in\IN$, such that
		\begin{align}
			\left\|r_{t}^{j}\right\|_{C^{l,S^1}_{m,n}\left(\IR^{2d}\right)}\leq\frac{C^{j}}{\prod_{i=1}^{j}\max\left(1,i-\frac{l+d}{2}\right)}t^{j-\frac{l+d}{2}}.
		\end{align}
	\end{Lemma}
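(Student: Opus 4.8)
For the $C^{l,S^1}_{m,n}(\IR^{2d})$-bound the plan is to induct on $j$ (over all admissible weight pairs at once), reproducing the scheme of the preceding kernel estimate for $\theta_t^j$ with $Q$ replaced by the heat-type kernel $\theta$ and $A^2-M_0^2$ by $\Ii\di^{E}A$; the role of Lemma \ref{qproplem} is now played by Corollary \ref{thetacor} (operator bounds for convolution with $\theta_t$, left and right) and Corollary \ref{thetakernelcor} (the kernel bound $\|\theta_t\|_{C^l_{m,n}(\IR^{2d})}=h(t)t^{-\frac{l+m-n+d}{2}}$), and the role of Lemma \ref{thetaproplem} by Lemma \ref{rproplem}. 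First I would record the first inequality, which is immediate: since $r_t^j(x,y)=\int_{s\in t\Delta_j}\bigl(\theta_s\shuffle_{\ast}(\Ii\di^{E}A)^{\otimes j}\bigr)(x,y)\,\Id s$, the bound (\ref{rproplemeq1}) gives $\|r_t^j(x,y)\|_{B(\IC^r)\otimes\langle A_0\rangle^{-n}S^1(H)\langle A_0\rangle^{n}}\le\frac{(ct)^j}{j!}e^{ct}q_t(x-y)$, and summing over $j\ge1$ produces an extra factor $e^{ct}$, so after renaming the constant the claim follows; the same absolute summability is what lets me interchange the series $\theta_t=Q_t+\sum_j(-1)^j\theta_t^j$ with the convolution integrals defining $r_t^j$ in the rest of the argument.

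For the inductive step I would peel off the last $\Ii\di^{E}A$- and $\theta$-factor of the convolution shuffle and cut the time integral at $t/2$, expressing $r_t^{j+1}$, with the conventions $r_t^0:=\theta_t$ and $R_t^{0,L}:=\Theta_t^L$, as a sum of two terms. On the part where the small time parameter sits in the $r$-factor, $r_s^j$ is used as the operator $R_s^{j,L}$ from Lemma \ref{rproplem} (in the case $l'=l$, so with the bound $\tfrac{C^j}{j!}s^{j}$) applied to $(\Ii\di^{E}A)\theta_{t-s}(\cdot,y)$, which is bounded in $C^l_{m,n}(\IR^d)$ by $\lesssim(t-s)^{-\frac{l+d}{2}}$ via $\|\theta_{t-s}(\cdot,y)\|_{C^l_{n,n}(\IR^d)}\lesssim(t-s)^{-\frac{l+d}{2}}$ and $L_{\Ii\di^{E}A}\colon C^l_{n,n}\to C^{l,S^1}_{m,n}$; on the part where it sits in the $\theta$-factor, convolution with $\theta_{t-s}$ acts on $C^{l,S^1}_{m,n}$ with norm $h(t-s)$ by Corollary \ref{thetacor}, while $r_s^j(x,\cdot)(\Ii\di^{E}A)$ is bounded in $C^{l,S^1}_{m,n}(\IR^d)$ by the induction hypothesis at level $j$ with equal weights, $\|r_s^j\|_{C^l_{m,m}(\IR^{2d})}\lesssim\frac{C^j}{\prod_{i=1}^j\max(1,i-\frac{l+d}{2})}s^{j-\frac{l+d}{2}}$, together with $R_{\Ii\di^{E}A}\colon C^l_{m,m}\to C^{l,S^1}_{m,n}$. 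The two remaining time integrals $\int_0^{t/2}s^j(t-s)^{-\frac{l+d}{2}}\,\Id s$ and $\int_{t/2}^t s^{j-\frac{l+d}{2}}\,\Id s$ are each dominated, exactly as in the $\theta_t^j$-estimate, by a universal constant times $\frac{1}{\max(1,j+1-\frac{l+d}{2})}t^{j+1-\frac{l+d}{2}}$, so taking $C$ large enough (depending on $d,l,m,n,t_0$ and the constants of the quoted results) closes the induction; the $j=0$ inputs are Corollaries \ref{thetacor} and \ref{thetakernelcor} used in equal weights, and every occurring $h$ is entire, hence bounded on $[0,t_0]$, so it only enters the constant.

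The only genuinely delicate point I anticipate is the $\langle A_0\rangle$-weight bookkeeping. Each $\theta_u$-kernel carries the singularity $u^{-\frac{l+d+m-n}{2}}$, which on the singular half of the time integral would be non-integrable if the weight jump $\langle A_0\rangle^m\cdot\langle A_0\rangle^{-n}$ were carried by $\theta$; the remedy is to keep $\theta$ always in equal weights (and as an \emph{operator}, not a kernel, on the half where its time parameter is small) and to route the single weight jump through the finite-rank, $S^1$-valued factor $\Ii\di^{E}A$, which the bounded multiplications $L_{\Ii\di^{E}A}\colon C^l_{n,n}\to C^{l,S^1}_{m,n}$ and $R_{\Ii\di^{E}A}\colon C^l_{m,m}\to C^{l,S^1}_{m,n}$ (both of norm $\le cd^l$, as in the proof of Lemma \ref{rproplem}) accomplish at no cost in powers of $t$; this is precisely why the final exponent $j-\frac{l+d}{2}$ is independent of $m$ and $n$. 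The hypothesis $l\le1$ is forced by $\Ii\di^{E}A\in C^{1,S^1}$ only, in contrast to $A^2-M_0^2\in C^{2,S^1}$, which allowed $l\le2$ for $\theta_t^j$.
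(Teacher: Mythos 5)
Your proposal reproduces the paper's argument faithfully: the pointwise estimate and its summation from (\ref{rproplemeq1}), the induction with the convention $r_t^0=\theta_t$, the $t/2$-split of the time integral assigning the singular $\theta$-kernel bound to whichever half keeps it integrable, the use of $R_s^{j,L}$ from Lemma \ref{rproplem} and of $\Theta_{t-s}^{R}$ from Corollary \ref{thetacor}, and the routing of the weight jump through $L_{\Ii\di^E A}$ and $R_{\Ii\di^E A}$. The remarks on why the weight shift must sit on $\Ii\di^E A$ rather than on $\theta$, and on why $l\le 1$ is forced by $\Ii\di^E A\in C^{1,S^1}$, are correct and make explicit what the paper leaves implicit.
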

	
	\begin{proof}
		Let $0<t\leq t_{0}$ and $C'$ be a constant, independent of $t$, such that
		\begin{align}
			\left\|\theta_{t}\right\|_{C^{l}_{m,n}\left(\IR^{2d}\right)}\leq C't^{-\frac{l+m-n+d}{2}}.
		\end{align}
		Recall that there is a constant $c$, such that
		\begin{align}
			\left\|L_{\Ii\left(\di B\right)}\right\|_{B\left(C^{l}_{n,n},C^{l,S^1}_{m,n}\right)}+\left\|R_{\Ii\left(\di B\right)}\right\|_{B\left(C^{l}_{m,m},C^{l,S^1}_{m,n}\right)}\leq cd^{l}.
		\end{align}
		We have for $j\geq 0$ with convention $r_{t}^{0}:=\theta_{t}$, and $x,y\in\IR^{d}$,
		\begin{align}
			r_{t}^{j+1}\left(x,y\right)=\int_{0}^{\frac{t}{2}}R_{s}^{j,L}\left(\left(\di B\right)\theta_{t-s}\left(\cdot,y\right)\right)\left(x\right)\Id s+\int_{\frac{t}{2}}^{t}\Theta_{t-s}^{R}\left(r_{s}^{j}\left(x,\cdot\right)\left(\di B\right)\right)\left(y\right)\Id s,
		\end{align}
		and thus we have a constant $C''$ independent of $j$ and $t$, such that
		\begin{align}
			\left\|r^{j+1}_{t}\right\|_{C^{l,S^1}_{m,n}\left(\IR^{2d}\right)}\leq& cd^{l}C'C''\frac{C^{j}}{j!}\int_{0}^{\frac{t}{2}}s^{j}\left(t-s\right)^{-\frac{l+d}{2}}\Id s+cd^{l}C''\frac{C^{j}}{\prod_{i=1}^{j}\max\left(1,i-\frac{l+d}{2}\right)}\int_{\frac{t}{2}}^{t}s^{j-\frac{l+d}{2}}\Id s\nonumber\\
			\leq&\frac{C^{j+1}}{\prod_{i=1}^{j+1}\max\left(1,i-\frac{l+d}{2}\right)}t^{j+1-\frac{l+d}{2}},
		\end{align}
		if we choose $C$, dependent on $c, C', C'',l$, large enough.
	\end{proof}

    \section{Construction of the main integral kernels}

	With the auxiliary kernels constructed in the previous chapter, we may now proceed to construct the integral kernels of the operators appearing in Lemma \ref{reduxlem}, which involve commutators.
	
	\begin{Definition}
		Let $t>0$, $k\in\IN$. Define
		\begin{align}
			\kappa_{t}^{-,k}:=\sum_{j=\frac{k-1}{2}}^{\infty}r^{2j+1}_{t},\ k\text{ odd, }\kappa_{t}^{+,k}:=\sum_{j=\frac{k}{2}}^{\infty}r^{2j}_{t},\ k\text{ even.}
		\end{align}
		Moreover define for	$\epsilon,r>0$, with $\di_{x/y/1}$ denoting the application of $\di$ to the $x$-variable/$y$-variable/first variable, for $x,y\in\IR^{d}$,
		\begin{align}
			\rho_{t}^{\epsilon,r}\left(x,y\right):=&\one_{B_{r}\left(0\right)}\left(x\right)\one_{B_{r}\left(0\right)}\left(y\right)\nonumber\\
			&\tr_{\IC^{r}}\left(\Ii\left(\di_{x}+\di_{y}\right)\left(\left(\Ii\di q_{\epsilon}\otimes q_{\epsilon}\right)\ast\kappa_{t}^{-,d-2}+\left(q_{\epsilon}\otimes q_{\epsilon}\right)\ast\left(A\kappa_{t}^{+,d-1}\right)\right)\left(x,y\right)\right),\nonumber\\
			\sigma_{t}^{\epsilon,r}\left(x,y\right):=&-\one_{B_{r}\left(0\right)}\left(x\right)\one_{B_{r}\left(0\right)}\left(y\right)\left(\left(q_{\epsilon}\otimes q_{\epsilon}\right)\ast\left[A,\tr_{\IC^{r}}\left(A\kappa_{t}^{-,d}+\Ii\di_{1}\kappa_{t}^{+,d-1}\right)\right]\right)\left(x,y\right),\nonumber\\
			\sigma_{t}^{r}\left(x,y\right):=&-\one_{B_{r}\left(0\right)}\left(x\right)\one_{B_{r}\left(0\right)}\left(y\right)\left[A,\tr_{\IC^{r}}\left(A\kappa_{t}^{-,d}+\Ii\di_{1}\kappa_{t}^{+,d-1}\right)\right]\left(x,y\right).
		\end{align}
	\end{Definition}	

    We collect first some properties of the introduced integral kernels.
	
	\begin{Corollary}\label{kkernelcor}
		Let $l,m,n\in\IN_{0}$ with $m\geq n$. Let $t>0$, $k\in\IN$, then $\kappa^{\pm,k}_{t}\in C^{l}_{m,n}\left(\IR^{2d}\right)$, and there are continuous functions $h_{k}$, such that
		\begin{align}
			\left\|\kappa^{-,k}_{t}\right\|_{C^{l,S^1}_{m,n}\left(\IR^{2d}\right)}+\left\|\kappa^{+,k}_{t}\right\|_{C^{l,S^1}_{m,n}\left(\IR^{2d}\right)}\leq h_{k}\left(t\right)t^{k-\frac{l+d}{2}}.
		\end{align}
		Moreover, for $\epsilon,r>0$, and $\psi\in C^{\infty}_{c}\left(\IR^{d}\right)$ with $\supp\psi\in B_{r}\left(0\right)$, we have $\rho_{t}^{\epsilon,r},\sigma_{t}^{\epsilon,r},\sigma_{t}^{r}\in C_{0,0}^{0,S^1}\left(\IR^{2d}\right)$, such that for some continuous functions $h^{1}$, $h^{2}$,
		\begin{align}
			\left\|\left(\psi\otimes\psi\right)\rho_{t}^{\epsilon,r}\right\|_{C_{0,0}^{0,S^1}\left(\IR^{2d}\right)}\leq h^{1}\left(t\right)t^{-\frac{1}{2}},\ \left\|\left(\psi\otimes\psi\right)\sigma_{t}^{\epsilon,r}\right\|_{C_{0,0}^{0,S^1}\left(\IR^{2d}\right)},\left\|\left(\psi\otimes\psi\right)\sigma_{t}^{r}\right\|_{C_{0,0}^{0,S^1}\left(\IR^{2d}\right)}\leq h^{2}\left(t\right),
		\end{align}
		where the estimate on $\left(\psi\otimes\psi\right)\sigma_{t}^{\epsilon,r}$ holds uniformly in $\epsilon>0$.
	\end{Corollary}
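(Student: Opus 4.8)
The plan is to read both assertions off the kernel estimates for the building blocks $r^{j}_{t}$ (Lemma~\ref{rkernellem}), together with the smoothing properties of $\theta_{s}$ (Corollary~\ref{thetakernelcor}) and the finite‑rank nature of $\di^{E}A$ in the present situation, by summing the appropriate series and keeping track of powers of $t$ and $\epsilon$.

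\textbf{Part 1.} By definition $\kappa^{-,k}_{t}=\sum_{j\ge(k-1)/2}r^{2j+1}_{t}$ for $k$ odd and $\kappa^{+,k}_{t}=\sum_{j\ge k/2}r^{2j}_{t}$ for $k$ even. Substituting the bound of Lemma~\ref{rkernellem} gives, for $k$ odd,
\[
\bigl\|\kappa^{-,k}_{t}\bigr\|_{C^{l,S^{1}}_{m,n}(\IR^{2d})}\le\sum_{j\ge(k-1)/2}\frac{C^{2j+1}}{\prod_{i=1}^{2j+1}\max\!\bigl(1,i-\tfrac{l+d}{2}\bigr)}\;t^{\,2j+1-\frac{l+d}{2}},
\]
and similarly with $2j$ in place of $2j+1$ for $k$ even. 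Every exponent occurring here is of the form $k+2\nu-\frac{l+d}{2}$, $\nu\in\IN_{0}$, so I would factor out $t^{\,k-\frac{l+d}{2}}$; the remaining power series in $t$ has coefficients $C^{N}/\prod_{i=1}^{N}\max(1,i-\frac{l+d}{2})$, which decay faster than geometrically (the product grows like $N!$ up to a polynomial factor), hence it defines an entire function $h_{k}$, and the series converges in $C^{l,S^{1}}_{m,n}(\IR^{2d})$. That is the claim (with ``$=$'' read, as in Corollary~\ref{thetakernelcor}, as ``$\le$'' with $h_{k}$ the entire majorant), valid on any $0<t\le t_{0}$ on which Lemma~\ref{rkernellem} applies.

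\textbf{Part 2, membership.} Here I would use that in this subsection $B=B_{n,m}$, so $\di^{E}A=\di^{E}B$ is finite‑rank valued with range and adjoint range in $\dom A_{0}^{\infty}$ (Remark~\ref{limitrem}), and that $\theta_{s}$ is the integral kernel of the parabolic semigroup $e^{-sH_{B}}$; by Corollary~\ref{thetakernelcor} and self‑adjointness of $H_{B}$, both $\langle A_{0}\rangle^{N}\theta_{s}$ and $\theta_{s}\langle A_{0}\rangle^{N}$ are bounded for every $N$. Consequently every $r^{j}_{t}$, hence every $\kappa^{\pm,k}_{t}$, is $A_{0}$‑smoothing on both sides, i.e.\ $\langle A_{0}\rangle^{N}\kappa^{\pm,k}_{t}\langle A_{0}\rangle^{N'}$ is bounded and $S^{1}(H)$‑valued for all $N,N'\ge0$. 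Since $A=M_{0}+B$ is, by Hypothesis~\ref{hyp1}, $\langle A_{0}\rangle$‑bounded uniformly in $x$, left and right multiplication by $A$ and the commutator $[A,\cdot\,]$ therefore map these kernels back to $B(\IC^{r})\otimes S^{1}(H)$‑valued kernels with no weights; the partial trace $\tr_{\IC^{r}}$, the convolutions with the Schwartz functions $q_{\epsilon}$, $\di q_{\epsilon}$, and the differentiations $\di_{x},\di_{y},\di_{1}$ (harmless once the smoothing convolution has been carried out) preserve this, and multiplication by $\one_{B_{r}(0)}\otimes\one_{B_{r}(0)}$ keeps the result in $L^{\infty}$. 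This gives $\rho^{\epsilon,r}_{t},\sigma^{\epsilon,r}_{t},\sigma^{r}_{t}\in C^{0,S^{1}}_{0,0}(\IR^{2d})$; when tested against $\psi\otimes\psi$ with $\supp\psi\subseteq B_{r}(0)$ the sharp cut‑offs $\one_{B_{r}(0)}$ are simply absorbed into $\psi$ (so that, e.g., $(\psi\otimes\psi)\sigma^{r}_{t}=-\psi(x)\psi(y)[A,\tr_{\IC^{r}}(A\kappa^{-,d}_{t}+\Ii\di_{1}\kappa^{+,d-1}_{t})](x,y)$), and no continuity of the cut‑offs is actually needed.

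\textbf{Part 2, the $t$‑bounds.} For $\sigma^{r}_{t}$ I would feed Part~1 with $k=d$, $l=0$ into the $A\kappa^{-,d}_{t}$ term (power $t^{d/2}$) and with $k=d-1$, $l=1$ into the $\di_{1}\kappa^{+,d-1}_{t}$ term (power $t^{(d-3)/2}$); both exponents are $\ge 0$ since $d\ge 3$, so after the $A$‑multiplications, the commutator and $\tr_{\IC^{r}}$ one gets $\|\sigma^{r}_{t}\|\le h^{2}(t)$ with $h^{2}$ entire, and for $\sigma^{\epsilon,r}_{t}$ the extra convolution by $q_{\epsilon}\otimes q_{\epsilon}$ costs only $\|q_{\epsilon}\|_{L^{1}}^{2}=1$ (Young), giving the same bound uniformly in $\epsilon$. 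For $\rho^{\epsilon,r}_{t}$ one first commutes the outer $\di_{x}+\di_{y}$ through the convolutions onto the inner kernel, reducing matters to $\bigl((\di q_{\epsilon})\otimes q_{\epsilon}\bigr)\ast\bigl((\di_{1}+\di_{2})\kappa^{-,d-2}_{t}\bigr)$ and $\bigl(q_{\epsilon}\otimes q_{\epsilon}\bigr)\ast\bigl((\di_{1}+\di_{2})(A\kappa^{+,d-1}_{t})\bigr)$. The point is that $(\di_{1}+\di_{2})\kappa^{-,k}_{t}$ is, under $\tr_{\IC^{r}}$ and up to placement of the Clifford matrices, the kernel of $[\Ii\di,K^{-,k}_{t}]$, and since $\Ii\di$ commutes with the Laplacian in $H_{B}=\Delta+A^{2}$ this combination is of better order in $t$ than $\di_{1}\kappa^{-,k}_{t}$ alone — the standard heat‑kernel commutator gain of (at least) a half power. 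With $k=d-2$, $l=1$, Part~1 gives $t^{(d-5)/2}$ for $\di_{1}\kappa^{-,d-2}_{t}$, hence $t^{(d-4)/2}=t^{(d-3)/2}\cdot t^{-1/2}$ for the commutator; as $(d-3)/2\in\IN_{0}$ and $\|\di q_{\epsilon}\|_{L^{1}}$ is a finite (here $\epsilon$‑dependent) constant, this is $\le h^{1}(t)t^{-1/2}$ with $h^{1}$ entire, and the second term is handled likewise with the better exponent $(d-2)/2$ and uniformly in $\epsilon$.

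\textbf{Main obstacle.} The genuine work will be in the $\rho$‑estimate: making precise, uniformly over the infinitely many summands of $\kappa^{-,d-2}_{t}$ and of $A\kappa^{+,d-1}_{t}$, that the combination $\di_{1}+\di_{2}$ truly behaves like the commutator $[\Ii\di,\cdot\,]$ and so recovers the missing half power of $t$ with constants independent of the summation index, so that the resulting series is still summable to an entire function. This requires unwinding the simplex/Volterra structure of $r^{j}_{t}$ and exploiting that $\Ii\di$ commutes with $\Delta$ but not with $A^{2}$, which produces only additional $\di^{E}A$‑ (and $\di^{E}\di^{E}A$‑) factors rather than a genuine loss of order. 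The accompanying weight bookkeeping — moving $\langle A_{0}\rangle$‑powers through the $A$‑multiplications inside the admissible range dictated by $B\in W^{2,\infty}(\IR^{d},A_{0},2N+1)$ and by the finite‑rank, smooth‑range property of $\di^{E}B_{n,m}$ — and the convolution estimates (Young, $\|\di q_{\epsilon}\|_{L^{1}}\sim\epsilon^{-1/2}$) are routine.
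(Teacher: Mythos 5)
Your Part 1 is exactly the paper's argument: substitute the bound from Lemma~\ref{rkernellem} into the defining series, factor out $t^{k-\frac{l+d}{2}}$, and recognise the remaining power series as an entire function $h_k$. Your Part 2 membership discussion is a reasonable elaboration of what the paper leaves implicit, and your remark that the sharp cut-offs $\one_{B_r(0)}$ are absorbed into $\psi$ is a fair reading of the slightly informal $C^{0,S^1}_{0,0}$ claim. The $\sigma$-bounds (plug in Part~1 with $l=1$ for $\di_1\kappa^{+,d-1}_t$ and $l=0$ for $A\kappa^{-,d}_t$, then Young's inequality with $\|q_\epsilon\|_{L^1}=1$ for the uniform-in-$\epsilon$ statement) also match.

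Where you diverge is the $\rho$-estimate, and here you have substantially over-engineered. You propose to push $\di_x+\di_y$ through the convolution onto the inner kernel, identify $(\di_1+\di_2)\kappa^{-,d-2}_t$ (under $\tr_{\IC^r}$) with the kernel of $[\Ii\di,K^{-,d-2}_t]$, and exploit the heat-kernel commutator gain of Lemma~\ref{dicommlem} to recover a factor $t^{1/2}$. That argument is correct, but it is not what the paper does, and it is not needed here: since the claimed bound on $\rho^{\epsilon,r}_t$ is \emph{not} required to be uniform in $\epsilon$ (only the $\sigma^{\epsilon,r}_t$ bound is), one may simply let both copies of $\di_x+\di_y$ fall on the Schwartz factors $q_\epsilon$ and $\Ii\di q_\epsilon$. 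This costs only $\epsilon$-dependent $L^1$-norms, leaves the inner kernels untouched at $l=0$, and Part~1 directly gives $\kappa^{-,d-2}_t\sim t^{(d-4)/2}$ and $A\kappa^{+,d-1}_t\sim t^{(d-2)/2}$; since $(d-4)/2\ge -1/2$ for $d\ge3$, the bound $h^1(t)t^{-1/2}$ follows in one line. Your closing ``Main obstacle'' paragraph, which flags the summand-by-summand commutator bookkeeping as the real difficulty, is therefore a mis-assessment: the commutator structure of $\di_1+\di_2$ is genuinely needed elsewhere in the paper (Lemmas~\ref{diffkernellem}, \ref{rl1lem}, \ref{cutofflem}), but Corollary~\ref{kkernelcor} is provable without it, precisely because the freedom to pay $\epsilon$-dependent constants removes the obstacle you anticipated.
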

	
	\begin{proof}
		We have for $k$ odd, and $0<t\leq t_{0}$,
		\begin{align}\label{kkernelcoreq1}
			\left\|\kappa^{-,k}_{t}\right\|_{C^{l,S^1}_{m,n}\left(\IR^{2d}\right)}\leq&\sum_{j=\frac{k-1}{2}}^{\infty}\left\|r_{t}^{2j+1}\right\|_{C^{l,S^1}_{m,n}\left(\IR^{2d}\right)}\leq\left(\sum_{j=k}^{\infty}\frac{C^{j}}{\prod_{i=1}^{j}\max\left(1,i-\frac{l+d}{2}\right)}t^{j-k}\right)t^{k-\frac{l+d}{2}}\nonumber\\
			=:&h_{k}\left(t\right)t^{k-\frac{l+d}{2}}.
		\end{align}
		For $k\geq 1$ even, and $0<t\leq t_{0}$,
		\begin{align}
			\left\|\kappa^{+,k}_{t}\right\|_{C^{l,S^1}_{m,n}\left(\IR^{2d}\right)}\leq&\sum_{j=\frac{k}{2}}^{\infty}\left\|r_{t}^{2j}\right\|_{C^{l,S^1}_{m,n}\left(\IR^{2d}\right)}\leq\left(\sum_{j=k}^{\infty}\frac{C^{j}}{\prod_{i=1}^{j}\max\left(1,i-\frac{l+d}{2}\right)}t^{j-k}\right)t^{k-\frac{l+d}{2}}\nonumber\\
			=:&h_{k}\left(t\right)t^{k-\frac{l+d}{2}}.
		\end{align}
		As a first consequence, we obtain the claimed estimate for $\left(\psi\otimes\psi\right)\sigma^{r}_{t}$, and since $\left\|q_{\epsilon}\right\|_{L^1\left(\IR^{d}\right)}=1$, the statement for $\left(\psi\otimes\psi\right)\sigma^{\epsilon,r}_{t}$ also holds, uniformly in $\epsilon$. Letting the involved derivatives fall on $q_{\epsilon}$, which is a Schwartz function, we verified the claimed estimates for $\left(\psi\otimes\psi\right)\rho^{\epsilon,r}_{t}$, and $\left(\psi\otimes\psi\right)\sigma^{\epsilon,r}_{t}$.
	\end{proof}

    We will also need a more refined limiting result in Schatten-von Neumann class, which we provide now.
	
	\begin{Lemma}\label{approxlimitlem}
		Let $X$ be a separable Hilbert space, and denote for $r,\epsilon>0$, $f\in L^2\left(\IR^{d},X\right)$, $T\in B\left(L^2\left(\IR^{d},X\right)\right)$, $x\in\IR^{d}$,
		\begin{align}
			\left(\chi_{r}f\right)\left(x\right):=&\one_{B_{r}\left(0\right)}\left(x\right)f\left(x\right),\ \left(\chi^{L}_{r}T\right)\left(f\right):=\chi_{t}Tf,\ \left(\chi^{R}_{r}T\right)\left(f\right):=T\chi_{t}f,\nonumber\\
			p_{\epsilon}f\:=&q_{\epsilon}\ast f,\ \left(p_{\epsilon}^{L}T\right)\left(f\right):=p_{\epsilon}Tf,\ \left(p_{\epsilon}^{R}T\right)\left(f\right):=Tp_{\epsilon}f.
		\end{align}
		If $T\in S^p\left(L^2\left(\IR^{d},X\right)\right)$ for $p\in\left[1,\infty\right)$, then
		\begin{align}
			\chi_{r}^{L}\chi_{r}^{R}p_{\epsilon}^{L}p_{\epsilon}^{R}T\xrightarrow{\epsilon\searrow 0}\chi_{r}^{L}\chi_{r}^{R}T,
		\end{align}
		where the convergence above is uniform in $r$ in $S^{p}\left(L^2\left(\IR^{d},X\right)\right)$-norm. In particular the double limit
		\begin{align}
			\lim_{\left(\epsilon,r\right)\to\left(0,\infty\right)}\chi_{r}^{L}\chi_{r}^{R}p_{\epsilon}^{L}p_{\epsilon}^{R}T=T
		\end{align}
		exists and equals the iterated limits.
	\end{Lemma}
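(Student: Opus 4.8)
The plan is to reduce the statement to two standard facts: first, that $p_{\epsilon}$ (convolution with the Gaussian $q_{\epsilon}$) is a self-adjoint contraction on $L^{2}\left(\IR^{d},X\right)$ converging strongly to the identity as $\epsilon\searrow 0$; and second, that left and right multiplication by a fixed Schatten-class operator is continuous with respect to SOTA-convergence of the multipliers, which is precisely Lemma \ref{traceconvlem}. The Dirac/Clifford structure plays no role here, so the whole argument is soft functional analysis.

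First I would record the relevant properties of $p_{\epsilon}$ and $\chi_{r}$. Since $q_{\epsilon}\geq 0$ with $\left\|q_{\epsilon}\right\|_{L^{1}\left(\IR^{d}\right)}=1$, Young's inequality (whose scalar proof carries over unchanged to $X$-valued functions) gives $\left\|p_{\epsilon}\right\|_{B\left(L^{2}\left(\IR^{d},X\right)\right)}\leq 1$; the family $\left(q_{\epsilon}\right)_{\epsilon>0}$ is an approximate identity, so $p_{\epsilon}f\to f$ in $L^{2}\left(\IR^{d},X\right)$ first for $f\in C_{c}\left(\IR^{d},X\right)$ and then, by density and the uniform bound, for all $f$; and $p_{\epsilon}^{\ast}=p_{\epsilon}$ because $q_{\epsilon}$ is even (equivalently, $p_{\epsilon}$ is the heat semigroup $e^{-\epsilon\Delta_{\IR^{d}}}\otimes\one_{X}$, a $C_{0}$-semigroup of self-adjoint contractions). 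Hence $p_{\epsilon}\to\one$ in SOTA of $L^{2}\left(\IR^{d},X\right)$ as $\epsilon\searrow 0$. Likewise $\chi_{r}$ is a self-adjoint contraction with $\left\|\chi_{r}f-f\right\|_{L^{2}}^{2}=\int_{\left|x\right|>r}\left\|f\left(x\right)\right\|_{X}^{2}\Id x\to 0$, so $\chi_{r}\to\one$ in SOTA as $r\to\infty$.

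Next, for $T\in S^{p}\left(L^{2}\left(\IR^{d},X\right)\right)$ with $1\leq p<\infty$, I claim $p_{\epsilon}Tp_{\epsilon}\to T$ in $S^{p}$-norm as $\epsilon\searrow 0$. It suffices to verify this along an arbitrary sequence $\epsilon_{n}\searrow 0$, where it is Lemma \ref{traceconvlem} with $R_{n}:=p_{\epsilon_{n}}\to\one$ strongly, $S_{n}:=T$ (constant, hence convergent in $S^{p}$), and $T_{n}:=p_{\epsilon_{n}}$ with $T_{n}^{\ast}=p_{\epsilon_{n}}\to\one$ strongly. Since $\left\|\chi_{r}\right\|_{B\left(L^{2}\left(\IR^{d},X\right)\right)}\leq 1$ for every $r>0$, and observing $\chi_{r}^{L}\chi_{r}^{R}p_{\epsilon}^{L}p_{\epsilon}^{R}T=\chi_{r}p_{\epsilon}Tp_{\epsilon}\chi_{r}$ and $\chi_{r}^{L}\chi_{r}^{R}T=\chi_{r}T\chi_{r}$, we get
\begin{align*}
	\left\|\chi_{r}^{L}\chi_{r}^{R}p_{\epsilon}^{L}p_{\epsilon}^{R}T-\chi_{r}^{L}\chi_{r}^{R}T\right\|_{S^{p}\left(L^{2}\left(\IR^{d},X\right)\right)}=\left\|\chi_{r}\left(p_{\epsilon}Tp_{\epsilon}-T\right)\chi_{r}\right\|_{S^{p}\left(L^{2}\left(\IR^{d},X\right)\right)}\leq\left\|p_{\epsilon}Tp_{\epsilon}-T\right\|_{S^{p}\left(L^{2}\left(\IR^{d},X\right)\right)},
\end{align*}
and the right-hand side tends to $0$ independently of $r$, which is the asserted uniform convergence.

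Finally, for the double limit I would invoke the Moore--Osgood theorem on interchanging limits. Applying Lemma \ref{traceconvlem} once more (with $R_{n}=T_{n}=\chi_{r_{n}}$, $r_{n}\to\infty$, $S_{n}=T$) gives $\chi_{r}T\chi_{r}\to T$ in $S^{p}$ as $r\to\infty$; combined with the just-proved $\epsilon$-convergence $\chi_{r}p_{\epsilon}Tp_{\epsilon}\chi_{r}\to\chi_{r}T\chi_{r}$, uniform in $r$, this yields $\lim_{\left(\epsilon,r\right)\to\left(0,\infty\right)}\chi_{r}^{L}\chi_{r}^{R}p_{\epsilon}^{L}p_{\epsilon}^{R}T=T$, and both iterated limits are evaluated directly and equal $T$ as well (for the order $\lim_{\epsilon}\lim_{r}$ one additionally notes $\chi_{r}p_{\epsilon}Tp_{\epsilon}\chi_{r}\to p_{\epsilon}Tp_{\epsilon}$ in $S^{p}$ as $r\to\infty$, again by Lemma \ref{traceconvlem}, and then lets $\epsilon\searrow 0$). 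The only steps needing any care are the passage from the sequential form of Lemma \ref{traceconvlem} to the continuous parameters $\epsilon$ and $r$, handled via the subsequence characterisation of limits, and the verbatim extension of the classical scalar approximate-identity and Young estimates to $X$-valued $L^{2}$; neither is a genuine obstacle, so I expect the proof to be short.
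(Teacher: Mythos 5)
Your proof is correct, and it takes a genuinely different route from the paper's. Where you invoke Lemma \ref{traceconvlem} with $R_n = T_n = p_{\epsilon_n}$ and $S_n = T$ to conclude $p_{\epsilon}Tp_{\epsilon} \to T$ in $S^p$-norm (and then pass $\chi_r$ through via the ideal inequality $\left\|\chi_r S \chi_r\right\|_{S^p} \leq \left\|S\right\|_{S^p}$ to get uniformity in $r$ for free), the paper instead \emph{reproves} this convergence from scratch: it first reduces to finite-rank $T$ using density in $S^p$ and the uniform contraction bounds on $\chi_r, p_\epsilon$, then further to rank-one $T = \langle\cdot,\phi\rangle\phi$, and estimates the $S^p$-norm of the difference directly via $\left\|\langle\cdot,f\rangle g\right\|_{S^p} = \left\|f\right\|\left\|g\right\|$, obtaining the uniform bound $\sup_{r}\left\|\chi_r(p_\epsilon-1)\phi\right\| \leq \left\|(p_\epsilon-1)\phi\right\| \to 0$. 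Your argument is shorter and avoids the finite-rank reduction because Lemma \ref{traceconvlem} already packages exactly that reduction; the paper's version is more self-contained and makes the source of uniformity slightly more explicit (it is literally the same quantity $\left\|(p_\epsilon-1)\phi\right\|$ appearing with and without the $\chi_r$). Both conclude the double-limit statement by the same uniform-convergence plus interchangeability argument, which you correctly identify as Moore--Osgood and which the paper leaves implicit. Your remark about passing between the sequential and continuous-parameter formulations of Lemma \ref{traceconvlem} is the right thing to flag and is handled correctly.
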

	
	\begin{proof}
		Denote $Y=L^2\left(\IR^{d},X\right)$. Since $T\in S^{p}\left(Y\right)$ can be approximated in $S^{p}$ norm by finite rank operators and since $\left\|\chi_{r}\right\|_{B\left(Y\right)},\left\|p_{\epsilon}\right\|_{B\left(Y\right)}\leq 1$, hold uniformly in $r,\epsilon>0$, we may assume, without loss, that $T$ is finite rank. By linearity we may further restrict ourselves to $T=\langle\cdot,\phi\rangle\phi$, for $\phi\in Y$. Since $\left\|\langle\cdot,f\rangle g\right\|_{S^{p}\left(Y\right)}=\left\|f\right\|_{Y}\left\|g\right\|_{Y}$, $f,g\in Y$, we have
		\begin{align}
			&\left\|\chi_{r}^{L}\chi_{r}^{R}p_{\epsilon}^{L}p_{\epsilon}^{R}T-\chi_{r}^{L}\chi_{r}^{R}T\right\|_{S^{p}\left(Y\right)}=\left\|\langle\cdot,\chi_{r}\left(p_{\epsilon}-1\right)\phi\rangle\chi_{r}\phi+\langle\cdot,\chi_{r}p_{\epsilon}\phi\rangle\chi_{r}\left(p_{\epsilon}-1\right)\phi\right\|_{S^{p}\left(Y\right)}\nonumber\\
			&\leq\left\|\chi_{r}\left(p_{\epsilon}-1\right)\phi\right\|_{Y}\left\|\chi_{r}\phi\right\|_{Y}+\left\|\chi_{r}p_{\epsilon}\phi\right\|_{Y}\left\|\chi_{r}\left(p_{\epsilon}-1\right)\phi\right\|_{Y}.
		\end{align}
		Since $\left\|\chi_{r}\phi\right\|_{Y}$, $\left\|\chi_{r}p_{\epsilon}\phi\right\|_{Y}$ are uniformly bounded by $\left\|\phi\right\|_{Y}$, it remains to show that \break$\left\|\chi_{r}\left(p_{\epsilon}-1\right)\phi\right\|_{Y}\xrightarrow{\epsilon\searrow 0} 0$, uniformly in $r$. However this indeed is the case, since
		\begin{align}
			\sup_{r>0}\left\|\chi_{r}\left(p_{\epsilon}-1\right)\phi\right\|_{Y}\leq\left\|\left(p_{\epsilon}-1\right)\phi\right\|_{L^2\left(\IR^{d},X\right)}\xrightarrow{\epsilon\searrow 0}0.
		\end{align}
		The remaining statements are a direct consequence of uniform convergence and the interchangeability of limits.
	\end{proof}
	
	If we apply this approximation argument to Proposition \ref{exptraceprop} and Lemma \ref{reduxlem}, we obtain the following Corollary.
	
	\begin{Corollary}\label{traceapproxcor}
		Denote for $\epsilon,r>0$, and $X=H$, $P_{\epsilon}:=\one_{\IC^{r}}\otimes p_{\epsilon}$, then for $t>0$,
		\begin{align}\label{traceapproxcoreq1}
			\tr_{\IC^{r}}\left(e^{-tD^{\ast}D}-e^{-tDD^{\ast}}\right)=\lim_{\left(\epsilon,r\right)\to\left(0,\infty\right)}&\int_{0}^{t}\left(\chi_{r}\tr_{\IC^{r}}\left(\left[\Ii\di,\left(\Ii\di P_{\epsilon} K_{s}^{-,d-2}+P_{\epsilon}AK_{s}^{+,d-1}\right)P_{\epsilon}\right]\right)\chi_{r}\right.\nonumber\\
			&\left.-\chi_{r}p_{\epsilon}\left[A,\tr_{\IC^{r}}\left(AK_{s}^{-,d}+\Ii\di K_{s}^{+,d-1}\right)\right]p_{\epsilon}\chi_{r}\right)\Id s,
		\end{align}
		where the double limit is in $S^1\left(L^2\left(\IR^{d},H\right)\right)$.
	\end{Corollary}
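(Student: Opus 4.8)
The plan is to combine three results already in hand: the trace-class membership of the semi-group difference (Proposition \ref{exptraceprop}), the commutator expansion of Lemma \ref{reduxlem}, whose right-hand side converges in the operator norm of $L^2(\IR^d,H)$, and the mollifier/cut-off convergence of Lemma \ref{approxlimitlem}. Write $T:=\tr_{\IC^r}(e^{-tD^\ast D}-e^{-tDD^\ast})$; by Proposition \ref{exptraceprop}, $T\in S^1(L^2(\IR^d,H))$.

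First I would apply Lemma \ref{approxlimitlem} with $X=H$ and $p=1$. Since $T\in S^1(L^2(\IR^d,H))$, the double limit $\lim_{(\epsilon,r)\to(0,\infty)}\chi_r^L\chi_r^Rp_\epsilon^Lp_\epsilon^RT$ exists in $S^1$-norm, equals $T$, and coincides with the iterated limits; note $\chi_r^L\chi_r^Rp_\epsilon^Lp_\epsilon^RT=\chi_rp_\epsilon\,T\,p_\epsilon\chi_r$. So it remains to identify $\chi_rp_\epsilon\,T\,p_\epsilon\chi_r$ with the integral in (\ref{traceapproxcoreq1}). Substituting the formula (\ref{reduxlemeq1}) for $T$, and using that its $s$-integral converges in operator norm while $S\mapsto\chi_rp_\epsilon\,S\,p_\epsilon\chi_r$ is a linear map on $B(L^2(\IR^d,H))$ of norm at most $1$ (uniformly in $\epsilon,r$, since $\|\chi_r\|\le1$ and $\|p_\epsilon\|\le\|q_\epsilon\|_{L^1}=1$), this map passes inside the Bochner integral.

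The heart of the argument is then the pointwise (in $s$) rewriting of the integrand. Here the key observation is that $p_\epsilon$ is convolution by the scalar Schwartz function $q_\epsilon$, hence a Fourier multiplier in the $\IR^d$-variable: on the domains in play---which are stable under convolution by $q_\epsilon$, since $e^{-sH_B}$ already maps into $\dom\ H_0$ and mollification only improves regularity---it commutes with the constant-coefficient operator $\Ii\di=\Ii\sum_{i=1}^dc^i\p_i$, and $p_\epsilon\tr_{\IC^r}(S)=\tr_{\IC^r}(P_\epsilon S)$, $\tr_{\IC^r}(S)p_\epsilon=\tr_{\IC^r}(SP_\epsilon)$ with $P_\epsilon=\one_{\IC^r}\otimes p_\epsilon$. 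Writing $Y_s:=\Ii\di K_s^{-,d-2}+AK_s^{+,d-1}$, the Dirac-commutator term becomes
\begin{align*}
\chi_rp_\epsilon\,\tr_{\IC^r}\big([\Ii\di,Y_s]\big)\,p_\epsilon\chi_r
&=\chi_r\,\tr_{\IC^r}\big(P_\epsilon[\Ii\di,Y_s]P_\epsilon\big)\,\chi_r\\
&=\chi_r\,\tr_{\IC^r}\big([\Ii\di,P_\epsilon Y_sP_\epsilon]\big)\,\chi_r,
\end{align*}
and $P_\epsilon Y_sP_\epsilon=\Ii\di P_\epsilon K_s^{-,d-2}P_\epsilon+P_\epsilon AK_s^{+,d-1}P_\epsilon=(\Ii\di P_\epsilon K_s^{-,d-2}+P_\epsilon AK_s^{+,d-1})P_\epsilon$, where $P_\epsilon$ was pushed through $\Ii\di$ on the left---this is precisely the operator in the first line of (\ref{traceapproxcoreq1}). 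In the potential-commutator term, $p_\epsilon$ does not commute with the multiplication operator $A=A(x)$, so the mollifiers cannot be absorbed and the term is kept as $\chi_rp_\epsilon\,[A,\tr_{\IC^r}(AK_s^{-,d}+\Ii\di K_s^{+,d-1})]\,p_\epsilon\chi_r$, matching the second line. Integrating in $s$ and taking the double limit $(\epsilon,r)\to(0,\infty)$ in $S^1$-norm (legitimate by the previous step) yields (\ref{traceapproxcoreq1}).

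The main obstacle is the operator-algebraic bookkeeping underlying these identities: one must check that the ranges of the bounded operators $K_s^{\pm,k}$, after the smoothing by $e^{-sH_B}$ and the mollifier $P_\epsilon$, lie in the domains on which $\Ii\di$ and the several commutators act as the bounded operators identified in the proof of Lemma \ref{reduxlem}, and that $P_\epsilon$ genuinely commutes with $\Ii\di$ there---this is where the Fourier-multiplier nature of $P_\epsilon$ and the smoothing of $e^{-sH_B}$ work together. Granting this, the remaining points---continuity of $\tr_{\IC^r}$ on $S^1$, and uniform boundedness of $S\mapsto\chi_rp_\epsilon\,S\,p_\epsilon\chi_r$ so it passes the operator-norm Bochner integral---are routine, and the corollary follows.
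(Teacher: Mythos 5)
Your proposal is correct and follows the same route the paper indicates -- the paper's own ``proof'' is just the remark that the Corollary follows from applying Lemma \ref{approxlimitlem} to Proposition \ref{exptraceprop} and Lemma \ref{reduxlem}, and you supply exactly the details left implicit (Lemma \ref{approxlimitlem} with $p=1$ to get $S^1$-convergence of $\chi_r p_\epsilon T p_\epsilon\chi_r$ to $T$, pushing the uniformly contractive map $S\mapsto\chi_r p_\epsilon S p_\epsilon\chi_r$ through the operator-norm Bochner integral of Lemma \ref{reduxlem}, and absorbing $P_\epsilon$ into the commutator using that it is a Fourier multiplier commuting with $\Ii\di$ and with $\tr_{\IC^r}$). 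You also correctly flag the only delicate point, the domain bookkeeping needed to justify $P_\epsilon[\Ii\di,Y_s]P_\epsilon=[\Ii\di,P_\epsilon Y_sP_\epsilon]$ on the bounded closures, which is implicit in the paper as well.
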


	The next Lemma is a table of all relevant integral kernels, and the corresponding operators.
	
	\begin{Lemma}\label{kernelwelllem}
		Let $t>0$, $j\in\IN_{0}$, $k\in\IN$, then for $f\in L^{2}\left(\IC^{r}\otimes H\right)$, and a.e. $x\in\IR^{d}$,
		\begin{align}\label{kernelwelllemeq1}
			\int_{\IR^{d}}\theta^{j}_{t}\left(x,y\right)f\left(y\right)\Id y&=\left(\int_{s\in\Delta_{j}}e^{-sH_{0}}\shuffle\left(A^2-M_{0}^2\right)^{\otimes j}\Id s f\right)\left(x\right),\nonumber\\
			\int_{\IR^{d}}\theta_{t}\left(x,y\right)f\left(y\right)\Id y&=\left(e^{-tH_{B}}f\right)\left(x\right),\ \int_{\IR^{d}}r^{j}_{t}\left(x,y\right)f\left(y\right)\Id y=\left(R_{t}^{j} f\right)\left(x\right),\nonumber\\
			\int_{\IR^{d}}\kappa^{\pm,k}_{t}\left(x,y\right)f\left(y\right)\Id y&=\left(K^{\pm,k}_{t}f\right)\left(x\right),
		\end{align}
		and for $t,\epsilon,r>0$, $f\in L^2\left(\IR^{d},H\right)$, and a.e. $x\in\IR^{d}$,
		\begin{align}\label{kernelwelllemeq2}
			\int_{\IR^{d}}\rho^{\epsilon,r}_{t}\left(x,y\right)f\left(y\right)\Id y&=\left(\chi_{r}\overline{\tr_{\IC^{r}}\left(\left[\Ii\di,\left(\Ii\di P_{\epsilon} K_{t}^{-,d-2}+P_{\epsilon}AK_{t}^{+,d-1}\right)P_{\epsilon}\right]\right)}\chi_{r}f\right)\left(x\right),\nonumber\\
			\int_{\IR^{d}}\sigma^{\epsilon,r}_{t}\left(x,y\right)f\left(y\right)\Id y&=-\left(\chi_{r}p_{\epsilon}\left[A,\tr_{\IC^{r}}\left(AK_{s}^{-,d}+\Ii\di K_{s}^{+,d-1}\right)\right]p_{\epsilon}\chi_{r}f\right)\left(x\right),\nonumber\\
			\int_{\IR^{d}}\sigma^{r}_{t}\left(x,y\right)f\left(y\right)\Id y&=-\left(\chi_{r}\left[A,\tr_{\IC^{r}}\left(AK_{s}^{-,d}+\Ii\di K_{s}^{+,d-1}\right)\right]\chi_{r}f\right)\left(x\right).
		\end{align} 
	\end{Lemma}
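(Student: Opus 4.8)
The plan is to verify each kernel identity by unwinding the definitions and appealing to the convergence/summability estimates already proved, so that all the interchanges of sums, integrals, and operators are justified. The backbone is the observation that the free heat kernel $Q_t$ is exactly the integral kernel of $e^{-tH_0}$ on $L^2(\IR^d,\IC^r\otimes H)$: indeed $e^{-tH_0}=e^{-t\Delta}\otimes e^{-tM_0^2}=(q_t\ast\,\cdot\,)\otimes e^{-tA_0^2}$ acting in the obvious way, so $(e^{-tH_0}f)(x)=\int_{\IR^d}Q_t(x,y)f(y)\,\Id y$. From this, the first identity in \eqref{kernelwelllemeq1} for $\theta_t^j$ follows by induction on $j$: the convolution shuffle product $\shuffle_\ast$ was defined precisely so that it becomes the ordinary shuffle product $\shuffle$ of the corresponding integral operators, provided the iterated integrals converge absolutely, which is guaranteed by \eqref{thetaproplemeq2} of Lemma \ref{thetaproplem}. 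Summing over $j$ with the dominated-convergence bound \eqref{thetacoreq1} of Corollary \ref{thetacor}, together with Lemma \ref{volterralem} applied to $H_B=H_0+(A^2-M_0^2)$ (whose perturbation is the bounded multiplication operator $A^2-M_0^2$), gives $\int_{\IR^d}\theta_t(x,y)f(y)\,\Id y=(e^{-tH_B}f)(x)$.

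For the $r_t^j$ identity I would again argue by induction on $j$, using that $\Ii\di^E A$ is a bounded multiplication operator, so that $\theta_s\shuffle_\ast(\Ii\di^E A)^{\otimes j}$ is the kernel of $\int_{s\in t\Delta_j}e^{-sH_B}\shuffle(\Ii\di^E A)^{\otimes j}\,\Id s=R_t^j$; absolute convergence of the defining integral is Lemma \ref{rproplemeq1} of Lemma \ref{rproplem}. The $\kappa_t^{\pm,k}$ identities then follow term by term from the definition $K_t^{\pm,k}=\sum_j R_t^{2j(+1)}$ in Definition \ref{kdef} matched against $\kappa_t^{\pm,k}=\sum_j r_t^{2j(+1)}$, with summability of the kernel side supplied by Corollary \ref{kkernelcor} (its estimate \eqref{kkernelcoreq1}) and of the operator side by the remark following the definition of $R_t^k$ that the norms $\|R_t^k\|$ are absolutely summable.

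For \eqref{kernelwelllemeq2} the key points are: (i) the cut-off $\chi_r$ corresponds to left/right multiplication by $\one_{B_r(0)}$, hence to multiplying the kernel by $\one_{B_r(0)}(x)\one_{B_r(0)}(y)$; (ii) the regularizer $p_\epsilon=q_\epsilon\ast\,\cdot$ corresponds to convolving the kernel in the respective variable, i.e. replacing the kernel $k(x,y)$ by $(q_\epsilon\otimes q_\epsilon)\ast k$, which is legitimate because $q_\epsilon\in L^1$ and the kernels are in $C^{0,S^1}_{0,0}(\IR^{2d})$ by Corollary \ref{kkernelcor}; (iii) the commutator $[\Ii\di,\,\cdot\,]$ applied to an operator with a $C^1$ kernel $k(x,y)$ produces the operator with kernel $\Ii(\di_x+\di_y)k(x,y)$ — here one integrates by parts moving $\di_y$ off $f$, using the Schwartz decay of $q_\epsilon$ to kill boundary terms; and (iv) $[A,\,\cdot\,]$ on a multiplication-type left/right structure produces the kernel $A(x)k(x,y)-k(x,y)A(y)$, which is exactly what appears in $\sigma_t^{\epsilon,r}$ and $\sigma_t^r$. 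Assembling these, the kernels $\rho_t^{\epsilon,r},\sigma_t^{\epsilon,r},\sigma_t^r$ are read off directly from the definitions, and the regularity needed to make the integration by parts and the commutator computations rigorous is precisely the $C^{1,S^1}$-membership already established.

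The main obstacle is purely bookkeeping: one must be careful that every interchange of the infinite sums (over $j$) with the Bochner integrals (over simplices $t\Delta_j$) and with the bounded operators $\chi_r,p_\epsilon,\di,A$ is covered by an absolute bound. All the needed bounds have been proved — \eqref{thetaproplemeq2}, \eqref{thetacoreq1}, \eqref{rproplemeq1}, \eqref{kkernelcoreq1}, and the summability remarks — so the proof is essentially a matter of citing them in the right order; the only genuinely delicate spot is the integration by parts in (iii), where one needs the joint continuity and the $L^1_x$-decay of the kernels in the $h\mapsto k(\cdot,\cdot+h)$ sense from Theorem \ref{bruseethm} (and the Schwartz decay of $q_\epsilon$) to discard boundary contributions when transferring $\di_y$ from $f$ to the kernel.
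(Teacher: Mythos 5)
Your proposal follows essentially the same route as the paper: use the pointwise Gaussian bounds to justify Fubini and sum over $j$, identify $Q_t$ as the kernel of $e^{-tH_0}$ and propagate via the convolution-shuffle/Volterra correspondence, then for \eqref{kernelwelllemeq2} convolve with $q_\epsilon$, multiply by $\chi_r$, and integrate by parts against test functions. Two small imprecisions worth flagging (neither a gap): the relation "$[\Ii\di,\cdot]$ has kernel $\Ii(\di_x+\di_y)k$" is \emph{not} true at the level of $\IC^{r\times r}$-valued kernels, because after integrating by parts the Clifford matrix $c^j$ lands on the wrong side of $k$; it becomes true only after applying $\tr_{\IC^r}$ and using its cyclicity to move $c^j$ back (this is exactly what the paper does), and the definitions of $\rho^{\epsilon,r}_t$ already carry the $\tr_{\IC^r}$ for this reason. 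Also, the vanishing of boundary terms in that integration by parts comes from testing against $f\in C^\infty_c$ (together with the cut-off $\chi_r$), not from the Schwartz decay of $q_\epsilon$ — the latter is what makes $\eta_t$ smooth so that the integration by parts is even meaningful.
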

	
	\begin{proof}
		The pointwise estimates of the involved kernels by multiples of $q_{t}\left(x-y\right)$ enable us to use Fubini's theorem on appearing infinite sums and integrals, and obtain successively the claimed integral kernel properties (\ref{kernelwelllemeq1}). For $\sigma_{t}^{\epsilon,r}$ and $\sigma_{t}^{r}$ in (\ref{kernelwelllemeq2}), the claim follows also by Fubini's theorem, the memberships $\kappa^{\pm,k}_{t}\in C^{1,S^1}_{m,n}\left(\IR^{2d}\right)$, and that $\chi_{r}$ is compactly supported. Furthermore, $\eta_{t}:=\left(\Ii\di q_{\epsilon}\otimes q_{\epsilon}\right)\ast\kappa_{t}^{-,d-2}+\left(q_{\epsilon}\otimes q_{\epsilon}\right)\ast\left(A\kappa_{t}^{+,d-1}\right)$ is smooth, because $q_{\epsilon}$ is a Schwartz function. We also have for $f\in C^{\infty}_{c}\left(\IR^{d},\IC^{r}\right)$, via integration by parts,
		\begin{align}
			\int_{\IR^{d}}\left(\Ii\left(\di_{1}+\di_{2}\right)\eta_{t}\right)\left(x,y\right)f\left(y\right)\Id y=&\Ii\di\left(\left(\Ii\di P_{\epsilon}K_{t}^{-,d-2}+P_{\epsilon}AK_{t}^{+,d-1}\right)P_{\epsilon}\right)f\nonumber\\
			&-\Ii\sum_{j=1}^{d}c^{j}\left(\left(\Ii\di P_{\epsilon}K_{t}^{-,d-2}+P_{\epsilon}AK_{t}^{+,d-1}\right)P_{\epsilon}\right)\p_{j}f
		\end{align}
		We apply $\tr_{\IC^{r}}$, change $c^{j}$ to the end, and have for $f\in C^{\infty}_{c}\left(\IR^{d},H\right)$,
		\begin{align}
			\int_{\IR^{d}}\tr_{\IC^{r}}\left(\left(\Ii\left(\di_{1}+\di_{2}\right)\eta_{t}\right)\left(x,y\right)\right)f\left(y\right)\Id y=\tr_{\IC^{r}}\left(\left[\Ii\di,\left(\Ii\di P_{\epsilon} K_{t}^{-,d-2}+P_{\epsilon}AK_{t}^{+,d-1}\right)P_{\epsilon}\right]f\right)\left(x\right).
		\end{align}
		Since the right hand side, due to the presence of $P_{\epsilon}$, possesses a bounded extension (which must equal its closure) in $L^2\left(\IR^{d},H\right)$, we obtain the claim for $\rho^{\epsilon,r}_{t}$ in (\ref{kernelwelllemeq2}).
	\end{proof}
	
	To improve the postulated decay $t\searrow 0$ of some of the operators, we show the following formula, similar to Duhamel's principle.
	
	\begin{Lemma}\label{dicommlem}
        Let $T$ be a operator in $H$ such that $TH_{B}$ and $H_{B}T$ are densely defined, and such that $\overline{\left[T,H_{B}\right]}$ is a bounded operator. Then for $t>0$ the commutator $\left[T,e^{-t H_{B}}\right]$ admits a bounded operator extension,
		\begin{align}
			\overline{\left[T,e^{-t H_B}\right]}=-\int_{0}^{t}e^{-\left(t-s\right)H_{B}}\overline{\left[T, H_{B}\right]}e^{-sH_{B}}\Id s,
		\end{align}
		where the integral converges in operator norm.
	\end{Lemma}
	
	\begin{proof}
		For $z\in\IC\backslash\left[0,\infty\right)$, we have
		\begin{align}
			0=\left[T,\left(z-H_{B}\right)^{-1}\left(z-H_{B}\right)\right]=\left[T,\left(z-H_{B}\right)^{-1}\right]\left(z-H_{B}\right)-\left(z-H_{B}\right)^{-1}\left[T,H_{B}\right].
		\end{align}
		Thus
		\begin{align}
			\overline{\left[T,\left(z-H_{B}\right)^{-1}\right]}=\left(z-H_{B}\right)^{-1}\overline{\left[T,H_{B}\right]}\left(z-H_{B}\right)^{-1}.
		\end{align}
		Let $C>0$, and let $\rho\in\left(0,\frac{\pi}{2}\right)$. Let $\Gamma:=\Gamma_{C,\rho}:=\Gamma_{-}\circ\Gamma_{+}^{-1}$, where $\Gamma_{\pm}\left(s\right):=-C\pm e^{\Ii\rho}s$, $s\in\left[0,\infty\right)$. Note that $\Gamma$ encloses the non-negative real axis once. Let $C'>C$ and $\rho\in\left(0,\frac{\pi}{2}\right)$, $\rho'>\rho$, and denote $\Gamma':=\Gamma_{C',\rho'}$. The curve $\Gamma'$ encloses $\Gamma$ once. By holomorphic functional calculus we have
		\begin{align}\label{dicommlemeq1}
			&\frac{1}{\left(2\pi\Ii\right)^{2}}\int_{\lambda\in\Gamma}\int_{\mu\in\Gamma'}\frac{e^{-t\mu}}{\lambda-\mu}\left(\lambda-H_{B}\right)^{-1}\overline{\left[T,H_{B}\right]}\left(\mu-H_{B}\right)^{-1}\Id\mu\ \Id\lambda\nonumber\\
			=&\frac{1}{2\pi\Ii}\int_{\Gamma}\left(\lambda-H_{B}\right)^{-1}\overline{\left[T,H_{B}\right]}\left(\lambda-H_{B}\right)^{-1}\Id\lambda\ e^{-tH_{B}}\nonumber\\
			=&\overline{\left[T,\frac{1}{2\pi\Ii}\int_{\Gamma}\left(\lambda-H_{B}\right)^{-1}\Id\lambda\right]}e^{-tH_{B}}=\overline{\left[T,1\right]}e^{-tH_{B}}=0.
		\end{align}
		Thus,
		\begin{align}
			&\overline{\left[T,e^{-tH_{B}}\right]}=\frac{1}{2\pi\Ii}\int_{\lambda\in\Gamma}e^{-t\lambda}\left(\lambda-H_{B}\right)^{-1}\overline{\left[T,H_{B}\right]}\left(\lambda-H_{B}\right)^{-1}\Id\lambda\nonumber\\
			&=\frac{1}{\left(2\pi\Ii\right)^{2}}\int_{\lambda\in\Gamma}\int_{\mu\in\Gamma_{\lambda}}\frac{e^{-t\lambda}}{\lambda-\mu}\left(\lambda-H_{B}\right)^{-1}\overline{\left[T,H_{B}\right]}\left(\mu-H_{B}\right)^{-1}\Id\mu\ \Id\lambda\nonumber\\
			&\stackrel{(\ref{dicommlemeq1})}{=}\frac{1}{\left(2\pi\Ii\right)^{2}}\int_{\lambda\in\Gamma}\int_{\mu\in\Gamma_{\lambda}}\frac{e^{-t\lambda}-e^{-t\mu}}{\lambda-\mu}\left(\lambda-H_{B}\right)^{-1}\overline{\left[T,H_{B}\right]}\left(\mu-H_{B}\right)^{-1}\Id\mu\ \Id\lambda\nonumber\\
			&=-\frac{1}{\left(2\pi\Ii\right)^{2}}\int_{\lambda\in\Gamma}\int_{\mu\in\Gamma_{\lambda}}\int_{0}^{t}e^{-\left(t-s\right)\lambda}e^{-s\mu}\Id s\ \left(\lambda-H_{B}\right)^{-1}\overline{\left[T,H_{B}\right]}\left(\mu-H_{B}\right)^{-1}\Id\mu\ \Id\lambda\nonumber\\
			&=-\int_{0}^{t}e^{-\left(t-s\right)H_{B}}\overline{\left[T,H_{B}\right]}e^{-sH_{B}}\Id s.
		\end{align}
	\end{proof}

	The following result follows directly from Corollary \ref{simoninterpolcor}.

\begin{Corollary}\label{simonshufflecor}
	For $j\geq d+1$, and $\epsilon>0$, let $F:=\left(f_{i}\right)_{i=1}^{j}\subset L^{d+\epsilon}\left(\IR^{d}\right)\cap L^{\infty}\left(\IR^{d}\right)$, with $\left\|f_{i}\right\|_{L^{d+\epsilon}}+\left\|f_{i}\right\|_{L^{\infty}}\leq c$, $i\in\left\{1,\ldots,j\right\}$. Then for $t>0$, $s\in t\Delta_{j}$,
	\begin{align}
		e^{-s\Delta_{\IR^{d}}}\shuffle F\in S^1\left(L^2\left(\IR^{d}\right)\right),
	\end{align}
	and there is a constant $C$, independent of $j$ and $s$, such that
	\begin{align}
		\left\|e^{-s\Delta_{\IR^{d}}}\shuffle F\right\|_{S^1\left(L^2\left(\IR^{d}\right)\right)}\leq C c^j\prod_{i=1}^{d+1}s_{i}^{-\frac{1}{2}}.
	\end{align}
	For $\epsilon>0$, let $F:=\left(f_{i}\right)_{i=1}^{d}\subset L^{d+\epsilon}\left(\IR^{d}\right)\cap L^{\infty}\left(\IR^{d}\right)$, and assume there exists $i\in\left\{1,\ldots,d\right\}$, such that $f_{i}\in L^{\frac{d+\epsilon}{1+\epsilon}}\left(\IR^{d}\right)$. Then for $t>0$, $s\in t\Delta_{d}$,
	\begin{align}
		e^{-s\Delta_{\IR^{d}}}\shuffle F\in S^1\left(L^2\left(\IR^{d}\right)\right),
	\end{align}
	and there is a constant $C$, independent of $j$ and $s$, such that
	\begin{align}
		\left\|e^{-s\Delta_{\IR^{d}}}\shuffle F\right\|_{S^1\left(L^2\left(\IR^{d}\right)\right)}\leq C\left(\prod_{k=1,k\neq i}^{d}s_{k}^{-\frac{1}{2}}\right)s_{i}^{-\frac{1+\epsilon}{2}}.
	\end{align}
\end{Corollary}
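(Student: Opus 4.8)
The plan is to reduce both estimates, by H\"older's inequality for the Schatten ideals, to two elementary facts about the scalar operators on $L^{2}\left(\IR^{d}\right)$: first, for $q\in\left[2,\infty\right)$ and $a>d/q$, the Kato--Seiler--Simon bound in the form of Corollary \ref{simoninterpolcor} (with $H=\IC$, $r=1$, $T$ the multiplication operator by $f$, and the parameters $s=t=a$, $p=q$) gives $f\langle\di\rangle^{-a}\in S^{q}\left(L^{2}\left(\IR^{d}\right)\right)$ with $\left\|f\langle\di\rangle^{-a}\right\|_{S^{q}}\leq C_{d,q,a}\left\|f\right\|_{L^{q}\left(\IR^{d}\right)}$; second, from $\sup_{\lambda\geq 0}\left(1+\lambda\right)^{a/2}e^{-s\lambda}\leq C_{a}\left(1+s^{-a/2}\right)$ and $s\leq t$ one obtains $\left\|\langle\di\rangle^{a}e^{-s\Delta_{\IR^{d}}}\right\|_{B\left(L^{2}\left(\IR^{d}\right)\right)}\leq C_{a,t}s^{-a/2}$. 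Multiplying these, for $0<s\leq t$,
\begin{align}
	\left\|f\,e^{-s\Delta_{\IR^{d}}}\right\|_{S^{q}\left(L^{2}\left(\IR^{d}\right)\right)}=\left\|\bigl(f\langle\di\rangle^{-a}\bigr)\bigl(\langle\di\rangle^{a}e^{-s\Delta_{\IR^{d}}}\bigr)\right\|_{S^{q}}\leq C\left\|f\right\|_{L^{q}\left(\IR^{d}\right)}s^{-a/2},\qquad a>d/q.
\end{align}

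For the first assertion I would write, on $s\in t\Delta_{j}$ (so each $s_{i}\leq t$),
\begin{align}
	e^{-s\Delta_{\IR^{d}}}\shuffle F=e^{-s_{0}\Delta_{\IR^{d}}}\prod_{i=1}^{j}\bigl(f_{i}\,e^{-s_{i}\Delta_{\IR^{d}}}\bigr),
\end{align}
keep the leading $e^{-s_{0}\Delta_{\IR^{d}}}$ and the surplus blocks $f_{i}e^{-s_{i}\Delta_{\IR^{d}}}$ with $i>d+1$ in operator norm (contributing at most $\prod_{i>d+1}\left\|f_{i}\right\|_{L^{\infty}}\leq c^{\,j-d-1}$), and place the first $d+1$ blocks into $S^{d+\epsilon}$ by the single-factor bound with $a=1$ (valid since $d+\epsilon>d$, and, for the trace-class conclusion, $\left(d+1\right)/\left(d+\epsilon\right)\geq 1$, which is the relevant regime $\epsilon\leq 1$). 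H\"older with $d+1$ exponents equal to $d+\epsilon$ then puts the product in $S^{\left(d+\epsilon\right)/\left(d+1\right)}\subseteq S^{1}$ with
\begin{align}
	\left\|e^{-s\Delta_{\IR^{d}}}\shuffle F\right\|_{S^{1}\left(L^{2}\left(\IR^{d}\right)\right)}\leq c^{\,j-d-1}\prod_{i=1}^{d+1}\left\|f_{i}e^{-s_{i}\Delta_{\IR^{d}}}\right\|_{S^{d+\epsilon}}\leq C\,c^{\,j}\prod_{i=1}^{d+1}s_{i}^{-1/2},
\end{align}
using $\left\|f_{i}\right\|_{L^{d+\epsilon}}\leq c$ in the last step. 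For the second assertion the same factorization with $j=d$ applies; now the distinguished block $f_{i}e^{-s_{i}\Delta_{\IR^{d}}}$ goes into $S^{\left(d+\epsilon\right)/(1+\epsilon)}$ via the single-factor bound with $a=1+\epsilon$ (valid since $\left(1+\epsilon\right)\cdot\tfrac{d+\epsilon}{1+\epsilon}=d+\epsilon>d$, and, for $\epsilon$ in the relevant range, $\tfrac{d+\epsilon}{1+\epsilon}\geq 2$), which produces the factor $s_{i}^{-\left(1+\epsilon\right)/2}$, while the other $d-1$ blocks go into $S^{d+\epsilon}$ with $a=1$ as before. The reciprocal Schatten exponents now sum to exactly $1$, $\tfrac{1+\epsilon}{d+\epsilon}+\left(d-1\right)\tfrac{1}{d+\epsilon}=1$, so H\"older again gives $S^{1}$-membership and the bound $C\bigl(\prod_{k\neq i}s_{k}^{-1/2}\bigr)s_{i}^{-\left(1+\epsilon\right)/2}$.

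I do not anticipate a genuine analytic obstacle; the care lies entirely in the combinatorics of the Schatten--H\"older decomposition --- choosing how many, which, and with what $\langle\di\rangle$-weight the function-heat blocks are spent in Schatten classes, and verifying that the reciprocals of the Schatten exponents sum to at least $1$ so that the product is trace class. In the second part that sum is pinned at exactly $1$, and unwinding the arithmetic shows the extra integrability $f_{i}\in L^{\left(d+\epsilon\right)/(1+\epsilon)}$ is precisely what is needed to make $d$ functions suffice in place of $d+1$. Uniformity of $C$ in $s$ (and in $j$, the $c$-dependence being carried explicitly by $c^{\,j}$) is then immediate, since every surplus function is absorbed into a single bounded factor $\left\|f_{i}\right\|_{L^{\infty}}\leq c$.
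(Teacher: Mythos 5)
Your proposal is correct, and it supplies exactly the Schatten--H\"older decomposition that the paper's one-line justification (``follows directly from Corollary \ref{simoninterpolcor}'') leaves implicit; there is nothing substantive to compare, since the paper offers no written argument. The single-factor estimate $\|fe^{-s\Delta_{\IR^{d}}}\|_{S^{q}}\leq C\|f\|_{L^{q}}s^{-a/2}$ for $a>d/q$, obtained by splitting $f\,e^{-s\Delta_{\IR^{d}}}=\bigl(f\langle\di\rangle^{-a}\bigr)\bigl(\langle\di\rangle^{a}e^{-s\Delta_{\IR^{d}}}\bigr)$ and applying the Kato--Seiler--Simon bound plus the elementary heat-semigroup estimate, is the only analytic input, and your bookkeeping of how many blocks to spend in $S^{d+\epsilon}$ versus operator norm, and of which $\langle\di\rangle$-weight to use in the distinguished block, is exactly right; the reciprocal exponent sum hitting precisely $1$ in the second part is the key arithmetic check and you verify it. Two small observations: (i) as you note, the argument genuinely requires $\epsilon\leq 1$ in the first part (so that $(d+1)/(d+\epsilon)\geq 1$) and $(d+\epsilon)/(1+\epsilon)\geq 2$, i.e.\ $\epsilon\leq d-2$, in the second; the Corollary as literally stated allows arbitrary $\epsilon>0$, but every invocation in the paper (e.g.\ in Lemmas \ref{rl1lem} and \ref{diffkernellem}) takes $\epsilon$ or $\delta$ small, so this is a minor imprecision in the statement rather than a defect of your proof, and for $\epsilon>1$ the hypothesis is simply stronger than needed after replacing $\epsilon$ by a smaller value for which $f_{i}\in L^{d+\epsilon}\cap L^{\infty}\subset L^{d+\epsilon'}$ fails without further interpolation, so the correct reading is indeed the small-$\epsilon$ regime; (ii) strictly, Corollary \ref{simoninterpolcor} is phrased for $\IC^{r}\otimes H$ with the Dirac operator, and ``$r=1$, $H=\IC$'' is not literally an allowed choice of Clifford rank, but since $|\di|=\Delta_{\IR^{d}}^{1/2}$ and the KSS estimate underlying Lemma \ref{simoninterpollem} is a scalar statement on $L^{2}\left(\IR^{d}\right)$, the scalar specialization you invoke is exactly what is meant and what one applies.
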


\begin{Lemma}\label{rl1lem}
	Let $t>0$. Then for all $m,n\in\IN_{0}$ with $n+m\leq 1$, we have
	\begin{align}
		x\mapsto \tr_{\IC^{r}}A\left(x\right)r_{t}^{d}\left(x,x\right)\in L^1\left(\IR^{d},\langle A_{0}\rangle^{-n}S^{1}\left(H\right)\langle A_{0}\rangle^{-m}\right).
	\end{align}
	Moreover for any $\delta>0$ small enough there exists a function $h$ bounded near $0$, such that for $t_{0}>0$, $t_{0}\geq t>0$,
	\begin{align}
		\left\|x\mapsto \tr_{\IC^{r}}A\left(x\right)r_{t}^{d}\left(x,x\right)\right\|_{L^1\left(\IR^{d},\langle A_{0}\rangle^{-n}S^{1}\left(H\right)\langle A_{0}\rangle^{-m}\right)}\leq h\left(t\right)t^{-\delta}.
	\end{align}
\end{Lemma}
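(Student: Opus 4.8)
The plan is to bound $\int_{\IR^{d}}\|\langle A_{0}\rangle^{-n}\tr_{\IC^{r}}(A(x)r_{t}^{d}(x,x))\langle A_{0}\rangle^{-m}\|_{S^{1}(H)}\,\Id x$ directly, extracting the required spatial decay from Hypothesis \ref{hyp1} and the cancellation forced by $\tr_{\IC^{r}}$. Since $M_{0}$ is constant, $\di^{E}A=\di^{E}B$, and by Remark \ref{domarem} the multiplication operator $A(x)\langle A_{0}\rangle^{-1}$, hence also $\overline{\langle A_{0}\rangle^{-1}A(x)}$, is bounded uniformly in $x$; writing out $r_{t}^{d}(x,x)=\int_{s\in t\Delta_{d}}(\theta_{s}\shuffle_{\ast}(\Ii\di^{E}B)^{\otimes d})(x,x)\,\Id s$ as in Definition \ref{kerneldef}, inserting telescoping factors $\langle M_{0}\rangle^{\pm\beta_{i}}$ between consecutive occurrences of $\Ii\di^{E}B$ exactly as in the factorisation (\ref{tracemempropeq3}) of Proposition \ref{tracememprop}, and absorbing $A(x)=\langle A_{0}\rangle\,\overline{\langle A_{0}\rangle^{-1}A(x)}$ into the adjacent heat-kernel factor, one reduces to the same integral with at most two extra units of $\langle A_{0}\rangle$-weight (from the $\langle A_{0}\rangle^{-n}$, $\langle A_{0}\rangle^{-m}$ and the $\langle A_{0}\rangle^{+1}$ of $A$) redistributed as internal dressings of the $d$ factors $\Ii\di^{E}B$. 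Because $\alpha\geq 1$ and $N$ is large, all the $\beta_{i}$ stay inside $[-2N+\alpha,2N+1]$ (this uses $n+m\leq 1$), so every $\tau^{\alpha,\cdot,\cdot,d,\cdot}(B)$ that appears is finite, and the dressed heat-kernel factors remain controlled by Corollary \ref{thetacor} and Lemma \ref{boundedlem}.

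Next I split each factor as $\Ii\di^{E}B=T_{0}^{B}+T_{1}^{B}$ with $T_{0}^{B}=\phi\,\Ii\di^{E}B+(1-\phi)\Ii c_{R}\p_{R}^{E}B$ and $T_{1}^{B}=(1-\phi)\Ii(\di^{E}-c_{R}\p_{R}^{E})B$, as in Proposition \ref{tracememprop}, and expand the $d$-fold product into $2^{d}$ terms. For every term containing at least one factor $T_{0}^{B}$: that factor is either compactly supported (the $\phi$-part) or, by $\tau^{\alpha,\beta,\p_{R},d,1+\epsilon}(B)<\infty$, has (dressed) $S^{d}(H)$-norm decaying like $\langle\cdot\rangle^{-1-\epsilon}$, while the remaining $d-1$ factors have (dressed) $S^{d}(H)$-norm decaying like $\langle\cdot\rangle^{-1}$ by $\tau^{\alpha,\beta,\nabla,d,1}(B)<\infty$. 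Using the Schatten--H\"older inequality along the chain of $d$ operator-valued factors together with the pointwise domination $\|\theta_{s}(x,y)\|\leq e^{cs}q_{s}(x-y)$ of Corollary \ref{thetacor}, the contribution of such a term to $\int_{\IR^{d}}\|\cdots\|_{S^{1}(H)}\,\Id x$ is dominated by $e^{ct}\int_{s\in t\Delta_{d}}\|e^{-s\Delta_{\IR^{d}}}\shuffle G\|_{S^{1}(L^{2}(\IR^{d}))}\,\Id s$, where $G=(g_{i})_{i=1}^{d}$ collects the Schatten-norm functions, one of them lying in $L^{\frac{d+\epsilon}{1+\epsilon}}(\IR^{d})$ (the $T_{0}^{B}$-factor, since $\langle\cdot\rangle^{-1-\epsilon}\in L^{\frac{d+\epsilon}{1+\epsilon}}$ once $\epsilon$ is small) and the rest in $L^{d+\epsilon}(\IR^{d})\cap L^{\infty}(\IR^{d})$. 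Corollary \ref{simonshufflecor} then makes this finite with $s$-dependence $\prod_{k\neq i}s_{k}^{-1/2}\,s_{i}^{-(1+\epsilon)/2}$, whose integral over $t\Delta_{d}$ is $\leq h(t)t^{(d-\epsilon)/2}$ with $h$ entire; passing from operators to the diagonals of their kernels is justified by Theorem \ref{bruseethm} and Remark \ref{bruseerem} (the commutators in Lemma \ref{kernelwelllem} showing that the relevant partially traced operators indeed carry the kernels $A(x)r_{t}^{d}(x,y)$).

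The only remaining summand is the one in which every factor is $T_{1}^{B}$, which provides merely the borderline decay $\langle\cdot\rangle^{-d}$; this is the crux. Here I repeat the anticommutator argument used for $W_{0}$ in Proposition \ref{tracememprop}: since $c_{R}$ anticommutes with $\di^{E}-c_{R}\p_{R}^{E}$ and $d$ is odd, conjugating the partially traced operator by $(1-\psi)c_{R}$ reverses its sign up to the commutators $[(1-\psi)c_{R},e^{-s_{j}H_{B}}]$, so twice the operator equals a remainder built from these commutators; expanding each by the Duhamel identity of Lemma \ref{dicommlem} and invoking Lemma \ref{psicommlem} produces an extra scalar factor bounded by $C\langle X\rangle^{-1}$ (together with a first-order operator absorbed by an adjacent $\theta$-factor), upgrading the decay to $\langle\cdot\rangle^{-d-1}\in L^{1}(\IR^{d})$. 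The resulting term is then trace class with values in the required weighted $S^{1}$-space, again via Schatten--H\"older, the domination of $\theta$, and Corollary \ref{simonshufflecor}, at the cost of at most an $\epsilon$-loss in the exponents. Collecting the finitely many summands — each yielding an entire function times a power of $t$ that is nonnegative except for an arbitrarily small $\epsilon$-loss — gives the asserted membership and the bound $h(t)t^{-\delta}$ for every sufficiently small $\delta>0$. The main obstacle is carrying the anticommutator reduction through the heat-semigroup factors $\theta_{s_{j}}$ rather than the resolvents of Proposition \ref{tracememprop}, while keeping the $s_{j}$- (hence $t$-) dependence integrable over $t\Delta_{d}$; once the commutators are rewritten by Lemma \ref{dicommlem} this is a bookkeeping exercise of the same type as Step 1 of Proposition \ref{tracememprop}.
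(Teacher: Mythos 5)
Your proposal follows the paper's proof nearly step for step: the split $\Ii\di^{E}B=T_{0}^{B}+T_{1}^{B}$, the treatment of terms containing at least one $T_{0}^{B}$ via the pointwise heat-kernel domination of Corollary \ref{thetacor} together with Schatten--H\"older and Corollary \ref{simonshufflecor}, and the anticommutator argument with $(1-\psi)c_{R}$ combined with Lemma \ref{dicommlem} and Lemma \ref{psicommlem} for the remaining all-$T_{1}^{B}$ summand are exactly the paper's ingredients, and the $t^{-\delta}$ loss you identify matches the paper's bound. The one place where you are thinner than the paper, and where ``bookkeeping of the same type as Step 1 of Proposition \ref{tracememprop}'' understates what is needed, is the closing estimate of the all-$T_{1}^{B}$ case. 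After the Duhamel expansion, one slot in the shuffle carries the scalar first-order operator $P=[\Delta,(1-\psi)c_{R}]$, which, unlike the $T_{1}^{B}$ factors, is neither Schatten-smoothing nor able to absorb the $\langle M_{0}\rangle$-weight forced by the factor $A(x)$ and the target space $\langle A_{0}\rangle^{-n}S^{1}(H)\langle A_{0}\rangle^{-m}$. The paper resolves this precisely because in this chapter $B$ is the approximant of Proposition and Definition \ref{approxpropandef}, so $K_{m}B=B=BK_{m}$: a copy of $\widetilde{K}_{m}$ is commuted into the $P$-slot, converting it into the smoothing, finite-rank-valued $\widetilde{K}_{m}P$, at the cost of an additional Duhamel term built from $[H_{B},\widetilde{K}_{m}]$, which is then controlled by a $\langle X\rangle^{\pm\epsilon}$ conjugation and a Hille--Yosida argument to obtain a uniform semigroup bound. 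This device has no counterpart in Step 1 of Proposition \ref{tracememprop}; you should either reproduce it or check explicitly that the $\langle M_{0}\rangle$-weight can be distributed onto the heat-semigroup factors with $s$-singularities that remain integrable over $t\Delta_{d+1}$.
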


\begin{proof}
	Let $\psi,\phi\in C^{\infty}_{c}\left(\IR^{d}\right)$, with $0\leq\psi,\phi\leq 1$, $\supp\psi\subseteq\overline{B_{\frac{1}{2}}\left(0\right)}$, and $\psi\equiv 1$ on $\overline{B_{\frac{1}{4}}\left(0\right)}$, and $\supp\phi\subseteq\overline{B_{1}\left(0\right)}$, and $\phi\equiv 1$ on $\overline{B_{\frac{1}{2}}\left(0\right)}$. Then, with $w_{t}\left(x\right):=A\left(x\right)r_{t}^{d}\left(x,x\right)$, $x\in\IR^{d}$, we have
	\begin{align}
		w_{t}=-\left(1-\phi\right)c_{R}\left(1-\psi\right)c_{R}w_{t}\left(1-\phi\right)+\left(1-\phi\right)w_{t}\phi+\phi w_{t}.
	\end{align}
	The last two summands are in $L^1\left(\IR^{d},\langle A_{0}\rangle^{-n}S^{1}\left(H\right)\langle A_{0}\rangle^{-m}\right)$, since $\phi$ has compact support, and $r_{t}^{d}$ is continuous. With the estimate from Lemma \ref{rkernellem}, we then have a constant $C$, such that
	\begin{align}
		\left\|\tr_{\IC^{r}}\left(\left(1-\phi\right)w_{t}\phi+\phi w_{t}\right)\right\|_{L^1\left(\IR^{d},S^1\left(H\right)\right)}\leq Ct^{\frac{d}{2}}.
	\end{align}
	For the first summand $-\left(1-\phi\right)c_{R}\left(1-\psi\right)c_{R}w_{t}\left(1-\phi\right)$ expand $\Ii\left(\di B\right)=T_{0}^{B}+T_{1}^{B}$ with $T_{0}^{B}=\phi\Ii\left(\di B\right)+\left(1-\phi\right)\Ii c_{R}\left(\p_{R}B\right)$, and $T_{1}^{B}=\left(1-\phi\right)\Ii\left(\left(\di-c_{R}\p_{R}\right)B\right)$, in each factor of the definition of $r^{d}_{t}$ (see Definition \ref{kerneldef}). We obtain $2^{d}-1$ summands where at least one factor $T_{0}^{B}$ appears, and one summand which only involves $T_{1}^{B}$. Let us deal first with the summands with at least one factor $T_{0}^{B}$. We note that $T_{0}^{B}\in L^{d-\epsilon}\left(\IR^{d},\langle A_{0}\rangle^{-n}S^{1}\left(H\right)\langle A_{0}\rangle^{-m}\right)$ for some $\epsilon>0$ and all $n,m\in\IN_{0}$, owed to the presence of either the cut-off function or the radial derivative. Thus, setting $T^{B}_{\alpha}:=\left(T^{B}_{\alpha_{1}},\ldots,T^{B}_{\alpha_{d}}\right)$, for $\alpha\in\left\{0,1\right\}^{d}$, where for some $i\in\left\{1,\ldots,d\right\}$ we have $\alpha_{i}=0$, we have, deploying point-wise estimates on $\theta_{s_{j}}$ and on $A\langle A_{0}\rangle^{-1}$,
	\begin{align}\label{rl1lemeq1}
		\left\|\tr_{\IC^{r}}\left(A\left(x\right)\int_{s\in t\Delta_{d}}\left(\theta_{s}\shuffle_{\ast}T_{\alpha}^{B}\right)\left(x,x\right)\Id s\right)\right\|_{\langle A_{0}\rangle^{-n}S^1\left(H\right)\langle A_{0}\rangle^{-m}}\leq ce^{ct}\int_{s\in t\Delta_{d}}\left(q_{s}\shuffle_{\ast}F\right)\left(x,x\right)\Id s,
	\end{align}
	where $F=\left(f_{i}\right)_{i=1}^{d}$ is a collection of continuous functions with $F\subset L^{d+\delta}\left(\IR^{d}\right)\cap L^{\infty}\left(\IR^{d}\right)$, and $f_{i}\in L^{d-\epsilon}\left(\IR^{d}\right)$, for some $\delta>0$ small enough. Thus Corollary \ref{simonshufflecor} applies, and the right-hand side of (\ref{rl1lemeq1}) is the diagonal entry at $x$ of the continuous integral kernel of a trace-class operator $S$ in $L^2\left(\IR^{d}\right)$, with trace-norm estimate
	\begin{align}
		\left\|S\right\|_{S^1\left(L^2\left(\IR^{d}\right)\right)}\leq Ce^{ct}\int_{s\in t\Delta_{d}}\left(\prod_{j=1,j\neq i}^{d}s_{j}^{-\frac{1}{2}}\right)s_{i}^{-\frac{1+\delta}{2}}\Id s\leq C'e^{ct}t^{-\frac{\delta}{2}}=h\left(t\right)t^{-\frac{\delta}{2}},\ t_{0}\geq t>0,
	\end{align}
	for $t_{0}>0$, some constants $C,C'$, and some continuous function $h$. Now consider the remaining case $\alpha_{j}=1$ for all $j\in\left\{1,\ldots,d\right\}$. By Remark \ref{symbolrem}, we have that $\left[H_{B},\left(1-\psi\right)c_{R}\right]=\left[\Delta,\left(1-\psi\right)c_{R}\right]$ is a first order differential operator $P$, with smooth coefficients in $Sym_{-1}$. By Lemma \ref{dicommlem}, we have for $\tau>0$,
	\begin{align}\label{rl1lemeq2}
		\overline{\left[\left(1-\psi\right)c_{R},e^{-\tau H_{B}}\right]}=\int_{0}^{\tau}e^{-\left(\tau-\sigma\right)H_{B}}Pe^{-\sigma H_{B}}\Id\sigma,
	\end{align}
	the integral convergent in $B\left(L^2\left(\IR^{d},\IC^{r}\otimes H\right)\right)$. For $t>0$, let
	\begin{align}
		V_{t}:=\int_{s\in t\Delta_{d}}e^{-s H_{B}}\shuffle\left(T_{1}^{B}\right)^{\otimes d}\Id s,
	\end{align}
	where the integral converges in operator norm. We note that $\left(1-\psi\right)c_{R}$ anti-commutes with $T_{1}^{B}$, since $1-\psi\equiv 1$ on $\supp\left(1-\phi\right)$, and $c_{R}$ anti-commutes with $\di-c_{R}\p_{R}$, because $\p_{R}$ is a component in a orthonormal system given by coordinates. Consequently, since there is a odd number of factors $T_{1}^{B}$ anti-commuting $\left(1-\psi\right)c_{R}$ with $V_{t}$ we obtain by (\ref{rl1lemeq2}),
	\begin{align}
		\left\{\left(1-\psi\right)c_{R},V_{t}\right\}=\sum_{i=0}^{d}\int_{s\in t\Delta_{d+1}}e^{-s H_{B}}\shuffle C^{i}\Id s,
	\end{align}
	where $C^{i}_{j}:= T^{B}_{1}$, $j\neq i$, $C^{i}_{i}:=P$, and $\left\{\cdot,\cdot\right\}$ denotes the anti-commutator. It follows that
	\begin{align}
		\widetilde{V}_{t}:=-\left(1-\phi\right)c_{R}\left(1-\psi\right)c_{R}V_{t}\left(1-\phi\right)=\left(1-\phi\right)c_{R}V_{t}\left(1-\phi\right)c_{R}+\left\{\left(1-\psi\right)c_{R},V_{t}\right\},
	\end{align}
	and by applying $\tr_{\IC^{r}}$ and bringing $c_{R}$ around, we have
	\begin{align}\label{rl1lemeq3}
		2\tr_{\IC^{r}}\widetilde{V}_{t}=\tr_{\IC^{r}}\left\{\left(1-\psi\right)c_{R},V_{t}\right\}=\sum_{i=0}^{d}\tr_{\IC^{r}}\int_{s\in\Delta_{d+1}}e^{-s H_{B}}\shuffle C^{i}\Id s.
	\end{align}
	Now recall that $B$ is a operator function which commutes with some finite rank projection $K_{m}$ from Proposition and Definition \ref{approxpropandef}, for $m\in\IN$ large enough, and that $K_{m}B=B=BK_{m}$. Consequently also $K_{m}T^{B}_{1}=T^{B}_{1}=T^{B}_{1}K_{m}$. Furthermore, with $\widetilde{K}_{m}=\one_{L^2\left(\IR^{d},\IC^{r}\right)}\otimes K_{m}$, we have $\left[H_{B},\widetilde{K}_{m}\right]=B\left[M_{0},\widetilde{K}_{m}\right]+\left[M_{0},\widetilde{K}_{m}\right]B=:K$, where the right hand side is a bounded continuous function with values in the finite rank operators. We have similar by Lemma \ref{dicommlem} for $\tau>0$,
	\begin{align}
		\overline{\left[\widetilde{K}_{m},e^{-\tau H_{B}}\right]}=\int_{0}^{\tau}e^{-\left(\tau-\sigma\right)H_{B}}Ke^{-\sigma H_{B}}\Id\sigma,
	\end{align}
	which shows that
	\begin{align}
		\int_{s\in t\Delta_{d+1}}e^{-s H_{B}}\shuffle C^{i}\Id s=\int_{s\in\Delta_{d+1}}e^{-s H_{B}}\shuffle\widetilde{C}^{i}\Id s+\int_{s\in\Delta_{d+2}}U_{s}\shuffle F^{i}\Id s,
	\end{align}
	where $\widetilde{C}^{i}_{j}=T^{B}_{1}$, $j\neq i$, $\widetilde{C}^{i}_{i}=\widetilde{K}_{m}P$, $U_{s_{j},j}=e^{-s_{j}H_{B}}$, $j\neq i$, $U_{s_{i},i}=\langle X\rangle^{\epsilon}e^{-s_{i}H_{B}}\langle X\rangle^{-\epsilon}$, and finally $F^{i}_{j}=T^{B}_{1}$, $j\notin\left\{i,i+1\right\}$, $F^{i}_{i}=\langle X\rangle^{-\epsilon}K$, $F^{i}_{i+1}=\langle X\rangle^{\epsilon}P$, with some $\epsilon>0$. Since the factor $\langle X\rangle^{\epsilon}e^{-s_{i}H_{B}}\langle X\rangle^{-\epsilon}$ can be regarded as the semi-group generated by $\langle X\rangle^{\epsilon}H_{B}\langle X\rangle^{-\epsilon}$, which satisfies the conditions of the Hille-Yosida theorem (cf. the Neumann series argument of (\ref{boundedlemeq1})), we conclude that the semi-group $\overline{\langle X\rangle^{\epsilon}e^{-s_{i}H_{B}}\langle X\rangle^{-\epsilon}}$ is uniformly bounded in operator norm for all $s_{i}>0$. If we write
	\begin{align}
		\langle M_{0}\rangle^{1+m} e^{-sH_{B}}\shuffle C^{i}\langle M_{0}\rangle^{n}=&\langle M_{0}\rangle^{1+m} e^{-s_{-1}H_{B}}\langle M_{0}\rangle^{-\eta}\left(\langle M_{0}\rangle^{\eta}C^{i}_{0}\langle\di\rangle^{-\beta_{0}}\right)\left(\langle\di\rangle^{\beta_{0}}e^{-s_{0}H_{B}}\right)\nonumber\\
		&\prod_{j=1}^{d}\left(C^{i}_{j}\langle\di\rangle^{-\beta_{j}}\right)\left(\langle\di\rangle^{\beta_{j}}e^{-s_{j}H_{B}}\right)\langle M_{0}\rangle^{n},
	\end{align}
	for suitable $\beta_{j}>0$, $\eta>0$, we may estimate the factors with $C^{i}_{j}\langle\di\rangle^{-\beta_{j}}$ in an appropriate Schatten-von Neumann norm by Corollary \ref{simoninterpolcor}, and $\langle\di\rangle^{\beta_{j}}e^{-s_{j}H_{B}}$ in operator norm, to obtain a $s$-integrable trace-norm of $\langle M_{0}^{1+m}\rangle e^{-sH_{B}}\shuffle C^{i}\langle M_{0}\rangle^{n}$. Careful accounting then leads to the following norm estimate
	\begin{align}
		\int_{s\in t\Delta_{d+1}}\left\|\langle M_{0}\rangle e^{-s H_{B}}\shuffle C^{i}\right\|_{S^1\left(L^2\left(\IR^{d},\IC^{r}\otimes H\right)\right)}\Id s\leq Ct^{\frac{d-1-n-m}{2}+\delta},\ t_{0}\geq t>0,
	\end{align}
	for some $t_{0},\delta>0$, and a constant $C$. For $\int_{s\in\Delta_{d+2}}U_{s}\shuffle F^{i}\Id s$ a similar factorization is possible, and one obtains
	\begin{align}
		\int_{s\in\Delta_{d+2}}\left\|U_{s}\shuffle F^{i}\right\|_{S^1\left(L^2\left(\IR^{d},\IC^{r}\otimes H\right)\right)}\Id s\leq Ct^{\frac{d+1-n-m}{2}-\delta},\ t_{0}\geq t>0,
	\end{align}
	for some $t_{0},\delta>0$, and a constant $C$. Consequently we have by (\ref{rl1lemeq3}),
	\begin{align}
		\left\|\tr_{\IC^{r}}\widetilde{V}_{t}\right\|_{S^1\left(L^2\left(\IR^{d},H\right)\right)}\leq C t^{\frac{d-1-n-m}{2}+\delta},\ t_{0}\geq t>0,
	\end{align}
	for some $t_{0},\delta>0$, and a constant $C$. Now Theorem \ref{bruseethm} and its Remark \ref{bruseerem}, imply for the diagonal of its continuous integral kernel, that
	\begin{align}
		&\left\|\tr_{\IC^{r}}\left(A\left(x\right)\left(1-\phi\left(x\right)\right)\int_{s\in t\Delta_{d}}\left(\theta_{s}\shuffle_{\ast}\left(T_{1}^{B}\right)^{\otimes d}\right)\left(x,x\right)\left(1-\phi\left(x\right)\right)\Id s\right)\right\|_{\langle A_{0}\rangle^{-n}S^1\left(H\right)\langle A_{0}\rangle^{-m}}\nonumber\\
		&\leq Ct^{\frac{d-1-n-m}{2}+\delta},\ t_{0}\geq t>0,
	\end{align}
	for some $t_{0},\delta>0$, and a constant $C$. Compiling all summands and estimates we see that for any $\delta>0$ small enough, we have
	\begin{align}
		\left\|\tr_{\IC^{r}}\ A\left(x\right)r^{d}_{t}\left(x,x\right)\right\|_{\langle A_{0}\rangle^{-n}S^1\left(H\right)\langle A_{0}\rangle^{-m}}\leq h\left(t\right)t^{-\delta},\ t_{0}\geq t>0,
	\end{align}
	for $t_{0}>0$ and some continuous function $h$.
\end{proof}
	
	The following Lemma shows what intuitively seems right, namely that the trace of $\sigma_{s}$ should vanish, due to the presence of a commutator. We note that at some point in the proof we need that $\left(\Delta B\right)$ is bounded, which is the main reason for the introduction of the smoothing procedure in Proposition and Definition \ref{approxpropandef}.
	
	\begin{Lemma}\label{diffkernellem}
		Let $t>0$. Then
		\begin{align}
			\tr_{H}\left(\lim_{r\to\infty}\lim_{\epsilon\searrow 0}\int_{\IR^{d}}\int_{0}^{t}\sigma_{s}^{\epsilon,r}\left(x,x\right)\Id s\ \Id x\right)=0.
		\end{align}
	\end{Lemma}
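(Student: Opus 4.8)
The plan is to read the object inside $\tr_{H}$ through the kernel dictionary of Lemma~\ref{kernelwelllem} and then annihilate it by cyclicity of $\tr_{H}$, exploiting that the only genuinely unbounded ingredient, the model operator $A_{0}$, cancels from the difference that appears. By Lemma~\ref{kernelwelllem} and the $C^{1}$-regularity of the kernels $\kappa^{\pm,k}_{s}$ from Corollary~\ref{kkernelcor}, the diagonal of $\sigma^{\epsilon,r}_{s}$ is
\[
	\sigma^{\epsilon,r}_{s}(x,x)=-\one_{B_{r}(0)}(x)\iint q_{\epsilon}(x-u)\,q_{\epsilon}(x-v)\,\bigl[A(u)\mathcal{T}_{s}(u,v)-\mathcal{T}_{s}(u,v)A(v)\bigr]\,\Id u\,\Id v,
\]
where $\mathcal{T}_{s}$ is the continuous $S^{1}(H)$-valued integral kernel of $T_{s}:=\tr_{\IC^{r}}\bigl(AK^{-,d}_{s}+\Ii\di K^{+,d-1}_{s}\bigr)$, assembled from $\kappa^{-,d}_{s}$ and $\di_{1}\kappa^{+,d-1}_{s}$. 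This is where the hypothesis that $B$, hence $A$, is of class $C^{2,\infty}$ is finally used: it makes $\di^{E}A=\di^{E}B$, and therefore $\kappa^{\pm,k}_{s}$, of class $C^{1}$, so that $\di_{1}\kappa^{+,d-1}_{s}$ is a bona fide continuous kernel. Since this chapter works throughout with the approximant $B=B_{n,m}$, Remark~\ref{limitrem} guarantees that $\mathcal{T}_{s}(u,v)$ absorbs arbitrary powers of $\langle A_{0}\rangle$ on either side into $S^{1}(H)$; in particular $A(u)\mathcal{T}_{s}(u,v)$ and $\mathcal{T}_{s}(u,v)A(u)$ are trace-class on $H$ for all $u,v$, with norms controlled uniformly in $\epsilon$.

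Applying $\tr_{H}$ and the cyclicity identity $\tr_{H}\!\bigl(A(u)\mathcal{T}_{s}(u,v)\bigr)=\tr_{H}\!\bigl(\mathcal{T}_{s}(u,v)A(u)\bigr)$ — legitimate by inserting and cyclically moving powers $\langle A_{0}\rangle^{\pm 2}$, which is exactly where the two-sided $\langle A_{0}\rangle$-room is needed — gives
\[
	\tr_{H}\sigma^{\epsilon,r}_{s}(x,x)=-\one_{B_{r}(0)}(x)\iint q_{\epsilon}(x-u)\,q_{\epsilon}(x-v)\,\tr_{H}\!\Bigl[\mathcal{T}_{s}(u,v)\bigl(B(u)-B(v)\bigr)\Bigr]\,\Id u\,\Id v,
\]
because $A(u)-A(v)=B(u)-B(v)$: the $x$-independent part $A_{0}$ drops out entirely. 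Thus the commutator with the (relatively) unbounded $A$ has been turned into a difference of the $A_{0}$-bounded family $B$, which is the whole point.

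For the limits I would first fix $r$. The estimates of Corollary~\ref{kkernelcor} bound $\|\mathcal{T}_{s}(u,v)\|$ in the relevant weighted $S^{1}(H)$-norm by $h(s)$ uniformly in $\epsilon$, so $\|\tr_{H}\sigma^{\epsilon,r}_{s}(x,x)\|\le h(s)\one_{B_{r}(0)}(x)$ is dominated on $(0,t)\times B_{r}(0)$ independently of $\epsilon$; and since $B$ is continuous and $q_{\epsilon}\otimes q_{\epsilon}$ is an approximate identity concentrating on the diagonal, $\mathcal{T}_{s}(u,v)(B(u)-B(v))\to\mathcal{T}_{s}(x,x)\cdot 0=0$ there, so $\tr_{H}\sigma^{\epsilon,r}_{s}(x,x)\to 0$ pointwise. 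Dominated convergence then yields
\[
	\lim_{\epsilon\searrow 0}\int_{\IR^{d}}\int_{0}^{t}\sigma^{\epsilon,r}_{s}(x,x)\,\Id s\,\Id x=-\int_{B_{r}(0)}\int_{0}^{t}\bigl[A(x)\mathcal{T}_{s}(x,x)-\mathcal{T}_{s}(x,x)A(x)\bigr]\,\Id s\,\Id x=:-M_{r}
\]
in $S^{1}(H)$, with $\tr_{H}M_{r}=0$ by the same cyclicity argument applied directly on the diagonal. The outer limit $r\to\infty$ follows from spatial $L^{1}$-integrability of $x\mapsto A(x)\mathcal{T}_{s}(x,x)-\mathcal{T}_{s}(x,x)A(x)$: the contributions of the terms $r^{j}_{s}$ with $j\ge d+1$ have ample decay from their $\ge d+1$ factors $\Ii\di^{E}B$, while the borderline $d$-fold term is exactly the content of Lemma~\ref{rl1lem}; hence $M_{r}\to M$ in $S^{1}(H)$, and by continuity of $\tr_{H}$ we get $\tr_{H}M=\lim_{r}\tr_{H}M_{r}=0$, which is the assertion.

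The main obstacle is not the cancellation but the trace-class bookkeeping that makes it rigorous: tracking arbitrary $\langle A_{0}\rangle$-powers on both sides through the construction of $\theta_{s}$, $r^{j}_{s}$, $\kappa^{\pm,k}_{s}$ for the approximant (so that cyclicity of $\tr_{H}$ applies, that $A(u)\mathcal{T}_{s}(u,v)$ is trace-class, and that $(u,v)\mapsto\tr_{H}[\mathcal{T}_{s}(u,v)(B(u)-B(v))]$ is continuous and suitably dominated to run dominated convergence), together with the borderline spatial integrability of the $d$-fold term needed for the outer limit — the place where Lemma~\ref{rl1lem}, and upstream the $C^{2,\infty}$-regularity of $B$, is genuinely used.
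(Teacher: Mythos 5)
Your proposal is essentially the same as the paper's argument: the vanishing is established via cyclicity of $\tr_{H}$ applied to the commutator $[A,\tau_{s}]$, made rigorous by the weighted $S^{1}(H)$-membership of the kernel diagonal and $A\langle A_{0}\rangle^{-1}\in L^{\infty}$, with the iterated limits handled by dominated convergence and the key $L^{1}(\IR^{d})$-integrability supplied by Lemma~\ref{rl1lem} and its companions. The one notable presentational variant is that you apply cyclicity \emph{off}-diagonal, converting the mollified commutator $[A,\mathcal{T}_{s}]$ into the mollified difference $\mathcal{T}_{s}(u,v)\bigl(B(u)-B(v)\bigr)$ before passing to the limit $\epsilon\searrow0$; the paper instead passes $\epsilon\searrow0$ and $r\to\infty$ first and applies cyclicity on the diagonal, $\tr_{H}\bigl(A(x)\tau_{s}(x)\bigr)=\tr_{H}\bigl(\langle A_{0}\rangle\tau_{s}(x)A(x)\langle A_{0}\rangle^{-1}\bigr)=\tr_{H}\bigl(\tau_{s}(x)A(x)\bigr)$. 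Both work; your version makes the pointwise $\epsilon$-limit immediately transparent, at the price of an extra continuity claim on $(u,v)\mapsto\tr_{H}[\mathcal{T}_{s}(u,v)(B(u)-B(v))]$.

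One point you get wrong is \emph{why} $C^{2,\infty}$-regularity of $B$ is needed. The $C^{1}$-smoothness of $\kappa^{\pm,k}_{s}(x,y)$ in $(x,y)$ is automatic: the $(x,y)$-dependence of $r_{s}^{j}$ enters only through the (smooth) heat kernel factors $\theta$, so $\di_{1}\kappa_{s}^{+,d-1}$ is a well-defined continuous kernel already for $C^{1}$ families $B$. What actually consumes the second derivative is the commutator formula $\overline{[\Ii\di,R^{j}_{t}]}$ — after rewriting $\di_{1}$ on the diagonal as $\tfrac12(\di_{x}+\di_{y})$ and expanding via Lemma~\ref{dicommlem}, the terms $C^{j,i}_{i}=-\Delta^{E}A$ appear, i.e.\ genuine second derivatives of the family. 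This is the mechanism the paper uses to convert $\di_{1}\kappa^{+,d-1}_{s}$ into convolution shuffles with extra decaying factors, which together with the Schatten bookkeeping of Corollary~\ref{simonshufflecor} yields the estimate $\|\tau_{t}\|_{L^{1}(\IR^{d},\langle A_{0}\rangle^{-n}S^{1}(H)\langle A_{0}\rangle^{-m})}=O(t^{-\delta})$ for $n+m\le1$. You correctly flag this $L^{1}$-integrability, together with the borderline $d$-fold term of Lemma~\ref{rl1lem}, as the main technical obstacle, but the full machinery — self-adjointness trick, Duhamel commutator, Schatten estimates — is where the work actually resides, and the mislocated role of $C^{2,\infty}$ would need correcting if the argument were written out in full.
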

	
	\begin{proof}
		$\kappa_{t}^{\pm,k}$ is the integral kernel of a self-adjoint operator, and thus
		\begin{align}
			\kappa_{t}^{\pm,k}\left(x,y\right)=\left(\kappa_{t}^{\pm,k}\left(y,x\right)\right)^{\ast},\ x,y\in\IR^{d},
		\end{align}
		which implies
		\begin{align}
			\di_{1}\kappa_{t}^{\pm,k}\left(x,x\right)=\frac{1}{2}\di_{x}\kappa_{t}^{\pm,k}\left(x,x\right)=\left.\frac{1}{2}\left(\di_{x}+\di_{y}\right)\kappa_{t}^{\pm,k}\left(x,y\right)\right|_{y=x},\ x\in\IR^{d}.
		\end{align}
		Analogously we have
		\begin{align}
			\Ii\di_{1}r^{j}_{t}\left(x,x\right)=\left.\frac{1}{2}\left(\Ii\di_{x}+\Ii\di_{y}\right)r^{j}_{t}\left(x,y\right)\right|_{y=x},\ x\in\IR^{d}.
		\end{align}
		If we apply $\tr_{\IC^{r}}$ and integrate by parts, it follows that $\frac{1}{2}\tr_{\IC^{r}}\left(\Ii\di_{1}+\Ii\di_{2}\right)r_{t}^{j}$ is the continuous $S^1\left(H\right)$-valued integral kernel of $\frac{1}{2}\tr_{\IC^{r}}\overline{\left[\Ii\di,R_{t}^{j}\right]}$. We have
		\begin{align}
			\overline{\left[\Ii\di,R_{t}^{j}\right]}&=\sum_{i=0}^{j}\int_{s\in t\Delta_{j}}T_{s}^{i}\shuffle\left(\Ii\di B\right)^{\otimes j}\Id s+\sum_{i=1}^{j}\int_{s\in t\Delta_{j}}e^{-sH_{B}}\shuffle C^{j,i}\Id s,\nonumber\\
			\left(T^{i}_{s}\right)_{k}&:=\begin{cases}
				e^{-s_{k}H_{B}},&k\neq i,\\
				\overline{\left[\Ii\di,e^{-s_{i}H_{B}}\right]},&k=i,
			\end{cases}\ C^{j,i}_{k}:=\begin{cases}
				\Ii\left(\di B\right),&k\neq i,\\
				-\left(\Delta B\right),&k=i.
			\end{cases}
		\end{align}
		By Lemma \ref{dicommlem}, we have
		\begin{align}
			\overline{\left[\Ii\di,e^{-s_{i}H_{B}}\right]}=-\int_{0}^{s_{i}}e^{-\zeta H_{B}}\left(A\Ii\left(\di B\right)+\Ii\overline{\left(\di B\right)A}\right)e^{-\zeta H_{B}}\Id\zeta,
		\end{align}
		which implies
		\begin{align}
			\overline{\left[\Ii\di,R_{t}^{j}\right]}=&\sum_{i=1}^{j+1}\int_{s\in t\Delta_{j+1}}e^{-sH_{B}}\shuffle S^{j+1,i}\Id s+\sum_{i=1}^{j}\int_{s\in t\Delta_{j}}e^{-sH_{B}}\shuffle C^{j,i}\Id s,\nonumber\\
			S^{j+1,i}_{k}:=&\begin{cases}
				\Ii\left(\di B\right),& k\neq i,\\
				-A\Ii\left(\di B\right)-\Ii\overline{\left(\di B\right)A},& k=i.
			\end{cases}
		\end{align}
		Therefore, by the point-wise estimate on $\theta_{t}$, given by Corollary \ref{thetacor}, and Fubini's theorem we have
		\begin{align}\label{diffkernellemeq1}
			\left(\Ii\di_{1}+\Ii\di_{2}\right)r_{t}^{j}=\sum_{i=1}^{j+1}\int_{s\in t\Delta_{j+1}}\theta_{s}\shuffle_{\ast} S^{j+1,i}\Id s+\sum_{i=1}^{j}\int_{s\in t\Delta_{j}}\theta_{s}\shuffle_{\ast}C^{j,i}\Id s,
		\end{align}
		which a priori only holds a.e., however, we note that the right hand side of (\ref{diffkernellemeq1}) is in $C^{0,S^1}_{m,n}\left(\IR^{2d}\right)$ similar to Lemma \ref{rproplem}, with summable norms over $j\in\IN$. We take the sum and apply $\tr_{\IC^{r}}$ to the diagonal, and so we have for $x\in\IR^{d}$,
		\begin{align}
			\tr_{\IC^{r}}\left(\Ii\di_{1}\kappa_{t}^{+,d-1}\left(x,x\right)\right)=&\frac{1}{2}\sum_{j=\frac{d-1}{2}}^{\infty}\sum_{i=0}^{2j+2}\int_{s\in t\Delta_{2j+2}}\tr_{\IC^{r}}\left(\theta_{s}\shuffle_{\ast} S^{2j+2,i}\right)\left(x,x\right)\Id s\nonumber\\
			&+\frac{1}{2}\sum_{j=\frac{d+1}{2}}^{\infty}\sum_{i=1}^{2j+1}\int_{s\in t\Delta_{2j+1}}\tr_{\IC^{r}}\left(\theta_{s}\shuffle_{\ast}C^{2j+1,i}\right)\left(x,x\right)\Id s,
		\end{align}
		Note that we used that $C^{j,i}$ contains $j-1$ factors of Clifford matrices $c^{k}$, and thus the second sum starts with $j=\frac{d+1}{2}$ instead of $\frac{d-1}{2}$. For $n,m\in\IN$ we take the norm in $\langle A_{0}\rangle^{-n}S^1\left(H\right)\langle A_{0}\rangle^{-m}$, and obtain with the pointwise estimate
		$\left\|\theta_{t}\left(x,y\right)\right\|_{B\left(\IC^{r}\otimes\dom\ A_{0}^{n}\right)}\leq e^{ct}q_{t}\left(x-y\right)$, $x,y\in\IR^{d}$,
		\begin{align}\label{diffkernellemeq2}
			&\left\|\tr_{\IC^{r}}\left(\Ii\di_{1}\kappa_{t}^{+,d-1}\left(x,x\right)\right)\right\|_{\langle A_{0}\rangle^{-n}S^1\left(H\right)\langle A_{0}\rangle^{-m}}\leq\frac{e^{ct}}{2}\sum_{j=\frac{d-1}{2}}^{\infty}\sum_{i=0}^{2j+2}\int_{s\in t\Delta_{2j+2}}\left(q_{s}\shuffle_{\ast}f^{2j+2,i}\right)\left(x,x\right)\Id s\nonumber\\
			&+\frac{e^{ct}}{2}\sum_{j=\frac{d+1}{2}}^{\infty}\sum_{i=1}^{2j+1}\int_{s\in t\Delta_{2j+1}}\left(q_{s}\shuffle_{\ast}g^{2j+1,i}\right)\left(x,x\right)\Id s,
		\end{align}
		where $f^{j,i}_{k}:=\left\|S^{j,i}_{k}\right\|_{\left(\dom\ A_{0}^{m}\right)'\otimes \dom\ A_{0}^{n}}$, $g^{j,i}_{k}:=\left\|C^{j,i}_{k}\right\|_{\left(\dom\ A_{0}^{m}\right)'\otimes \dom\ A_{0}^{n}}$, with a uniform constant $c'$, such that
		\begin{align}
			\left\|f^{j,i}_{k}\right\|_{L^{d}\left(\IR^{d}\right)}+\left\|f^{j,i}_{k}\right\|_{L^{\infty}\left(\IR^{d}\right)}+\left\|g^{j,i}_{k}\right\|_{L^{d}\left(\IR^{d}\right)}+\left\|g^{j,i}_{k}\right\|_{L^{\infty}\left(\IR^{d}\right)}\leq c',
		\end{align}
		for all $j\in\IN$, and $i,k\leq j$. The right-hand side of (\ref{diffkernellemeq2}) are by Corollary \ref{simonshufflecor} the diagonal entries of the (continuous) integral kernel of a trace class operator $T$ in $L^2\left(\IR^{d}\right)$, satisfying the trace norm estimate
		\begin{align}
			&\left\|T\right\|_{S^1\left(L^2\left(\IR^{d}\right)\right)}\leq C\frac{e^{ct}}{2}\sum_{j=\frac{d-1}{2}}^{\infty}c'^{2j+2}\sum_{i=0}^{2j+2}\int_{s\in t\Delta_{2j+2}}\prod_{l=1}^{d+1}s_{l}^{-\frac{1}{2}}\Id s\nonumber\\
			&+C\frac{e^{ct}}{2}\sum_{j=\frac{d+1}{2}}^{\infty}c'^{2j+1}\sum_{i=1}^{2j+1}\int_{s\in t\Delta_{2j+1}}\prod_{l=1}^{d+1}s_{l}^{-\frac{1}{2}}\Id s\leq C\frac{e^{ct}}{2}\sum_{j=\frac{d-1}{2}}^{\infty}c'^{2j+2}\left(2j+3\right)\frac{C'}{\left(2j+1-d\right)!}t^{2j+1-d}\nonumber\\
			&+C\frac{e^{ct}}{2}\sum_{j=\frac{d+1}{2}}^{\infty}c'^{2j+1}\left(2j+2\right)\frac{C'}{\left(2j-d\right)!}t^{2j-d}=h\left(t\right),
		\end{align}
		where $h$ is some continuous function. By Theorem \ref{bruseethm} we thus have
		\begin{align}
			\left\|x\mapsto\tr_{\IC^{r}}\left(\Ii\di_{1}\kappa_{t}^{+,d-1}\left(x,x\right)\right)\right\|_{L^1\left(\IR^{d},\langle A_{0}\rangle^{-n}S^1\left(H\right)\langle A_{0}\rangle^{-m}\right)}\leq\left\|T\right\|_{S^1\left(L^2\left(\IR^{d}\right)\right)}\leq h\left(t\right).
		\end{align}
		An analogous argument shows that
		\begin{align}
			\left\|x\mapsto\tr_{\IC^{r}}A\left(x\right)\kappa_{t}^{-,d+2}\left(x,x\right)\right\|_{L^1\left(\IR^{d},\langle A_{0}\rangle^{-n}S^1\left(H\right)\langle A_{0}\rangle^{-m}\right)}\leq h\left(t\right),
		\end{align}
		for some continuous function $h$. Together with Lemma \ref{rl1lem}, we have for $n,m\in\IN_{0}$, with $n+m\leq 1$,
		\begin{align}
			\left\|x\mapsto\tr_{\IC^{r}}A\left(x\right)\kappa_{t}^{-,d}\left(x,x\right)\right\|_{L^1\left(\IR^{d},\langle A_{0}\rangle^{-n}S^1\left(H\right)\langle A_{0}\rangle^{-m}\right)}\leq H\left(t\right)t^{-\delta},
		\end{align}
		for any $\delta>0$, and some continuous function $H$. In total we thus obtain
		\begin{align}\label{diffkernellemeq3}
			\tau_{t}\left(x\right)&:=\tr_{\IC^{r}}\left(\Ii\di_{1}\kappa_{t}^{+,d-1}\left(x,x\right)+A\left(x\right)\kappa_{t}^{-,d}\left(x,x\right)\right),\nonumber\\
			\left\|\tau_{t}\right\|_{L^1\left(\IR^{d},\langle A_{0}\rangle^{-n}S^1\left(H\right)\langle A_{0}\rangle^{-m}\right)}&=O\left(t^{-\delta}\right),\ t\searrow 0,\ n,m\in\IN_{0},\ n+m\leq1,\ \delta>0.
		\end{align}
		Let $\sigma_{t}\left(x\right):=-\overline{\left[A\left(x\right),\tau_{t}\left(x\right)\right]}$, $x\in\IR^{d}$. For $r>0$, $\sigma_{s}^{\epsilon,r}$ possesses an integrable dominant in $S^1\left(H\right)$-norm, by Corollary \ref{kkernelcor}, and converges to $\sigma_{s}^{r}$ pointwise in $S^1\left(H\right)$-norm as $\epsilon\searrow 0$, because $q_{\epsilon}$ is a delta sequence. Therefore by Lebesgue's dominated convergence theorem we have convergence in $S^1\left(H\right)$ of
		\begin{align}
			\lim_{\epsilon\searrow 0}\int_{\IR^{d}}\int_{0}^{t}\sigma_{s}^{\epsilon,r}\left(x,x\right)\Id s\ \Id x=\int_{B_{r}\left(0\right)}\int_{0}^{t}\ \sigma_{s}^{r}\left(x,x\right)\Id s\ \Id x=\int_{\IR^{d}}\one_{B_{r}\left(0\right)}\left(x\right)\int_{0}^{t}\sigma_{s}\left(x\right)\Id s\ \Id x
		\end{align}
		Since $A\langle A_{0}\rangle^{-1}\in L^{\infty}\left(\IR^{d},B\left(H\right)\right)$, we have, by (\ref{diffkernellemeq3}), and dominated convergence, iterated convergence in $S^1\left(H\right)$,
		\begin{align}\label{diffkernellemeq4}
			\lim_{r\to\infty}\lim_{\epsilon\searrow 0}\int_{\IR^{d}}\int_{0}^{t}\sigma_{s}^{\epsilon,r}\left(x,x\right)\Id s=\int_{\IR^{d}}\int_{0}^{t}\sigma_{s}\left(x\right)\Id s\ \Id x.
		\end{align}
		We apply $\tr_{H}$, which we may pull through the integrals by (\ref{diffkernellemeq3}). Then for $s>0$, $x\in\IR^{d}$,
		\begin{align}
			\tr_{H}\sigma_{s}\left(x\right)=&-\tr_{H}\left(A\left(x\right)\tau_{s}\left(x\right)\right)+\tr_{H}\left(\overline{\tau_{s}\left(x\right)A\left(x\right)}\right)=-\tr_{H}\left(\langle A_{0}\rangle\tau_{s}\left(x\right)A\left(x\right)\langle A_{0}\rangle^{-1}\right)\nonumber\\
			&+\tr_{H}\left(\overline{\tau_{s}\left(x\right)A\left(x\right)}\right)=0.
		\end{align}
		Together with (\ref{diffkernellemeq4}), the claim follows.
	\end{proof}
	
	We are prepared to conclude this chapter with a trace formula, in which already the theme of an integral over a sphere with radius $r\to\infty$ appears. Denote by $S_{r}\left(0\right)$ the $\left(d-1\right)$-dimensional sphere of radius $r$ around $0$ in $\IR^{d}$, and for $v\in\IR^{d}$, $\langle c,v\rangle:=\sum_{j=1}^{d}c^{j}v_{j}$.
	
	\begin{Proposition}\label{tracecalcprop}
		Let $t>0$. Then
		\begin{align}\label{tracecalcpropeq0}
			&\tr_{L^2\left(\IR^{d},H\right)}\tr_{\IC^{r}}\left(e^{-tD^{\ast}D}-e^{-tDD^{\ast}}\right)\nonumber\\
			=&\tr_{H}\lim_{r\to\infty}r^{d-1}\left(\int_{0}^{t}\int_{S_{1}\left(0\right)}\tr_{\IC^{r}}\left(\Ii\langle c,y\rangle A\left(ry\right)\kappa_{s}^{+,d-1}\left(ry,ry\right)\right)\Id S_{1}\left(y\right)\ \Id s\right.\nonumber\\
			&\left.+\frac{1}{2}\int_{0}^{t}\int_{S_{1}\left(0\right)}\tr_{\IC^{r}}\left(\p_{r}\kappa_{s}^{-,d}\left(ry,ry\right)\right)\Id S_{1}\left(y\right)\ \Id s\right),
		\end{align}
		where the argument of $\tr_{H}$ of the right hand side is a limit in $S^1\left(H\right)$-norm.
	\end{Proposition}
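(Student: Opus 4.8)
The plan is to derive \eqref{tracecalcpropeq0} from Corollary \ref{traceapproxcor} by passing to integral kernels and integrating by parts. First I would apply $\tr_{L^{2}\left(\IR^{d},H\right)}$ to \eqref{traceapproxcoreq1}; since this functional is continuous on $S^{1}\left(L^{2}\left(\IR^{d},H\right)\right)$ it commutes with the double limit, and for fixed $\epsilon,r>0$ Lemma \ref{kernelwelllem} exhibits $\rho^{\epsilon,r}_{s}$ and $\sigma^{\epsilon,r}_{s}$ as the jointly continuous $S^{1}\left(H\right)$-valued integral kernels of the two operators under the $s$-integral. Using the $s$-integrable dominants from Corollary \ref{kkernelcor} to interchange $\tr_{L^{2}\left(\IR^{d},H\right)}$ with the $s$-integral, Theorem \ref{bruseethm} together with Remark \ref{bruseerem} gives
\[
\tr_{L^{2}\left(\IR^{d},H\right)}\tr_{\IC^{r}}\left(e^{-tD^{\ast}D}-e^{-tDD^{\ast}}\right)=\tr_{H}\lim_{\left(\epsilon,r\right)\to\left(0,\infty\right)}\int_{\IR^{d}}\int_{0}^{t}\left(\rho^{\epsilon,r}_{s}\left(x,x\right)+\sigma^{\epsilon,r}_{s}\left(x,x\right)\right)\Id s\ \Id x ,
\]
the inner limit being in $S^{1}\left(H\right)$. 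By Lemma \ref{approxlimitlem} the double limit equals the iterated one (first $\epsilon\searrow 0$, then $r\to\infty$), and by Lemma \ref{diffkernellem} the $\sigma$-contribution vanishes after $\tr_{H}$, so the whole computation collapses to the boundary term produced by the outer commutator with $\Ii\di$.

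To extract that boundary term, fix $\epsilon>0$ and set $\eta^{\epsilon}_{s}:=\left(\Ii\di q_{\epsilon}\otimes q_{\epsilon}\right)\ast\kappa^{-,d-2}_{s}+\left(q_{\epsilon}\otimes q_{\epsilon}\right)\ast\left(A\kappa^{+,d-1}_{s}\right)$, which is smooth because $q_{\epsilon}$ is Schwartz. The chain rule gives $\Ii\left(\di_{x}+\di_{y}\right)\eta^{\epsilon}_{s}\left(x,y\right)|_{y=x}=\Ii\di\left(x\mapsto\eta^{\epsilon}_{s}\left(x,x\right)\right)$, hence $\rho^{\epsilon,r}_{s}\left(x,x\right)=\one_{B_{r}\left(0\right)}\left(x\right)\tr_{\IC^{r}}\left(\Ii\di\left(x\mapsto\eta^{\epsilon}_{s}\left(x,x\right)\right)\right)$, and the divergence theorem on $B_{r}\left(0\right)$ rewrites $\int_{\IR^{d}}\rho^{\epsilon,r}_{s}\left(x,x\right)\Id x$ as the spherical integral $\int_{S_{r}\left(0\right)}\tr_{\IC^{r}}\left(\Ii\langle c,x/\left|x\right|\rangle\,\eta^{\epsilon}_{s}\left(x,x\right)\right)\Id S_{r}\left(x\right)$. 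Letting $\epsilon\searrow 0$ — using that $q_{\epsilon}$ is an approximate identity and that $\kappa^{-,d-2}_{s}\in C^{1}_{m,n}\left(\IR^{2d}\right)$ by Corollary \ref{kkernelcor}, so that $\eta^{\epsilon}_{s}\left(x,x\right)\to\Ii\di_{1}\kappa^{-,d-2}_{s}\left(x,x\right)+A\left(x\right)\kappa^{+,d-1}_{s}\left(x,x\right)$ uniformly on $S_{r}\left(0\right)$ with an $s$-integrable dominant — and parametrising $x=ry$, $\Id S_{r}=r^{d-1}\Id S_{1}$, the $\rho$-term turns into
\[
r^{d-1}\int_{0}^{t}\int_{S_{1}\left(0\right)}\tr_{\IC^{r}}\left(\Ii\langle c,y\rangle\left(\Ii\di_{1}\kappa^{-,d-2}_{s}\left(ry,ry\right)+A\left(ry\right)\kappa^{+,d-1}_{s}\left(ry,ry\right)\right)\right)\Id S_{1}\left(y\right)\ \Id s .
\]

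It remains to identify $\tr_{\IC^{r}}\left(\Ii\langle c,y\rangle\,\Ii\di_{1}\kappa^{-,d-2}_{s}\right)$ on the diagonal with $\tfrac{1}{2}\tr_{\IC^{r}}\left(\p_{r}\kappa^{-,d}_{s}\right)$. I would combine three ingredients. From Definition \ref{kdef}, $K^{-,d-2}_{s}=R^{d-2}_{s}+K^{-,d}_{s}$. From the Clifford relations $c^{j}c^{k}+c^{k}c^{j}=-2\delta_{jk}$ one gets $\langle c,y\rangle\langle c,y\rangle=-\one_{\IC^{r}}$ and hence $\Ii\langle c,y\rangle\,\Ii\di_{1}=\p_{r,1}-T_{1}$, where $\p_{r,1}$ is the radial derivative in the first kernel slot and $T_{1}$ is purely tangential and carries two Clifford factors; since $\p_{r,1}$ is scalar, Lemma \ref{cliflem} forces $\tr_{\IC^{r}}\left(\p_{r,1}r^{d-2}_{s}\right)=0$ (an odd number $d-2<d$ of Clifford matrices), so the scalar part only sees $\kappa^{-,d}_{s}$. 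Finally, each $R^{k}_{s}$ is self-adjoint (because $\Ii\di^{E}A$ and $H_{B}$ are self-adjoint and the defining simplex is invariant under reversing the order of the factors), so $\kappa^{\pm,k}_{s}\left(x,y\right)=\kappa^{\pm,k}_{s}\left(y,x\right)^{\ast}$; together with the cyclicity of $\tr_{\IC^{r}}$ and the fact that $\langle c,y\rangle$ anticommutes with the tangential Dirac operator $\di-\langle c,y\rangle\p_{r}$, this symmetrises the slot derivative $\di_{1}$ acting on the diagonal into one half of the total radial derivative of $r\mapsto\kappa^{-,d}_{s}\left(ry,ry\right)$ and recombines the tangential term $T_{1}$. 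The integrand then reduces precisely to $\tr_{\IC^{r}}\left(\Ii\langle c,y\rangle A\left(ry\right)\kappa^{+,d-1}_{s}\left(ry,ry\right)\right)+\tfrac{1}{2}\tr_{\IC^{r}}\left(\p_{r}\kappa^{-,d}_{s}\left(ry,ry\right)\right)$, and pulling $\tr_{H}$ through the $r\to\infty$ limit (continuity on $S^{1}\left(H\right)$) yields \eqref{tracecalcpropeq0}. I expect this last step — the Clifford-algebra bookkeeping that makes the tangential contribution $T_{1}\kappa^{-,d-2}_{s}$ and the asymmetry $\left(\p_{r,1}-\p_{r,2}\right)\kappa^{-,d}_{s}$ cancel under $\tr_{\IC^{r}}$, leaving the exact factor $\tfrac{1}{2}$ and the kernel $\kappa^{-,d}_{s}$ — to be the main obstacle; the divergence theorem and the $\epsilon\searrow 0$, $r\to\infty$ passages are routine given the uniform kernel estimates of the previous subsection and Lemmas \ref{approxlimitlem} and \ref{diffkernellem}.
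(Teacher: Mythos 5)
Your plan reproduces the paper's proof in its essentials: start from Corollary \ref{traceapproxcor}, identify the kernels $\rho_{s}^{\epsilon,r},\sigma_{s}^{\epsilon,r}$ via Lemma \ref{kernelwelllem}, convert the trace to a diagonal integral via Theorem \ref{bruseethm} and Remark \ref{bruseerem}, kill the $\sigma$--contribution by Lemma \ref{diffkernellem}, apply the divergence theorem and then let $\epsilon\searrow 0$ by dominated convergence using Corollary \ref{kkernelcor}. The one place you diverge is the order of operations at the end, and it creates a small overclaim. The paper first proves the \emph{pointwise} identity of two divergences (\ref{tracecalcpropeq8}),
\begin{align}
\tr_{\IC^{r}}\left(\Ii\di_{x}\left(\left(\Ii\di q_{\epsilon}\otimes q_{\epsilon}\right)\ast\kappa_{s}^{-,d-2}\right)\left(x,x\right)\right)=-\tfrac{1}{2}\tr_{\IC^{r}}\left(\Delta_{x}\left(\left(q_{\epsilon}\otimes q_{\epsilon}\right)\ast\kappa_{s}^{-,d}\right)\left(x,x\right)\right),\nonumber
\end{align}
and only then integrates by parts, reading off the two boundary terms separately. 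You instead apply the divergence theorem once and then try to rewrite the resulting spherical integrand as the \emph{pointwise} equality $\tr_{\IC^{r}}\left(\Ii\langle c,y\rangle\Ii\di_{1}\kappa_{s}^{-,d-2}\left(ry,ry\right)\right)=\tfrac{1}{2}\tr_{\IC^{r}}\left(\p_{r}\kappa_{s}^{-,d}\left(ry,ry\right)\right)$ on $S_{r}\left(0\right)$. That is a strictly stronger statement than what (\ref{tracecalcpropeq8}) gives you: two vector fields with identical divergence have equal total flux through $S_{r}$, but their normal components need not agree pointwise, so after the divergence theorem you are only entitled to the spherical \emph{integral} identity, not the integrand identity. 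Fortunately the integral identity is all the Proposition needs, so either retreat to the integral version (which follows from (\ref{tracecalcpropeq8}) via the divergence theorem), or establish the pointwise divergence identity first, as the paper does, and then integrate by parts. The algebraic ingredients you list (self-adjointness of $\kappa_{s}^{\pm,k}$, Lemma \ref{cliflem} to kill the $r_{s}^{d-2}$ contribution, anticommutativity of $\langle c,y\rangle$ with $\di_{1}-\langle c,y\rangle\p_{r,1}$ for fixed $y$) are correct and are exactly what the paper leaves implicit behind the terse ``and thus'' of (\ref{tracecalcpropeq8}); you are right that this bookkeeping is the real work.
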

	
	\begin{proof}
		We start with Corollary \ref{traceapproxcor}, and consider the term of the right hand side of (\ref{traceapproxcoreq1}) without the limit, i.e.
		\begin{align}\label{tracecalcpropeq1}
			T_{t}^{\epsilon,r}:=\int_{0}^{t}&\left(\chi_{r}\tr_{\IC^{r}}\left(\left[\Ii\di,\left(\Ii\di P_{\epsilon} K_{s}^{-,d-2}+P_{\epsilon}AK_{s}^{+,d-1}\right)P_{\epsilon}\right]\right)\chi_{r}\right.\nonumber\\
			&\left.-\chi_{r}p_{\epsilon}\left[A,\tr_{\IC^{r}}\left(AK_{s}^{-,d}+\Ii\di K_{s}^{+,d-1}\right)\right]p_{\epsilon}\chi_{r}\right)\Id s,
		\end{align}
		Now Lemma \ref{kernelwelllem}, and Corollary \ref{kkernelcor} imply that
		\begin{align}\label{tracecalcpropeq2}
			\int_{0}^{t}\left(\rho_{s}^{\epsilon,r}+\sigma_{s}^{\epsilon,r}\right)\Id s
		\end{align}
		is a $S^1\left(H\right)$-valued integral kernel of $T_{t}^{\epsilon,r}$, which is continuous on $B_{r}\left(0\right)\times B_{r}\left(0\right)$. Thus Remark \ref{bruseerem} implies that the $S^1\left(H\right)$-valued integral kernel of $T_{t}^{\epsilon,r}$, which is supported on $B_{r}\left(0\right)\times B_{r}\left(0\right)$, must coincide with (\ref{tracecalcpropeq2}) on its support. Theorem \ref{bruseethm} also implies that $\tr_{\IC^{r}}\left(e^{-tD^{\ast}D}-e^{-tDD^{\ast}}\right)$ possesses an integral kernel $k_{t}$, which is normalized by the condition $h\mapsto\left(x\mapsto k_{t}\left(x,x+h\right)\right)\in C_{b}\left(\IR^{d},L^1\left(\IR^{d},S^1\left(H\right)\right)\right)$, and by Corollary \ref{traceapproxcor} satisfies
		\begin{align}
			\int_{\IR^{d}}k_{t}\left(x,x\right)\Id x=\lim_{\left(\epsilon,r\right)\to\left(0,\infty\right)}\int_{\IR^{d}}\int_{0}^{t}\left(\rho_{s}^{\epsilon,r}+\sigma_{s}^{\epsilon,r}\right)\Id s\ \Id x,
		\end{align}
		where the convergence holds in $S^1\left(H\right)$-norm, by Theorem \ref{bruseethm}. We also obtain by Theorem \ref{bruseethm},
		\begin{align}\label{tracecalcpropeq4}
			\tr_{L^2\left(\IR^{d},H\right)}\tr_{\IC^{r}}\left(e^{-tD^{\ast}D}-e^{-tDD^{\ast}}\right)=\tr_{H}\left(\lim_{r\to\infty}\lim_{\epsilon\searrow 0}\int_{\IR^{d}}\int_{0}^{t}\left(\rho_{s}^{\epsilon,r}+\sigma_{s}^{\epsilon,r}\right)\left(x,x\right)\Id s\ \Id x\right).
		\end{align}
		By Lemma \ref{diffkernellem} we thus have
		\begin{align}\label{tracecalcpropeq3}
			&\tr_{L^2\left(\IR^{d},H\right)}\tr_{\IC^{r}}\left(e^{-tD^{\ast}D}-e^{-tDD^{\ast}}\right)=\tr_{H}\left(\lim_{r\to\infty}\lim_{\epsilon\searrow 0}\int_{\IR^{d}}\int_{0}^{t}\rho_{s}^{\epsilon,r}\left(x,x\right)\Id s\ \Id x\right)\nonumber\\
			&=\tr_{H}\left(\lim_{r\to\infty}\lim_{\epsilon\searrow 0}\int_{0}^{t}\int_{B_{r}\left(0\right)}\tr_{\IC^{r}}\Ii\di_{x}\left(\left(\Ii\di q_{\epsilon}\otimes q_{\epsilon}\right)\ast\kappa_{s}^{-,d-2}+\left(q_{\epsilon}\otimes q_{\epsilon}\right)\ast\left(A\kappa_{s}^{+,d-1}\right)\right)\left(x,x\right)\Id x\ \Id s\right).
		\end{align}
		Since $\left(q_{\epsilon}\otimes q_{\epsilon}\right)\ast\kappa_{s}^{-,d-2}$ is the continuous integral kernel of a self-adjoint operator, we have for $x,y\in\IR^{d}$,
		\begin{align}
			\left(\left(q_{\epsilon}\otimes q_{\epsilon}\right)\ast\kappa_{s}^{-,d-2}\right)\left(x,y\right)=\left(\left(\left(q_{\epsilon}\otimes q_{\epsilon}\right)\ast\kappa_{s}^{-,d-2}\right)\left(y,x\right)\right)^{\ast},
		\end{align}
		and thus for $x\in\IR^{d}$, if we apply the trace $\tr_{\IC^{r}}$, which eliminates the $d-2$ summand,
		\begin{align}\label{tracecalcpropeq8}
			\tr_{\IC^{r}}\Ii\di_{x}\left(\left(\Ii\di q_{\epsilon}\otimes q_{\epsilon}\right)\ast\kappa_{s}^{-,d-2}\right)\left(x,x\right)&=-\frac{1}{2}\tr_{\IC^{r}}\Delta_{x}\left(\left(q_{\epsilon}\otimes q_{\epsilon}\right)\ast\kappa_{s}^{-,d}\right)\left(x,x\right).
		\end{align}
		We insert (\ref{tracecalcpropeq8}) into (\ref{tracecalcpropeq3}), apply the divergence theorem, and choose polar coordinates, to obtain
		\begin{align}\label{tracecalcpropeq6}
			&\tr_{L^2\left(\IR^{d},H\right)}\tr_{\IC^{r}}\left(e^{-tD^{\ast}D}-e^{-tDD^{\ast}}\right)\nonumber\\
			=&\tr_{H}\lim_{r\to\infty}\lim_{\epsilon\searrow 0}\left(r^{d-1}\int_{0}^{t}\int_{S_{1}\left(0\right)}\tr_{\IC^{r}}\left(\Ii\langle c,y\rangle\left(q_{\epsilon}\otimes q_{\epsilon}\right)\ast\left(A\kappa_{s}^{+,d-1}\right)\left(ry,ry\right)\right)\Id S_{1}\left(y\right)\ \Id s\right.\nonumber\\
			&\left.+\frac{1}{2}\int_{0}^{t}\int_{S_{1}\left(0\right)}\tr_{\IC^{r}}\left(\p_{r}\left(q_{\epsilon}\otimes q_{\epsilon}\right)\ast\kappa_{s}^{-,d}\left(ry,ry\right)\right)\Id S_{1}\left(y\right)\ \Id s\right).
		\end{align}
		Fixing $r>0$, by Corollary \ref{kkernelcor}, the above inner integrands of the right hand side of (\ref{tracecalcpropeq6}) possess an integrable dominant in $S^1\left(H\right)$-norm, and converge in $S^1\left(H\right)$-norm point-wise. So by Lebesgue's dominated convergence we obtain
		\begin{align}\label{tracecalcpropeq7}
			&\tr_{L^2\left(\IR^{d},H\right)}\tr_{\IC^{r}}\left(e^{-tD^{\ast}D}-e^{-tDD^{\ast}}\right)\nonumber\\
			=&\tr_{H}\lim_{r\to\infty}\left(r^{d-1}\int_{0}^{t}\int_{S_{1}\left(0\right)}\tr_{\IC^{r}}\left(\Ii\langle c,y\rangle A\left(ry\right)\kappa_{s}^{+,d-1}\left(ry,ry\right)\right)\Id S_{1}\left(y\right)\ \Id s\right.\nonumber\\
			&\left.+\frac{1}{2}\int_{0}^{t}\int_{S_{1}\left(0\right)}\tr_{\IC^{r}}\left(\p_{r}\kappa_{s}^{-,d}\left(ry,ry\right)\right)\Id S_{1}\left(y\right)\ \Id s\right).
		\end{align}
	\end{proof}
	
	\section{The principal trace formula}
	
	In this chapter we exploit that we only need the involved integral kernels in their long range limits $\left|x\right|\to\infty$. The first result shows that we do not need to care about which values the operator family $A$ has near $0$.
	
	\begin{Lemma}\label{ballexciselem}
		For $j\in\IN$ let $M=\left(M_{i}\right)_{i=1}^{j}$ be measurable families of bounded operators in $\IC^{r\times r}\otimes\langle A_{0}\rangle^{-n}B\left(H\right)\langle A_{0}\rangle^{-m}$, with
		\begin{align}
			\left\|M_{i}\left(x\right)\right\|_{L^{\infty}\left(\IR^{d},\IC^{r\times r}\otimes\langle A_{0}\rangle^{-n}B\left(H\right)\langle A_{0}\rangle^{-m}\right)}\leq C,
		\end{align}
		for some constant $C<\infty$. Assume also that there are measurable functions
		\begin{align}
			\left(0,\infty\right)\times\IR^{d}\times\IR^{d}\ni\left(t,x,y\right)\mapsto k_{t}^{i}\left(x,y\right)\in\IC^{r\times r}\otimes B\left(H\right),\ i\in\left\{0,\ldots,j\right\},
		\end{align}
		satisfying for some constant $c<\infty$,
		\begin{align}
			\left\|k_{t}^{i}\left(x,y\right)\right\|_{B\left(\IC^{r}\otimes\dom\ A_{0}^{n}\right)}\leq e^{ct}q_{t}\left(x-y\right),\ x,y\in\IR^{d},\ t>0.
		\end{align}
		Denote for $R>0$, $M':=\left(M_{i}'\right)_{i=1}^{j}$, with $M_{i}'\in\left\{M_{i},M_{i}\one_{B_{\frac{R}{2}}\left(0\right)^{c}}\right\}$, $i\in\left\{1,\ldots,j\right\}$ with at least one $i_{0}\in\left\{1,\ldots,j\right\}$, such that $M_{i_{0}}'=M_{i_{0}}\one_{B_{\frac{R}{2}}\left(0\right)^{c}}$. Then there is a constant $c'$ independent of $j$, such that for $t>0$,
		\begin{align}
			&\int_{S_{R}\left(0\right)}\int_{s\in t\Delta_{j}}\left\|\left(k_{s}\shuffle_{\ast} M-k_{s}\shuffle_{\ast}M'\right)\left(x,x\right)\right\|_{\IC^{r\times r}\otimes\langle A_{0}\rangle^{-n}B\left(H\right)\langle A_{0}\rangle^{-m}}\Id s\ \Id S_{R}\left(x\right)\nonumber\\
			&\leq c'e^{ct}\frac{\left(2Ct\right)^{j}}{j!}R^{-1}.
		\end{align}
	\end{Lemma}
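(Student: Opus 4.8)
The plan is to reduce the statement to a purely scalar Gaussian estimate: a telescoping decomposition isolates a single factor $\one_{B_{R/2}(0)}$ inside the shuffle chain, and then the pointwise Gaussian bounds on the $k^{i}_{s}$ force that chain to traverse a distance $\ge R/2$, producing a factor that, together with the surface measure of $S_{R}(0)$, yields the $R^{-1}$.

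First I would set up the telescoping. Let $I:=\{i:M_{i}'=M_{i}\one_{B_{R/2}(0)^{c}}\}\neq\emptyset$, ordered $i_{1}<\dots<i_{p}$ with $p\le j$. Using multilinearity of $\shuffle_{\ast}$ in the $M$-slots and $M_{i}-M_{i}\one_{B_{R/2}(0)^{c}}=M_{i}\one_{B_{R/2}(0)}$, a standard telescoping gives $k_{s}\shuffle_{\ast}M-k_{s}\shuffle_{\ast}M'=\sum_{l=1}^{p}k_{s}\shuffle_{\ast}M^{(l)}$, where $M^{(l)}$ agrees with $M$ except that it carries $M_{i_{\nu}}\one_{B_{R/2}(0)^{c}}$ in slot $i_{\nu}$ for $\nu<l$ and $M_{i_{l}}\one_{B_{R/2}(0)}$ in slot $i_{l}$. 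Every entry of $M^{(l)}$ still has norm $\le C$, and exactly one slot carries the indicator of the small ball, so it suffices to bound a single such term.

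Next, exactly as in the pointwise estimates of Lemma \ref{rproplem} (and Lemma \ref{thetaproplem}), the hypotheses give, with $u_{0}=u_{j+1}=x$ and using $\sum_{k=0}^{j}s_{k}=t$ on $t\Delta_{j}$,
\begin{align*}
\left\|\left(k_{s}\shuffle_{\ast}M^{(l)}\right)(x,x)\right\|_{\IC^{r\times r}\otimes\langle A_{0}\rangle^{-n}B(H)\langle A_{0}\rangle^{-m}}\le e^{ct}C^{j}\int_{(\IR^{d})^{j}}\left(\prod_{k=0}^{j}q_{s_{k}}(u_{k}-u_{k+1})\right)\one_{B_{R/2}(0)}(u_{i_{l}})\,du .
\end{align*}
The right-hand side is scalar, and integrating out every variable except $u_{i_{l}}$ by the convolution semigroup identity $q_{a}\ast q_{b}=q_{a+b}$ collapses the chain to $\int_{B_{R/2}(0)}q_{\sigma}(x-u)q_{t-\sigma}(u-x)\,du$ with $\sigma:=s_{0}+\dots+s_{i_{l}-1}\in[0,t]$. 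The Gaussian product identity $q_{\sigma}(v)q_{t-\sigma}(v)=q_{t}(0)\,q_{\sigma(t-\sigma)/t}(v)$ turns this into $q_{t}(0)\int_{B_{R/2}(0)}q_{\tau}(u-x)\,du$ with $\tau:=\sigma(t-\sigma)/t\le t/4$. Since $|x|=R$ forces $B_{R/2}(0)\subset\{|u-x|\ge R/2\}$, the elementary tail estimate $\int_{|v|\ge R/2}q_{\tau}(v)\,dv\le 2^{d/2}e^{-R^{2}/(32\tau)}\le 2^{d/2}e^{-R^{2}/(8t)}$ applies; note this is independent of $s$, so integration over $t\Delta_{j}$ only multiplies by $\vol(t\Delta_{j})=t^{j}/j!$.

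Finally I would assemble the pieces. For one telescoping term the bound above is $\le e^{ct}C^{j}(t^{j}/j!)\,2^{d/2}\,q_{t}(0)e^{-R^{2}/(8t)}$; integrating over $S_{R}(0)$ (area $\omega_{d-1}R^{d-1}$) and using the scaling bound $q_{t}(0)e^{-R^{2}/(8t)}=(4\pi t)^{-d/2}e^{-R^{2}/(8t)}\le C_{d}R^{-d}$ (put $t=R^{2}/u$; the function $u^{d/2}e^{-u/8}$ is bounded on $(0,\infty)$) gives $\le e^{ct}C^{j}(t^{j}/j!)\,2^{d/2}C_{d}\omega_{d-1}R^{-1}$. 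Summing the $p\le j$ terms and using $j\,C^{j}t^{j}\le(2Ct)^{j}$ (since $j\le 2^{j}$) yields the claim with $c':=2^{d/2}C_{d}\omega_{d-1}$, independent of $j$. The only genuinely delicate points are the measurability/Tonelli justification for collapsing the $u$-integral and the bookkeeping that the shuffle chain lands in $\IC^{r\times r}\otimes\langle A_{0}\rangle^{-n}B(H)\langle A_{0}\rangle^{-m}$ with the displayed pointwise bound — both are routine repetitions of the arguments behind Lemmas \ref{thetaproplem}, \ref{rproplem} and \ref{rkernellem} — so the new content is precisely the Gaussian tail computation and its interplay with the simplex volume.
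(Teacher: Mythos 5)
Your proof is correct and follows essentially the same strategy as the paper's: isolate (at least) one slot of the shuffle chain that carries the indicator of $B_{R/2}(0)$, collapse the Gaussian chain by semigroup convolution to a two-factor bridge $q_\sigma(x-y)q_{t-\sigma}(y-x)$ integrated over the small ball, bound that integral by $O(R^{-d})$ uniformly in $\sigma$, integrate the remaining simplex variables to produce $t^j/j!$, and finish with the surface measure of $S_R(0)$. The only cosmetic differences are that you telescope into $p\le j$ difference terms where the paper product-expands into at most $2^j-1$ terms (both absorb into $(2Ct)^j$ via $j\le 2^j$ resp.\ $2^j-1\le 2^j$), and that you obtain the $R^{-d}$ via the Gaussian product identity plus a tail estimate, whereas the paper takes $\sup_\sigma$ of each Gaussian factor separately and multiplies by $\operatorname{vol}B_{R/2}(0)$; these are interchangeable and give the same scaling.
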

	
	\begin{proof}
		Since $M$, and $M'$ are uniformly bounded in $\left\|\cdot\right\|_{\IC^{r\times r}\otimes\langle A_{0}\rangle^{-n}B\left(H\right)\langle A_{0}\rangle^{-m}}$-norm by some constant $C$, we have for $t>0$, $x\in S_{R}\left(0\right)$,
		\begin{align}\label{ballexciselemeq1}
			&\int_{s\in t\Delta_{j}}\left\|\left(k_{s}\shuffle_{\ast} M-k_{s}\shuffle_{\ast}M'\right)\left(x,x\right)\right\|_{\IC^{r\times r}\otimes\langle A_{0}\rangle^{-n}B\left(H\right)\langle A_{0}\rangle^{-m}}\Id s\nonumber\\
			&\leq e^{ct}C^{j}\left(2^{j}-1\right)\int_{0}^{t}\frac{u^{j-1}}{\left(j-1\right)!}\int_{B_{\frac{R}{2}}\left(0\right)}q_{t-u}\left(x-y\right)q_{u}\left(y-x\right)\Id y\ \Id u
		\end{align}
		Since $\mathrm{dist}\left(x,y\right)\geq\frac{R}{2}$, there exist constants $c_{d}, c_{d}'$ only dependent on $d$, such that
		\begin{align}\label{ballexciselemeq2}
			&\int_{B_{\frac{R}{2}}\left(0\right)}q_{t-u}\left(x-y\right)q_{u}\left(y-x\right)\Id u\leq R^{d}c_{d}\left(\sup_{\sigma>0}\sigma^{-\frac{d}{2}}e^{-\frac{R^2}{16\sigma}}\right)^{2}\stackrel{\lambda=\frac{R^2}{\sigma}}{=}R^{d}c_{d}\left(R^{-d}\sup_{\lambda>0}\lambda^{\frac{d}{2}}e^{-\frac{\lambda}{16}}\right)^{2}\nonumber\\
			=&c_{d}'R^{-d}.
		\end{align}
		Compiling (\ref{ballexciselemeq1}) and (\ref{ballexciselemeq2}), we obtain
		\begin{align}
			&\int_{s\in t\Delta_{j}}\left\|\left(k_{s}\shuffle_{\ast} M-k_{s}\shuffle_{\ast}\left(M\one_{B_{\frac{R}{2}}\left(0\right)^{c}}\right)\right)\left(x,x\right)\right\|_{\IC^{r\times r}\otimes\langle A_{0}\rangle^{-n}B\left(H\right)\langle A_{0}\rangle^{-m}}\Id s\nonumber\\
			&\leq e^{ct}c'_{d}C^{j}\left(2^{j}-1\right)\frac{t^{j}}{j!}R^{-d}.
		\end{align}
		Integration over $S_{R}\left(0\right)$ yields the claim.
	\end{proof}
	
	\begin{Lemma}\label{decaycutofflem}
		Let $j\geq d$, and let $F:=\left(F_{k}\right)_{k=1}^{j}$ be a tuple of measurable $B\left(\IC^{r}\otimes H\right)$-valued functions, which satisfy
		\begin{align}
			\left\|\langle X\rangle F_{k}\right\|_{L^{\infty}\left(\IR^{d},\IC^{r\times r}\otimes\langle A_{0}\rangle^{-n}B\left(H\right)\langle A_{0}\rangle^{-m}\right)}\leq C<\infty.
		\end{align}
		Let there be measurable functions
		\begin{align}
			\left(0,\infty\right)\times\IR^{d}\times\IR^{d}\ni\left(t,x,y\right)\mapsto k_{t}^{i}\left(x,y\right)\in B\left(\IC^{r}\otimes\dom\ A_{0}^{n}\right),\ i\in\left\{0,\ldots,j\right\},
		\end{align}
		satisfying for some constant $c<\infty$,
		\begin{align}
			\left\|k_{t}^{i}\left(x,y\right)\right\|_{B\left(\IC^{r}\otimes\dom\ A_{0}^{n}\right)}\leq e^{ct}q_{t}\left(x-y\right),\ x,y\in\IR^{d},\ t>0.
		\end{align}
		Then, for $t_{0}>0$ there is a constant $C'$ independent of $j$, such that for $0<t\leq t_{0}$, $R>0$,
		\begin{align}
			\int_{S_{R}\left(0\right)}\int_{s\in t\Delta_{j}}\left\|\left(k_{s}\shuffle_{\ast}F\right)\left(x,x\right)\right\|_{\IC^{r\times r}\otimes\langle A_{0}\rangle^{-n}B\left(H\right)\langle A_{0}\rangle^{-m}}\Id s\ \Id x\leq C'e^{ct}\frac{\left(2C\right)^{j}}{j!}t^{j-\frac{d}{2}}R^{-1}.
		\end{align}
	\end{Lemma}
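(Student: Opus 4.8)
The plan is to reduce the weighted operator-norm bound to a scalar Gaussian estimate, the decisive point being that although $j$ decaying factors $F_k$ are present, already $d$ of them suffice: the remaining $j-d$ may be bounded by $1$ and the Gaussians between them collapsed using the convolution identity $q_a\ast q_b=q_{a+b}$. First I would combine the pointwise bounds $\|k^i_s(x,y)\|_{B(\IC^r\otimes\dom A_0^n)}\le e^{cs_i}q_{s_i}(x-y)$ with the decay $\|F_k(u)\|\le C(1+|u|)^{-1}$ (the $\langle A_0\rangle$-weights causing no trouble, since $\langle A_0\rangle^{-1}$ is bounded), giving for $x\in S_R(0)$ with $R:=|x|$, $s\in t\Delta_j$, and $u_0=u_{j+1}:=x$,
\[
\bigl\|(k_s\shuffle_{\ast}F)(x,x)\bigr\|_{\IC^{r\times r}\otimes\langle A_0\rangle^{-n}B(H)\langle A_0\rangle^{-m}}\le e^{ct}C^j\int_{(\IR^d)^j}q_{s_0}(x-u_1)\prod_{k=1}^j\frac{q_{s_k}(u_k-u_{k+1})}{1+|u_k|}\,\Id u,
\]
where $\sum_{i=0}^j s_i=t$, so $e^{cs_0}\cdots e^{cs_j}=e^{ct}$. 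Then I would bound $(1+|u_k|)^{-1}\le1$ for $k=1,\dots,j-d$ and integrate out $u_1,\dots,u_{j-d}$ via $q_a\ast q_b=q_{a+b}$, which collapses the right-hand side to $e^{ct}C^j\,\widetilde I(x;\sigma)$, where
\[
\widetilde I(x;\sigma):=\int_{(\IR^d)^d}q_{\sigma_0}(x-v_1)\prod_{l=1}^d\frac{q_{\sigma_l}(v_l-v_{l+1})}{1+|v_l|}\,\Id v,\qquad v_{d+1}:=x,
\]
with $\sigma_0:=s_0+\dots+s_{j-d}$, $\sigma_l:=s_{j-d+l}$ ($l=1,\dots,d$) and $\sum_{l=0}^d\sigma_l=t$; crucially $\widetilde I$ no longer depends on $j$.

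Next I would take care of the simplex bookkeeping that produces $(2C)^j/j!$. Since $s_0=t-\sum_{i\ge1}s_i$, one has $\sigma_0=t-\sigma_1-\dots-\sigma_d$, so $\widetilde I$ depends on $s$ only through $\sigma_1,\dots,\sigma_d=s_{j-d+1},\dots,s_j$; integrating out the remaining $j-d$ coordinates over a $(j-d)$-simplex of side $\sigma_0$ gives
\[
\int_{t\Delta_j}\widetilde I(x;\sigma)\,\Id s=\int_{\{\sigma_l\ge0,\ \sigma_1+\dots+\sigma_d\le t\}}\widetilde I(x;\sigma)\,\frac{\sigma_0^{\,j-d}}{(j-d)!}\,\Id\sigma_1\cdots\Id\sigma_d .
\]
Granting the core estimate $\widetilde I(x;\sigma)\le C_{d,t_0}(1+R)^{-d}t^{-d/2}$ of the next paragraphs, and using $\sigma_0\le t$ together with $\binom{j}{d}\le2^j$, this is $\le C_{d,t_0}(1+R)^{-d}\,2^j t^{j-d/2}/j!$; multiplying by $e^{ct}C^j$, integrating over $S_R(0)$ (whose surface measure is $\omega_{d-1}R^{d-1}$, $\omega_{d-1}=|S_1(0)|$), and using the elementary inequality $R^{d-1}(1+R)^{-d}=R^{-1}\bigl(R/(1+R)\bigr)^d\le R^{-1}$ finishes the proof with $C'=C_{d,t_0}\,\omega_{d-1}$.

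The main obstacle — indeed the only genuinely delicate point — is the core estimate
\[
\widetilde I(x;\sigma)\le C_{d,t_0}\,(1+|x|)^{-d}\,t^{-d/2},\qquad 0<t=\textstyle\sum_l\sigma_l\le t_0,
\]
in which the spatial decay $(1+|x|)^{-d}$ and the time factor $t^{-d/2}$ must be extracted simultaneously. Since $\widetilde I(x;\sigma)\le q_t(0)=(4\pi t)^{-d/2}$ trivially (drop all weights, collapse all Gaussians), it suffices to prove $\widetilde I\le C_{d,t_0}R^{-d}t^{-d/2}$. I would split the $v$-integration at the sphere of radius $R/2$: on $\{|v_l|\ge R/2\ \text{for all }l\}$ one has $\prod_l(1+|v_l|)^{-1}\le(2/R)^d$ and the Gaussians collapse to $q_t(0)$, contributing $(2/R)^d(4\pi t)^{-d/2}$; the complement lies in $\bigcup_{l_0=1}^d\{|v_{l_0}|<R/2\}$, and on $\{|v_{l_0}|<R/2\}$ I would discard all weights and integrate out every $v_l$ with $l\ne l_0$, reaching $\int_{B_{R/2}(0)}q_a(x-v)q_b(v-x)\,\Id v$ with $a+b=t$, hence $ab\le t^2/4$.

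To conclude I would use that $|x-v|\ge R/2$ on $B_{R/2}(0)$, so that $q_a(x-v)q_b(v-x)\le(16\pi^2ab)^{-d/2}e^{-tR^2/(16ab)}$ — the same mechanism as in (\ref{ballexciselemeq2}) — whence, with $w:=tR^2/(16ab)$,
\[
\int_{B_{R/2}(0)}q_a(x-v)q_b(v-x)\,\Id v\ \le\ \vol\bigl(B_{R/2}(0)\bigr)\,(16\pi^2ab)^{-d/2}e^{-w}\ =\ c_d\,t^{-d/2}\,w^{d/2}e^{-w},
\]
where the $R$-dependence has cancelled and survives only through the constraint $w\ge R^2/(4t)$ forced by $ab\le t^2/4$. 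Since $\sup_{w\ge W_0}w^{d/2}e^{-w}$ equals $(d/2e)^{d/2}$ for $W_0\le d/2$ and $W_0^{d/2}e^{-W_0}$ otherwise, a short case distinction between $R\ge\sqrt{2dt}$ and $R<\sqrt{2dt}$ (in the latter range bounding $R^{-d}\ge(2dt_0)^{-d/2}$, which is where the hypothesis $t\le t_0$ is used) gives $\int_{B_{R/2}(0)}q_aq_b\le C_{d,t_0}R^{-d}t^{-d/2}$. Summing the at most $d+1$ contributions yields the core estimate; after that the reductions above are routine bookkeeping. The only subtle ingredient, then, is exactly this simultaneous extraction of $R^{-d}$ and $t^{-d/2}$ from the near-origin region — neither the crude $R^{-d}$ bound of (\ref{ballexciselemeq2}) nor the plain $t^{-d/2}$ bound supplies it on its own — and it rests on $ab\le t^2/4$ together with $t\le t_0$.
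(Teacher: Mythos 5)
Your proof is correct, but it takes a substantially heavier route than the paper's short argument. The paper first assumes every $F_k$ is supported outside $B_{R/2}(0)$: then each of the $j$ factors contributes $\|F_k\|\le 2C/R$; collapsing all Gaussians gives $q_t(0)=(4\pi t)^{-d/2}$; $\vol(t\Delta_j)=t^j/j!$; and the surface measure $\omega_{d-1}R^{d-1}$ multiplied by $R^{-j}$ gives $R^{d-1-j}\le R^{-1}$ precisely because $j\ge d$. The interior contributions are then handled by a single invocation of Lemma~\ref{ballexciselem}, whose bound lacks the $t^{-d/2}$ factor but is absorbed using $t^j\le t_0^{d/2}\,t^{j-d/2}$. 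You instead discard $j-d$ of the decaying weights (bounding them by $1$), so the decay must be extracted from only $d$ weights plus the surface measure, and you recover the factor $(2C)^j/j!$ from the combinatorial identity $1/[(j-d)!\,d!]=\binom{j}{d}/j!\le 2^j/j!$ rather than from $(2C/R)^j$. This forces you to prove from scratch the sharpened near-origin estimate $\int_{B_{R/2}(0)}q_a(x-v)q_b(v-x)\,\Id v\le C_{d,t_0}R^{-d}t^{-d/2}$ — strictly stronger than (\ref{ballexciselemeq2}), which only yields $R^{-d}$ — via the case analysis in $w=tR^2/(16ab)$. Your version is self-contained (no appeal to Lemma~\ref{ballexciselem}) and in fact establishes the slightly stronger fact that only $d$ of the $F_k$ need to decay, which may have independent value; but for the statement as given, the paper's observation that the $j\ge d$ decaying factors combined with $R^{d-1}$ already yield $R^{-1}$ directly makes your refined core estimate unnecessary.
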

	
	\begin{proof}
		Let $R>0$ and consider first that all $F_{k}$ are supported outside of $B_{\frac{R}{2}}\left(0\right)$. Then
		\begin{align}
			R\left\|F_{k}\left(x\right)\right\|_{\IC^{r\times r}\otimes\langle A_{0}\rangle^{-n}B\left(H\right)\langle A_{0}\rangle^{-m}}\leq 2C,
		\end{align}
		for a.e. $x\in\IR^{d}$. So
		\begin{align}
			&\int_{S_{R}\left(0\right)}\int_{s\in t\Delta_{j}}\left\|\left(k_{s}\shuffle_{\ast}F\right)\left(x,x\right)\right\|_{\IC^{r\times r}\otimes\langle A_{0}\rangle^{-n}B\left(H\right)\langle A_{0}\rangle^{-m}}\Id s\ \Id x\leq e^{ct}\left(4\pi t\right)^{-\frac{d}{2}}\frac{\left(2Ct\right)^{j}}{j!}\vol S_{R}\left(0\right)R^{-j}\nonumber\\
			&\leq C'e^{ct}\frac{\left(2C\right)^{j}}{j!}t^{j-\frac{d}{2}}R^{-1}.
		\end{align}
		The general case follows by Lemma \ref{ballexciselem}.
	\end{proof}

	The next lemma shows that we may delete all higher summands $r^{j}_{t}$, $j\geq d$, which make up the kernels $\kappa^{+,d-1}$ and $\kappa^{-,d}$. In particular, the term in the trace formula of Proposition \ref{tracecalcprop} which involves the radial derivative is also eliminated.
	
	\begin{Lemma}\label{cutofflem}
		For $t>0$,
		\begin{align}
			&\tr_{L^{2}\left(\IR^{d},H\right)}\tr_{\IC^{r}}\left(e^{-tD^{\ast}D}-e^{-tDD^{\ast}}\right)\nonumber\\
			=&\tr_{H}\left(\lim_{r\to\infty}r^{d-1}\int_{0}^{t}\int_{S_{1}\left(0\right)}\tr_{\IC^{r}}\left(\Ii\langle c,y\rangle A\left(ry\right)r_{s}^{d-1}\left(ry,ry\right)\right)\Id S_{1}\left(y\right)\ \Id s\right).
		\end{align}
	\end{Lemma}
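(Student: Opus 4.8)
The plan is to take the trace formula of Proposition~\ref{tracecalcprop} as starting point and to discard, one at a time, every contribution except the one coming from the single kernel $r_{s}^{d-1}$; throughout, $B$ denotes the finite-rank approximant $B_{n,m}$ as agreed at the start of this chapter. Since $d$ is odd, unwinding Definitions~\ref{kdef} and \ref{kerneldef} gives $\kappa_{s}^{+,d-1}=r_{s}^{d-1}+\sum_{\text{even }j\ge d+1}r_{s}^{j}$ and $\kappa_{s}^{-,d}=\sum_{\text{odd }j\ge d}r_{s}^{j}$, so both $\kappa_{s}^{+,d-1}-r_{s}^{d-1}$ and $\kappa_{s}^{-,d}$ involve only the kernels $r_{s}^{j}$ with $j\ge d$. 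Recall $r_{s}^{j}=\int_{s'\in s\Delta_{j}}\theta_{s'}\shuffle_{\ast}\left(\Ii\di^{E}A\right)^{\otimes j}\Id s'$, that $\Ii\di^{E}A=\Ii\di^{E}B$ is a family of finite-rank operators with $\left(1+\left|x\right|\right)\left\|\left(\Ii\di^{E}A\right)\left(x\right)\right\|_{\IC^{r\times r}\otimes\langle A_{0}\rangle^{-n}B\left(H\right)\langle A_{0}\rangle^{-m}}$ bounded for all $n,m\in\IN_{0}$ (Hypothesis~\ref{hyp1}, (\ref{approxpropandefeq6}), Remark~\ref{limitrem}), that $\left\|A\left(x\right)\langle A_{0}\rangle^{-1}\right\|_{B\left(H\right)}$ is bounded, and that $\langle A_{0}\rangle^{k}\theta_{s}\langle A_{0}\rangle^{-k}$ obeys the pointwise Gaussian bound (\ref{thetacoreq1}) of Corollary~\ref{thetacor}. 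Finally, on $S_{r}\left(0\right)$ we use the substitution $x=ry$, $\Id S_{r}\left(x\right)=r^{d-1}\Id S_{1}\left(y\right)$, and Corollary~\ref{kkernelcor} together with Theorem~\ref{bruseethm}/Remark~\ref{bruseerem} to pass between diagonals of the relevant continuous kernels and $S^{1}\left(H\right)$-norms of trace-class operators.

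First I would remove the higher summands of $\kappa_{s}^{+,d-1}$. Fix an even $j\ge d+1$. After inserting suitable powers of $\langle A_{0}\rangle$ to absorb the outer $A\left(x\right)$ into $A\langle A_{0}\rangle^{-1}$ and redistribute the weight over the $\theta$- and $\di^{E}A$-factors, the diagonal $A\left(x\right)r_{s}^{j}\left(x,x\right)$ is of the form $k_{s'}\shuffle_{\ast}F$ at $\left(x,x\right)$ with $F=\left(\Ii\di^{E}A\right)^{\otimes j}$ decaying like $\left(1+\left|x\right|\right)^{-1}$ and the $k^{i}_{s'}$ Gaussian-bounded; Lemma~\ref{decaycutofflem} then yields a constant $C$ independent of $j$ with $\int_{S_{r}\left(0\right)}\int_{s'\in s\Delta_{j}}\left\|\left(k_{s'}\shuffle_{\ast}F\right)\left(x,x\right)\right\|\Id s'\,\Id S_{r}\left(x\right)\le C'e^{cs}\tfrac{\left(2C\right)^{j}}{j!}s^{\,j-\frac{d}{2}}r^{-1}$. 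Summing over the even $j\ge d+1$ — the sum being dominated by $s^{-d/2}\sum_{j\ge d+1}\left(2Cs\right)^{j}/j!\le C''s^{\,(d+2)/2}e^{2Cs}$, which is integrable on $\left(0,t\right)$ since $d\ge 3$ — and integrating over $s\in\left(0,t\right)$ shows that $r^{d-1}\int_{0}^{t}\int_{S_{1}\left(0\right)}\tr_{\IC^{r}}\left(\Ii\langle c,y\rangle A\left(ry\right)\left(\kappa_{s}^{+,d-1}-r_{s}^{d-1}\right)\left(ry,ry\right)\right)\Id S_{1}\left(y\right)\,\Id s\to 0$ in $S^{1}\left(H\right)$ as $r\to\infty$.

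Next I would eliminate the radial-derivative term entirely. The point is that in $r_{s}^{j}\left(x,x\right)=\int_{s'}\int_{u}\theta_{s'_{0}}\left(x,u_{1}\right)\left(\Ii\di^{E}A\right)\left(u_{1}\right)\cdots\left(\Ii\di^{E}A\right)\left(u_{j}\right)\theta_{s'_{j}}\left(u_{j},x\right)\Id u\,\Id s'$ the variable $x$ occurs only in the two outer $\theta$-factors, so $\p_{r}$ of the diagonal produces a sum of two terms in which one of $\theta_{s'_{0}},\theta_{s'_{j}}$ is replaced by its radial derivative $\p_{r}\theta$, the interior factors $\Ii\di^{E}A$ — in particular \emph{no} second derivative of $A$ — being untouched. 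Since $\theta_{s}=Q_{s}+\sum_{l\ge 1}\left(-1\right)^{l}\theta_{s}^{l}$ and $\left|\p q_{s}\left(v\right)\right|\le Cs^{-1/2}q_{2s}\left(v\right)$, the argument behind Corollary~\ref{thetacor} gives $\sum_{l}\left\|\p_{r}\theta_{s}^{l}\left(x,y\right)\right\|_{B\left(\IC^{r}\otimes\dom\ A_{0}^{n}\right)}\le e^{cs}s^{-1/2}\hat{q}_{s}\left(x-y\right)$ for a suitable Gaussian $\hat{q}$. Feeding this into Lemma~\ref{decaycutofflem} (the extra $s'^{-1/2}$ only shifts the exponent to $s^{\,j-(d+1)/2}$, still integrable on $\left(0,t\right)$ for $j\ge d$, and $\sum_{j\ge d}\left(2Cs\right)^{j}/j!\le C''s^{d}e^{2Cs}$), exactly as above, yields $r^{d-1}\int_{0}^{t}\int_{S_{1}\left(0\right)}\tr_{\IC^{r}}\left(\p_{r}\kappa_{s}^{-,d}\left(ry,ry\right)\right)\Id S_{1}\left(y\right)\,\Id s\to 0$ in $S^{1}\left(H\right)$. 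Subtracting these two vanishing contributions from Proposition~\ref{tracecalcprop} leaves precisely the asserted identity, the argument of $\tr_{H}$ being, by Proposition~\ref{tracecalcprop} and the two estimates, a limit in $S^{1}\left(H\right)$-norm.

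The main obstacle is the radial-derivative step: one must make sure that $\p_{r}$ on the diagonal never lands on a factor $\Ii\di^{E}A$ (which would produce a second derivative $\Delta^{E}A$ that carries no decay and only $j-1$ effective powers of $\left(1+\left|x\right|\right)^{-1}$), so that the clean $\left(1+\left|x\right|\right)^{-j}$, $j\ge d$, decay survives. This is achieved by letting $\p_{r}$ act only on the Gaussian kernels $\theta$, for which a pointwise Gaussian bound on $\p_{r}\theta$ is needed. As an alternative one could reduce $\p_{r}$ of the diagonal to $\di$ of the diagonal via self-adjointness of $\kappa_{s}^{-,d}$ as in Lemma~\ref{diffkernellem}; this does reintroduce a $\Delta^{E}A$-slot with only $\left(1+\left|x\right|\right)^{-(j-1)}$ decay, but then the critical $j=d$ contribution carries only $d-1$ Clifford factors (recall $\Delta^{E}A$ and $A$ are scalar in $\IC^{r}$) and is therefore annihilated by $\tr_{\IC^{r}}$ thanks to Lemma~\ref{cliflem}, which again leaves only $j\ge d+1$ and the harmless decay $\left(1+\left|x\right|\right)^{-d}$ on $S_{r}\left(0\right)$.
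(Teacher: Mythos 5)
Your decomposition $\kappa_{s}^{+,d-1}=r_{s}^{d-1}+\sum_{\text{even }j\geq d+1}r_{s}^{j}$ and $\kappa_{s}^{-,d}=\sum_{\text{odd }j\geq d}r_{s}^{j}$ is correct, and your Step~1 — removing $A\kappa_{s}^{+,d+1}$ by feeding the $\left(1+\left|x\right|\right)^{-1}$-decaying slots $\Ii\di^{E}A$ into Lemma~\ref{decaycutofflem} — is essentially what the paper does. Your Step~2, however, takes a genuinely different route from the paper's. The paper writes $\p_{r}\kappa_{s}^{-,d}\left(ry,ry\right)=\sum_{j}\left(\left(\nabla_{1}^{y}+\nabla_{2}^{y}\right)r_{s}^{2j+1}\right)\left(ry,ry\right)$, identifies $\left(\nabla_{1}^{y}+\nabla_{2}^{y}\right)r_{s}^{j}$ (via integration by parts) with the kernel of $\overline{\left[\nabla^{y},R_{s}^{j}\right]}$, and expands this commutator using Lemma~\ref{dicommlem} into shuffle products whose slots (including a slot $-\nabla^{y,E}\Ii\di^{E}A$) all carry $\langle X\rangle^{-1}$ decay, so Lemma~\ref{decaycutofflem} applies as stated. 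You instead differentiate the kernel $r_{s}^{j}\left(x,y\right)$ directly and use that $\p_{r}$ of the diagonal falls only on the two outer $\theta$-factors, never on an $\Ii\di^{E}A$-slot, so the full $j\geq d$ slot decay is preserved and only a time singularity $s^{-1/2}$ is introduced. This is a legitimate alternative; it even has the advantage of never producing second derivatives of $A$, so it is independent of any decay assumption on $\nabla^{E}\Ii\di^{E}A$, which the paper's expansion needs and asserts somewhat implicitly. The price is two additions to the toolkit that you gesture at but do not supply: (i) a pointwise Gaussian bound $\left\|\p_{r}\theta_{s}\left(x,y\right)\right\|\leq e^{cs}s^{-1/2}\hat{q}_{s}\left(x-y\right)$, which does follow by iterating the Duhamel identity behind Corollary~\ref{thetacor} together with $\left|\p q_{s}\right|\leq Cs^{-1/2}q_{2s}$, but is not in the paper; and (ii) a variant of Lemma~\ref{decaycutofflem} that allows exactly one of the kernels $k_{t}^{i}$ to carry an extra $s_{i}^{-1/2}$. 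Both are routine given the proofs in the paper, but you should state them.

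Your fallback argument, on the other hand, has a genuine gap. You propose replacing $\p_{r}$ of the diagonal by $\di_{1}$ via self-adjointness as in Lemma~\ref{diffkernellem}, which introduces a $\Delta^{E}A$-slot, and claim that the critical $j=d$ contribution then carries $d-1$ Clifford factors and is annihilated by $\tr_{\IC^{r}}$ thanks to Lemma~\ref{cliflem}. But Lemma~\ref{cliflem}(1) gives vanishing only for \emph{odd} products with fewer than $d$ factors; since $d$ is odd, $d-1$ is \emph{even}, and even products can have nonzero partial trace — e.g., $\tr_{\IC^{r}}\left(\left(c^{1}\right)^{2}\right)=-r\neq 0$ — so there is no cancellation from the Clifford algebra alone. (This is in fact why the paper's own Lemma~\ref{diffkernellem} is careful to start its $C^{j,i}$-sum at $j=\frac{d+1}{2}$, where the Clifford count $2j-1\geq d$ is odd and only $\geq d$ matters, and why the paper routes the $j=d$ radial term through the $\langle X\rangle^{-1}$-decaying slots rather than Clifford vanishing.) Since your primary route is complete once the two supporting lemmas are written out, the overall proposal still goes through; just drop or repair the fallback.
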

	
	\begin{proof}
		Let $y\in S_{1}\left(0\right)$, and denote by $\nabla^{y}$ the directional derivative along $y$. We have for $r>0$,
		\begin{align}
			\p_{r}\kappa_{s}^{-,d}\left(ry,ry\right)=\sum_{j=\frac{d-1}{2}}^{\infty}\left(\left(\nabla_{1}^{y}+\nabla_{2}^{y}\right)r_{s}^{2j+1}\right)\left(ry,ry\right),
		\end{align}
		where the sum converges as continuous functions uniformly in $r,y$.	By integration by parts, $\left(\nabla_{1}^{y}+\nabla_{2}^{y}\right)r_{s}^{j}$ is the continuous integral kernel of the operator $\overline{\left[\nabla^{y},R_{s}^{j}\right]}$, and so, by Lemma \ref{dicommlem}, and similar to the proof of Lemma \ref{diffkernellem}, we have
		\begin{align}
			\left(\nabla_{1}^{y}+\nabla_{2}^{y}\right)r_{s}^{j}=&\sum_{i=1}^{j+1}\int_{u\in s\Delta_{j+1}}\theta_{s}\shuffle_{\ast}S^{j+1,i}\Id s+\sum_{i=1}^{j}\int_{u\in s\Delta_{j}}\theta_{s}\shuffle_{\ast}T^{j,i}\Id s,\nonumber\\
			S^{j+1,i}_{k}:=&\begin{cases}
				\Ii\left(\di B\right),&k\neq i,\\
				-A\Ii\left(\di B\right)-\Ii\overline{\left(\di B\right)A},&k=i,
			\end{cases}\ T^{j,i}_{k}:=\begin{cases}
				\Ii\left(\di B\right),&k\neq i,\\
				-\Ii\left(\nabla^{y}\di B\right),&k=i.
			\end{cases}
		\end{align}
		In particular, for $n,m\in\IN_{0}$, there exists a constant $C$, such that for all $i,j,k\in\IN$,
		\begin{align}
			&\left\|\langle X\rangle S_{k}^{j+1,i}\right\|_{L^{\infty}\left(\IR^{d},\IC^{r\times r}\otimes\langle A_{0}\rangle^{-n}B\left(H\right)\langle A_{0}\rangle^{-m}\right)}+\left\|\langle X\rangle T_{k}^{j,i}\right\|_{L^{\infty}\left(\IR^{d},\IC^{r\times r}\otimes\langle A_{0}\rangle^{-n}B\left(H\right)\langle A_{0}\rangle^{-m}\right)}\nonumber\\
			&\leq C<\infty.
		\end{align}
		Lemma \ref{decaycutofflem} implies that for $t>0$,
		\begin{align}\label{cutofflemeq1}
			&R^{d-1}\int_{0}^{t}\int_{S_{1}\left(0\right)}\left\|\p_{r}\kappa_{s}^{-,d}\left(ry,ry\right)\right\|_{B\left(\IC^{r}\otimes H\right)}\Id S_{1}\left(y\right)\ \Id s\nonumber\\
			&\leq C'R^{-1}\int_{0}^{t}e^{cs}\sum_{j=\frac{d-1}{2}}^{\infty}\left(\left(2j+2\right)\frac{\left(2C\right)^{2j+2}}{\left(2j+2\right)!}s^{2j+2-\frac{d}{2}}+\left(2j+1\right)\frac{\left(2C\right)^{2j+1}}{\left(2j+1\right)!}s^{2j+1-\frac{d}{2}}\right)\Id s\xrightarrow{R\to\infty}0.
		\end{align}
		Similarly, since $A\langle A_{0}\rangle^{-1}$ is uniformly bounded, say by $c'$, we have by Lemma \ref{decaycutofflem},
		\begin{align}\label{cutofflemeq2}
			&\int_{0}^{t}\int_{S_{R}\left(0\right)}\left\|A\left(x\right)\kappa_{s}^{+,d+1}\left(x,x\right)\right\|_{B\left(\IC^{r}\otimes H\right)}\Id S_{R}\left(x\right)\ \Id s\leq c'C'R^{-1}\int_{0}^{t}e^{cs}\sum_{j=\frac{d+1}{2}}^{\infty}\frac{\left(2C\right)^{2j}}{\left(2j\right)!}s^{2j-\frac{d}{2}}\Id s\nonumber\\
			&\xrightarrow{R\to\infty}0.
		\end{align}
		The claim then follows from (\ref{cutofflemeq1}), (\ref{cutofflemeq2}), and Proposition \ref{tracecalcprop}.
	\end{proof}
	
	We are now ready to simplify the right hand side of the principal trace formula to its penultimate form, which has its own raison d'\^{e}tre, in so far as it exhibits clearly a similar form as the classical Callias index formula (i.e. a limit of a $d-1$ form integrated over spheres with radii tending to infinity). However it is not the final form, because we should not forget that $A=A_{n,m,l}$ is still an approximant. Our general conditions on the family $A$ do not suffice to enable us to go to the limit $n,m,l\to\infty$ in the trace formula of the lemma below.
	
	\begin{Lemma}\label{qforthetalem}
		For $t>0$,
		\begin{align}
			\tr_{L^{2}\left(\IR^{d},H\right)}\tr_{\IC^{r}}&\left(e^{-tD^{\ast}D}-e^{-tDD^{\ast}}\right)\nonumber\\
			=\tr_{H}&\left(\lim_{r\to\infty}r^{d-1}\int_{0}^{t}\left(4\pi s\right)^{-\frac{d}{2}}\Ii^{d}\kappa_{c}\int_{S_{1}\left(0\right)}A\left(ry\right)\int_{u\in s\Delta_{d-1}}\sum_{j=1}^{d}\sum_{\alpha\in\left\{1,\ldots,d\right\}^{d-1}}\right.\nonumber\\
			&\left.y^{j}\epsilon_{\left(j,\alpha\right)}e^{-uA\left(ry\right)^{2}}\shuffle\left(\left(\p_{\alpha_{1}}^{E}A\right)\left(ry\right),\ldots,\left(\p_{\alpha_{d-1}}^{E}A\right)\left(ry\right)\right)\Id u\ \Id S_{1}\left(y\right)\ \Id s\right),
		\end{align}
        where $\epsilon_{\left(j,\alpha\right)}$ denotes the Levi-Civita symbol of the tuple $\left(j,\alpha\right)\in\left\{1,\ldots,d\right\}^{d}$.
	\end{Lemma}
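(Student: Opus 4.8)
The plan is to start from Lemma \ref{cutofflem}, which already expresses the partial trace as $\tr_H$ of the spheric limit $\lim_{r\to\infty}r^{d-1}\int_0^t\int_{S_1(0)}\tr_{\IC^r}(\Ii\langle c,y\rangle A(ry)r_s^{d-1}(ry,ry))\,\Id S_1(y)\,\Id s$, and to evaluate the inner kernel $r_s^{d-1}(ry,ry)$ in the long-range regime. First I would insert the defining convolution-shuffle integral of $r_s^{d-1}$ from Definition \ref{kerneldef}, expand $\di^E A=\sum_k c^k\p_k^E A$ and $\langle c,y\rangle=\sum_j c^j y^j$, and commute all Clifford matrices — which act trivially in the $H$-factor, since every factor $\theta_u$ and $\p_k^E A$ is of the form $\one_{\IC^r}\otimes(\cdot)$ — into a single product in front of the convolution-shuffle. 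Applying $\tr_{\IC^r}$ and Lemma \ref{cliflem}, equation (\ref{cliflemeq2}), turns $\tr_{\IC^r}(c^j c^{\alpha_1}\cdots c^{\alpha_{d-1}})$ into $\kappa_c\epsilon_{(j,\alpha)}$ and gathers the imaginary units into $\Ii^d$. After this purely algebraic step the assertion reduces to showing that, for each fixed $s\in(0,t)$, each $y\in S_1(0)$ and each multi-index $\alpha\in\{1,\ldots,d\}^{d-1}$,
\begin{align*}
	r^{d-1}A(ry)\int_{u\in s\Delta_{d-1}}\left(\left(\theta_u\shuffle_{\ast}\left(\p_{\alpha_1}^E A,\ldots,\p_{\alpha_{d-1}}^E A\right)\right)(ry,ry)-(4\pi s)^{-\frac d2}\,e^{-uA(ry)^2}\shuffle\left(\p_{\alpha_1}^E A(ry),\ldots,\p_{\alpha_{d-1}}^E A(ry)\right)\right)\Id u\xrightarrow{r\to\infty}0
\end{align*}
in the weighted trace norm $\langle A_{0}\rangle^{-n}S^1(H)\langle A_{0}\rangle^{-m}$ for the exponents of Lemma \ref{rl1lem}, uniformly in $y$ and dominated by an $s$-integrable function; the spheric average $r^{d-1}\int_{S_1(0)}\Id S_1$ then converges in $S^1(H)$, and since the $S^1(H)$-limit inside $\tr_H$ in Lemma \ref{cutofflem} already exists, dominated convergence and the continuity of $\tr_H$ on $S^1(H)$ yield the formula.

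For the freezing step I would introduce the frozen heat kernel $q_\tau(w-w')e^{-\tau A(x)^2}$, the kernel of $e^{-\tau(\Delta+A(x)^2)}$, and note that substituting it into the convolution-shuffle and restricting to $(x,x)$ produces exactly $(q_{u_0}\ast\cdots\ast q_{u_{d-1}})(0)\,e^{-u_0A(x)^2}\p_{\alpha_1}^E A(x)\cdots\p_{\alpha_{d-1}}^E A(x)\,e^{-u_{d-1}A(x)^2}=(4\pi s)^{-\frac d2}\,e^{-uA(x)^2}\shuffle(\p_{\alpha_1}^E A(x),\ldots)$, because the scalar Gaussians convolve to $q_s(0)$ on the diagonal while the $H$-operator part factors out of the intermediate integrations. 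The difference in the display is telescoped by freeing one factor at a time, alternately a heat kernel $\theta_{u_i}$ and a derivative $\p_{\alpha_k}^E A$; each resulting term carries a single defect factor — either $\theta_{u_i}$ minus its frozen version, which by a Duhamel identity as in Lemma \ref{dicommlem} produces an insertion of $A(\cdot)^2-A(x)^2$ between heat semigroups, or a difference $\p_{\alpha_k}^E A(\cdot)-\p_{\alpha_k}^E A(x)$ — while the remaining factors are controlled by the Gaussian bound of Corollary \ref{thetacor} (still-unfrozen $\theta$'s), the pointwise Gaussians together with $\|e^{-\tau A(x)^2}\|\le 1$ (frozen ones), or the $O(r^{-1})$ decay of $\p^E A$ coming from $\tau^{\alpha,\beta,\nabla,d,1}(B)<\infty$. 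Since the Gaussian weights confine every intermediate integration point to a ball of radius $O(\sqrt t)$ about $ry$, the radial-continuity condition (\ref{radlimeq}) — in the form supplied by Remark \ref{limitrem} for the approximants $B_{n,m}$ — makes each defect factor vanish as $r\to\infty$: for a derivative-defect the weight $R^{|\gamma|}$ in (\ref{radlimeq}) with $|\gamma|=1$ contributes an extra $r^{-1}$, compensating the one fewer matched derivative factor, whereas an $A^2$-defect is merely $o(1)$, which is enough because all $d-1$ derivative factors are still present; in either case each telescoping term is $o(r^{-(d-1)})$, so the prefactor $r^{d-1}$ turns it into $o(1)$.

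The main obstacle will be the norm bookkeeping in this telescoping: the potential difference $A(\cdot)^2-A(x)^2$ and the derivative differences $\p_i^E A(\cdot)-\p_i^E A(x)$ converge to zero not in operator norm but only in the weighted trace-class norms of Remark \ref{limitrem}, so the telescoping has to be arranged so that each defect factor is flanked by powers of $\langle A_{0}\rangle$ that are absorbed either by an adjacent semigroup $e^{-\tau A(x)^2}$ or through $B\in W^{2,\infty}(\IR^d,A_{0},2N+1)$ and the $C^{2,\infty}$-regularity of $B$; this is the point where the second derivative of $B$ is genuinely needed, via item $2$ of Remark \ref{limitrem}. Once this is in place, the pointwise-in-displacement convergence of (\ref{radlimeq}) is upgraded to norm convergence by dominated convergence against the Gaussian kernels in the intermediate variables, using the uniform weighted-$S^1$ bound of Remark \ref{limitrem} as dominant. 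The remaining items — uniformity in $y\in S_1(0)$, the $s$-integrable dominant, and the interchange of $\lim_{r\to\infty}$ with $\int_0^t\Id s$, $\int_{S_1(0)}\Id S_1$ and $\tr_H$ — follow from the Gaussian estimates of Corollary \ref{thetacor}, Lemma \ref{rkernellem} and Corollary \ref{simonshufflecor} together with Theorem \ref{bruseethm}.
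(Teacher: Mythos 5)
Your proposal matches the paper's proof of Lemma \ref{qforthetalem} in all essential respects: starting from Lemma \ref{cutofflem}, the heat kernels $\theta$ are frozen to $Q^{x_0}_{\tau}(w,w')=q_{\tau}(w-w')\otimes e^{-\tau A(x_0)^{2}}$ via the Duhamel identity, the derivative factors are frozen via the radial-continuity condition (\ref{radlimeq}), the defect terms are controlled by Lemma \ref{ballexciselem} together with Gaussian localization and the $O(|x|^{-1})$ decay of $\di^{E}A$, and the Clifford trace is evaluated by Lemma \ref{cliflem}. The only differences from the paper are cosmetic reorganizations — you expand the Clifford trace into $\kappa_{c}\epsilon_{(j,\alpha)}$ at the outset rather than applying it to the frozen $\pi_{s}$ at the very end, and you telescope heat kernels and derivatives in an interleaved fashion rather than in the paper's two separate sweeps — and your identification of the norm-bookkeeping issue (pointwise strong convergence in (\ref{radlimeq}) upgraded to weighted $S^{1}$ convergence via Remark \ref{limitrem} for the approximants $B_{n,m}$) is precisely where the paper also relies on the approximation from Proposition and Definition \ref{approxpropandef}.
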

	
	\begin{proof}
		For $x_{0}\in\IR^{d}$, denote $\left(M_{x_{0}}f\right)\left(x\right):=A\left(x_{0}\right)f\left(x\right)$, $x\in\IR^{d}$, $f\in\dom\ M_{0}$, and $H_{x_{0}}:=\Delta+M_{x_{0}}^{2}$, which is self-adjoint on $\dom\ H_{0}$. The smooth integral kernel $Q^{x_{0}}_{t}$ of $e^{-tH_{x_{0}}}$, $t>0$, is given by
		\begin{align}
			Q^{x_{0}}_{t}\left(x,y\right)=q_{t}\left(x-y\right)\one_{\IC^{r}}\otimes e^{-tA\left(x_{0}\right)^{2}},\ x,y\in\IR^{d}.
		\end{align}
		Duhamel's principle yields $e^{-tH_{B}}=e^{-tH_{x_{0}}}-\int_{0}^{t}e^{-\left(t-s\right)H_{x_{0}}}\left(A^{2}-M_{x_{0}}^{2}\right)e^{-sH_{B}}\Id s$, which implies for a.e. $x,y\in\IR^{d}$,
		\begin{align}\label{qforthetalemeq1}
			\theta_{t}\left(x,y\right)=Q^{x_{0}}_{t}\left(x,y\right)-\int_{0}^{t}\int_{\IR^{d}}Q^{x_{0}}_{t-s}\left(x,z\right)\left(A\left(z\right)^{2}-A\left(x_{0}\right)^{2}\right)\theta_{s}\left(z,y\right)\Id z\ \Id s.
		\end{align}
		In fact the right hand side is an element of $C_{m,n}^{\infty}\left(\IR^{2d}\right)$, $m,n\in\IN_{0}$, which can be shown by using Lemma \ref{qproplem}, Corollary \ref{thetacor}, and Corollary \ref{thetakernelcor} in a similar way as (\ref{auxeq1}), and (\ref{auxeq2}). So (\ref{qforthetalemeq1}) holds everywhere. We find,
		\begin{align}
			r_{t}^{d-1}\left(x_{0},x_{0}\right)=&\int_{s\in t\Delta_{d-1}}\left(\left(Q_{s_{0}}^{x_{0}},\theta_{s_{1}},\ldots,\theta_{s_{d-1}}\right)\shuffle_{\ast}\left(\Ii\di B\right)^{\otimes\left(d-1\right)}\right)\left(x_{0},x_{0}\right)\Id s\nonumber\\
			-&\int_{s\in t\Delta_{d}}\left(\left(Q_{s_{0}}^{x_{0}},\theta_{s_{1}},\ldots,\theta_{s_{d}}\right)\shuffle_{\ast}\left(A^{2}-A\left(x_{0}\right)^{2},\left(\Ii\di B\right)^{\otimes\left(d-1\right)}\right)\right)\left(x_{0},x_{0}\right)\Id s.
		\end{align}
		By Lemma \ref{ballexciselem}, we have for $n,m\in\IN_{0}$, with $A\left(x\right)\langle A_{0}\rangle^{-1}\leq C'$, $x\in\IR^{d}$, and $B_{x_{0}}:=A^{2}-A\left(x_{0}\right)^{2}$,
		\begin{align}\label{qforthetalemeq2}
			&\limsup_{R\to\infty}\int_{0}^{t}\int_{S_{R}\left(0\right)}\int_{u\in s\Delta_{d}}\nonumber\\
			&\left\|A\left(x_{0}\right)\left(\left(Q_{u_{0}}^{x_{0}},\theta_{\left(u_{1},\ldots,u_{d}\right)}\right)\shuffle_{\ast}\left(B_{x_{0}},\left(\Ii\di B\right)^{\otimes\left(d-1\right)}\right)\right)\left(x_{0},x_{0}\right)\right\|_{\IC^{r\times r}\otimes\langle A_{0}\rangle^{-n}B\left(H\right)\langle A_{0}\rangle^{-m}}\nonumber\\
			&\Id u\ \Id S_{R}\left(x_{0}\right)\ \Id s\nonumber\\
			=&\limsup_{R\to\infty}\int_{0}^{t}\int_{S_{R}\left(0\right)}\int_{u\in s\Delta_{d}}\nonumber\\
			&\left\|A\left(x_{0}\right)\left(\left(Q_{u_{0}}^{x_{0}},\theta_{\left(u_{1},\ldots,u_{d}\right)}\right)\shuffle_{\ast}\left(B_{x_{0}},\left(\one_{B_{\frac{R}{2}}\left(0\right)^{c}}\Ii\left(\di B\right)\right)^{\otimes\left(d-1\right)}\right)\right)\left(x_{0},x_{0}\right)\right\|_{\IC^{r\times r}\otimes\langle A_{0}\rangle^{-n}B\left(H\right)\langle A_{0}\rangle^{-m}}\nonumber\\
			&\Id u\ \Id S_{R}\left(x_{0}\right)\ \Id s\nonumber\\
			&\leq C'C^{d-1}\limsup_{R\to\infty}\int_{0}^{t}e^{cs}\int_{S_{1}\left(0\right)}\int_{0}^{s}\int_{\IR^{d}}q_{s-u}\left(Ry-z\right)\left\|A\left(z\right)^{2}-A\left(Ry\right)^{2}\right\|_{B\left(H,\dom\ A_{0}\right)}\nonumber\\
            &q_{u}\left(z-Ry\right)\Id z\ \frac{u^{d-1}}{\left(d-1\right)!}\Id u\ \Id S_{1}\left(y\right)\ \Id s\nonumber\\
			&=C'C^{d-1}\limsup_{R\to\infty}\int_{0}^{t}e^{cs}\int_{S_{1}\left(0\right)}\int_{0}^{s}\int_{\IR^{d}}q_{s-u}\left(w\right)q_{u}\left(w\right)\left\|A\left(Ry+w\right)^{2}-A\left(Ry\right)^{2}\right\|_{B\left(H,\dom\ A_{0}\right)}\nonumber\\
			&\Id w\ \frac{u^{d-1}}{\left(d-1\right)!}\Id u\ \Id S_{1}\left(y\right)\ \Id s.
		\end{align}
		The integrand of the last line of (\ref{qforthetalemeq2}) has an integrable dominant, since
		\begin{align}
			\left\|A\left(Ry+w\right)^{2}-A\left(Ry\right)^{2}\right\|_{B\left(H,\dom\ A_{0}\right)},
		\end{align}
	 	is uniformly bounded (since $B$ is uniformly bounded with values in $\langle A_{0}\rangle^{-m}B\left(H\right)\langle A_{0}\rangle^{n}$ for any $m,n\in\IN_{0}$), and we may take the limit $R\to\infty$ inside the integrals, which yields $0$. Therefore
		\begin{align}
			&\lim_{r\to\infty}r^{d-1}\int_{0}^{t}\int_{S_{1}\left(0\right)}\tr_{\IC^{r}}\left(\Ii\langle c,y\rangle A\left(ry\right)r_{s}^{d-1}\left(ry,ry\right)\right)\Id S_{1}\left(y\right)\ \Id s\nonumber\\
			=&\lim_{r\to\infty}r^{d-1}\int_{0}^{t}\int_{S_{1}\left(0\right)}\tr_{\IC^{r}}\left(\Ii\langle c,y\rangle A\left(ry\right)\int_{u\in s\Delta_{d-1}}\right.\nonumber\\
			&\left.\left(\left(Q_{u_{0}}^{ry},\theta_{u_{1},\ldots,u_{d-1}}\right)\shuffle_{\ast}\left(\Ii\di B\right)^{\otimes\left(d-1\right)}\right)\left(ry,ry\right)\Id u\right)\Id S_{1}\left(y\right)\ \Id s
		\end{align}
		By iterating the procedure, we obtain
		\begin{align}
			&\lim_{r\to\infty}r^{d-1}\int_{0}^{t}\int_{S_{1}\left(0\right)}\tr_{\IC^{r}}\left(\Ii\langle c,y\rangle A\left(ry\right)r_{s}^{d-1}\left(ry,ry\right)\right)\Id S_{1}\left(y\right)\ \Id s\nonumber\\
			&=\lim_{r\to\infty}r^{d-1}\int_{0}^{t}\int_{S_{1}\left(0\right)}\tr_{\IC^{r}}\left(\Ii\langle c,y\rangle A\left(ry\right)p_{s}\left(ry\right)\right)\Id S_{1}\left(y\right)\ \Id s,
		\end{align}
		where for $t>0$, $x_{0}\in\IR^{d}$ we define,
		\begin{align}
			p_{t}\left(x_{0}\right):=\int_{s\in t\Delta_{d-1}}\left(Q^{x_{0}}_{s}\shuffle_{\ast}\left(\Ii\di B\right)^{\otimes\left(d-1\right)}\right)\left(x_{0},x_{0}\right)\Id s.
		\end{align}
		
		\textit{Step 2:} Denote $\left(N_{x_{0}}f\right)\left(x\right):=\left(\Ii\di B\right)\left(x_{0}\right)f\left(x\right)$, $f\in\dom\ M_{0}$, then
		\begin{align}\label{qforthetalemeq3}
			&\limsup_{R\to\infty}\int_{0}^{t}\int_{S_{R}\left(0\right)}\int_{u\in s\Delta_{d-1}}\left\|A\left(x_{0}\right)Q_{u}^{x_{0}}\shuffle_{\ast}\left(\Ii\left(\di B\right)-N_{x_{0}},\left(\Ii\di B\right)^{\otimes\left(d-2\right)}\right)\left(x_{0},x_{0}\right)\right\|_{\IC^{r\times r}\otimes B\left(H\right)}\nonumber\\
			&\Id u\ \Id S_{R}\left(x_{0}\right)\Id s\nonumber\\
			=&\limsup_{R\to\infty}\int_{0}^{t}\int_{S_{R}\left(0\right)}\int_{u\in s\Delta_{d-1}}\nonumber\\
			&\left\|A\left(x_{0}\right)\left(Q_{u}^{x_{0}}\shuffle_{\ast}\left(\Ii\left(\di B\right)-N_{x_{0}},\left(\one_{B_{\frac{R}{2}}\left(0\right)^{c}}\Ii\left(\di B\right)\right)^{\otimes\left(d-2\right)}\right)\right)\left(x_{0},x_{0}\right)\right\|_{\IC^{r\times r}\otimes B\left(H\right)}\Id u\ \Id S_{R}\left(x_{0}\right)\Id s\nonumber\\
			\leq& C'C^{d-1}\limsup_{R\to\infty}\int_{0}^{t}e^{cs}\int_{S_{1}\left(0\right)}\int_{0}^{s}\int_{\IR^{d}}q_{s-u}\left(Ry-z\right)R\left\|\left(\di B\right)\left(z\right)-\left(\di B\right)\left(Ry\right)\right\|_{B\left(H,\dom\ A_{0}\right)}\nonumber\\
			&q_{u}\left(z-Ry\right)\Id z\ \frac{u^{d-2}}{\left(d-2\right)!} \Id u\ \Id S_{1}\left(y\right)\ \Id s\nonumber\\
			=&C'C^{d-1}\limsup_{R\to\infty}\int_{0}^{t}e^{cs}\int_{S_{1}\left(0\right)}\int_{0}^{s}\int_{\IR^{d}}R\left\|\left(\di B\right)\left(Ry+w\right)-\left(\di B\right)\left(Ry\right)\right\|_{B\left(H,\dom\ A_{0}\right)}\nonumber\\
            &q_{s-u}\left(w\right)q_{u}\left(w\right)\Id w\ \frac{u^{d-2}}{\left(d-2\right)!}\Id u\ \Id S_{1}\left(y\right)\ \Id s.
		\end{align}
		By the same reasoning as for (\ref{qforthetalemeq2}), we may take the limit $R\to\infty$ inside the integrals in (\ref{qforthetalemeq3}), which is $0$. Thus,
		\begin{align}
			&\lim_{r\to\infty}r^{d-1}\int_{0}^{t}\int_{S_{1}\left(0\right)}\tr_{\IC^{r}}\left(\Ii\langle c,y\rangle A\left(ry\right)p_{s}\left(ry\right)\right)\Id S_{1}\left(y\right)\ \Id s\nonumber\\
			=&\lim_{r\to\infty}r^{d-1}\int_{0}^{t}\int_{S_{1}\left(0\right)}\tr_{\IC^{r}}\left(\Ii\langle c,y\rangle A\left(ry\right)\right.\nonumber\\
			&\left.\int_{u\in s\Delta_{d-1}}\left(Q_{u}^{ry}\shuffle_{\ast}\left(N_{ry},\left(\Ii\di B\right)^{\otimes\left(d-2\right)}\right)\right)\left(ry,ry\right)\right)\Id S_{1}\left(y\right)\ \Id s.
		\end{align}
		By iterating the procedure, we obtain
		\begin{align}
			&\lim_{r\to\infty}r^{d-1}\int_{0}^{t}\int_{S_{1}\left(0\right)}\tr_{\IC^{r}}\left(\Ii\langle c,y\rangle A\left(ry\right)p_{s}\left(ry\right)\right)\Id S_{1}\left(y\right)\ \Id s\nonumber\\
			&=\lim_{r\to\infty}r^{d-1}\int_{0}^{t}\int_{S_{1}\left(0\right)}\tr_{\IC^{r}}\left(\Ii\langle c,y\rangle A\left(ry\right)\pi_{s}\left(ry\right)\right)\Id S_{1}\left(y\right)\ \Id s,
		\end{align}
		where for $t>0$, $x_{0}\in\IR^{d}$,
		\begin{align}
			&\pi_{t}\left(x_{0}\right):=\int_{s\in t\Delta_{d-1}}\left(Q_{s}^{x_{0}}\shuffle_{\ast}\left(N_{x_{0}}\right)^{\otimes\left(d-1\right)}\right)\left(x_{0},x_{0}\right)\Id s\nonumber\\
			=&\left(4\pi t\right)^{-\frac{d}{2}}\int_{s\in t\Delta_{d-1}}\left(\one_{\IC^{r}}^{\otimes d}\otimes e^{-sA\left(x_{0}\right)^{2}}\right)\shuffle\left(\Ii\left(\di B\right)\left(x_{0}\right)\right)^{\otimes\left(d-1\right)}\Id s,
		\end{align}
		The claim then follows if we apply $\tr_{\IC^{r}}$ to $\Ii\langle c,y\rangle\pi_{s}$, using Lemma \ref{cliflem}.
	\end{proof}
	
	\subsection{The main result}
	
	Throughout the previous chapters, we used tacitly the approximation family $B_{n,m,l}$ instead of $B$ for notational brevity. Now we are prepared to take the limits $n\to\infty$, $m\to\infty$, $l\to\infty$ to arrive at the principal trace formula, which is our main result, presented in the following Theorem.
    
	If we assume Hypothesis \ref{hyp1}, by Remark \ref{a0rem}, the model operator $A_{0}$ is allowed to be $A\left(x_{0}\right)$, for a.e. $x_{0}\in\IR^{d}$. We fix such a $x_{0}$, and set $A_{0}=A\left(x_{0}\right)$ in the following.

\begin{Theorem}\label{newmainthm}
	Assume Hypothesis \ref{hyp1}. For $\phi\in C^{\infty}_{c}\left(\IR^{d}\right)$ denote $A_{\phi}:=A_{0}+\left(1-\phi\right)\left(A-A_{0}\right)$. Then for $t>0$, and any $\phi\in C^{\infty}_{c}\left(\IR^{d}\right)$,
	\begin{align}\label{newmainthmeq-1}
		&\tr_{L^{2}\left(\IR^{d},H\right)}\tr_{\IC^{r}}\left(e^{-tD^{\ast}D}-e^{-tDD^{\ast}}\right)\nonumber\\
		=&\frac{2}{d}\left(4\pi\right)^{-\frac{d}{2}}\Ii^{d}\kappa_{c}t^{\frac{d}{2}}\int_{s\in\Delta_{d-1}}\int_{\IR^{d}}\tr_{H}\bigwedge_{j=0}^{d-1}\left(\left(\Id A_{\phi}\right)e^{-ts_{j}A_{\phi}^2}\right)\Id s,
	\end{align}
	where $\bigwedge$ denotes the exterior product.
\end{Theorem}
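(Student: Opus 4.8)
The plan is to start from the spheric formula of Lemma~\ref{qforthetalem} (which it is cleanest to invoke for the finite-rank approximants $B_{n,m}$, whose associated operators carry the extra $C^{\cdot,S^1}$-regularity of Remark~\ref{limitrem}; recall the model operator has been fixed to $A_{0}=A(x_{0})$) and to turn the limit of spheric integrals into an integral over all of $\IR^{d}$ via the divergence theorem. Evaluating $\tr_{\IC^{r}}$ in the integrand of Lemma~\ref{qforthetalem} by Lemma~\ref{cliflem} exhibits $r^{d-1}\int_{S_{1}(0)}(\cdots)\,\Id S_{1}(y)$ as the flux $\int_{S_{r}(0)}\langle\Omega^{(s)},\nu\rangle\,\Id S_{r}$ of the $S^{1}(H)$-valued vector field
$\Omega^{(s)}:=\Ii^{d}\kappa_{c}(4\pi s)^{-d/2}\int_{u\in s\Delta_{d-1}}A_{n,m}\,\Psi^{(u)}\,\Id u$
(its $a$-th component being, up to the sign $(-1)^{a-1}$, the $\widehat{\Id x^{a}}$-component of the operator $(d-1)$-form, which is what the $\epsilon$-symbol in Lemma~\ref{qforthetalem} encodes), where $A_{n,m}:=A_{0}+B_{n,m}$ and
\[
\Psi^{(u)}:=e^{-u_{0}A_{n,m}^{2}}(\Id A_{n,m})e^{-u_{1}A_{n,m}^{2}}\cdots(\Id A_{n,m})e^{-u_{d-1}A_{n,m}^{2}}.
\]
First I would apply the $S^{1}(H)$-valued divergence theorem (legitimate by Theorem~\ref{bruseethm} and Remark~\ref{limitrem}), giving $\int_{S_{r}(0)}\langle\Omega^{(s)},\nu\rangle\,\Id S_{r}=\int_{B_{r}(0)}\dive\Omega^{(s)}\,\Id x$, and then pass $r\to\infty$. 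The $L^{1}(\IR^{d},S^{1}(H))$-integrability of $\dive\Omega^{(s)}$ needed here, and with it the fact that $\int_{\IR^{d}}\dive\Omega^{(s)}$ only feels $A_{n,m}$ outside $\supp\phi$ (so $A_{n,m}$ may be replaced by $(A_{n,m})_{\phi}:=A_{0}+(1-\phi)B_{n,m}$ throughout), are obtained from exactly the Schatten estimates of Lemmas~\ref{schattlem}, \ref{decaycutofflem} and \ref{ballexciselem}; it is here that the faster radial decay built into Hypothesis~\ref{hyp1} (the $\epsilon$ in $\tau^{\alpha,\beta,\p_{R},d,1+\epsilon}$) is essential, since a product of $d$ derivatives of $A$ is integrable near infinity only once one factor is radial.

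Next I would compute $\dive\Omega^{(s)}$. Writing $\Omega^{(s)}=\Ii^{d}\kappa_{c}(4\pi s)^{-d/2}\int_{u}(A_{n,m})_{\phi}\,\Psi^{(u)}\,\Id u$ and using the Leibniz identity $\Id\big((A_{n,m})_{\phi}\Psi^{(u)}\big)=(\Id(A_{n,m})_{\phi})\wedge\Psi^{(u)}+(A_{n,m})_{\phi}\,\Id\Psi^{(u)}$, the divergence is the volume-coefficient of the $u$-integral of these two forms. The first, $(\Id(A_{n,m})_{\phi})\wedge\Psi^{(u)}$, is by the very definition of the shuffle/exterior product equal to $\bigwedge_{j=0}^{d-1}\big((\Id(A_{n,m})_{\phi})e^{-u_{j}(A_{n,m})_{\phi}^{2}}\big)$, i.e.\ the integrand of the target formula. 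The second, $(A_{n,m})_{\phi}\,\Id\Psi^{(u)}$, has to be shown not to contribute: using that the operator $1$-form $\Id\big((A_{n,m})_{\phi}\big)$ is closed (which needs $(A_{n,m})_{\phi}\in C^{2}$, the point the paper flags for requiring $C^{2,\infty}$) together with the Duhamel formula for $\Id\big(e^{-w(A_{n,m})_{\phi}^{2}}\big)$ of the type in Lemma~\ref{dicommlem}, one rewrites $\Id\Psi^{(u)}$, after integrating the extra Duhamel parameter against the enlarged simplex, as a sum of forms which are $\Id$-exact on $\IR^{d}$ up to a remainder annihilated by $\tr_{\IC^{r}}$; Stokes' theorem and the decay from Hypothesis~\ref{hyp1} then give $\int_{\IR^{d}}\tr_{H}\int_{u\in s\Delta_{d-1}}(A_{n,m})_{\phi}\,\Id\Psi^{(u)}\,\Id u\,\Id x=0$.

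Then I would reassemble and fix the scaling. Folding the outer integral with the internal simplex, $\int_{0}^{t}(4\pi s)^{-d/2}\int_{u\in s\Delta_{d-1}}(\cdots)\,\Id u\,\Id s=(4\pi)^{-d/2}\int_{\{u_{j}\ge 0,\ \sum_{j}u_{j}\le t\}}(\sum_{j}u_{j})^{-d/2}(\cdots)\,\Id u$, and substituting $u=s\,v$ with $v\in\Delta_{d-1}$, $s=\sum_{j}u_{j}$, $\Id u=s^{d-1}\Id s\,\Id v$, yields $\Ii^{d}\kappa_{c}(4\pi)^{-d/2}\tr_{H}\int_{0}^{t}s^{d/2-1}H_{n,m}(s)\,\Id s$ with $H_{n,m}(s):=\int_{v\in\Delta_{d-1}}\int_{\IR^{d}}\big[\bigwedge_{j=0}^{d-1}\big((\Id(A_{n,m})_{\phi})e^{-sv_{j}(A_{n,m})_{\phi}^{2}}\big)\big]_{\mathrm{vol}}\,\Id x\,\Id v$. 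A heat-kernel transgression argument — differentiate $H_{n,m}$ in $s$, recognise the derivative as $\Id$-exact on $\IR^{d}$ up to a $\tr_{\IC^{r}}$-negligible term, and integrate it out using the decay — shows $H_{n,m}(s)$ is independent of $s$, so $\int_{0}^{t}s^{d/2-1}H_{n,m}(s)\,\Id s=\tfrac{2}{d}t^{d/2}H_{n,m}(t)$, which is exactly \eqref{newmainthmeq-1} for $B_{n,m}$. Finally I would take $n\to\infty$ and then $m\to\infty$: the left-hand side converges by Corollary~\ref{expapproxcor}, and the right-hand side converges because $\Id B_{n,m}\to\Id B$ in the relevant Schatten senses uniformly in the $\tau$-semi-norms, as in the proof of Proposition and Definition~\ref{approxpropandef}, so the $\langle X\rangle^{-d-\epsilon}$-type domination from the first step lets dominated convergence pass the limits through $\int_{\Delta_{d-1}}\int_{\IR^{d}}$.

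I expect the main obstacle to be the two transgression identities — the vanishing of the $(A_{n,m})_{\phi}\,\Id\Psi^{(u)}$-contribution and the $s$-independence of $H_{n,m}$ — since both demand careful bookkeeping of the repeated Duhamel expansions of the heat semigroups against the simplex integrations and the systematic disposal of all arising boundary terms; the numerous $L^{1}(\IR^{d},S^{1}(H))$-estimates, while only routine adaptations of the lemmas already established, must be threaded through every one of these manipulations.
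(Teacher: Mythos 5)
Your first two paragraphs follow the paper's route: invoke Lemma~\ref{qforthetalem} for $B_{n,m}$, read the spheric integrand as a flux of a $(d-1)$-form, apply Stokes, pass $R\to\infty$ using Lemmas~\ref{ballexciselem} and \ref{decaycutofflem}, insert the cut-off $\phi$, and split $\Id\big((A_{n,m})_\phi\,\Psi^{(u)}\big)$ by Leibniz. Up to there you are on track, and the approximation/domination bookkeeping in your last paragraph likewise matches the paper.

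The gap is in the two \emph{transgression} claims. Write $\omega_{s}^{1}:=\int_{u\in\Delta_{d-1}}\tr_{H}\bigwedge_{j=0}^{d-1}\bigl((\Id A_{\phi,\ast})e^{-su_{j}A_{\phi,\ast}^{2}}\bigr)\Id u$, so that your $H_{n,m}(s)=\int_{\IR^{d}}\omega_{s}^{1}$, and denote by $\omega_{s}^{2}$ the $d$-form coming from $(A_{n,m})_{\phi}\,\Id\Psi^{(u)}$ (after the Duhamel expansion of $\Id e^{-wA_{\phi,\ast}^{2}}$ and folding of the Duhamel parameter into the simplex). You assert both that $\int_{\IR^{d}}\omega_{s}^{2}=0$ and that $H_{n,m}(s)$ is constant in $s$. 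Neither is true. What is true — and this is the computation the paper makes at this point — is the pointwise identity
\begin{align}
\omega_{s}^{2}=\tfrac{2}{d}\,s\,\partial_{s}\omega_{s}^{1},
\end{align}
obtained by cyclically commuting the $A_{\phi,\ast}^{2}$ that Duhamel produces under $\tr_{H}$, and by symmetrizing over the simplex variables. The correct finish is then the integration by parts
\begin{align}
\int_{0}^{t}s^{\frac{d}{2}-1}\Bigl(\omega_{s}^{1}+\tfrac{2}{d}s\,\partial_{s}\omega_{s}^{1}\Bigr)\Id s
=\tfrac{2}{d}\int_{0}^{t}\partial_{s}\bigl(s^{\frac{d}{2}}\omega_{s}^{1}\bigr)\Id s
=\tfrac{2}{d}\,t^{\frac{d}{2}}\,\omega_{t}^{1},
\end{align}
using $s^{d/2}\omega_{s}^{1}\to 0$ as $s\searrow 0$. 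No vanishing of $\omega_{s}^{2}$ and no $s$-invariance of $H_{n,m}$ is needed, and indeed neither can hold: if $H_{n,m}(s)$ were constant, the right side of \eqref{newmainthmeq-1} would grow like $t^{d/2}$ and its $t\to\infty$ limit would be infinite, contradicting Remark~\ref{wittenrem} (when $D$ is Fredholm, the left side tends to $\ind D$, a finite number). Equivalently, since $\int_{\IR^{d}}\omega_{s}^{2}=\tfrac{2}{d}s\,\partial_{s}H_{n,m}(s)$, the non-constancy of $H_{n,m}$ is exactly the non-vanishing of the second Leibniz piece. Your final numerical identity $\int_{0}^{t}s^{d/2-1}H_{n,m}(s)\Id s=\tfrac{2}{d}t^{d/2}H_{n,m}(t)$ does hold, but for the integration-by-parts reason, not because the integrand is constant; as written, the two incorrect premises happen to deliver the correct coefficient, so the error is masked. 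Replace the two transgression arguments with the cyclicity identity and the integration by parts and the proof goes through.
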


\begin{proof}
	Denote $A_{\ast}:=A_{n,m,l}$, and $D_{\ast}:=D_{n,m,l}$. Let $r=\left|x\right|$. For $x=ry\in\IR^{d}$ with $y\in S_{1}\left(0\right)$, we have $\Id r=\sum_{j=1}^{d}y^{j}\Id x^{j}$, and for $u\in s\Delta_{d-1}$, $s>0$,
	\begin{align}
			\Id r\wedge e^{-u_{0}A_{\ast}^{2}}\bigwedge_{j=1}^{d-1}\left(\Id A_{\ast}\right)e^{-u_{j}A_{\ast}^{2}}=\sum_{j=1}^{d}\sum_{\alpha\in\left\{1,\ldots,d\right\}^{d-1}}y^{j}\epsilon_{\left(j,\alpha\right)}e^{-uA_{\ast}^2}\shuffle\left(\p_{\alpha_{1}}^{E}A_{\ast},\ldots,\p_{\alpha_{d-1}}^{E}A_{\ast}\right)\Id x.
	\end{align}
	If we apply $\iota_{\Id r^{\sharp}}$, the interior multiplication with the radial field, to both sides, and restrict to $S_{R}\left(0\right)$, we obtain
	\begin{align}\label{newmainthmeq0}
		e^{-u_{0}A_{\ast}^{2}}\bigwedge_{j=1}^{d-1}\left(\Id A_{\ast}\right)e^{-u_{j}A_{\ast}^{2}}=\sum_{j=1}^{d}\sum_{\alpha\in\left\{1,\ldots,d\right\}^{d-1}}y^{j}\epsilon_{\left(j,\alpha\right)}e^{-uA_{\ast}^2}\shuffle\left(\p_{\alpha_{1}}^{E}A_{\ast},\ldots,\p_{\alpha_{d-1}}^{E}A_{\ast}\right)\Id S_{R}\left(x\right).
	\end{align}
	We insert (\ref{newmainthmeq0}) into Lemma \ref{qforthetalem}, and change variables in the $u$-integration,
	\begin{align}\label{newmainthmeq1}
		X_{t}:=&\tr_{L^{2}\left(\IR^{d},H\right)}\tr_{\IC^{r}}\left(e^{-tD^{\ast}_{\ast}D_{\ast}}-e^{-tD_{\ast}D^{\ast}_{\ast}}\right)\nonumber\\
		=&\Ii^{d}\left(4\pi\right)^{-\frac{d}{2}}\kappa_{c}\tr_{H}\left(\lim_{R\to\infty}\int_{0}^{t}s^{\frac{d}{2}-1}\int_{S_{R}\left(0\right)}A_{\ast}\int_{u\in\Delta_{d-1}}e^{-su_{0}A_{\ast}^{2}}\bigwedge_{j=1}^{d-1}\left(\Id A_{\ast}\right)e^{-su_{j}A_{\ast}^{2}}\Id u\ \Id s\right).
		\end{align}
		Due to the limit $R\to\infty$, we may replace $A_{\ast}$ by $A_{\phi,\ast}:=A_{0}+\left(1-\phi\right)\left(A_{\ast}-A_{0}\right)$, for any $\phi\in C^{\infty}_{c}\left(\IR^{d}\right)$, in (\ref{newmainthmeq1}). We note that the integrand is Bochner integrable in trace-class, since $\Id A_{\ast}$ is finite rank. We apply Stokes Theorem, and the trace $\tr_{H}$, and get
		\begin{align}\label{newmainthmeq2}
			X_{t}=&\Ii^{d}\left(4\pi\right)^{-\frac{d}{2}}\kappa_{c}\int_{0}^{t}s^{\frac{d}{2}-1}\int_{\IR^{d}}\left(\omega_{s}^{1}+\omega_{s}^{2}\right)\Id s,
		\end{align}
		where
		\begin{align}
			\omega_{s}^{1}&:=\int_{u\in\Delta_{d-1}}\tr_{H}\bigwedge_{j=0}^{d-1}\left(\left(\Id A_{\phi,\ast}\right)e^{-su_{j}A_{\phi,\ast}^2}\right)\Id u,\nonumber\\
			\omega_{s}^{2}&:=-2s\int_{u\in\Delta_{d}} \tr_{H}\left(A_{\phi,\ast}^{2}e^{-su_{0}A_{\phi,\ast}^{2}}\bigwedge_{j=1}^{d}\left(\Id A_{\phi,\ast}\right)e^{-su_{j}A_{\phi,\ast}^{2}}\right)\Id u\nonumber\\
			&=-2s\int_{u\in\Delta_{d-1}}u_{0}\tr_{H}\left(\left(\Id A_{\phi,\ast}\right) e^{-su_{0}A_{\phi,\ast}^2} A_{\phi,\ast}^{2}\bigwedge_{j=1}^{d-1}\left(\Id A_{\phi,\ast}\right)e^{-su_{j}A_{\phi,\ast}^2}\right)\Id u.
		\end{align}
		by cyclically commuting the factor $A_{\phi,\ast}^{2}$ under the trace and changing variables, we obtain for $\omega_{s}^{2}$,
		\begin{align}
			\omega_{s}^{2}=\frac{2}{d}s\p_{s}\omega_{s}^{1}.
		\end{align}
		Integrating by parts in (\ref{newmainthmeq2}), we thus obtain,
		\begin{align}\label{newmainthmeq3}
			X_{t}=&\frac{2}{d}\left(4\pi\right)^{-\frac{d}{2}}t^{\frac{d}{2}}\Ii^{d}\kappa_{c}\int_{\IR^{d}}\omega^{1}_{t}.
		\end{align}
		The argument of the trace of the $n,m,l$-dependent $\omega_{t}^{1}$ possesses iteratively integrable dominants in trace-norm. Indeed, first for fixed $n,m\in\IN$ there exists a constant $C$ independent of $l$ (because $K_{m}$ is finite rank), such that
		\begin{align}\label{newmainthmeq4}
			&\left\|\bigwedge_{j=0}^{d-1}\left(\left(\Id A_{\phi,n,m,l}\right)e^{-su_{j}A_{\phi,n,m,l}^2}\right)\left(x\right)\right\|_{S^{1}\left(H\right)}\leq C\left\|\delta_{l}\left(K_{m}P_{n}BP_{n}K_{m}\right)\right\|_{\mathcal{F}_{\epsilon}}^{d}\langle x\rangle^{-d-\epsilon}\Id x\nonumber\\
            &\stackrel{\text{Lemma \ref{radialconvolutionlem}}}{\leq} C'\left\|K_{m}P_{n}BP_{n}K_{m}\right\|_{\mathcal{F}_{\epsilon}}^{d}\langle x\rangle^{-d-\epsilon}\Id x,
		\end{align}
        which is an integrable dominant on $\IR^{d}\times\Delta_{d-1}$, and thus allows us to take the limit $l\to\infty$ under the integral.
        
        Now fix $n$. Since $B\left(x_{0}\right)=A\left(x_{0}\right)-A_{0}=0$, the operator function $\langle A_{0}\rangle^{-\alpha+\beta}B\langle A_{0}\rangle^{-\beta}$ is Lipschitz with values in $S^{d}\left(H\right)$, for $\alpha,\beta$ as in Hypothesis \ref{hyp1}. Since $\nabla\phi$ is compactly supported it follows that $\left(\nabla\phi\right)P_{n}BP_{n}\in C_{c}\left(\IR^{d},S^{d}\left(H\right)\right)$. Hence there is a constant $C$ independent of $m$, such that
        \begin{align}
            \left\|\nu^{E}\left(\left(1-\phi\right)K_{m}P_{n}BK_{m}P_{n}\right)\left(x\right)\right\|_{S^{d}\left(H\right)}\leq C\langle x\rangle^{-1-\eta},
        \end{align}
        where $\nu$ is a vector field, $\eta\geq 0$, and $\eta=\epsilon$ for $\nu=\p_{R}$. Thus
        \begin{align}\label{newmainthmeq5}
			\left\|\bigwedge_{j=0}^{d-1}\left(\left(\Id A_{\phi,n,m}\right)e^{-su_{j}A_{\phi,n,m}^2}\right)\left(x\right)\right\|_{S^{1}\left(H\right)}\leq C^{d}\langle x\rangle^{-d-\epsilon}\Id x,
		\end{align}
    which is integrable on $\IR^{d}\times\Delta_{d-1}$, which allows us to take the limit $m\to\infty$ under the integral.
    
    For the last limit $n\to\infty$, we partition $\Delta_{d-1}$ via $\Delta_{d-1}=\bigcup_{k=0}^{d-1}M_{k}=\dot{\bigcup}_{k=0}^{d-1}N_{k}$, where $M_{k}:=\left\{u\in\Delta_{d-1},u_{k}\geq\frac{1}{d}\right\}$, $N_{k}:=M_{k}\backslash\left(\bigcup_{l=0}^{k-1}M_{l}\right)$. For $u\in N_{k}$, we have by Hypothesis \ref{hyp1} a constant $C$ independent of $n$, such that for $t>0$,
		\begin{align}
			\left\|\bigwedge_{j=0}^{d-1}\left(\left(\Id A_{\phi,n}\right)e^{-su_{j}A_{\phi,n}^2}\right)\right\|_{S^{1}\left(H\right)}\leq C\prod_{j=0}^{d-1}\tau^{\alpha,\beta_{j},\nu_{j},d,\delta_{j}}\left(\left(1-\phi\right)B\right)\sup_{\lambda\in\IR}\left(\langle\lambda\rangle^{-d\alpha}e^{-tu_{k}\lambda^2}\right)\langle x\rangle^{-d-\epsilon}\Id x,
		\end{align}
		where $\beta_{j}=j\alpha$, $\nu_{j}=\nabla$ for $j\neq k$, $\nu_{k}=\p_{R}$, and $\delta_{j}=1$ for $j\neq k$, and $\delta_{k}=1+\epsilon$. In the estimate we used that $P_{n}$ commutes with $A_{0}$, and, as seen before, that $\langle A_{0}\rangle^{-\alpha+\beta_{j}}\left(\nu_{j}\phi\right)B\langle A_{0}\rangle^{-\beta_{j}}\in C_{c}\left(\IR^{d},S^{d}\left(H\right)\right)$, which implies that the involved semi-norms $\tau^{\bullet}\left(\left(1-\phi\right)B\right)$ are finite. We may interchange the limit $n\to\infty$ with the trace and the integrals integrals in (\ref{newmainthmeq3}) and obtain the claimed formula (\ref{newmainthmeq-1}).
\end{proof}
	
If the one-forms $\Id A$ admit radial limits, we may pass to these limits in the above trace formula.

\begin{Corollary}\label{newcor}
	Assume Hypothesis \ref{hyp1}, and denote $A^{\circ}\left(y\right)\psi:=\lim_{R\to\infty}A\left(Ry\right)\psi$, for $\psi\in\dom\ A_{0}$, and $y\in S_{1}\left(0\right)$. Assume that $A^{\circ}\psi$ is differentiable on $S_{1}\left(0\right)$ for $\psi\in\dom\ A_{0}$, such that for a.e. $y\in S_{1}\left(0\right)$,
	\begin{align}\label{mainlimitcoreq1}
		\lim_{R\to\infty}R\left\|\p_{i}A^{\circ}\left(y\right)\psi-\p_{i}A\left(Ry\right)\psi\right\|_{H}=0.
	\end{align}
	and that for $\beta\in\left[-2N+\alpha,2N+1\right]$, and all unit vector fields $\nu$ in $TS_{1}\left(0\right)$,
	\begin{align}\label{newcoreq0}
		\langle A_{0}\rangle^{-\alpha+\beta}\left(\nu A^{\circ}\right)\langle A_{0}\rangle^{-\beta}\in L^{\infty}\left(S_{1}\left(0\right),S^{d}\left(H\right)\right).
	\end{align}
	Denote for $\rho\in C^{\infty}_{c}\left(\IR\right)$,
	\begin{align}
		\widetilde{A^{\circ}}_{\rho}\left(x\right)&:=A_{0}+\left(1-\rho\left(\left|x\right|\right)\right)\left(A^{\circ}\left(\frac{x}{\left|x\right|}\right)-A_{0}\right).
	\end{align}
	Then for $t>0$, and any $\rho\in C^{\infty}_{c}\left(\IR\right)$, with $\rho\equiv 1$ near $0$,
	\begin{align}
		&\tr_{L^{2}\left(\IR^{d},H\right)}\tr_{\IC^{r}}\left(e^{-tD^{\ast}D}-e^{-tDD^{\ast}}\right)\nonumber\\
		=&\frac{2}{d}\left(4\pi\right)^{-\frac{d}{2}}\Ii^{d}\kappa_{c}t^{\frac{d}{2}}\int_{s\in\Delta_{d-1}}\int_{\IR^{d}}\tr_{H}\bigwedge_{j=0}^{d-1}\left(\left(\Id\widetilde{A^{\circ}}_{\rho}\right)e^{-ts_{j}\widetilde{A^{\circ}}_{\rho}^2}\right)\Id u.
	\end{align}
\end{Corollary}

\begin{proof}
	We first note that $A^{\circ}$ is well-defined. Indeed, for a.e. $y\in S_{1}\left(0\right)$,
	\begin{align}
		A\left(Ry\right)-A\left(0\right)=\int_{0}^{R}\left(\p_{R}A\right)\left(sy\right)\Id s,
	\end{align}
	and by Hypothesis \ref{hyp1} there is a constant $C$, such that for $\beta\in\left[-2N+\alpha,2N+1\right]$,
	\begin{align}
		\left\|\langle A_{0}\rangle^{-\alpha+\beta}\left(\p_{R}A\right)\left(sy\right)\langle A_{0}\rangle^{-\beta}\right\|_{S^{d}\left(H\right)}\leq C\langle s\rangle^{-1-\epsilon},
	\end{align}
	which is integrable on $s\in\left(0,\infty\right)$. Moreover we estimate for $y\in S_{1}\left(0\right)$, and $k\in\left[-2N,2N\right]$ by Hypothesis \ref{hyp1},
	\begin{align}\label{newcoreq1}
		&\left\|\langle A_{0}\rangle^{k}\left(A^{\circ}\left(y\right)-A_{0}\right)\langle A_{0}\rangle^{-k}\langle A_{0}\rangle^{-1}_{z}\right\|_{B\left(H\right)}\leq\sup_{x\in\IR^{d}}\left\|\langle A_{0}\rangle^{k}\left(A\left(x\right)-A_{0}\right)\langle A_{0}\rangle^{-k}\langle A_{0}\rangle^{-1}_{z}\right\|_{B\left(H\right)}\nonumber\\
		&\xrightarrow{z\to\infty}0,
	\end{align}
	which implies that $A^{\circ}\left(y\right)$ is self-adjoint on $\dom\ A_{0}$ and that $\dom\ \left(A^{\circ}\left(y\right)\right)^{k}=\dom\ A_{0}^{k}$, for $k\in\left[0,2N\right]$, and that the graph norms are equivalent, uniformly in $y\in S_{1}\left(0\right)$ by Kato-Rellich. Similarly we introduce for $n,m\in\IN$, $\rho\in C^{\infty}_{c}\left(\IR\right)$,
	\begin{align}
		\widetilde{A^{\circ}}_{n,m,l,\rho}:=A_{0}+\delta_{l}\left(K_{m}P_{n}\left(\widetilde{A^{\circ}}_{\rho}-A_{0}\right)P_{n}K_{m}\right).
	\end{align}
	Analogously to (\ref{newcoreq1}) we conclude that the graph norms of $\widetilde{A^{\circ}}_{n,m,l,\rho}^{k}$ and $A_{0}^{k}$ are equivalent, uniformly in $n,m,l\in\IN$ for $k\in\left[0,2N\right]$. In the proof of Theorem \ref{newmainthm} we replace, due to the limit $R\to\infty$, in (\ref{newmainthmeq1}), the terms $A_{\ast}$ by $\widetilde{A^{\circ}}_{n,m,l,\rho}$ and $\Id A_{\ast}$ by $\Id\widetilde{A^{\circ}}_{n,m,l,\rho}$, which follows from the additional condition (\ref{mainlimitcoreq1}), and dominated convergence after scaling coordinates from $S_{R}\left(0\right)$ to $S_{1}\left(0\right)$. The presence of $K_{m}$ ensures that the limit $R\to\infty$ of the integrand is still in trace-class. We now proceed similarly to the proof of Theorem \ref{newmainthm}, and obtain
	\begin{align}
		&\tr_{L^{2}\left(\IR^{d},H\right)}\tr_{\IC^{r}}\left(e^{-tD^{\ast}D}-e^{-tDD^{\ast}}\right)\nonumber\\
		=&\lim_{n\to\infty}\lim_{m\to\infty}\lim_{l\to\infty}\frac{2}{d}\left(4\pi\right)^{-\frac{d}{2}}\Ii^{d}\kappa_{c}t^{\frac{d}{2}}\int_{s\in\Delta_{d-1}}\int_{\IR^{d}}\tr_{H}\bigwedge_{j=0}^{d-1}\left(\left(\Id\widetilde{A^{\circ}}_{n,m,l,\rho}\right)e^{-ts_{j}\widetilde{A^{\circ}}_{n,m,l,\rho}^2}\right)\Id u.
	\end{align}
	It remains to evaluate the iterated limits $n,m,l\to\infty$. Since the graph norms of the powers of $\widetilde{A^{\circ}}_{n,m,l,\rho}$ are uniformly equivalent to those of $A_{0}$, the limiting procedure is similar to the one in the proof of Theorem \ref{newmainthm}, if we show that the operator family $\widetilde{A^{\circ}}_{\rho}$ satisfies for $\beta\in\left[-2N+\alpha,2N+1\right]$,
	\begin{align}\label{newcoreq2}
		\tau^{\alpha,\beta,\nabla,d,1}\left(\widetilde{A^{\circ}}_{\rho}-A_{0}\right)<&\infty,\nonumber\\
		\exists\epsilon>0:\tau^{\alpha,\beta,\p_{R},d,1+\epsilon}\left(\widetilde{A^{\circ}}_{\rho}-A_{0}\right)<&\infty.
	\end{align}
	Indeed, by (\ref{newcoreq0}) we have a constant $C$, such that
	\begin{align}
		\left\|\langle A_{0}\rangle^{-\alpha+\beta}\left(A^{\circ}\left(\frac{x}{\left|x\right|}\right)-A_{0}\right)\langle A_{0}\rangle^{-\beta}\right\|_{S^{d}\left(H\right)}\leq C.
	\end{align}
	The fact that $\rho'$ is compactly supported, and condition (\ref{newcoreq0}) imply by direct calculation (\ref{newcoreq2}) for any choice of $\epsilon>0$.
\end{proof}

\section{Applications}

	The principal trace formulas displayed in Theorem \ref{newmainthm} and in Corollary \ref{newcor} enable us to establish a condition of the existence, and a formula for the index of $D$. At this stage we introduce the following regularized notion of an index, which might still exist even if $D$ fails to be Fredholm, which is called the (partial) Witten index. Without the presence of the partial trace, its notion was introduced in \cite{GesSim}.
	
	\begin{Definition}
		Let $T$ be a linear, densely defined operator $\IC^{r}\otimes X$ for some separable complex Hilbert space $X$. We say the partial (semi-group regularized) Witten index of $T$ exists, if for some $t_{0}>0$,
		\begin{align}
			\tr_{\IC^{r}}\left(e^{-t T^{\ast}T}-e^{-tTT^{\ast}}\right)\in S^1\left(X\right),\ t\geq t_{0},
		\end{align}
		and if the limit
		\begin{align}
			\ind_{W}T:=\lim_{t\to\infty}\tr_{X}\left(\tr_{\IC^{r}}\left(e^{-t T^{\ast}T}-e^{-tTT^{\ast}}\right)\right),
		\end{align}
		exists. Then $\ind_{W}T$ is called the partial Witten index of $T$.
	\end{Definition}

	\begin{Remark}\label{wittenrem}
		Similar to the proof of \cite{Cal}[Lemma 1], one obtains the analogous result for the partial Witten index. Namely, if for some $t_{0}>0$, the condition
		\begin{align}
			\tr_{\IC^{r}}\left(e^{-t T^{\ast}T}-e^{-tTT^{\ast}}\right)\in S^1\left(X\right),\ t\geq t_{0}
		\end{align}
        holds, and $T$ is additionally Fredholm, then $T$ admits a partial Witten index, $\ind_{W}T$, which coincides with the Fredholm index $\ind T=\dim\ker T-\dim\coker T$.
	\end{Remark}

	It is clear that $\ind_{W}D$ exists, whenever the right hand sides of the principal trace formulas in Theorem \ref{newmainthm} or Corollary \ref{newcor} admit a limit for $t\to\infty$. If $D$ is Fredholm, this is the case by Remark \ref{wittenrem}. However for the interesting case where $D$ is non-Fredholm, we also present an example for which the Witten index exists, and can be calculated directly in terms of its symbol. We start with the Fredholm result first.
	
	\begin{Theorem}\label{fredindexthm}
		Assume Hypothesis \ref{hyp1}. Additionally assume that there exists $R_{0}\geq 0$, such that for $\left|x\right|\geq R_{0}$, we have $0\in\rho\left(A\left(x\right)\right)$. Then $D$ is Fredholm, and the Fredholm index $\ind D$ is given by
		\begin{align}\label{fredindexthmeq1}
			\ind_{W}D=\frac{2}{d}\left(4\pi\right)^{-\frac{d}{2}}\Ii^{d}\kappa_{c}\lim_{t\to\infty}t^{\frac{d}{2}}\int_{s\in\Delta_{d-1}}\int_{\IR^{d}}\tr_{H}\bigwedge_{j=0}^{d-1}\left(\left(\Id A_{\phi}\right)e^{-ts_{j}A_{\phi}^2}\right)\Id s,
		\end{align}
		where $A_{\phi}:=A_{0}+\left(1-\phi\right)\left(A-A_{0}\right)$ for any $\phi\in C^{\infty}_{c}\left(\IR^{d}\right)$, and the formula is independent of the choice of $\phi$.
	\end{Theorem}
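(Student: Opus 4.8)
The strategy is to obtain \eqref{fredindexthmeq1} as an essentially immediate consequence of Theorem~\ref{newmainthm} together with the heat‑kernel index theorem, the one substantial point being the Fredholmness of $D$, which is where the extra assumption $0\in\rho(A(x))$ for $|x|\geq R_{0}$ is used. First I would upgrade this pointwise invertibility to a uniform bound $c:=\bigl(\sup_{|x|\geq R_{1}}\|A(x)^{-1}\|_{B(H)}\bigr)^{-1}>0$ for some $R_{1}\geq R_{0}$: by Remark~\ref{domarem} and the Kato--Rellich structure underlying Hypothesis~\ref{hyp1}, each $A(x)$ has domain $\dom\ A_{0}$ with graph norm uniformly equivalent to that of $A_{0}$, and the radial‑continuity condition \eqref{radlimeq} together with the compactness of the spheres forbids the resolvent norm of $A(x)$ blowing up along any sequence with $|x_{n}|\to\infty$.

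Next I would estimate the quadratic forms of $D^{\ast}D$ and $DD^{\ast}$ on functions supported far out. Since $\langle D^{\ast}Du,u\rangle=\|Du\|^{2}$, $\langle DD^{\ast}u,u\rangle=\|D^{\ast}u\|^{2}$, and $D^{\ast}D=\Delta+A^{2}-\Ii\di^{E}A$, $DD^{\ast}=\Delta+A^{2}+\Ii\di^{E}A$ (cf.\ Proposition~\ref{hdomainprop}), it suffices to show that $\Ii\di^{E}A$ is, on $\{|x|>R\}$, form‑small relative to $A^{2}$ with bound tending to $0$ as $R\to\infty$. This follows from $\tau^{\alpha,\alpha,\nabla,d,1}(B)<\infty$, which gives $\|(\Ii\di^{E}A)(x)\langle A_{0}\rangle^{-\alpha}\|_{B(H)}\leq C\langle x\rangle^{-1}$, combined with the uniform equivalence of graph norms from the previous paragraph (used to control $\|\langle A_{0}\rangle^{\alpha}u\|$ by $\|A(x)u\|$ and $\|u\|$ when $|x|\geq R_{1}$). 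Hence, for $R$ large, $\langle D^{\ast}Du,u\rangle,\ \langle DD^{\ast}u,u\rangle\geq\tfrac{c^{2}}{2}\|u\|^{2}$ for every $u$ supported in $\{|x|>R\}$, and the decomposition principle (Persson's theorem, applicable since $D^{\ast}D$, $DD^{\ast}$ and $H_{0}$ are lower semibounded and share the common core $C^{\infty}_{c}(\IR^{d})\otimes\IC^{r}\otimes\mathcal{D}$) yields $\inf\sigma_{\mathrm{ess}}(D^{\ast}D)>0$ and $\inf\sigma_{\mathrm{ess}}(DD^{\ast})>0$. Therefore $0$ is at most an isolated eigenvalue of finite multiplicity for each of $D^{\ast}D$ and $DD^{\ast}$; equivalently $D$ is Fredholm, with $\ind D=\dim\ker(D^{\ast}D)-\dim\ker(DD^{\ast})=\dim\ker D-\dim\ker D^{\ast}$.

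Finally, with $D$ Fredholm and, by Proposition~\ref{exptraceprop}, $\tr_{\IC^{r}}(e^{-tD^{\ast}D}-e^{-tDD^{\ast}})\in S^{1}(L^{2}(\IR^{d},H))$ for every $t>0$, Remark~\ref{wittenrem} applies and gives $\ind_{W}D=\ind D$, i.e.\ $\lim_{t\to\infty}\tr_{L^{2}(\IR^{d},H)}\tr_{\IC^{r}}(e^{-tD^{\ast}D}-e^{-tDD^{\ast}})=\ind D$; in fact this heat trace is constant in $t$ by the standard McKean--Singer argument (cf.\ the proof of Remark~\ref{wittenrem}, modelled on \cite{Cal}[Lemma 1]), so the limit is a formality. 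On the other hand Theorem~\ref{newmainthm}, applied with the running convention $A_{0}=A(x_{0})$ and any $\phi\in C^{\infty}_{c}(\IR^{d})$, evaluates this very heat trace, for each fixed $t>0$, as the right‑hand side of \eqref{newmainthmeq-1}; passing to the limit $t\to\infty$ identifies $\ind D$ with \eqref{fredindexthmeq1}. The only real obstacle in the whole argument is the quadratic‑form lower bound of the second paragraph — establishing uniform invertibility of $A$ at infinity and tracking the decay of $\di^{E}A$ relative to $A^{2}$ for general $\alpha\geq1$ in the operator‑valued, variable‑domain setting — everything else being a direct application of Theorem~\ref{newmainthm} and of standard heat‑kernel index theory.
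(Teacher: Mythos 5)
Your proposal reaches the same endpoint through a genuinely different mechanism for the one nontrivial step, namely the Fredholmness of $D$. The paper proves $D^{\ast}D$ and $DD^{\ast}$ Fredholm by writing an operator inequality $D^{\ast}D\geq\Delta+C-C'/\langle X\rangle$ and invoking the Seeley-type Fredholmness criterion from Callias' paper (\cite{Cal}[Theorem 1]) for the scalar comparison operator. You instead control the quadratic forms of $D^{\ast}D$ and $DD^{\ast}$ on functions supported outside a large ball and invoke Persson's decomposition principle to conclude $\inf\sigma_{\mathrm{ess}}(D^{\ast}D)>0$ and $\inf\sigma_{\mathrm{ess}}(DD^{\ast})>0$. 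This is a more elementary route that avoids the scalar comparison operator altogether; it also sidesteps a subtlety in the paper's argument, where the min--max comparison against $\Delta+C-C'/\langle X\rangle$ has to be handled with some care because that operator acts on $L^2(\IR^d)$ rather than on $L^2(\IR^d,\IC^r\otimes H)$ and may itself have negative spectrum. After Fredholmness is settled, both arguments finish identically: Proposition \ref{exptraceprop} gives partial trace-class membership, Remark \ref{wittenrem} identifies $\ind_{W}D$ with $\ind D$, and Theorem \ref{newmainthm} supplies the formula; this part of your write-up matches the paper's.

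Two small points. First, your form-bound step invokes $\tau^{\alpha,\alpha,\nabla,d,1}(B)<\infty$ to get $\|(\Ii\di^{E}A)(x)\langle A_{0}\rangle^{-\alpha}\|_{B(H)}\leq C\langle x\rangle^{-1}$ and then tries to control $\|\langle A_{0}\rangle^{\alpha}u\|$ by $\|A(x)u\|+\|u\|$; for $\alpha>1$ that graph-norm comparison fails, since Kato--Rellich only gives $\|\langle A_{0}\rangle u\|\lesssim\|A(x)u\|+\|u\|$. The fix is to split symmetrically: use $\tau^{\alpha,\alpha/2,\nabla,d,1}(B)<\infty$ so that $|\langle\Ii\di^{E}Au,u\rangle|\leq C\langle x\rangle^{-1}\|\langle A_{0}\rangle^{\alpha/2}u\|^{2}$, and then interpolate $\|\langle A_{0}\rangle^{\alpha/2}u\|^{2}$ against $\|\langle A_{0}\rangle u\|^{2}$ and $\|u\|^{2}$ (this requires $\alpha\leq2$; the paper's argument has the same latent restriction). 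Second, your justification of the \emph{uniform} lower bound $\inf_{|x|\geq R_{1}}\|A(x)^{-1}\|^{-1}>0$ via radial continuity and sphere compactness is only heuristic — there is no compactness in the strong operator topology in infinite dimensions — but the paper simply asserts this uniform bound without argument, so you are not introducing a gap beyond what the paper already leaves implicit.
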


	\begin{proof}
		If $D$ is Fredholm, $D$ admits a partial Witten index, according to Remark \ref{wittenrem}, and it equals the right hand side of (\ref{fredindexthmeq1}) by Theorem \ref{newmainthm}. We thus only need to show that $D$ is Fredholm. Since $A\left(x\right)$ is invertible for $\left|x\right|$ large enough, let $C>0$ be a constant such that $A\left(x\right)^{2}\geq C$, for $\left|x\right|\geq R_{0}$. Note that $D^{\ast}D=\Delta+A^{2}-\Ii\left(\di B\right)$ as differential operator. On the domain of $D^{\ast}D$, we thus have some constant $C'$, such that
		\begin{align}\label{fredindexthmeq2}
			D^{\ast}D\geq\Delta+C-\frac{C'}{\langle X\rangle},
		\end{align}
		where we used that $A^2\geq 0$, and $\Ii\left(\di B\right)$ is bounded, and decays like $\left|x\right|^{-1}$ for $\left|x\right|\to\infty$. By Seeley's Theorem (\cite{Cal}[Theorem 1]), the elliptic differential operator $\Delta+C-\frac{C'}{\langle X\rangle}$ is Fredholm. The operator inequality (\ref{fredindexthmeq2}) then implies that $D^{\ast}D$ is also Fredholm. The analogous argument works for $DD^{\ast}$ as well, hence $D$ is Fredholm. 
	\end{proof}
	
	The following examples are presented in more detail in \cite{F2}, here we merely quote the results without the proofs. The basic object is a Dirac-Schr\"odinger operator $D_{V}$ which is non-Fredholm.
	
	\begin{Definition}
		For a potential $V:\IR^{d}\times\IR\rightarrow B_{sa}\left(H\right)$, define the associated \textit{massless $\left(d+1\right)$-Dirac-Schr\"odinger operator} $D_{V}$,
		\begin{align}
			\left(D_{V}f\right)\left(x,y\right):=&i\sum_{j=1}^{d}c^{j}\p_{x^{j}}f\left(x,y\right)+i\p_{y}f\left(x,y\right)+V\left(x,y\right)f\left(x,y\right),\ x\in\IR^{d},\ y\in\IR,\nonumber\\
			f\in& W^{1,2}\left(\IR^{d+1},\IC^{r}\otimes H\right),
		\end{align}
	\end{Definition}
	
	\begin{Example}[\cite{F2}, Theorem 1.1]\label{theexample}
		Let $V\in C_{b}^{2}\left(\IR^{d}\times\IR,B_{sa}\left(H\right)\right)$, where $B_{sa}\left(H\right)$ is equipped with the strong operator topology. Assume furthermore that for $i\in\left\{1,\ldots,d\right\}$ and $k\in\left\{0,1,2\right\}$,
		\begin{align}
			\left\|y\mapsto\p_{x^{i}}\p_{y}^{k}V\left(x,y\right)\right\|_{L^{d}\left(\IR,S^{d}\left(H\right)\right)}&=O\left(\left|x\right|^{-1}\right),\ \left|x\right|\to\infty,\nonumber\\
			\exists\epsilon>0:\ \left\|y\mapsto\p_{R}\p_{y}^{k}V\left(x,y\right)\right\|_{L^{d}\left(\IR,S^{d}\left(H\right)\right)}&=O\left(\left|x\right|^{-1-\epsilon}\right),\ \left|x\right|\to\infty,
		\end{align}
		and that for any $\widehat{x}\in S_{1}\left(0\right)$, $x_{0}\in\IR^{d}$, and $\gamma\in\IN_{0}^{d}$ with $\left|\gamma\right|\leq 1$, we have
		\begin{align}
			\lim_{R\to\infty}R^{\left|\gamma\right|}\left\|\left(\p^{\gamma}_{x}V\right)\left(x_{0}+R\widehat{x},\cdot\right)-\left(\p^{\gamma}_{x}V\right)\left(R\widehat{x},\cdot\right)\right\|_{L^{\infty}\left(\IR,B\left(H\right)\right)}=0.
		\end{align}
		Then
		\begin{align}\label{thethmeq1}
			\ind_{W}D_{V}=\frac{1}{2\pi}\left(4\pi\right)^{-\frac{d-1}{2}}\frac{\left(\frac{d-1}{2}\right)!}{d!}\kappa_{c}\int_{\IR^{d}}\tr_{H}\left(\left(U^{V}\right)^{-1}\Id U^{V}\right)^{\wedge d},
		\end{align}
		where $\wedge d$ is the $d$-fold exterior power,
		\begin{align}
			\kappa_{c}:=\tr_{\IC^{r}}\left(c^{1}\cdot\ldots\cdot c^{d}\right),
		\end{align}
		and the unitary $U^{V}$ is given by $U^{V}\left(x\right):=\lim_{n\to\infty}U^{V\left(x,\cdot\right)}\left(n,-n\right)$, $x\in\IR^{d}$, where $U^{A}\left(y_{1},y_{2}\right)$, $y_{1},y_{2}\in\IR$, for a given $A:\IR\rightarrow B_{sa}\left(H\right)$ is the (unique) evolutions system of the equation
		\begin{align}
			u'\left(y\right)=\Ii A\left(y\right) u\left(y\right),\ y\in\IR,
		\end{align}
		i.e. for $y,y_{1},y_{2},y_{3}\in\IR$,
		\begin{align}
			\p_{y_{1}}U^{A}\left(y_{1},y_{2}\right)=&\Ii A\left(y_{1}\right)U^{A}\left(y_{1},y_{2}\right),\\
			\p_{y_{2}}U^{A}\left(y_{1},y_{2}\right)=&-\Ii U^{A}\left(y_{1},y_{2}\right)A\left(y_{2}\right),\\
			U^{A}\left(y_{1},y_{2}\right)U^{A}\left(y_{2},y_{3}\right)=&U^{A}\left(y_{1},y_{3}\right),\\
			U^{A}\left(y,y\right)=&1_{H}.
		\end{align}
		
		With the minimal choice $r=2^{\frac{d-1}{2}}$, and sign convention\footnote{This corresponds to the choice $c^{j}=-\Ii\sigma^{j}$, where $\sigma^{j}$ are classical Pauli matrices.} $\kappa_{c}=\left(-\Ii\right)^{d}\left(2\Ii\right)^{\frac{d-1}{2}}$, we have
		\begin{align}\label{ththmeq2}
			\ind_{W}D_{V}=\left(2\pi\Ii\right)^{-\frac{d+1}{2}}\frac{\left(\frac{d-1}{2}\right)!}{d!}\int_{\IR^{d}}\tr_{H}\left(\left(U^{V}\right)^{-1}\Id U^{V}\right)^{\wedge d}.
		\end{align}
	\end{Example}
	
	This formula generalizes an index formula found in \cite{BolGesGroSchSim}, \cite{CGGLPSZ}, and \cite{CGLPSZ} for the case $d=1$ to $d\geq 3$ odd. For a special choice of the potential $V$, the index formula becomes even more concrete. We shall close the paper with this example.
	
		\begin{Definition}\label{potentialexdef}
		Let $F:\IR^{d}\rightarrow\IR^{d}$ be a bounded smooth function with bounded derivatives, such that
		\begin{align}
			\p_{x^{i}}F\left(x\right)&=O\left(\left|x\right|^{-1}\right),\ \left|x\right|\to\infty,\nonumber\\
			\exists\epsilon>0:\ \p_{R}F\left(x\right)&=O\left(\left|x\right|^{-1-\epsilon}\right),\ \left|x\right|\to\infty,
		\end{align}
		and that for any $\widehat{x}\in S_{1}\left(0\right)$, $x_{0}\in\IR^{d}$, and $\gamma\in\IN_{0}^{d}$ with $\left|\gamma\right|\leq 1$,
		\begin{align}
			\lim_{R\to\infty}R^{\left|\gamma\right|}\left|\left(\p^{\gamma}F\right)\left(x_{0}+R\widehat{x}\right)-\left(\p^{\gamma}F\right)\left(R\widehat{x}\right)\right|=0.
		\end{align}
		Set $V\left(x,y\right):=\Ii\sum_{j=1}^{d}c^{j}F_{j}\left(x\right)\phi\left(y\right)$, $x\in\IR^{d}$, $y\in\IR$, for some function $\phi\in C_{c}^{\infty}\left(\IR\right)$ with $\int_{\IR}\phi\left(y\right)\Id y=1$.
	\end{Definition}
	
	\begin{Example}[\cite{F2}, Proposition 1.2]\label{exampleminor}
		With $V$ as in Definition \ref{potentialexdef}, with the minimal choices $\mathrm{rank}\left(c^{i}\right)=2^{\frac{d-1}{2}}$ and sign convention $\kappa_{c}=\left(-\Ii\right)^{d}\left(2\Ii\right)^{\frac{d-1}{2}}$, the partial Witten index of $D_{V}$ exists, and
		\begin{align}
			\ind_{W}D_{V}=\left(2\pi\right)^{-\frac{d+1}{2}}\frac{\left(\frac{d-1}{2}\right)!}{d}\int_{\IR^{d}}\frac{\left(\left|F\right|+d-1\right)\left(\cos\left(2\left|F\right|\right)-1\right)^{\frac{d-1}{2}}}{\left|F\right|^{d}}\det\mathrm{D}F\ \Id x,
		\end{align}
		where $\mathrm{D}F$ is the Jacobian of $F$. Note that the integrand extends continuously to $F=0$.
	\end{Example}
	
	The above formula allows one to show that $\ind_{W}$ may assume any real number for this class of operators $D_{V}$ if one chooses appropriate diffeomorphisms of $\IR^{d}$ onto balls centred around $0$ for the function $F$ (cf. \cite{F2}[Corollary 4.6]).
	
	\section{Acknowledgements}
	
	I would like to thank Elmar Schrohe, Matthias Lesch, Lennart Ronge, Robert Fulsche and Philipp Schmitt for many fruitful discussion concerned with this paper.

\end{document}